\documentclass[11pt,reqno]{amsart}
\def\MR#1{}
\usepackage[margin=37mm]{geometry}
\usepackage{amsfonts,amsmath,amssymb,enumerate}
\usepackage{amsthm, bm}
\usepackage{mathrsfs}
\usepackage{setspace}
\usepackage{comment}

\usepackage{pgfplots}
\usepackage{tikz-3dplot}
\usetikzlibrary{arrows,matrix}
\usetikzlibrary{decorations.markings,shapes.geometric,shapes.misc,cd}
\usetikzlibrary{3d}
\usetikzlibrary {graphs.standard}
\pgfplotsset{compat=1.17}
\usepgfplotslibrary{fillbetween}

\usepgfplotslibrary{external}
\def\noexternalize{yes}
\ifdefined\noexternalize\else
\tikzexternalize
\tikzset{external/up to date check = md5}
\fi

\usepackage{mathtools}
\usepackage{hyperref}
\usepackage{todonotes}
\usepackage{aliascnt}

\usepackage[capitalise]{cleveref}
\crefname{subsection}{\S\!}{\S\!}
\crefname{subappendix}{\S\!}{\S\!}
\Crefname{subsection}{\S\!}{\S\!}
\Crefname{subappendix}{\S\!}{\S\!}
\usepackage{upgreek}
\usepackage{simpler-wick}

\usepackage[hyperref,doi,url=false,
backref,
giveninits=true,
backend=biber,
style=alphabetic,
maxbibnames=99]{biblatex}

\usepackage{xpatch}


\makeatletter
\patchcmd\blx@bblinput{\blx@blxinit}{%
  \blx@blxinit
}{}{\fail}
\makeatother

\AtEveryBibitem{
 \clearlist{address}
 \clearfield{date}
 \clearfield{isbn}
 \clearfield{issn}
 \clearlist{location}
 \clearfield{month}
 \clearfield{series}

 \ifentrytype{book}{}{
  \clearlist{publisher}
  \clearname{editor}
 }
 \ifentrytype{misc}{}{
  \clearfield{eprint}
 }
}

%

\crefname{equation}{}{}
\Crefname{equation}{}{}

\DeclareFontEncoding{LS1}{}{}
\DeclareFontSubstitution{LS1}{stix}{m}{n}
\DeclareSymbolFont{symbols2}{LS1}{stixfrak}{m}{n}
\DeclareMathSymbol{\typecolon}{\mathbin}{symbols2}{"25}



\theoremstyle{plain}

\newtheorem{thm}{Theorem}
\newtheorem*{prop*}{Proposition}
\newtheorem*{thm*}{Theorem}
\let\cref\Cref
\newcommand{\NewTheoremWithAlias}[2]{%
  \newaliascnt{#1}{thm}%
  \newtheorem{#1}[#1]{#2}%
  \aliascntresetthe{#1}%
  \Crefname{#1}{#2}{#2s}%
}

\NewTheoremWithAlias{lem}{Lemma}
\NewTheoremWithAlias{prop}{Proposition}
\NewTheoremWithAlias{cor}{Corollary}
\NewTheoremWithAlias{defn}{Definition}
\NewTheoremWithAlias{defprop}{Definition-Proposition}

\NewTheoremWithAlias{examplex}{Example}
\newenvironment{exmp}
  {\pushQED{\qed}\examplex}
  {\popQED\endexamplex}

\theoremstyle{remark}
\NewTheoremWithAlias{remarkx}{Remark}

\newenvironment{rem}
  {\pushQED{\qed}\remarkx}
  {\popQED\endremarkx}

\newtheorem*{rem*}{Remark}

\newcommand{\be}{\begin{equation}}
\newcommand{\ee}{\end{equation}}
\newcommand{\bmx}{\begin{pmatrix}}
\newcommand{\emx}{\end{pmatrix}}

\newcommand{\bmb}{\begin{matrix}}
\newcommand{\bme}{\end{matrix}}

\newcommand{\bbb}{\!\begin{bmatrix}}
\newcommand{\bbe}{\end{bmatrix}}

\newcommand{\ttb}{\!\left[\!\!\left[\begin{matrix}}
\newcommand{\tte}{\end{matrix}\right]\!\!\right]}

\newcommand{\llb}{\hspace{-.15em}\left(\!\!\!\left(\begin{matrix}}
\newcommand{\lle}{\end{matrix}\right)\!\!\!\right)}

\newcommand{\ul}{\underline}
\newcommand{\del}{\partial}
\newcommand{\g}{{\mathfrak g}}

\newcommand{\p}{{\mathfrak p}}

\newcommand{\B}{{\mathsf B}}

\newcommand{\mc}{\mathcal}

\newcommand{\D}{\mathcal{D}}

\newcommand{\nn}{\nonumber}
\newcommand{\sign}{{\rm sign}}
\newcommand{\X}{{\tilde X}}
\newcommand{\8}{{\infty}}

\newcommand{\ZZ}{{\mathbb Z}}

\newcommand{\CC}{{\mathbb C}}
\newcommand{\kk}{{\mathbf k}}

\renewcommand{\AA}{{\mathbb A}}

\renewcommand{\P}{{\mathcal P}}

\newcommand{\RR}{{\mathbb R}}

\newcommand{\id}{{\textup{id}}}

\newcommand{\wh}{\widehat}

\newcommand{\Qc}[2]{\pi_i \circ Q \circ \pi_j}

\newcommand{\A}{\mathcal A}

\newcommand{\goi}[2]{=}
\DeclareMathOperator{\Hom}{Hom}

\newcommand{\on}{.}

\renewcommand{\binom}[2]{{#1 \brack #2}}

\newcommand{\btp}{\begin{tikzpicture}[baseline=0pt,scale=0.9,line width=0.25pt]}
\newcommand{\etp}{\end{tikzpicture}}

\newcommand{\range}[2]{\left[ \! \left[ #1,#2 \right] \! \right]}

\DeclareMathOperator{\res}{res}
\DeclareMathOperator{\Res}{Res}

\DeclareMathOperator{\Span}{span}

\DeclareMathOperator{\Spec}{Spec}

\DeclareMathOperator{\gr}{gr}

\renewcommand{\O}{\mc O}

\newcommand{\M}{\mathcal M}

\DeclareMathOperator{\Ob}{Ob}

\newcommand{\ox}{\mathbin\otimes}

\newcommand{\bul}{\bullet}

\newcommand{\directlim}{\varinjlim}

\newcommand{\into}{\hookrightarrow}

\newcommand{\onto}{\twoheadrightarrow}

\renewcommand{\i}{\mathsf i}

\def\B{\mc B}

\newcommand{\op}{\mathrm{op}}

\newcommand{\G}{\mc G}
\newcommand{\F}{\mc F}

\newcommand{\isom}{\xrightarrow\sim}

\newcommand{\x}{{\bm x}}

\newcommand{\I}{\mathcal I}



\newcommand{\C}{\mathcal C}



\makeatletter
\newcommand{\extp}{\@ifnextchar^\@extp{\@extp^{\,}}}
\def\@extp^#1{\mathop{\bigwedge\nolimits^{\!#1}}}
\makeatother
\makeatletter
\newcommand{\hextp}{\@ifnextchar^\@hextp{\@hextp^{\,}}}

\def\@hextp^#1{\mathop{\wh{\bigwedge\nolimits^{\!#1}}}}
\makeatother

\newcommand{\catname}[1]{\mathbf{#1}}

\newcommand{\Set}{\catname{Set}}

\newcommand{\Operad}[1]{\mathcal #1}

\newcommand{\LieOperad}{\Operad{Lie}}
\newcommand{\ComOperad}{\Operad{Com}}

\newcommand{\LieInfinityOperad}{\Operad{Lie}_\8}

\newcommand{\CAlg}{\catname{Alg}^{\ComOperad}}

\newcommand{\dgCAlg}{\CAlg(\dgVect_\kk)}

\newcommand{\dgVect}{\catname{dgVect}}

\newcommand{\Conf}{\mathrm{Conf}}

\newcommand{\dd}{\mathrm{d}}

\DeclareMathOperator{\Tot}{Tot}

\DeclareMathOperator{\holim}{holim}

\newcommand{\Cech}{\check C}

\newcommand{\zz}{\mathbf z}
\DeclareMathOperator{\Th}{Th}

\DeclareMathOperator{\Jac}{Jac}

\newcommand{\dfn}[1]{
\emph{#1}}

\newcommand{\Linfinity}{{\texorpdfstring{$L_\8\,$}{L-infinity}}}

\newcommand{\ijk}{_{1\leq i<j\leq k}}

\newcommand{\TS}{{\mathbf{P}}}

\newcommand{\ChirComp}[0]{\mathcal{C}}

\newcommand{\ChirCompTot}[0]{\mathcal{F}}
\newcommand{\Coh}[0]{H}

\newcommand{\tl}[0]{t^\lambda}

\newcommand{\shifted}{^\blacklozenge}
\newcommand{\two}{^{[2]}}
\newcommand{\bp}{\circ}

\newcommand{\DAAnk}{\mathcal D_{(\AA^n)^k}}
\newcommand{\ddts}{\dd_{\TS}}
\newcommand{\ddmu}[1]{\dd_{\mu^{#1}}}
\newcommand{\Vol}{\mathrm{Vol}}
\newcommand{\mTree}{\mathbf{mTree}}
\newcommand{\Prop}{P}
\newcommand{\oms}{\omega\shifted_{\AA^n}}
\newcommand{\om}{\omega_{\AA^n}}
\newcommand\xonto[2][]{%
  \mathrel{\ooalign{$\xrightarrow[#1\mkern4mu]{#2\mkern4mu}$\cr%
  \hidewidth$\rightarrow\mkern4mu$}}
}

\newcommand{\basepoint}{\bp}

\newcommand{\polysimplicial}{polysimplicial }
\newcommand{\topsimplex}{[N]}

\crefname{lem}{lemma}{lemmas}
\Crefname{lem}{Lemma}{Lemmas}
\crefname{prop}{proposition}{propositions}
\Crefname{prop}{Proposition}{Propositions}

\begin{document}

\title[Higher chiral algebras in a polysimplicial model]{Higher chiral algebras\\ in a polysimplicial model}

\author{Laura O. Felder, Zhengping Gui, Charles A. S. Young}
\address{}  \email{l.o.felder@herts.ac.uk, zgui@simis.cn, c.young8@herts.ac.uk}

\ifdefined\noexternalize\else\tikzexternaldisable\fi

\address{ZG: Shanghai Institute for Mathematics and Interdisciplinary Sciences, Shanghai, China; LOF,CASY: Department of Physics, Astronomy and Mathematics, University of Hertfordshire, College Lane, Hatfield AL10 9AB, UK. }%

\thanks{}%
\subjclass{}%
\keywords{}%


\begin{abstract}

Vertex algebras are equivalent to
translation-equivariant
chiral algebras on $\AA^1$, in the sense of Beilinson and Drinfeld \cite{BDChiralAlgebras}.
In this paper we give an algebraic construction of a chiral algebra on $\AA^n$; this can be seen as an algebraic construction of a
higher-dimensional vertex algebra.

We introduce a model, in dg commutative algebras, of the derived algebra of functions on the configuration space of $k$ distinct labelled marked points in $\AA^n$. Working in this model -- which we call the \emph{polysimplicial model} -- we obtain a dg operad of chiral operations on a degree-shifted copy of the canonical sheaf. We prove that there is a quasi-isomorphism, to this dg operad, from the Lie-infinity operad.
This result makes the shifted canonical sheaf into a first example of a \emph{homotopy polysimplicial chiral algebra} on $\AA^n$, in a sense which generalizes to higher dimensions Malikov and Schechtman's \cite{MalikovSchechtman} notion of a homotopy chiral algebra.



\end{abstract}


\maketitle
\setcounter{tocdepth}{1}
\tableofcontents

\section{Introduction}
\subsection{Motivation}

In this paper, we give an algebraic construction of a chiral algebra in higher dimensions. To motivate this, and to set the scene, let us recall the relationship between vertex algebras and chiral algebras.

Vertex algebras formalize the properties of local observables in chiral conformal field theories in complex dimension one \cite{Borcherds,KacVertexAlgBook,LLbook,FrenkelBenZvi}. They are a powerful and ubiquitous tool in mathematical physics and representation theory. 

Now, a running theme in recent years has been that important algebraic objects in mathematical physics which appear to be peculiar to complex dimension one sometimes \emph{can} be generalized to higher complex dimensions, provided one is prepared to go to the setting of higher and derived algebras. 
For example, current algebras and their Kac-Moody central extensions, which are associated to the punctured formal disc in complex dimension one, turn out to have higher dimensional analogs \cite{FHK}, \cite{GWHigherKM}, once one recognizes that one should pass from the algebra of functions on the punctured disc to its derived analog.

It is an important open problem to construct 
vertex algebras in higher dimensions. At first sight it is perhaps far from obvious that this ought to be possible. For example, vertex algebras enjoy a form of associativity -- Borcherds identity -- which follows naturally from physical arguments about applying the residue theorem to chiral correlators/conformal blocks in complex dimension one, but which is rather sui generis from an algebraic perspective.

To make progress, it is valuable to pass to Beilinson and Drinfeld's elegant language of chiral algebras 
 \cite{BDChiralAlgebras}. Every vertex algebra has an equivalent description as a chiral algebra\footnote{Specifically, a translation-equivariant chiral algebra on $\AA^1$. See \cite[\S0.15]{BDChiralAlgebras}.}  on $\AA^1$ and crucially a chiral algebra is just a certain sort of Lie algebra. 
In particular, the associativity-type relation is just the usual Jacobi identity.
This opens a path to generalization. And indeed, it has been known since the seminal work of Francis and Gaitsgory \cite{FrancisGaitsgory} that the notion of chiral algebras extends to higher dimensions. The paper \cite{FrancisGaitsgory} works throughout at the level of the derived/infinity categories, independently of any choice of model, and it relaxes the strict notion of chiral algebras of \cite{BDChiralAlgebras} so that, roughly speaking, \emph{all} structures are up to homotopy.

For the purposes of applications -- in particular, to allow explicit computations -- one would like to have at hand 
an algebraic \emph{construction} of higher chiral algebras of interest. In the present work we give such a construction.  We confine our attention to the very simplest example, that of the unit chiral algebra in affine $n$-space $\AA^n$. We construct it explicitly, working in a model we introduce which we shall refer to as the \emph{polysimplicial} model.\footnote{One certainly expects that other examples of higher vertex/chiral algebras interest can be similarly constructed using this model, for example the free boson, free fermion, $\beta\gamma$/$bc$ ghost system, Kac-Moody, and so on. We hope to return to this in future work.}

It will turn out that our notion of chiral algebras in the polysimplicial model is an up-to-homotopy generalization of that of \cite{BDChiralAlgebras}, but in a much milder sense than that of \cite{FrancisGaitsgory}. What we obtain is, rather, an example in higher dimensions of a \emph{homotopy chiral algebra} in a sense Malikov and Schechtman have recently introduced in dimension one \cite{MalikovSchechtman}.\footnote{Although we will not make the connection explicit here, polysimplical chiral algebras in our sense should also furnish examples of chiral algebras in the sense of \cite{FrancisGaitsgory}.}
To explain that statement, let us now give an overview of the content of the paper.

\subsection{Overview}
\subsubsection*{Chiral operations on $\AA^n$}
Let $\Conf_k(\AA^n)$ be the configuration space of $k$ distinguishable marked points in $\AA^n$. We denote by $j:\Conf_k(\AA^n)\hookrightarrow (\AA^n)^k$ the open inclusion and by $\Delta: \AA^n\hookrightarrow (\AA^n)^k$ the diagonal embedding.

In $n=1$ dimensions, a $k$-ary \dfn{chiral operation}
\( (L_1,\dots,L_k) \to M \)
on $\AA^1$ in the sense of \cite{BDChiralAlgebras} is a map
\begin{equation}
j_*j^*( L_1\boxtimes \dots \boxtimes L_k 
) \to \Delta_* M \nn\end{equation}
(of right $\D$-modules on $(\AA^1)^k$; here $L_1,\dots,L_k$ and $M$ are quasi-coherent right $\D$-modules on $\AA^1$; cf.  \cref{sec: main}).

Such a map takes as input a section 
over the configuration space of $k$ distinguishable marked points in $\AA^1$ -- in other words, a section which is allowed to be singular along the diagonals in $(\AA^1)^k$, where collisions of marked points occur. That configuration space happens to be affine as a scheme over the ground field $\kk$, namely $\Conf_k(\AA^1) = \Spec\left( \kk[z_i,(z_j-z_\ell)^{-1}]_{1\leq i\leq k;1\leq j < \ell \leq  k}\right)$.

In higher dimensions, $n\geq 2$, the configuration space 
of $k$ distinguishable marked points in $\AA^n$ is no longer affine. 
We shall introduce a model,
\[ \TS_{\AA^n}^k \simeq R\Gamma(\Conf_k(\AA^n),\O), \]
in dg commutative $\kk$-algebras, of the derived sections of the structure sheaf $\O$. It carries a $\D_{(\AA^n)^k}$-module structure. We also get a model of the derived sections of any quasi-coherent sheaf of right $\D$-modules on $(\AA^n)^k$, the relevant examples being those of the form 
$\TS_{\AA^n}^k(L_1\boxtimes \dots \boxtimes L_k)
\simeq R\Gamma\left(\Conf_k(\AA^n) , L_1\boxtimes \dots \boxtimes L_k \right)\simeq Rj_*j^*( L_1\boxtimes \dots \boxtimes L_k).
\nn$ Here we abuse the notation of a sheaf and its sections, see \cref{thm: F model} for the precise statement.

It is then natural to define a $k$-ary \dfn{chiral operation} \( (L_1,\dots,L_k) \to M \) on $\AA^n$ in this polysimplicial model to be a map
\begin{align}  \TS_{\AA^n}^k(L_1\boxtimes \dots \boxtimes L_k) \to \Delta_*M
\nn\end{align}
(again of right $\D$-modules, now on $(\AA^n)^k$; see \cref{sec: main} for details, and cf. \cref{rem: reln to DS}).

\subsubsection*{The unit chiral algebra $\oms$ on $\AA^n$} In particular we get the $k$-ary operations
\[\TS_{\AA^n}^k(\underbrace{\oms\boxtimes\dots\oms}_{k}) \to \Delta_*\oms\]
from a degree-shifted copy
\[\oms:=\omega_{\AA^n}[n-1]\]
of the canonical sheaf $\omega$ on $\AA^n$ to itself. We shall show
(in \cref{sec: operad})
that these operations form a dg operad, $\P^{ch}_{\AA^n,\bul}$,
which we call the \dfn{polysimplical unit chiral operad}.
The main result of the paper is then the following.
\begin{thm*}[\cref{thm: quasi-isomorphism from LieInfinity}]
    There is a quasi-isomorphism of dg operads
\[\LieInfinityOperad\xrightarrow\sim \P_{\AA^n}^{ch}\]
from the Lie-infinity operad $\LieInfinityOperad$ to the polysimplicial unit chiral operad $\P_{\AA^n}^{ch}$.
\qed\end{thm*}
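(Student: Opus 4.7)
The plan is to construct an explicit morphism of dg operads $\LieInfinityOperad \to \P^{ch}_{\AA^n}$ and then verify that it is a quasi-isomorphism by computing the cohomology of $\P^{ch}_{\AA^n}$ in each arity. The overall strategy is a higher-dimensional generalization of Beilinson--Drinfeld's identification, in one complex dimension, of chiral operations on $\omega_{\AA^1}$ with the Lie operad. The degree shift $\oms=\omega_{\AA^n}[n-1]$ is engineered so that the higher cohomology of $\Conf_k(\AA^n)$, which enters through the polysimplicial model, contributes in the right degree to reproduce a Lie-type answer up to homotopy.

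First, I would unpack the complex $\P^{ch}_{\AA^n}(k)$ concretely. By definition it is the $\Hom$-complex of right $\D_{(\AA^n)^k}$-modules
\[\TS^k_{\AA^n}\!\left( \oms\boxtimes\dots\boxtimes\oms \right) \longrightarrow \Delta_*\oms. \]
Because $\TS^k_{\AA^n}$ is a specific polysimplicial dg commutative $\kk$-algebra resolving $R\Gamma(\Conf_k(\AA^n),\O)$, this $\Hom$-complex has an explicit cochain presentation whose generators and differential are combinatorial data associated to collision patterns of the $k$ marked points. Using the adjunction between $\Delta_*$ and $\Delta^!$ for right $\D$-modules, the computation reduces to one supported on the main diagonal $\AA^n \hookrightarrow (\AA^n)^k$, where residues along the punctual directions can be read off directly from the polysimplicial structure.

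Second, I would compute $H^\bullet\P^{ch}_{\AA^n}(k)$. The expected answer is that $H^\bullet\P^{ch}_{\AA^n}(k)$ is concentrated in a single cohomological degree, where it recovers $\LieOperad(k)$, of dimension $(k-1)!$, as an $S_k$-representation. To prove this I would use the stratification of $(\AA^n)^k$ by the closed diagonals indexed by partitions of $\{1,\ldots,k\}$, giving a filtration of $\TS^k_{\AA^n}$ and hence a convergent spectral sequence. On the $E_1$ page the contributions are indexed by labelled trees (equivalently, nested partitions), and the claim is that the higher differentials collapse the spectral sequence to a single row matching the classical description of $\LieOperad(k)$ in terms of trees modulo Jacobi. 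The extra cohomological degrees of $\Conf_k(\AA^n)$ for $n\geq 2$ are precisely absorbed by the shift in $\oms$.

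Third, I would exhibit the morphism on generators. Since $\LieInfinityOperad$ is cofibrant with an $\ell_k$ generator in arity $k$ and degree $2-k$, it suffices to specify, for each $k\geq 2$, a cocycle $\varphi_k\in \P^{ch}_{\AA^n}(k)$ in the correct degree with the right $S_k$-antisymmetry. I would define $\varphi_2$ as the higher-dimensional chiral bracket, a Bochner--Martinelli type residue class on the diagonal $\AA^n\hookrightarrow (\AA^n)^2$; this is the natural generalization of the chiral bracket on $\omega_{\AA^1}$. The higher $\varphi_k$ are then constructed inductively as homotopy lifts: the failure of $\{\varphi_j\}_{j<k}$ to satisfy the next $L_\infty$-relation is a cocycle in $\P^{ch}_{\AA^n}(k)$ of the appropriate degree, and by the acyclicity established in step two it is a coboundary; any primitive defines $\varphi_k$.

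The main obstacle is in fact the acyclicity statement in step two and its interaction with step three: one must show that the relevant cochain complexes are acyclic \emph{in the precise degrees} where the obstructions to the $L_\infty$-relations live. This requires a careful spectral-sequence analysis of the polysimplicial model, keeping careful track of both the internal (de Rham-type) differential coming from $\D$-module structure and the polysimplicial differential. An equivalent and arguably cleaner route, which I would pursue in parallel as a verification, is via operadic Koszul duality: upgrade the cohomology computation of step two to a morphism at the level of bar constructions, and then use the Koszul duality between $\LieOperad$ and the shifted cocommutative cooperad to transport this to the desired $L_\infty$-morphism. Once any such $L_\infty$-morphism is constructed, the theorem follows from step two, since an operad morphism inducing the identity on cohomology in each arity is a quasi-isomorphism by definition.
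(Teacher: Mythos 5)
Your overall architecture---(i) show $H(\P^{ch}_{\AA^n}(k))\cong\LieOperad(k)$ in degree $0$, then (ii) use cofibrancy of $\LieInfinityOperad$ to lift the generators $\ell_k$ inductively, killing the obstruction cocycles---is exactly the paper's (the paper packages step (ii) as a lifting against the acyclic fibration $\P^{ch}_{\AA^n,\geq 0}\onto H\P^{ch}_{\AA^n}\cong\LieOperad$). The residue map $\mu_2$ as the image of $\ell_2$ and surjectivity onto $\LieOperad(k)$ via iterated residues also match. One small imprecision in your step three: for $k=3$ the Jacobiator lives in homological degree $0$, which is precisely where the homology is \emph{not} zero, so ``acyclicity'' alone does not make it a coboundary; you additionally need that its class in $H_0\cong\LieOperad(3)$ vanishes (which it does, because the induced bracket on homology is a genuine Lie bracket). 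The model-category formulation handles this automatically.

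The genuine gap is in your step two, which is where essentially all of the paper's work lies. You propose to compute $H^\bullet\P^{ch}_{\AA^n}(k)$ by filtering $\TS^k_{\AA^n}$ by diagonal strata and asserting that the resulting spectral sequence on the $\Hom$-complex collapses to a single row of labelled trees. But $\P^{ch}_{\AA^n}(k)=\Hom_{\D}(\TS^k_{\AA^n}((\oms)^{\boxtimes k}),\Delta_*\oms)$ is an \emph{underived} $\Hom$ out of a non-formal complex into a $\D$-module that is not injective, so there is no a priori reason that cohomology commutes with $\Hom$; this is the crux, not a routine convergence check. The paper's substitute is the ``injectivity property'' (its Proposition on injectivity): any chiral operation vanishing on $\ker(\ddts)$ factors through $\ddts$, proved via a translation-invariant reduction, a decomposition of $\TS$ into maximal-tree and sub-maximal-tree parts, scaling/Euler-vector-field weight arguments, and the fact that the maximal-tree subspace meets the sub-maximal-tree $\D$-submodule trivially (which itself uses the Arnold relations and pairing against iterated residues). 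None of this is visible in your sketch, and without it the identification $H(\Hom(\TS,\Delta_*\oms))\hookrightarrow\Hom(H(\TS),\Delta_*\oms)$---and hence the entire computation---is unjustified. Similarly, your appeal to the $(\Delta_*,\Delta^!)$ adjunction only holds at the derived level, whereas the operad here is defined with strict $\Hom$'s; the paper deliberately avoids this route. To complete your argument you would need to either prove the injectivity property (or an injectivity/acyclicity statement for $\Delta_*\oms$ in a suitable subcategory of $\D$-modules), or genuinely derive the $\Hom$'s, which would change the object being studied.
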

This result is a natural generalization (in some sense as mild as one could hope for)\footnote{What we mean by this is that the choice of model appears to be somewhat fortunate here.
We expect that one could define, analogously, a unit chiral operad $\mc Q_{\AA^n}^{ch}$ in various other models, but in general (to the authors' knowledge) one cannot always expect to get a quasi-isomorphism $\LieInfinityOperad\xrightarrow\sim\mc Q_{\AA^n}^{ch}$.}
of the statement that there is an \emph{iso}morphism of operads $\LieOperad\cong \P_{\AA^1}^{ch}$ in the one-dimensional case (for which see \cite[Theorem 3.1.5]{BDChiralAlgebras} with $X=\AA^1$).

It makes the shifted canonical sheaf $\oms$ into a first example of a \dfn{homotopy chiral algebra on $\AA^n$}, cf. \cite{MalikovSchechtman} and \cref{sec: homotopy polysimplicial chiral algebras}. We call it the \dfn{unit chiral algebra on $\AA^n$, in the polysimplicial model}.
If one accepts that chiral algebras on $\AA^n$ are the natural algebraic generalization of vertex algebras to higher dimensions, then this theorem can be seen as providing the first example, $\oms$, of a higher-dimensional vertex algebra.

\subsubsection*{Propagators and Arnold relations} To explain more explicitly the structures with which the unit chiral algebra $\oms$ comes equipped, we first give a rough description of the polysimplicial model $\TS_{\AA^n}^k$ itself (see \cref{sec: higher configuration space and movable punctures in two complex dimensions} for the precise statements).

It is helpful to start with the case of $k=2$ marked points. An element of \[\TS_{\AA^n}^2\simeq R\Gamma(\Conf_2(\AA^n),\O)\] is by definition a polynomial differential form on an $n-1$ simplex $\triangle_{n-1}$, taking coefficients in the algebra $\kk[z^r_1,z^r_2,(z^r_1-z^r_2)^{-1}]^{1\leq r\leq n}$ 
, and obeying boundary conditions: let $u^r$, $1\leq r\leq n$, be the coordinates on $\triangle_{n-1}$ (with $\sum_{r=1}^n u^r =1$); then the pullback to the face at $u^r=0$ must be regular in $z_1^r-z_2^r$, for each $r$.


The cohomology of $\TS_{\AA^n}^2$ is concentrated in cohomological degrees $0$ and $n-1$. It is generated (as a $\D$-module, and not freely) by the cohomology classes $[1]$ and $[\Prop_{12}]$ of respectively the unit element $1$ and what we shall call the \dfn{propagator}\footnote{Since $\Conf_2(\AA^n) \cong (\AA^n\setminus \{0\}) \times \AA^n$, we are implicitly also describing here a model of the derived sections on punctured affine $n$-space. (More precisely, we have $\TS_{\AA^n}^k \simeq R\Gamma(\AA^n\setminus\{0\},\O)\ox_\kk \Gamma(\AA^n,\O)$.) The reader may find it instructive to compare this model to the Jouanolou model \[A_n^\bul\simeq R\Gamma(\AA^n\setminus\{0\},\O)\] of \cite{FHK}, and in particular to compare the propagator $\Prop_{12}$ to the Martinelli-Bochner form.}
\be \Prop_{12} := \frac{\dd u^1 \dots \dd u^{n-1} 
}{(z^1_1-z^1_2) \dots (z^n_1-z^n_2)} \in \TS_{\AA^n}^{2,n-1}. \nn\ee

Elements of $\TS_{\AA^n}^k\simeq R\Gamma(\Conf_k(\AA^n),\O)$ are then polynomial differential forms on a $\binom k 2$-fold \emph{product} of simplices $\prod_{1\leq i<j\leq k} \triangle_{n-1}^{(i,j)}$ where the copy $\triangle_{n-1}^{(i,j)}\cong \triangle_{n-1}$ is associated to the pair of marked points labelled $i$, $j$.

We compute the cohomology $H^\bul(\TS_{\AA^n}^k) = H^\bul(\Conf_k(\AA^n),\O)$ explicitly in \cref{sec: cohomology}. It is again generated by the classes $[\Prop_{ij}]$ of propagators, $\Prop_{ij}$, $1\leq i<j\leq k$. Recall that $\TS_{\AA^n}^k$ is a model in dg commutative algebras. We shall show that
\be  \Prop_{ij} \Prop_{j\ell}+\Prop_{j\ell} \Prop_{\ell i}+\Prop_{\ell i} \Prop_{ij} \in \dd \left(\TS_{\AA^n}^k\right) \nn\ee
is exact, for all $1\leq i<j<\ell\leq k$. Thus, an analog of the usual Arnold relations holds at the level of cohomology.
See \cref{sec: arnold relations}.

\begin{figure}
  \[\begin{tikzpicture}[every node/.style={font=\scriptsize}]
    \begin{axis}[scale=1.5,axis equal,view={110}{25},xlabel={$u_{12}$},
      ylabel={$u_{13}$}, zlabel={$u_{23}$},xmin=0,xmax=1,ymin=0,ymax=1,zmin=0,zmax=1,xtick={0,1},ytick={0,1},ztick={0,1},clip]

\draw[thick,dotted] (1,0,0) -- (0,0,0) -- (0,1,0) (0,0,0) -- (0,0,1);
\draw[thick] (1,0,0) -- (1,1,0) -- (1,1,1) -- (1,0,1) -- cycle (1,0,1) -- (0,0,1) -- (0,1,1) -- (0,1,0) -- (1,1,0) (1,1,1) -- (0,1,1);

\draw[thick,opacity=0.5,fill=green,green]   (0,0,0) -- (1,1,0) -- (1,1,1) -- (0,0,1) -- cycle;
\draw[thick,opacity=0.5,fill=purple,purple] (0,0,0) -- (1,0,1) -- (1,1,1) -- (0,1,0) -- cycle;
\draw[thick,opacity=0.5,fill=yellow,yellow] (0,0,0) -- (0,1,1) -- (1,1,1) -- (1,0,0) -- cycle;


\end{axis}
  \end{tikzpicture}\]
\caption{Integration domains for the Jacobiator $\Jac_3$ in $n=2$ dimensions; see \cref{sec: examples} \label{fig}}
\end{figure}
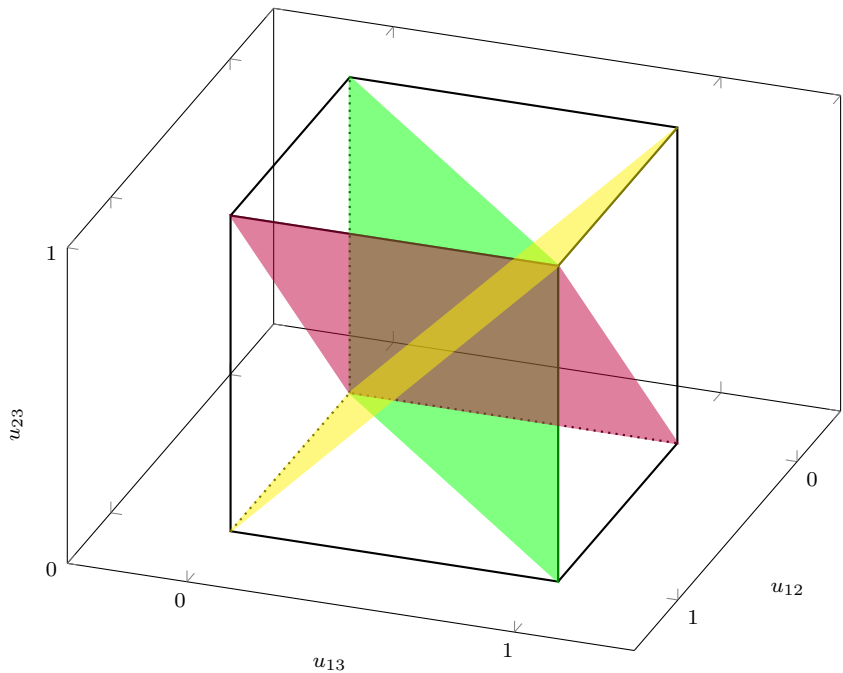

\subsubsection*{The chiral bracket $\mu_2$ and higher chiral operations $(\mu_k)_{k \geq 3}$}\label{sec: higher operations}
Now we can explain the content of our main result, \cref{thm: quasi-isomorphism from LieInfinity}, more concretely. We first define the \dfn{residue map} or \dfn{chiral bracket}.
It is a binary operation of the polysimplicial unit chiral operad, $\mu_2 \in \P^{ch}_{\AA^n,0}(2)$, given explicitly by (the following $\lambda$ variables come from $\Delta_*\oms$)
\begin{align} \mu_2 &:= 
\int_{\triangle_{n-1}}
\res_{z^1_2 \to z^1_1} e^{\lambda^1_2(z^1_2- z^1_1) } \dots \res_{z^n_2 \to z^n_1} e^{\lambda^n_2(z^n_2- z^n_1) } .\nn
\end{align}
Observe that the degree shift in the definition of $\oms:= \omega_{\AA^n}[n-1]$ is chosen to ensure that the map $\mu_2$ itself is in degree zero. It is a closed element and on the cohomology it gives the Sato construction (for which see e.g. \cite[\S2.7.19]{ginzburg1998lectures}).

The maps \((\oms)^{\boxtimes 2} \into \TS_{\AA^n}^2((\oms)^{\boxtimes 2}) \xrightarrow{\mu_2} \Delta_* \oms\) of right $\D$-modules on $(\AA^n)^k$ induce a short exact sequence of the cohomologies,
\be 0 \to (\oms)^{\boxtimes 2} \to H_{\ddts}( \TS_{\AA^n}^{2,\bul}((\oms)^{\boxtimes 2})) \xrightarrow{\mu_2} \Delta_* \oms \to 0.  \nn\ee
The structure can be pictured as follows. (Here $\dd\zz = \dd z^1\dots \dd z^n$, $p$ and $q$ are polynomials, and $1\leq r\leq n$.)
\be\begin{tikzpicture}
\matrix (m) [matrix of math nodes, row sep=.2em,
column sep=2em, text height=2ex, text depth=1ex]
{
{} & (\oms)^{\boxtimes 2} & \TS_{\AA^n}^{2,\bul}((\oms)^{\boxtimes 2}) & \Delta_* \oms \\
1-n & 0  & p(z^r_1,\del_{z^r_2}) P_{12} \dd\zz_1\dd\zz_2   &  \pm\dd\zz_1p(z^r_1,\lambda^r_2)        \\
\vdots  & 0 & * & 0     \\
\vdots  & \vdots & \vdots & \vdots     \\
\vdots  & 0 & * & 0     \\
2-2n  & q(z_1^r,z_2^r) \dd\zz_1 \dd\zz_2 & q(z_1^r,z_2^r) \dd\zz_1\dd\zz_2 &  0    \\
};
\path[->,font=\scriptsize,shorten <= 2mm,shorten >= 2mm]
(m-1-2) edge[right hook->]
node [above] {} (m-1-3)
(m-1-3) edge[->] node [above] {} (m-1-4);
\path[->,font=\scriptsize,shorten <= 2mm,shorten >= 2mm]
(m-6-2) edge[->] node [above] {} (m-6-3)
(m-2-3) edge[->] node [above] {$\mu_2$} (m-2-4);
\path ($(m-4-1)- (1,0ex)$) node[rotate=90,font=\scriptsize,align=center]  {Cohomological degree};
\draw[black,line width=0.4pt] ($(m-1-1)-(1,2ex)$) -- ($(m-1-4)-(-1,2ex)$);
\draw[black,line width=0.4pt] ($(m-1-1)-(-1,2ex)$) -- ($(m-6-1)-(-1,2ex)$);
\end{tikzpicture}\nn\ee

In the special case of dimension $n=1$, $\mu_2$ reduces to the usual $\D$-module residue map $\res_{z_2 \to z_1} e^{\lambda_2(z_2-z_1)}$ which is the chiral bracket of the unit chiral algebra $\omega_{\AA^1}$. That map is a Lie bracket, i.e., it obeys the Jacobi identity.
In all higher dimensions $n>1$, $\mu_2$ obeys the Jacobi identity only up to homotopy. 

Indeed, recall that $\LieInfinityOperad$ is the free operad in graded vector spaces generated by operations $\ell_p\in \LieInfinityOperad(p), p\geq 2$, where $\ell_p$ is skewsymmetric and has homological degree $p-2$ (see \cite[\S10.1.6]{LodayVallette}). It becomes a dg operad with the differential given by
\[
\dd \ell_p=\sum_{\substack{ p_1+p_2=p+1\\p_1,p_2>1}} \sum_{\sigma\in \mathrm{Sh}^{-1}_{p_1-1,p_2}}\mathrm{sgn}(\sigma)(-1)^{(p_1-1)p_2}\left(\ell_{p_1}\circ_1\ell_{p_2}\right)^{\sigma}.
\]
Here $\mathrm{Sh}^{-1}_{p_1-1,p_2}$ is the set of $(p_1-1,p_2)$-unshuffles -- see \cite[\S1.3.2]{LodayVallette} -- and $\mathrm{sgn}$ is the signature.

Thus, our main result asserts the existence of higher chiral operations $\mu_p\in \P^{ch}_{\AA^n,p-2}(p)$, $p=3,4,\dots$, of the polysimplicial unit chiral operad, obeying the coherence relations
\begin{align}
  (\mu_p\circ \ddts)(-) & =\sum_{\substack{ p_1+p_2=p+1\\p_1,p_2>1}} \sum_{\sigma\in \mathrm{Sh}^{-1}_{p_1-1,p_2}}\mathrm{sgn}(\sigma)(-1)^{(p_1-1)p_2} (\mu_{p_1}\circ_1\mu_{p_2})^{\sigma}(-)
\label{mucoherence}\end{align}
and the skewsymmetry property $\mu_p^\sigma = \mathrm{sgn}(\sigma)\mu_p$.

(Here we are glossing over how partial composition $\circ_1$ and the action of the symmetric group are actually defined in the polysimplicial unit chiral operad. See \cref{sec: operad}.)

In \cref{sec: examples} we write down the first of these higher chiral products explicitly in the simplest non-trivial example, that of $n=2$ dimensions.

\subsubsection{The Chevalley-Cousin complex} In \cref{sec: Chevalley-Cousin complex} we construct the \dfn{Chevalley-Cousin complex} for the polysimplicial unit chiral algebra $\oms$ on $\AA^n$. It is a complex whose differential encodes the \Linfinity coherence relations \cref{mucoherence} obeyed by the chiral products.
From the algebraic perspective, it is the chiral analog of the Chevalley-Eilenberg chain complex $C(\g)$ one associates to an \Linfinity-algebra $\g$ in dg vector spaces.

In the case of chiral algebras on $\AA^1$, the Chevalley-Cousin complex also has a dual algebro-geometric
interpretation as the Cousin complex for the stratification given by the diagonals \cite{BDChiralAlgebras}.

While we won't mention factorization algebras (the Koszul duals of chiral algebras) at all in the main text of the present paper, we shall establish, in \cref{thm: Chevalley-Cousin}, that the algebro-geometric interpretation of the Chevalley-Cousin complex persists in higher dimensions:
it provides a resolution in $\D$-modules of the global sections of the canonical sheaf on $(\AA^n)^k$, for all $k$.

The reader is encouraged to examine \cref{exmp: cousin}, which illustrates how the usual Chevalley-Cousin complex for chiral algebras on $\AA^1$ generalizes to $\AA^n$.

\subsection{Relation to the smooth setting and holomorphic field theories}
In this paper, we work in the algebraic setting. Let us close this introduction by commenting briefly on the relationship to the smooth setting, and to holomorphic field theories in the physics literature more broadly.

Working in the setting of their smooth notion of factorization algebras, Costello and Gwilliam define a class of holomorphically translation invariant prefactorization algebras whose cohomologies, in the case of complex dimension one, encode vertex algebras (under some technical conditions) \cite[Chapter 5]{CG1}. In higher complex dimensions this class can be seen as providing higher analogs of vertex algebras, for the purposes of those (many and rich) applications in which one
loses nothing by passing once and for all to
the smooth setting. The general relationship between smooth factorization algebras and factorization/chiral algebras in the algebraic setting of \cite{BDChiralAlgebras} appears to be a very subtle topic.
In particular, what isn't known is how to recover, from such smooth prefactorization algebras, higher vertex algebras in the sense we are seeking here: namely algebraic objects suitable for, for example, doing representation theory.

More broadly, 
it is perhaps helpful to remark on our general expectation about the relationship between higher chiral algebras and holomorphic field theories in the physics literature:
\begin{enumerate}[-]
\item Recall that vertex algebras capture in axiomatic form the local behaviour 
of chiral conformal field theories in complex dimension one.   
\item In the same way, the \dfn{raviolo vertex algebras} recently introduced by Garner and Williams \cite{GarnerWilliamsRaviolo}, cf. \cite{arXiv:2310.08524,MR4861815}, are defined to capture the local behaviour of topological-holomorphic field theories on spacetimes locally modelled on $\RR\times \CC$.
\item
Our general expectation is that higher chiral algebras on $\AA^n$  in the polysimplicial model we introduce here (specialized to the ground field $\kk = \CC$) will stand in much the same relation to holomorphic field theories in higher complex dimensions.
\end{enumerate}
Notions of higher products have been introduced in the holomorphic field theory literature \cite{BGKWY2023} (see also \cite{wang2025feynman}). The higher products vanish in complex dimension one by a theorem of Li \cite{li2023vertex} (within the perturbative BV-formalism). In higher dimensions, the higher products are not trivial, as shown in \cite{BGKWY2023}. In \cref{sec: triangle diagram} we find a match between our first higher product, $\mu_3$, and the one computed by very different methods (summing Feynman diagrams) in the work of Budzik, Gaiotto, Kulp, Wu and Yu \cite{BGKWY2023}. One benefit of having at hand a concrete algebraic model in dg commutative algebras is that it clarifies the conceptual status of these products: for us they are very precisely the products of a homotopy/\Linfinity algebra in the chiral setting, as we discussed above. While our construction is purely algebraic, an \Linfinity chiral algebra in the Jouanolou model can be constructed using renormalized Feynman integrals \cite{GWW25}.


\subsection{Notations}\label{sec: notations}
\begin{enumerate}[-]
\item $\kk$ --- ground field of characteristic zero
\item $n\geq 1$ --- number of dimensions
\item $k\geq 1$ --- number of marked points
\item $z^r_i$, $r\in \{1,\dots,n\}$, $i\in \{1,\dots,k\}$ --- coordinates on $(\AA^n)^k$
\item $z^r_{ij} := z^r_i - z^r_j$
\item $u^r_{ij}$ --- coordinates on the polysimplex $\triangle_{n-1}^{\times \binom k2}$, cf. \cref{sec: polysimplices}
\item $I,J,K,\dots \subseteq \{1,\dots,k\}$ --- finite sets of indices, labelling marked points
\item $(\AA^n)^I$ --- a copy of $(\AA^n)^{|I|}$ associated to the index set $I$
\item $I\two := \{(i,j) \in I\times I: i\neq j\}$ for any subset $I\subset \{1,\dots,k\}$
\item $\O_{\AA^1}^I := \kk[z_i]_{i\in I}$, $\B_{\AA^1}^I := \kk[z_i,\frac 1{z_{j\ell}}]_{i\in I; (j,\ell)\in I\two}$
\item $\O_{(\AA^n)^I} := \left( \O_{\AA^1}^I \right)^{\ox n} = \kk[z^r_i]^{1\leq r\leq n}_{i\in I}$, $\B_{\AA^n}^I := \left( \B_{\AA^1}^I \right)^{\ox n} = \kk[z^r_i,\frac 1{z^r_{j\ell}}]^{1\leq r\leq n}_{i\in I; (j,\ell) \in I\two}$
\item $[p]$ --- the $\kk$-vector space of dimension one sitting in homological degree $p$
\item $V[p]:= [p]\ox V$ --- degree-shifted copy of a (possibly graded) $\kk$-vector space $V$
\item $\omega_{\AA^n}$ --- global sections of the canonical sheaf on $\AA^n$
\item $\oms:= \omega_{\AA^n}[n-1]$ --- degree-shifted copy of $\omega_{\AA^n}$
\item $\dd \mathbf z\shifted := \dd z^1 \dots \dd z^n[n-1] \in \oms$ --- shifted volume form
\item $\TS^I_{\AA^n}(\F)$ -- \polysimplicial model of derived sections of  $\F$  on $(\AA^n)^I$; see \cref{sec: TS def}
\end{enumerate}

\subsection{Acknowledgements}
The authors would like to thank Severin Bunk, Hyungrok Kim, Kai Wang, Gerard Watts and Keyou Zeng, for helpful communications and discussions.
Some of the results in this paper were announced at the 4th International Conference on Operad Theory and Related Topics (Tianjing, November 2024). Z.G. would like to thank the organizers and participants of this conference.
C.A.S.Y. gratefully acknowledges the financial support of the Leverhulme Trust in the form of Research Project Grant RPG-2021-092, during which part of this work was initiated.
Some of the results of this paper were presented at a Hamburg U./DESY ZMP colloquium in May 2025 and C.A.S.Y. would like to thank the organisers, especially Sven M\"oller, and audience members for many insightful suggestions.

\section{A \polysimplicial model of derived sections}
\label{sec: higher configuration space and movable punctures in two complex dimensions}
We work over a field $\kk$ of characteristic zero (for example $\CC$, the complex numbers).
Let us consider the configuration space $\Conf_k(\AA^n)$ of $k$-tuples of distinguishable, pairwise distinct closed points in affine $n$-space
\be \AA^n:= \AA^n_\kk:= \Spec \kk[z^1,\dots,z^n]. \nn\ee
By definition, 
$\Conf_k(\AA^n)$
is obtained by starting with the affine scheme
\begin{equation} (\AA^{n})^k = \underbrace{\left(\AA^n \times \dots \times \AA^n\right)}_{\text{$k$ copies}} = \Spec(\kk[z^1_1,\dots,z^n_1;\dots;z^1_k,\dots,z^n_k])
\nn\end{equation}
and removing the diagonals:
\begin{equation} \Conf_k(\AA^n)  := (\AA^{n})^k \setminus \bigcup\ijk \{ z^1_i=z^1_j,\dots,z^n_i=z^n_j\} \label{def: CNN}. \end{equation}
In the special case of dimension $n=1$, these diagonals are of codimension one and the configuration space is itself an affine scheme,
\be \Conf_k(\AA^1) = \Spec \B_k, \nn\ee
where
\be  \B_k := \Gamma(\Conf_k(\AA^1),\O) = \kk[z_i,(z_j-z_\ell)^{-1}]_{1\leq i\leq k;1\leq j<\ell\leq k}\label{Bkdef}\ee
is the commutative algebra of global sections over $\Conf_k(\AA^1)$ of the structure sheaf.
For all $n>1$, however, the configuration space $\Conf_k(\AA^n)$ is not affine and its structure sheaf has higher cohomology.

The space $R\Gamma(\Conf_k(\AA^n),\O)$ of derived sections 
 comes with a dg commutative algebra structure, unique up to zigzags of quasi-isomorphisms of dg commutative algebras.
Let us describe the model of $R\Gamma(\Conf_k(\AA^n),\O)$ we shall use.

\subsection{Forms on polysimplices}
\label{sec: polysimplices}
Let $\Omega^\bul(\triangle_{n-1})$ denote the dg commutative algebra
\begin{equation} 
\Omega^\bul(\triangle_{n-1})  :=  \kk[u^1,\dots,u^{n}; \dd u^1, \dots \dd u^{n}]\big/\langle \sum_{r=1}^n u^r -1, \sum_{r=1}^n \dd u^r \rangle \nn\end{equation}
with $u^r$ in degree $0$ and $\dd u^r$ in degree $1$, for each $r$, and equipped with the usual de Rham differential.
One should think of $\Omega^\bul(\triangle_{n-1})$ as the algebra of polynomial differential forms on the algebro-geometric $(n-1)$-simplex
\be \triangle_{n-1} := \Spec\left( \kk[u^1,\dots,u^{n}]\big/\langle \sum_{r=1}^{n} u^r -1\rangle\right) .\nn\ee
A product $\triangle_{n-1}\times \dots \times \triangle_{n-1}$ of several copies of this simplex is called a  \dfn{polysimplex}. 
We shall need the algebra of polynomial differential forms on the $\binom k 2$-fold polysimplex:
\begin{align} \Omega^\bul(\triangle_{n-1}^{\times \binom k 2}) &:= \Omega^\bul(\underbrace{\triangle_{n-1}\times \dots \times \triangle_{n-1}}_{\text{$\binom k 2$ times}})\nn\\
&= \kk[u^1_{ij},\dots,u^{n}_{ij}; \dd u^1_{ij}, \dots \dd u^{n}_{ij}]\ijk\big/\langle \sum_{r=1}^{n} u^r_{ij} -1, \sum_{r=1}^{n} \dd u^r_{ij} \rangle\ijk \nn.
\end{align}

\begin{rem}\label{rem: cubes}
In the case of $n=2$ dimensions, the polysimplex is a hypercube
\be \square_{\binom k 2} := \underbrace{\triangle_1\times \dots \times \triangle_1}_{\text{$\binom k 2$ copies}}\nn\ee
and the dg commutative algebra of forms on it admits a simpler description
\begin{equation}
\Omega^\bul(\square_{\binom k 2}) \cong \kk[u_{ij},\dd u_{ij}]\ijk. \nn\end{equation}
\end{rem}


\subsection{The model $\TS$}\label{sec: TS def}
Recall that $\B_k := \Gamma(\Conf_k(\AA^1),\O)=\kk[z_i,(z_j-z_\ell)^{-1}]_{1\leq i\leq k;1\leq j<\ell\leq k}$ as in \cref{Bkdef}. We think of the tensor product
\begin{align} 
\B_k^{\ox n} \ox \Omega(\triangle_{n-1}^{\times \binom k 2})
\cong \kk[z^r_i,(z^r_j-z^r_\ell)^{-1}]^{1\leq r\leq n}_{1\leq i\leq k; 1\leq j<\ell<k} \ox \Omega(\triangle_{n-1}^{\times \binom k 2})  \nn
\end{align}
as the algebra of $\B_k^{\ox n}$-valued polynomial differential forms on the polysimplex $\triangle_{n-1}^{\times \binom k 2}$.
Let $\TS_{\AA^n}^{k,\bul}$ denote the following dg commutative subalgebra, in which we place certain boundary conditions on such forms:
\begin{align} \TS_{\AA^n}^{k,\bul}  := &\bigl\{ \tau \in
 \kk[z^r_i,(z^r_j-z^r_\ell)^{-1}]^{1\leq r\leq n}_{1\leq i\leq k; 1\leq j<\ell<k} \ox \Omega^\bul(\triangle_{n-1}^{\times \binom k 2}) \label{def: TS}\\
& \quad :          \text{$\tau|_{u^r_{ij}=0}$ is regular in $(z^r_i-z^r_j)$,  for $1\leq r\leq n$ and $1\leq i<j\leq k$} \bigr\}. \nn
\end{align}
Here $\tau|_{u^r_{ij}=0}$ denotes the pull-back of the form $\tau$ to the face of the polysimplex on which $u^r_{ij} =0$. The differential $\dd_{\TS}$ on $\TS_{\AA^n}^k$ is given by the de Rham differential on the polysimplex.

\begin{thm}\label{thm: polysimplex model NN}
This dg commutative algebra $(\TS_{\AA^n}^k,\dd_{\TS})$ is a model of $R\Gamma(\Conf_k(\AA^n),\O)$:
\begin{equation} \TS_{\AA^n}^k \simeq R\Gamma(\Conf_k(\AA^n),\O). \nn\end{equation}
\end{thm}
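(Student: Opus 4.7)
The plan is to realize $\TS^k_{\AA^n}$ as the Thom-Sullivan (polynomial de Rham) totalization of the Čech co-nerve of a natural affine open cover of $\Conf_k(\AA^n)$. Two standard facts then give the result: (i) for a separated scheme equipped with an affine open cover whose finite intersections are affine, the Čech complex computes derived global sections of any quasi-coherent sheaf; and (ii) the Thom-Sullivan totalization of a polycosimplicial commutative algebra is naturally quasi-isomorphic to the associated normalized Čech totalization, while carrying a cdga structure (cf.~\cite{GriffithsMorgan, HinichHomotopyAlgebras}).

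\textbf{The cover and its nerve.} For each pair $1\leq i<j\leq k$ and each direction $r\in\{1,\dots,n\}$, let $U^r_{ij}\subset (\AA^n)^k$ denote the affine open locus where $z^r_i-z^r_j\neq 0$. Since the diagonal $\Delta_{ij}$ is the locus where all $n$ such differences vanish, $(\AA^n)^k\setminus \Delta_{ij}=\bigcup_{r=1}^n U^r_{ij}$, and therefore
\[ \Conf_k(\AA^n) \;=\; \bigcap\ijk \bigcup_{r=1}^{n} U^{r}_{ij} \;=\; \bigcup_{\mathbf r} V_{\mathbf r}, \qquad V_{\mathbf r}:=\bigcap\ijk U^{r_{ij}}_{ij}, \]
where $\mathbf r$ ranges over $\{1,\dots,n\}^{\binom{k}{2}}$. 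Each $V_{\mathbf r}$, together with every finite intersection of such opens, is affine, obtained from $\O_{(\AA^n)^k}$ by inverting finitely many differences $z^r_i-z^r_j$. For each fixed pair $(i,j)$ the local cover $\{U^r_{ij}\}_{r=1}^n$ has Čech nerve the abstract $(n-1)$-simplex; because the $\binom{k}{2}$ pair-covers are mutually independent, the full Čech nerve of $\{V_{\mathbf r}\}$ is the product of $\binom{k}{2}$ copies of $\triangle_{n-1}$, i.e.\ the polysimplex $\triangle_{n-1}^{\times\binom{k}{2}}$.

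\textbf{Identification with the Thom-Sullivan model.} Applied to the Čech polycosimplicial commutative algebra of $\{V_{\mathbf r}\}$, the polynomial-de-Rham Thom-Sullivan totalization produces, by definition, polynomial forms on $\triangle_{n-1}^{\times\binom{k}{2}}$ taking values in the affine ring $\B^{\ox n}_k$ of the (non-empty) top intersection, subject to one compatibility condition at every codimension-one face of each simplex factor. Combinatorially, the face $\{u^r_{ij}=0\}$ corresponds to deleting the vertex labelled $r$ from the $(i,j)$-th simplex; the associated intersection $\bigcap_{s\neq r}U^s_{ij}$ does not require inverting $z^r_i-z^r_j$. Hence the Thom-Sullivan constraint at $\{u^r_{ij}=0\}$ reads exactly: the pullback $\tau|_{u^r_{ij}=0}$ must be regular in $z^r_i-z^r_j$. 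This is precisely the condition in \eqref{def: TS}, so the Thom-Sullivan cdga \emph{is} $\TS^k_{\AA^n}$.

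\textbf{Conclusion and main obstacle.} The scheme $\Conf_k(\AA^n)$ is quasi-affine, hence separated; its cover $\{V_{\mathbf r}\}$ is affine with affine multiple intersections; so the Čech complex with coefficients in $\O$ computes $R\Gamma(\Conf_k(\AA^n),\O)$ (cf.~\cite{stacks-project}). Combined with the Thom-Sullivan quasi-isomorphism, this yields the required $\TS^k_{\AA^n}\simeq R\Gamma(\Conf_k(\AA^n),\O)$. The principal technical obstacle is the \emph{polysimplicial} nature of the Thom-Sullivan comparison: one must verify that the $\binom{k}{2}\cdot n$ iterated face conditions of \eqref{def: TS} match the Thom-Sullivan end-integrand conditions for the product of $\binom{k}{2}$ simplicial directions, and that the de Rham differential on $\triangle_{n-1}^{\times\binom{k}{2}}$ matches the totalization differential of the associated Čech double (poly-)complex up to the standard Dold-Kan signs. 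This check is routine factor-by-factor but is where the bulk of the careful bookkeeping lies.
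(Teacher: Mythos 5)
Your cover $\{V_{\mathbf r}\}_{\mathbf r\in\{1,\dots,n\}^{\binom k 2}}$ is exactly the cover $\{U_X\}_{X\in\I}$ used in the paper, and the overall strategy (affine Leray cover, ordered \v Cech complex, Thom--Sullivan) is the same. The gap is in the claim that the Thom--Sullivan totalization ``produces, by definition, polynomial forms on $\triangle_{n-1}^{\times\binom k 2}$''. It does not: the \v Cech construction hands you a semicosimplicial commutative algebra indexed by ordered tuples of elements of the cover, and the Thom--Sullivan functor applied to it produces, by definition, forms on the single large simplex $\triangle_{N}$ with $N=n^{\binom k 2}-1$, subject to boundary conditions on its facets --- this is the algebra $\Th^k_{\AA^n}$ of \cref{ThSimplex}. (Likewise, the \emph{nerve} of the cover is the full simplex on $n^{\binom k 2}$ vertices, not the polysimplex; the two are homotopy equivalent but not equal.) The passage from that single-simplex model to the polysimplex model $\TS^k_{\AA^n}$ is not definitional and is where the paper does its real work: \cref{prop: retract of simplex to polysimplex} constructs an explicit deformation retract from $\Omega^\bul(\triangle_{n^m-1})$ onto $\Omega^\bul(\triangle_{n-1}^{\times m})$, with $m=\binom k 2$, via maps $\pi^*$, $\iota^*$ and an integral homotopy $h$, and then verifies separately (parts (v)--(vi) of \cref{lem: retract}) that all three maps preserve the regularity ideals encoding the boundary conditions of \cref{def: TS}.

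You could instead try to set up a genuinely multicosimplicial \v Cech object (one cosimplicial direction per pair $(i,j)$, using the fact that the intersections depend only on $\bigcup_p X^r_p$ for each $r$) together with a multisimplicial Thom--Sullivan functor valued in forms on the polysimplex. But then you owe a proof that this computes the same homotopy limit as the ordinary \v Cech totalization (an Eilenberg--Zilber-type comparison for the Thom--Sullivan functor), and you must still check compatibility with the regularity conditions, which is not covered by any off-the-shelf statement. Either way, what you describe as ``routine factor-by-factor bookkeeping'' is the actual mathematical content of the theorem beyond the standard \v Cech input; as written, your argument asserts this step rather than proving it.
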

\begin{proof}
The proof is given in \cref{sec: proof of polysimplex model NN}.
\end{proof}

Here and subsequently, $\underline{E}$ stands for the quasi-coherent sheaf on ${(\AA^{n})^k}$ corresponding to a module $E$ over $\kk[z^r_i]^{1\leq r\leq n}_{1\leq i\leq k}$. For a sheaf $\F$ of dg $\O_{(\AA^n)^k}$-modules, we define the sheaf $\underline{\TS_{\AA^n}^k}(\F)$ to be the totalization
\be  \underline{\TS_{\AA^n}^{k}}(\F) := \Tot\left(\underline{\TS_{\AA^n}^{k}} \ox_{\O_{(\AA^{n})^k}} \F\right). \nn\ee
Note that $\TS_{\AA^n}^k$ is not only a dg algebra over the regular functions $\mathcal{O}_{(\AA^{n})^k}$, but also a $\mathcal{D}_{(\AA^{n})^k}$-module: we take the usual derivatives $\frac{\partial}{\partial z^r_j}$ and view elements in $\Omega^\bul(\triangle_{n-1}^{\times \binom k 2})$ as constants. Thus, if $\F$ is furthermore a sheaf of right (resp. left) $\D_{(\AA^n)^k}$-modules, 
 then $\underline{\TS_{\AA^n}^{k}}(\F)$ is a sheaf of right (resp. left) $\D_{(\AA^n)^k}$-modules via the left $\mathcal{D}_{(\AA^{n})^k}$-module structure on $\underline{\TS_{\AA^n}^{k}}$.
We give our conventions on $\D$-modules in \cref{sec: D module background}.
\begin{thm}\label{thm: F model}
Let $j:\Conf_k(\AA^n)\hookrightarrow (\AA^{n})^k$ denote the open embedding.
\begin{enumerate}[(1)]
\item Let $\F$ be a bounded complex of $\O_{(\AA^{n})^k}$-modules with $\mathcal{O}_{(\AA^{n})^k}$-quasi-coherent cohomology. Then
\be \underline{\TS_{\AA^n}^{k,\bul}}(\F)  \simeq \underline{R\Gamma(\Conf_k(\AA^n),j^{\centerdot}\F)}\simeq Rj_{\centerdot}j^{\centerdot}\F \nn\ee
in the bounded derived category of $\O_{(\AA^{n})^k}$-modules.

\item Let $\F$ be a bounded complex of $\D_{(\AA^{n})^k}$-modules with $\mathcal{O}_{(\AA^{n})^k}$-quasi-coherent cohomology. Then
\be \underline{\TS_{\AA^n}^{k,\bul}}(\F)  \simeq \int_jj^{*}\F \nn\ee
in the bounded derived category of $\D_{(\AA^{n})^k}$-modules.
\end{enumerate}

Here $j_{\centerdot},j^{\centerdot}$ are direct and inverse image for $\O_{(\AA^{n})^k}$-modules. And $\int_j,j^*$ are the (derived) direct and inverse image for $\D_{(\AA^{n})^k}$-modules (in our case $j^*=j^{\centerdot}$).
\end{thm}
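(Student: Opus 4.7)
The plan is to deduce \cref{thm: F model} from \cref{thm: polysimplex model NN} by invoking the projection formula for the open embedding $j\colon \Conf_k(\AA^n)\hookrightarrow (\AA^n)^k$, after first verifying a flatness property of the polysimplicial model.

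First, I would check that each graded component $\TS_{\AA^n}^{k,p}$ is flat as an $\O_{(\AA^n)^k}$-module. Expanding in the obvious monomial basis in the variables $u^r_{ij}$ and $\dd u^r_{ij}$, the regularity constraint at each face $\{u^r_{ij}=0\}$ is an $\O_{(\AA^n)^k}$-linear condition on the coefficients in $\B_k^{\otimes n}$. The resulting submodule of $\B_k^{\otimes n}\ox_\kk \Omega^\bul(\triangle_{n-1}^{\times \binom k2})^p$ can be described, monomial-by-monomial, as an intersection of localizations of $\O_{(\AA^n)^k}$ at multiplicative subsets of $\{z^r_i-z^r_j\}$; each such localization is flat, and so is their intersection. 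Consequently the underived tensor product computes the derived one:
\begin{equation*}
\underline{\TS_{\AA^n}^{k,\bul}}(\F) \;=\; \Tot\!\left(\underline{\TS_{\AA^n}^{k,\bul}}\ox_{\O_{(\AA^n)^k}} \F\right) \;\simeq\; \underline{\TS_{\AA^n}^{k,\bul}}\lox_{\O_{(\AA^n)^k}} \F.
\end{equation*}

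By \cref{thm: polysimplex model NN} we have $\underline{\TS_{\AA^n}^{k,\bul}}\simeq Rj_{\centerdot}\O_{\Conf_k(\AA^n)}$ in $D^b(\O_{(\AA^n)^k})$. The projection formula for the open embedding $j$ then gives
\begin{equation*}
\underline{\TS_{\AA^n}^{k,\bul}}\lox_{\O_{(\AA^n)^k}} \F \;\simeq\; Rj_{\centerdot}\O_{\Conf_k(\AA^n)}\lox_{\O_{(\AA^n)^k}}\F \;\simeq\; Rj_{\centerdot}j^{\centerdot}\F,
\end{equation*}
proving the $\O$-module statement of part (1); the intermediate identification with $\underline{R\Gamma(\Conf_k(\AA^n),j^\centerdot \F)}$ is the standard computation of $Rj_\centerdot$ as derived sections on the open subscheme. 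For part (2), the key observation is that $\TS_{\AA^n}^{k,\bul}$ already carries a left $\D_{(\AA^n)^k}$-action, with $\del/\del z^r_i$ acting on the $\B_k^{\otimes n}$ coefficients and trivially on the polysimplex forms. Hence $\underline{\TS_{\AA^n}^{k,\bul}}\ox_{\O_{(\AA^n)^k}}\F$ naturally inherits a right $\D_{(\AA^n)^k}$-module structure when $\F$ is a right $\D$-module. Since $j$ is an open embedding, $j^{*}=j^{\centerdot}$ on the nose and the derived $\D$-module direct image $\int_j$ coincides with $Rj_{\centerdot}$ on underlying complexes, equipped with the induced right $\D$-action. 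The $\O$-module quasi-isomorphism from part (1) therefore upgrades to the desired quasi-isomorphism of right $\D$-modules.

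I expect the flatness verification in the first step to be the main technical point: the regularity constraints cut a genuine subspace out of the manifestly flat ambient module, and one must confirm that flatness is preserved. Should the direct combinatorial argument prove delicate, a safer route is to replace $\TS_{\AA^n}^{k,\bul}$ by a termwise $\O_{(\AA^n)^k}$-flat resolution before applying the projection formula; this trades a cleaner proof for an extra zigzag but leaves the conclusion unchanged.
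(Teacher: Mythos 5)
Your overall route --- establish termwise $\O_{(\AA^{n})^k}$-flatness of $\TS_{\AA^n}^{k,\bul}$, then apply the projection formula to $\underline{\TS_{\AA^n}^{k,\bul}}\simeq Rj_{\centerdot}\O_{\Conf_k(\AA^n)}$ --- is genuinely different from the paper's proof, which instead compares $\Tot(\underline{\TS_{\AA^n}^{k}}\ox\F)$ with the ordered \v Cech complex via the Thom--Sullivan retraction, using Bernstein's theorem to replace $\F$ by a quasi-coherent complex and then a locally free resolution. The paper itself notes in a remark that the projection-formula route works \emph{given} the flatness of $\TS_{\AA^n}^k$, so your strategy is sound in outline. (One smaller point you gloss over: $\F$ is only assumed to have quasi-coherent cohomology, not quasi-coherent terms, so even on your route one needs Bernstein's theorem or a truncation argument before the projection formula applies in the form you use it.)

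The genuine gap is in your flatness argument, which is the entire technical content of this route. First, the boundary conditions do not decouple ``monomial-by-monomial'': because of the relations $\sum_{r=1}^n u^r_{ij}=1$ and $\sum_{r=1}^n \dd u^r_{ij}=0$, the faces $\{u^r_{ij}=0\}$ for different $r$ are not coordinate hyperplanes in any common monomial basis --- e.g.\ after eliminating $u^n_{ij}$, restriction to $u^n_{ij}=0$ means imposing $\sum_{r<n}u^r_{ij}=1$, which couples all coefficients of a form rather than constraining each coefficient separately. Second, even where one does land on an intersection of localizations, ``each localization is flat, so their intersection is flat'' is not a valid general principle; the paper has to prove a specific identity of this kind (its Lemma on intersections of partial localizations) identifying the intersection as again a localization. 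In the paper the flatness of $\TS_{\AA^n}^{k}$ is a substantial result: it goes through the generators-and-relations presentation $\TS^{I,E}_{\AA^n}\cong\mathbf B^{I,E}_{\AA^n}$ of \cref{thm: TSIE gens and rels}, reduces to a single edge, and uses miracle flatness in degree $0$ and the equational criterion for flatness in degree $1$ (\cref{lem: FlatE}). Finally, your proposed fallback --- replacing $\TS_{\AA^n}^{k,\bul}$ by a termwise flat resolution --- does not leave the conclusion unchanged: the theorem is a statement about the specific complex $\Tot(\underline{\TS_{\AA^n}^{k}}\ox_{\O_{(\AA^{n})^k}}\F)$ with the \emph{underived} tensor product, and without termwise flatness this complex need not agree with $\underline{\TS_{\AA^n}^{k,\bul}}\lox_{\O_{(\AA^{n})^k}}\F$. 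So the flatness input cannot be bypassed and must be proved in full.
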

\begin{proof}
The proof is given in \cref{sec: proof of F model}.
\end{proof}

In this paper, we will mainly deal with quasi-coherent $\D$-modules on $(\AA^n)^k$ and we will not distinguish the global section $\Gamma((\AA^n)^k,\F)$ and $\F$ itself. For example, we will just write ${\TS_{\AA^n}^{k,\bul}}(\F) $ instead of $\underline{\TS_{\AA^n}^{k,\bul}}(\F) $.

\subsection{Generators and relations for $\TS$}\label{sec: gens and rels for TS}
In \cref{sec: TS def} we gave the algebro-geometric definition of the model $\TS$, as the dg commutative algebra of polynomial differential forms on a polysimplex, valued in a particular ring, and obeying certain boundary conditions.

The model also has the following purely algebraic description in terms of generators and relations.
\begin{thm}\label{thm: TS gens and rels} There is an isomorphism of dg commutative algebras
\be \TS^k_{\AA^n} \cong \directlim_{m\geq 0}\kk\left[ z^r_i, \frac{u^r_{j\ell}}{(z^r_j-z^r_\ell)^m}, \frac{\dd u^r_{j\ell}}{(z^r_j-z^r_\ell)^m}, \right]^{1\leq r\leq n}_{\substack{1\leq i\leq k\\ 1\leq j<\ell\leq k}}\big/\langle \sum_{r=1}^{n} u^r_{ij} -1, \sum_{r=1}^{n} \dd u^r_{ij} \rangle\ijk \nn. \ee
\end{thm}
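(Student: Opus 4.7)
Both sides embed as subalgebras of the ambient dg commutative algebra
\[ A := \kk[z^r_i, (z^r_j-z^r_\ell)^{-1}]^{1\leq r\leq n}_{1\leq i\leq k,\,1\leq j<\ell\leq k}\,\otimes\,\Omega^\bul\!\left(\triangle_{n-1}^{\times\binom{k}{2}}\right). \]
I write $R$ for the right-hand side, viewed as the subalgebra of $A$ generated by $z^r_i$, $\tfrac{u^r_{j\ell}}{(z^r_j-z^r_\ell)^m}$, and $\tfrac{\dd u^r_{j\ell}}{(z^r_j-z^r_\ell)^m}$ for $m\geq 0$; the direct-limit notation records the union over $m$, and the simplex relations $\sum_r u^r_{ij}=1$ and $\sum_r \dd u^r_{ij}=0$ are inherited from $A$. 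Both $R$ and $\TS^k_{\AA^n}$ are closed under the de Rham differential on $A$ --- the nontrivial identity being $\dd\!\left(\tfrac{u^r_{j\ell}}{(z^r_j-z^r_\ell)^m}\right) = \tfrac{\dd u^r_{j\ell}}{(z^r_j-z^r_\ell)^m}\in R$ --- so the claim reduces to proving $\TS^k_{\AA^n}=R$ as subsets of $A$. The inclusion $R\subseteq\TS^k_{\AA^n}$ is immediate from the boundary conditions on each generator: $z^r_i$ has no poles; $\tfrac{u^r_{j\ell}}{(z^r_j-z^r_\ell)^m}$ vanishes on the face $u^r_{j\ell}=0$ and has no pole in any $z^{r'}_{j'}-z^{r'}_{\ell'}$ with $(r',j',\ell')\neq(r,j,\ell)$; and $\tfrac{\dd u^r_{j\ell}}{(z^r_j-z^r_\ell)^m}$ likewise pulls back to zero on $u^r_{j\ell}=0$ since the pullback annihilates $\dd u^r_{j\ell}$.

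\textbf{Hard direction by induction on total pole order.} I would induct on $|\tau|:=\sum_{(s,a,b)}N_{sab}(\tau)$, where $N_{sab}(\tau)\geq 0$ is the smallest $N$ such that $(z^s_a-z^s_b)^N\tau$ is regular in $z^s_a-z^s_b$. The base $|\tau|=0$ places $\tau\in\kk[z]\otimes\Omega^\bul\subseteq R$. For the inductive step, pick $(s,a,b)$ with $N:=N_{sab}(\tau)\geq 1$, set $t:=z^s_a-z^s_b$, and Laurent-decompose
\[ \tau = \tau^+ + \sum_{m=1}^N \tau^-_m\,t^{-m}, \]
where $\tau^+$ and each $\tau^-_m$ are regular in $t$. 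Since $\tau|_{u^s_{ab}=0}$ is regular in $t$ and the $t^{-m}$ are linearly independent over the coefficient ring, one gets $\tau^-_m|_{u^s_{ab}=0}=0$ for each $m\geq 1$. Hence each $\tau^-_m$ lies in the ideal $(u^s_{ab},\dd u^s_{ab})$ of $A$ and may be written $\tau^-_m=u^s_{ab}\alpha_m+\dd u^s_{ab}\beta_m$ with $\alpha_m,\beta_m$ regular in $t$. One then checks $\tau^+,\alpha_m,\beta_m\in\TS^k_{\AA^n}$ --- the remaining boundary conditions transfer from $\tau$ using the independence of $u^s_{ab}$ and $\dd u^s_{ab}$ as elements of $\Omega^\bul(\triangle^{(ab)}_{n-1})$. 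Each of them has strictly smaller total pole order than $\tau$ because none has a pole in $t$, so by the inductive hypothesis $\tau^+,\alpha_m,\beta_m\in R$; since $\tfrac{u^s_{ab}}{t^m}$ and $\tfrac{\dd u^s_{ab}}{t^m}$ are generators of $R$,
\[ \tau = \tau^+ + \sum_{m=1}^N\left(\tfrac{u^s_{ab}}{t^m}\alpha_m + \tfrac{\dd u^s_{ab}}{t^m}\beta_m\right)\in R, \]
completing the induction.

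\textbf{Main obstacle.} The step that requires the most care is the verification that $\alpha_m$ and $\beta_m$ inherit the boundary conditions of $\tau$, specifically at the ``diagonal'' face of the simplex $\triangle^{(ab)}_{n-1}$ --- the face $u^{s_0}_{ab}=0$ where $s_0$ is any coordinate eliminated via $\sum_r u^r_{ab}=1$. On that face $u^s_{ab}$ is not a free coordinate, so the naive linear-independence argument breaks down. The remedy is to work with all $n$ simplex coordinates $u^1_{ab},\dots,u^n_{ab}$ kept symmetrically in play and only the single relation $\sum_r u^r_{ab}=1$ imposed: in this symmetric presentation the canonical representative of $\alpha_m$ carries an implicit factor of $u^{s_0}_{ab}$, which automatically vanishes on the corresponding face and supplies the needed regularity in $z^{s_0}_a-z^{s_0}_b$.
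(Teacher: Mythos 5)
Your easy direction and your reduction to the one-edge situation are fine in spirit, but the inductive step in the hard direction has a genuine gap: the Laurent (partial-fraction) decomposition with respect to $t=z^s_a-z^s_b$ does not interact well with the \emph{other} diagonals, because the linear forms $z^s_{ab}, z^s_{ac}, z^s_{bc}$ are not independent. Concretely, take $n=2$, $k=3$ and $\tau=\frac{u^1_{12}u^1_{13}}{z^1_{12}z^1_{13}}\in\TS^3_{\AA^2}$ (one checks all boundary conditions hold; in particular $\tau$ has no pole in $z^1_{23}$). Decomposing in $t=z^1_{12}$, using $z^1_{13}=t+z^1_{23}$, gives
\[ \tau=\frac{u^1_{12}u^1_{13}}{z^1_{23}}\cdot\frac{1}{t}\;-\;\frac{u^1_{12}u^1_{13}}{z^1_{13}z^1_{23}}, \]
so $\tau^-_1=\frac{u^1_{12}u^1_{13}}{z^1_{23}}$ and $\tau^+=-\frac{u^1_{12}u^1_{13}}{z^1_{13}z^1_{23}}$. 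Neither lies in $\TS^3_{\AA^2}$: both have acquired a pole in $z^1_{23}$, and neither restricts to something regular in $z^1_{23}$ on the face $u^1_{23}=0$ (their poles there cancel only in the sum). So the assertion that ``the remaining boundary conditions transfer from $\tau$'' fails for edges other than $(a,b)$. Moreover your induction does not terminate: $|\tau|=2$ but $|\tau^+|=2$ as well, since clearing the pole in $z^1_{12}$ created a new pole in $z^1_{23}$. Your ``main obstacle'' paragraph addresses only the faces of the single simplex factor $\triangle^{(ab)}_{n-1}$, which is the easier of the two issues; the cross-edge interaction is the one that kills the argument.

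The paper avoids this by never doing partial fractions across distinct edges. It first proves the statement for a \emph{single} edge (\cref{sec: proof of thm TSIE gens and rels}), where the coefficient ring genuinely splits as a Laurent ring in the one variable $z^r_{v'}-z^r_{v''}$ over a polynomial ring, so an argument very much like yours goes through cleanly; it then adds edges one at a time, proving $\TS^{I,E\sqcup\{e\}}_{\AA^n}\cong\TS^{I,E}_{\AA^n}\ox_{\O_{(\AA^n)^I}}\TS^{I,\{e\}}_{\AA^n}$ by a diagram chase that rests on flatness over $\O_{(\AA^n)^I}$ (\cref{lem: FlatE}) and on \cref{RegularBoth}, which says that an element regular along two diagonals separately is regular along both jointly. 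That intersection lemma is exactly the tool needed to control the cross-edge pole interaction that your termwise decomposition cannot see. If you want to salvage an induction on pole structure, you would need to organize it around the Arnold relations (e.g.\ reduce to tree-type pole configurations as in \cref{lem: tree less}) rather than on the raw total pole order.
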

\begin{proof} \cref{thm: TS gens and rels} is a special case of \cref{thm: TSIE gens and rels}, below.
\end{proof}
Let $I\two := \left\{ (i,j) \in I \times I \,\middle|\, i\neq j\right\}$ denote the set of ordered pairs of distinct elements of an index set $I$.
Let $I\two/S_2$ denote the set of unordered pairs.
Given an ordered pair $e=(i,j) \in I\two$ we write
\be z_{e} := z_{ij} := z_i - z_j = - z_{ji}. \nn\ee
Given an unordered pair $e\in I\two/S_2$, for definiteness we set $z_e := z_i-z_j$ for the choice of representative $(i,j)$ such that $i<j$ relative to some fixed but arbitrary ordering $<$ of the index set $I$.
%
%
We think of elements of $I$ as vertices and elements of $I\two/S_2$ as undirected edges.
For a set of undirected edges $E\subset I\two/S_2$, let us define
\begin{enumerate}[-]
\item $\Omega_{n}^{I,E} := \kk[u^r_e; \dd u^r_e]^{1\leq r\leq n}_{e \in E} \big/\langle \sum\limits_{r=1}^{n} u^r_e -1, \sum\limits_{r=1}^{n} \dd u^r_e \rangle_{e\in E}$, we sometimes write $u^s_{ij}=u^s_{ji}=u^s_e$ if $e=(i,j)$.
\item $\mathbf \Omega_{\AA^n}^{I,E} := \kk[z^r_i,\frac 1{z^r_e}]^{1\leq r\leq n}_{i\in I; e \in E} \ox_\kk \Omega_{n}^{I,E}$
\end{enumerate}
Given also an undirected edge $e\in E$ and a direction $s\in \{1,\dots,n\}$, let
\begin{enumerate}[-]
\item $\Omega^{I,E}_{n,(e,\del_s)} := \Omega_{n}^{I,E}\big/ \langle u_e^s = 0, \dd u_e^s=0\rangle$
\item $\mathbf \Omega^{I,E}_{\AA^n,(e,\del_s)} 
:= \mathbf \Omega^{I,E}_{\AA^n}\big/  \langle u_e^s = 0, \dd u_e^s=0\rangle=\kk[z^r_i,\frac 1{z^r_e}]^{1\leq r\leq n}_{i\in I; e \in E} \ox_\kk \Omega_{n,(e,\partial_s)}^{I,E}$
\item $\mathbf \Omega^{I,E}_{\AA^n,(e,\del_s)^{\mathbf{reg}}} := \kk[z^r_i,\frac 1{z^t_{f}}]^{(r,i)\in \{1,\dots,n\}\times I}_{(t,f)\in (\{1,\dots,n\}\times E) \setminus \{(s,e)\}} \ox \Omega^{I,E}_{n,(e,\del_s)}$
\item $\mathbf \Upsilon^{I,E}_{\AA^n,u^s_e=0} := \mathbf \Omega^{I,E}_{\AA^n,(e,\del_s)}\big/  \mathbf \Omega^{I,E}_{\AA^n,(e,\del_s)^{\mathbf{reg}}}$
\end{enumerate}
We get the canonical quotient maps
$\Res_e^{I,E}|_{u^s_e=0} : \mathbf \Omega^{I,E}_{\AA^n} \to \mathbf\Upsilon^{I,E}_{\AA^n,u^s_e=0}$
(these quotient maps behave like residue maps) and define the dg commutative algebra
\be \TS^{I,E}_{\AA^n} := \bigcap_{\substack{e\in E\\ s\in\{1,\dots,n\}}} \ker\left( \Res_e^{I,E}|_{u^s_e=0}\right) \subset \mathbf \Omega^{I,E}_{\AA^n} .\label{def: TSIE}\ee
Observe that then the model $\TS^k_{\AA^n}$ as we defined it in \cref{def: TS} is equivalently given by
\be \TS_{\AA^n}^k \cong \TS^{I}_{\AA^n} := \TS^{I,I\two/S_2}_{\AA^n} \nn\ee
with $I= \{1,\dots,k\}$. Define the dg commutative algebra
\be \mathbf B^{I,E} := \directlim_{m\geq0}\kk\left[ z^r_i, \frac{u^r_{e}}{(z^r_{e})^m}, \frac{\dd u^r_{e}}{(z^r_{e})^m}\right]^{1\leq r\leq n}_{i\in I, e\in E}\big/\langle \sum_{r=1}^{n} u^r_{e} -1, \sum_{r=1}^{n} \dd u^r_e\rangle_{e\in E}\nn. \ee

\begin{thm}\label{thm: TSIE gens and rels}
There is an isomorphism of commutative dg algebras
\be \TS^{I,E}_{\AA^n} \cong \mathbf B^{I,E}_{\AA^n} ,\nn\ee
for any finite sets of indices $I$ and of undirected edges $E\subset I\two/S_2$.
\end{thm}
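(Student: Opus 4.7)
Both $\mathbf B^{I,E}_{\AA^n}$ and $\TS^{I,E}_{\AA^n}$ sit inside $\mathbf\Omega^{I,E}_{\AA^n}$, so the claim reduces to showing that the image of the inclusion $\mathbf B^{I,E}_{\AA^n}\hookrightarrow\mathbf\Omega^{I,E}_{\AA^n}$ is exactly $\TS^{I,E}_{\AA^n}$. The containment $\mathbf B^{I,E}_{\AA^n}\subseteq\TS^{I,E}_{\AA^n}$ is immediate: each generator $\frac{u^r_e}{(z^r_e)^m}$ pulls back to $0$ on the face $u^r_e=0$ (hence is regular in $z^r_e$ there), and has no pole in $z^t_{e'}$ whatsoever on any other face $u^t_{e'}=0$; the same check applies to $\frac{\dd u^r_e}{(z^r_e)^m}$, while $z^r_i$ is polynomial. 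Injectivity of the induced map $\mathbf B^{I,E}_{\AA^n}\to\TS^{I,E}_{\AA^n}$ is automatic.

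\textbf{Surjectivity by induction.} To show the reverse containment, I would induct on the total pole order $M(\tau):=\sum_{(r,e)\in\{1,\dots,n\}\times E}m_{r,e}(\tau)$, where $m_{r,e}(\tau)$ denotes the pole order of $\tau$ in $z^r_e$. The base case $M(\tau)=0$ is immediate: such a $\tau$ is polynomial and lies in the subalgebra generated by $z^r_i,u^r_e,\dd u^r_e$. For the inductive step, pick $(s,e_0)$ with $m_{s,e_0}(\tau)>0$, set $w:=z^s_{e_0}$, and Laurent-expand $\tau=\sum_{k\in\ZZ}\tau_k\,w^{-k}$, where each $\tau_k$ has no $w$-pole. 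Split $\tau=\tau_\mathrm{reg}+\tau_\mathrm{pol}$ into the $w$-polynomial ($k\leq 0$) and $w$-polar ($k\geq 1$) parts. Since $\tau|_{u^r_e=0,\dd u^r_e=0}$ is regular in $z^r_e$ by hypothesis, and the $\{w^{-k}\}_k$ are $\kk(z^r_e)$-linearly independent, each Laurent coefficient $\tau_k|_{u^r_e=0,\dd u^r_e=0}$ is separately regular in $z^r_e$; hence every $\tau_k$ lies in $\TS^{I,E}_{\AA^n}$. In particular $\tau_\mathrm{reg}\in\TS^{I,E}_{\AA^n}$ with $M(\tau_\mathrm{reg})<M(\tau)$, so $\tau_\mathrm{reg}\in\mathbf B^{I,E}_{\AA^n}$ by induction.

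\textbf{Polar part via a Key Lemma.} For $k\geq 1$ the boundary condition at $u^s_{e_0}=0$ forces $\tau_k|_{u^s_{e_0}=0,\dd u^s_{e_0}=0}=0$, so $\tau_k$ lies in the ideal $\langle u^s_{e_0},\dd u^s_{e_0}\rangle$. The induction is then completed by the following \emph{Key Lemma}: any $\eta\in\TS^{I,E}_{\AA^n}$ without $w$-pole and with $\eta|_{u^s_{e_0}=0,\dd u^s_{e_0}=0}=0$ admits a decomposition $\eta=u^s_{e_0}Q_1+\dd u^s_{e_0}Q_2$ with $Q_1,Q_2\in\TS^{I,E}_{\AA^n}$ free of $w$-pole. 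Granting this, for each $k\geq 1$ we obtain $Q_{i,k}\in\TS^{I,E}_{\AA^n}$ with $M(Q_{i,k})<M(\tau)$, hence $Q_{i,k}\in\mathbf B^{I,E}_{\AA^n}$ by induction, so
\[\tau_\mathrm{pol}=\sum_{k\geq 1}\Bigl(\tfrac{u^s_{e_0}}{w^k}Q_{1,k}+\tfrac{\dd u^s_{e_0}}{w^k}Q_{2,k}\Bigr)\in\mathbf B^{I,E}_{\AA^n}\]
using that $\tfrac{u^s_{e_0}}{w^k}$ and $\tfrac{\dd u^s_{e_0}}{w^k}$ are generators. Combined with $\tau_\mathrm{reg}\in\mathbf B^{I,E}_{\AA^n}$ this finishes the induction.

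\textbf{Main obstacle.} The proof of the Key Lemma is the most technical step. Parameterizing $\Omega_n^{I,\{e_0\}}$ by eliminating some $u^{s'}_{e_0},\dd u^{s'}_{e_0}$ (using the relations $\sum_r u^r_{e_0}=1$ and $\sum_r\dd u^r_{e_0}=0$) turns $u^s_{e_0}$ and $\dd u^s_{e_0}$ into free generators; writing $\eta=a_0(u^s_{e_0})+a_1(u^s_{e_0})\dd u^s_{e_0}$ and using $a_0(0)=0$ gives $a_0(u^s_{e_0})=u^s_{e_0}\tilde a_0(u^s_{e_0})$ and hence a canonical decomposition $(Q_1^{\mathrm{can}},Q_2^{\mathrm{can}}):=(\tilde a_0,a_1)$. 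However, $(Q_1^{\mathrm{can}},Q_2^{\mathrm{can}})$ need not lie in $\TS^{I,E}_{\AA^n}$. The pair is determined only up to the kernel of $(A,B)\mapsto u^s_{e_0}A+\dd u^s_{e_0}B$, which consists of elements of the form $(a\,\dd u^s_{e_0},\,-u^s_{e_0}a+b\,\dd u^s_{e_0})$; one exploits this freedom to absorb any $z^r_e$-polar contributions appearing in $Q_i^{\mathrm{can}}|_{u^r_e=0,\dd u^r_e=0}$ at the remaining faces. The verification rests on the $\kk(z^r_e)$-linear independence of $\{1,u^s_{e_0},\dd u^s_{e_0},u^s_{e_0}\dd u^s_{e_0}\}$ in each face algebra together with the fact that $u^s_{e_0}$ remains a non-zero divisor (as a non-constant polynomial in the remaining free variables), which allows one to separate polar contributions and absorb them into a suitable kernel adjustment.
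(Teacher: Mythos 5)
There is a genuine gap, and it occurs at the very first step of your induction. The finite Laurent expansion $\tau=\sum_{k}\tau_k\,w^{-k}$ in $w:=z^s_{e_0}$, with each $\tau_k$ free of $w$-poles, is not canonical — and with $\tau_k$ genuinely independent of $w$ it does not exist inside $\mathbf\Omega^{I,E}_{\AA^n}$ at all — because the linear forms $z^s_e$ attached to edges sharing a vertex are not algebraically independent. For instance, with $e_0=(1,2)$ and $(1,3)\in E$ one has $z^s_{13}=z^s_{12}+z^s_{23}$, so
\[
\frac{1}{z^s_{12}\,z^s_{13}}=\frac{1}{w}\cdot\frac{1}{z^s_{13}}=\frac{1}{(z^s_{13})^2}+\frac{1}{w}\cdot\frac{z^s_{23}}{(z^s_{13})^2},
\]
two inequivalent representations with all coefficients regular along $\{w=0\}$. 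Because the decomposition is not unique, the inference that ``each Laurent coefficient separately satisfies the boundary conditions'' does not follow from linear independence of the $w^{-k}$; and the bound $M(\tau_{\mathrm{reg}})<M(\tau)$ driving your induction is also not guaranteed, since rewriting can raise the pole order along the other divisors (as the example shows). Your expansion is canonical only in the one-edge case, where $z^s_{v''}$ and $z^s_{e}$ really are independent coordinates. Some mechanism is needed to first \emph{separate} the poles belonging to different edges; in the paper this is exactly the role of the edge-by-edge induction, the flatness of the factors over $\O_{(\AA^n)^I}$ (\cref{lem: FlatE}), the residue exact sequences of \cref{cor: oneedge}, and \cref{RegularBoth} (an element regular along two divisors separately is regular along both).

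The second gap is the Key Lemma itself, which you correctly identify as the technical heart but do not prove. The canonical extraction $(Q_1^{\mathrm{can}},Q_2^{\mathrm{can}})$ is compatible with restriction to faces of edges $e\neq e_0$, but it can fail at the \emph{other} faces $u^{s'}_{e_0}=0$ of the same edge: already for $n=2$ and $\eta=\frac{u^1_{e_0}\,\dd u^1_{e_0}}{z^2_{e_0}}$ the choice $Q_2=\frac{u^1_{e_0}}{z^2_{e_0}}$ restricts to $\frac{1}{z^2_{e_0}}$ on the face $u^2_{e_0}=0$ and so leaves $\TS$. The assertion that the kernel freedom can always repair all $n-1$ remaining faces of $e_0$ \emph{simultaneously} is precisely the nontrivial statement; the paper proves it as the displayed claim in the two-vertex/one-edge case (a form on $\triangle_{n-1}$ vanishing on the faces $u^s=0$, $s\in S$, is a $\kk$-combination of monomials $\prod_{s\in S}(\varepsilon u^s)^{p_s}$ with every $p_s>0$, established via the trick $\alpha=\sum_s u^s\alpha_s$). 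Without that statement, and without the pole-separation addressed above, your induction does not close.
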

\begin{proof} The proof is given in \cref{sec: proof of thm TSIE gens and rels}.
\end{proof}

\section{Main result: The homology of the \polysimplicial unit chiral operad}\label{sec: main}

In this section we define the unit chiral operad on $\AA^n$, in the \polysimplicial model. It is a dg operad -- that is, an operad in chain complexes of $\kk$-vector spaces.
Our main result is that its homology is isomorphic to the Lie operad.

\subsection{The shifted canonical sheaf $\omega\shifted$}\label{sec: shifted canonical bundle}

We will first introduce some notations. The canonical sheaf $\omega_{\AA^n}$ on $\AA^n$ is a right module over $\D_{\AA^n} = \kk[ z^r, \del_{z^r}]^{1\leq r\leq n}$. The action is given by
\begin{align} (p(z) \dd z^1\dots \dd z^n) \cdot \del_{z^s} &:= - \left(\del_{z^s} p(z)\right) \dd z^1\dots \dd z^n,\nn\\
 (p(z) \dd z^1\dots \dd z^n) \cdot z^s &:= \phantom{+} \left(z^s p(z)\right) \dd z^1\dots \dd z^n .\nn
\end{align}
Similarly, $\Gamma((\AA^{n})^k,\omega_{\AA^n}^{\boxtimes k}) = \omega_{(\AA^{n})^k}$ is a right module over $\DAAnk= \kk[ z^r_i, \del_{z^r_i}]^{1\leq r\leq n}_{1\leq i\leq k}$.

Whenever $\F$ is a left $\D$-module and $\G$ a right $\D$-module, the tensor product $\F\ox_\O \G$ is canonically a right $\D$-module
(with $(f \ox g)\cdot \del := (-\del f)\ox \tau + f\ox(\tau\cdot \del)$ for any derivation $\del$).
Thus, as $\TS_{\AA^n}^{k}$ (defined in \cref{def: TS}) is a left $\D$-module,
\be \TS_{\AA^n}^{k}((\omega_{\AA^n})^{\boxtimes k}) := \TS_{\AA^n}^{k} \ox_{\O_{(\AA^{n})^k}} \omega_{(\AA^{n})^k} \nn\ee
is a right $\DAAnk$-module.

The $\D$-module pushforward $\Delta_* \omega_{\AA^n}$ of $\omega_{\AA^n}$ along the closed diagonal embedding $\AA^n\into (\AA^n)^k$
is given as a $\kk$-vector space by
\be \Delta_* \omega_{\AA^n} := \omega_{\AA^n} \ox_{\kk[\lambda^r]^{1\leq r\leq n}} \kk[\lambda^r_i]^{1\leq r\leq n}_{1\leq i\leq k}. \nn\ee
The commutative variables $\lambda^s, \lambda^s_i$ should be viewed as $\del_{z^s}, \del_{z^s_i}$ respectively. The right action of $\lambda^s$ on $\omega_{\AA^n}$ is given by the right action of $\del_{z^s}$ as above; the left action of $\lambda^s$ on $\kk[\lambda^r_i]^{1\leq r\leq n}_{1\leq i\leq k}$ is via the map $\lambda^s\mapsto \sum\limits^k_{i=1}\lambda^s_i$. The right $\DAAnk$-module structure of $\Delta_* \omega_{\AA^n}$ is given by
\[
\left(\tau\otimes P(\lambda)\right)\del_{z^s_i}:=\tau\otimes P(\lambda)\cdot\lambda^s_i,\quad \left(\tau\otimes P(\lambda)\right){z^s_i}:=\tau\otimes (\del_{\lambda^s_i}P(\lambda))+(z^s\tau)\otimes P(\lambda).
\]

Our main interest is in the shifted canonical sheaf, the copy of $\omega_{\AA^n}$ degree-shifted into cohomological degree $1-n$:
\be \oms := \omega_{\AA^n}[n-1] .\nn\ee
(The reason for this shift is that it will put the binary chiral bracket $\mu_2$ correctly in degree zero, as we shall see in \cref{sec: higher operations}.)

As a shorthand, we shall write
$\dd \mathbf z\shifted := \dd z^1 \dots \dd z^n[n-1]$.
Then sections of $(\oms)^{\boxtimes k}= \omega_{(\AA^{n})^k}[k(n-1)]$ over $(\AA^{n})^k$ take the form
\be f\left((z^r_i)^{1\leq r\leq n}_{1\leq i\leq k} \right) \dd\zz_1\shifted \dots  \dd\zz_k\shifted \nn\ee
and sit in cohomological degree $k(1-n)$. We will mainly work with the $\D_{(\AA^n)^k}$-modules $\TS_{\AA^n}^{k}((\oms)^{\boxtimes k})$and $\Delta_*\oms$. Notice that in our convention, the degree $\bullet$ in $\TS_{\AA^n}^{k,\bullet}((\oms)^{\boxtimes k})$ is the total degree
$$
\TS_{\AA^n}^{k,\bullet}((\oms)^{\boxtimes k})=\Tot^{\bullet}\left(\underline{\TS_{\AA^n}^{k}} \ox_{\O_{(\AA^{n})^k}} \oms\right).
$$

\subsection{The \polysimplicial unit chiral operad}\label{sec: TS unit chiral operad}
We shall define a dg operad $\P^{ch}_{\AA^n}$.
For every strictly positive $k$ we let
\be \P^{ch}_{\AA^n}(k) := \Hom_{\DAAnk}\left(\TS_{\AA^n}^{k}((\oms)^{\boxtimes k}), \Delta_*\oms \right).\nn\ee
Recall that $\Delta_*\oms$ is by our definition concentrated in cohomological degree $-(n-1)$. Therefore, more explicitly, one has
\be \P^{ch}_{\AA^n,\bul}(k) := \mathop\oplus\limits_{l\in\mathbb{Z}}\Hom_{\DAAnk}\left(\TS_{\AA^n}^{k,\bul+l}((\oms)^{\boxtimes k}), (\Delta_*\oms)^{l} \right)\nn\ee
\be=\Hom_{\DAAnk}\left(\TS_{\AA^n}^{k,\bul-(n-1)}((\oms)^{\boxtimes k}), \Delta_*\omega_{\AA^n}\right),\nn\ee
with the differential given by
\be \dd_\P(-) := (-)\circ \ddts .\nn\ee
Thus, for each strictly positive $k$ we have a chain complex of $k$-ary operations\footnote{Indeed, since elements of $\TS_{\AA^n}^{k,p}((\oms)^{\boxtimes k})$ are given by  $p+ k(n-1)$-forms on the polysimplex $\triangle_{n-1}^{\times \binom k2}$, the cochain complex $\TS_{\AA^n}^{k,\bul}((\oms)^{\boxtimes k})$ is concentrated in cohomological degrees $[-k(n-1), -k(n-1) + \binom k 2 (n-1)]$. Therefore the chain complex $\P^{ch}_{\AA^n,\bul}(k)$ is concentrated in homological degrees $[-(k-1)(n-1),-(k-1)(n-1) + \binom k 2 (n-1)]$. }
\begin{align}
0 \leftarrow \P^{ch}_{\AA^n,-(k-1)(n-1)}(k) \xleftarrow{\dd_\P} \dots \xleftarrow{\dd_\P} \P^{ch}_{\AA^n,0}(k) \xleftarrow{\dd_\P} \dots \xleftarrow{\dd_\P} \P^{ch}_{\AA^n, \binom{k-1} 2 (n-1)}(k) \leftarrow 0.
\nn\end{align}
\begin{prop}\label{prop: operad}
$\P^{ch}_{\AA^n}$ is a dg operad.
\end{prop}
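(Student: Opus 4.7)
The plan is to verify in turn the pieces making up a dg operad structure: symmetric group actions, unit, partial compositions $\circ_i$, the operad axioms, and the Leibniz rule for $\dd_\P$.

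The symmetric action and unit are immediate. The group $S_k$ acts on $\TS^k_{\AA^n}((\oms)^{\boxtimes k})$ by simultaneously permuting the labels of $(z^r_i)$, the polysimplex coordinates $(u^r_{ij},\dd u^r_{ij})$, and the tensor factors of $(\oms)^{\boxtimes k}$; this is by $\D_{(\AA^n)^k}$-module automorphisms commuting with $\ddts$. Permuting $(\lambda^r_i)$ gives a compatible action on $\Delta_*\oms$, and the resulting conjugation action on $\P^{ch}_{\AA^n}(k)$ commutes with $\dd_\P$. For $k=1$ the index set admits no pair of distinct elements, so $\TS^1_{\AA^n}(\oms) = \oms = \Delta_*\oms$, and the identity map of $\oms$ provides the unit $\id\in\P^{ch}_{\AA^n,0}(1)$.

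For the partial composition $\mu\circ_i\nu$, with $\mu\in\P^{ch}_{\AA^n}(k)$, $\nu\in\P^{ch}_{\AA^n}(l)$ and $1\leq i\leq k$, the construction generalizes the one-dimensional chiral operad of \cite{BDChiralAlgebras}. Set $K=\{1,\dots,k+l-1\}$, $J=\{i,\dots,i+l-1\}\subset K$, and $K/J = (K\setminus J)\sqcup\{*\}$; let $\Delta_J\colon (\AA^n)^{K/J}\hookrightarrow(\AA^n)^K$ denote the closed embedding collapsing $J$ to the single point $*$. The key ingredient is a canonical chain map of right $\D_{(\AA^n)^K}$-modules
$$
\Phi_{J,K}\colon \TS^K_{\AA^n}\bigl((\oms)^{\boxtimes K}\bigr)\longrightarrow \Delta_{J,*}\TS^{K/J}_{\AA^n}\Bigl((\oms)^{\boxtimes(K\setminus J)}\boxtimes\Delta^{(J)}_{*}\bigl[\TS^J_{\AA^n}\bigl((\oms)^{\boxtimes J}\bigr)\bigr]\Bigr),
$$
realized in the polysimplicial model by the combinatorial splitting $E_K = E_{K\setminus J}\sqcup E_{\mathrm{cross}}\sqcup E_J$ of the edge set together with the specialization $z^r_i\mapsto z^r_*$ for $i\in J$ on the cross-edges. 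Applying $\nu$ to the $\TS^J$-factor and then $\mu$ to the resulting $(K/J)$-configuration yields $\mu\circ_i\nu\in\P^{ch}_{\AA^n}(k+l-1)$.

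The operadic axioms (unit, equivariance, and associativity in both the nested and disjoint cases) follow from the naturality of $\Phi_{J,K}$ under enlargement and nesting of $J$. Since $\Delta_*\oms$ is concentrated in a single cohomological degree, the Hom-complex differential reduces to $\dd_\P(\phi) = \pm\phi\circ\ddts$; as $\Phi_{J,K}$ is itself a chain map for $\ddts$, the Leibniz rule
$$
\dd_\P(\mu\circ_i\nu) = \dd_\P(\mu)\circ_i\nu + (-1)^{|\mu|}\mu\circ_i\dd_\P(\nu)
$$
is immediate, and $\dd_\P^2 = 0$ follows from $\ddts^2 = 0$. The main obstacle is the explicit construction and verification of $\Phi_{J,K}$: one must check, using the generators-and-relations presentation of \cref{thm: TSIE gens and rels}, that it is well-defined on $\TS^K_{\AA^n}$ (respecting the regularity conditions along the faces $u^r_e=0$), is a morphism of $\D$-modules, and is natural in $J$. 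This is a direct but somewhat technical case-check in the polysimplicial model.
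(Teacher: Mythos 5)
Your overall architecture matches the paper's: explicit symmetric action, a collapsing map feeding the output of the inner operations into the outer one, associativity from naturality of that map, and the Leibniz rule from the collapsing map being a chain map. The symmetric action, the unit, and the identification $\TS^1_{\AA^n}(\oms)=\oms=\Delta_*\oms$ are all fine. But the central object of your proof, the map $\Phi_{J,K}$, is both misformulated and placed at the wrong point in the composition, and this hides exactly the step that constitutes the real content of the proposition.

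Concretely: what exists \emph{before} applying $\nu$ is only the factorization isomorphism
$\TS^K(M^{\boxtimes K})\cong \TS^{\{K_j\}_{j\in J}}\bigl(\boxtimes_{j\in J}\TS^{K_j}(M^{\boxtimes K_j})\bigr)$
coming from the splitting of the edge set (a consequence of \cref{thm: TSIE gens and rels}); no pushforward $\Delta_{J,*}$ appears here, and indeed your displayed target does not typecheck (the inner $\Delta^{(J)}_*$ cannot be applied to a module already living on $(\AA^n)^J$, and specializing $z^r_i\mapsto z^r_*$ for $i\in J$ on cross-edges is not a $\D_{(\AA^n)^K}$-linear operation on $\TS^K$ itself). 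The pushforward only enters \emph{after} the inner operations: applying $\boxtimes_{j}\nu_{K_j}$ lands in $\TS^{\{K_j\}}\bigl(\boxtimes_j\Delta^{K_j/\{j\}}_*M\bigr)$, whose inner factors carry the transverse variables $\lambda^r_k$ of $\Delta_*\oms$. The map one then needs, $\mathbf I^{[K_j]_{j\in J}}_{[\{j\}]_{j\in J}}$ of \cref{def: Imap}, must (i) commute the $\lambda^r_k$ past the ambient $\TS^{\{K_j\}}$-factor via the Leibniz rule, converting them into honest right actions of $\del_{z^r_k}$ on the generators $u^r_e/(z^r_e)^m$, and only then (ii) specialize $z^r_k\mapsto z^r_{\pi(k)}$, $u^r_{kk'}\mapsto u^r_{\pi(k)\pi(k')}$. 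Step (i) is entirely absent from your proposal, and it is not cosmetic: for $n>1$ the resulting map is not an isomorphism (only a quasi-isomorphism realizing the projection formula), so one cannot sidestep its construction by identifying source and target. Associativity then requires verifying three separate compatibilities (the analogue of \cref{OperadLem}), of which the nontrivial one is that $\mathbf I$ commutes with $\boxtimes_i\Delta_*\mu_{J_i}$ --- i.e.\ the interaction between the collapsing map and the chiral operations --- rather than a single ``naturality in $J$'' statement. As written, your proof defers all of this to ``a direct but somewhat technical case-check,'' which is precisely where the proposition lives.
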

\begin{proof}
The proof is given in \cref{sec: operad}.
\end{proof}

\begin{rem}\label{rem: reln to DS} From \cref{thm: F model}, $\TS_{\AA^n}^{k}((\omega_{\AA^n})^{\boxtimes k})$ represents $\int_jj^*(\omega_{\AA^n})^{\boxtimes k}$.
In the case of dimension $n=1$, the higher direct image vanishes and we have
  \[
  \Gamma\bigl((\AA^{1})^k,\smallint_j(j^*M)\bigr)\simeq     \Gamma\bigl((\AA^{1})^k,j_{\centerdot}j^{\centerdot}M\bigr)=\TS^{k}_{\AA^1}(M).
  \]
Beilinson and Drinfeld \cite{BDChiralAlgebras} define the unit chiral operad, i.e., the operad of chiral operations on $\omega_X$, for curves $X$; in the case of $X=\AA^1$ their definition is
\begin{align}\P_{\AA^1}^{ch}(k)&:=\Hom_{\D_{(\AA^1)^k}
}\left(j_*j^{*}\omega_{\AA^1}^{\boxtimes k},\Delta_*\omega_{\AA^1}\right)\nn\\
&=\Hom_{\D_{(\AA^1)^k}}\left(\TS^{k}_{\AA^1}(\omega_{\AA^1}^{\boxtimes k}),\Delta_*\omega_{\AA^1}\right)\nn.
\end{align}
It is thus natural to use $\TS^{k}_{\AA^n}(\omega_{\AA^n}^{\boxtimes k})$ to construct higher analogs of the unit chiral operad for affine spaces.
\end{rem}
\begin{prop}\label{prop: mu2Residue}
    The residue operation $\mu$ defined in \cref{sec: higher operations} is an antisymmetric element in $\P^{ch}_{\AA^n,0}(2)$ and it is closed, $\dd_\P(\mu_2)=0$.
\end{prop}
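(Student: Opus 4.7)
The plan is to check four things in turn: (i) that $\mu_2$ descends to a well-defined graded linear map $\TS_{\AA^n}^{2,-(n-1)}((\oms)^{\boxtimes 2})\to \Delta_*\omega_{\AA^n}$ sitting in degree $0$ of the Hom complex; (ii) that it is $\DAAnk$-linear; (iii) that it is antisymmetric under the $S_2$-action; and (iv) that $\dd_\P(\mu_2)=0$. First I would unwind what a degree $-(n-1)$ element looks like: via \cref{thm: TSIE gens and rels}, such a $\tau$ is a top form on the simplex $\triangle_{n-1}$ with coefficients in $\B_{\AA^n}^{\{1,2\}}$, tensored with $\dd\zz_1\shifted\dd\zz_2\shifted$ and subject to the defining boundary conditions. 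Expanding each $e^{\lambda^r_2(z^r_2-z^r_1)}$ as a formal power series in $z^r_{12}:=z^r_1-z^r_2$, the iterated residue extracts a finite $\kk[\lambda^r_2]$-linear combination of derivatives of the Laurent coefficients of $\tau$ evaluated at $z^r_2=z^r_1$, and the integral over $\triangle_{n-1}$ of the resulting top form is a finite number. The output therefore lies in $\omega_{\AA^n}\otimes\kk[\lambda^r_2]\subset \Delta_*\omega_{\AA^n}$ and sits in homological degree $0$, giving (i).

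For (ii) I would invoke the classical fact that the one-dimensional residue $\res_{z_2\to z_1}e^{\lambda_2(z_2-z_1)}$ is $\D_{(\AA^1)^2}$-linear, as in the standard construction of the one-dimensional unit chiral operad \cite[\S3.1.5]{BDChiralAlgebras}. The $n$ residues in the definition of $\mu_2$ act in mutually commuting coordinate directions and hence collectively respect the full $\DAAnk$-action, while the integration over $\triangle_{n-1}$ is $\kk$-linear in variables disjoint from the $z^r_i$, so $\D$-linearity is preserved throughout.

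For (iii) I track signs: swapping the two $\oms$-factors, each of degree $1-n$, contributes the Koszul sign $(-1)^{(1-n)^2}=(-1)^{n-1}$; the polysimplicial coordinates $u^r_{12}=u^r_{21}$ are labelled by the unordered edge $\{1,2\}$ and are hence invariant under $\sigma=(12)$, so the integration domain and measure on $\triangle_{n-1}$ are untouched; and in each of the $n$ coordinate directions the single-variable residue contributes a sign $-1$, since the characteristic singularity $\tfrac{1}{z^r_1-z^r_2}$ is itself odd under the swap and the Taylor-expanded exponential commutes with this sign — this is the same mechanism that makes the one-dimensional chiral bracket antisymmetric. The product $(-1)^{n-1}(-1)^n=-1$ yields antisymmetry.

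Finally, for (iv), since $\triangle_{n-1}$ has dimension $n-1$ the complex $\TS_{\AA^n}^{2,\bul}((\oms)^{\boxtimes 2})$ is concentrated in degrees from $-2(n-1)$ to $-(n-1)$, so $\dd_\P(\mu_2)$ is automatically a map out of $\TS_{\AA^n}^{2,-n}((\oms)^{\boxtimes 2})$ — the simplicial forms of degree $n-2$. For any such $\tau$ the differential $\ddts$ is the de Rham differential on the simplex alone and commutes with the residue operators, so by Stokes' theorem
\[
\mu_2(\ddts\tau)=\int_{\triangle_{n-1}}\ddts\!\left(\prod_{r=1}^n\res_{z^r_2\to z^r_1}e^{\lambda^r_2(z^r_2-z^r_1)}\,\tau\right)=\int_{\partial\triangle_{n-1}}\prod_{r=1}^n\res_{z^r_2\to z^r_1}e^{\lambda^r_2(z^r_2-z^r_1)}\,\tau.
\]
The boundary $\partial\triangle_{n-1}$ is the union of the $n$ faces $\{u^r=0\}$; on the $r$-th face the defining boundary condition of $\TS$ forces $\tau|_{u^r=0}$ to be regular in $z^r_{12}$, so the residue in the $r$-th direction annihilates it and each face contributes zero. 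The hardest part will be the sign bookkeeping in (iii), keeping the Koszul rule for $\oms$, the unordered-edge convention on the polysimplex, and the intrinsic sign from each one-dimensional residue all in line; once that and the commutation of the residues with $\ddts$ are cleanly set up, closedness reduces to the slogan ``the residue of a regular function is zero.''
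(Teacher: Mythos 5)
Your strategy for well-definedness, $\D$-linearity and closedness coincides with the paper's: the paper likewise reads off $\D_{(\AA^n)^2}$-linearity from the construction and proves $\dd_\P(\mu_2)=0$ by Stokes' theorem on $\triangle_{n-1}$ together with the defining boundary conditions of $\TS$ (the face $u^r_{12}=0$ contributes nothing because the restriction there is regular in $z^r_{12}$, and the residue of a regular series vanishes). Those parts of your write-up are correct, including the degree bookkeeping in (i) and (iv).

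The antisymmetry argument, however, has a genuine gap: you track signs only on the source, whereas the $S_2$-action on $\P^{ch}_{\AA^n}(2)$ is $(\sigma\mu)(-)=\sigma\,\mu(\sigma^{-1}-)$, and the outer $\sigma$ acts on the target $\Delta_*\oms$ by relabelling $\lambda^r_1\leftrightarrow\lambda^r_2$; under the identification $\Delta_*\omega_{\AA^n}\cong\omega_{\AA^n}\ox_\kk\kk[\lambda^r_2]^{1\le r\le n}$ this is the substitution $\lambda^r_1\mapsto-\lambda^r_2$ (up to $\partial_{z^r}$-terms), which carries signs you never account for. Your claim that each single-variable residue contributes a sign $-1$ because $\tfrac{1}{z^r_1-z^r_2}$ is odd under the swap is only valid for first-order poles. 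Already for $n=1$ and the input $(z_1-z_2)^{-2}\,\dd z_1\boxtimes\dd z_2$ the singularity is \emph{even} under the swap and the Koszul sign is trivial, so your recipe predicts symmetric behaviour; in fact $\mu_2$ sends this element to $\lambda_2\,\dd z$, and antisymmetry is restored only by the target-side substitution $\lambda_1\mapsto-\lambda_2$. The clean repair is to observe that $\sigma\mu_2$ and $-\mu_2$ are both morphisms of right $\D_{(\AA^n)^2}$-modules, so it suffices to compare them on $\D$-module generators of $\TS^{2}_{\AA^n}((\oms)^{\boxtimes 2})$; by the generators-and-relations description of $\TS$ these may be taken of the form $F(u,\dd u)\bigl(\prod_{r\in S}(z^r_{12})^{-1}\bigr)\dd\zz_1\shifted\boxtimes\dd\zz_2\shifted$ with at most simple poles. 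For $S\neq\{1,\dots,n\}$ both sides vanish under the residues, and for $S=\{1,\dots,n\}$ the output contains no $\lambda$'s, the target action is trivial, and your count $(-1)^{(n-1)^2}\cdot(-1)^{n}=-1$ is correct.
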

\begin{proof}
    Recall that $\mu_2\in \Hom_{\mathbf{k}}\left(\TS_{\AA^n}^{2,-(n-1)}((\oms)^{\boxtimes 2}), \Delta_*\omega_{\AA^n}\right)$ is defined by
    $$
    \mu_2=
\mu_2^{2\rightarrow 1}=\int_{\triangle_{n-1}}
\res_{z^1_2 \to z^1_1} e^{\lambda^1_2(z^1_2- z^1_1) } \dots \res_{z^n_2 \to z^n_1} e^{\lambda^n_2(z^n_2- z^n_1) }
    ,$$
    where we identify $\Delta_*\omega_{\AA^n}=\omega_{\AA^n} \ox_{\kk[\lambda^r]^{1\leq r\leq n}} \kk[\lambda^r_1,\lambda^r_2]^{1\leq r\leq n}$ with $\omega_{\AA^n} \ox_{\kk} \kk[\lambda^r_2]^{1\leq r\leq n}$. Here we use the notation $\res_{z^s_2 \to z^s_1} e^{\lambda^s_2(z^s_2- z^s_1) }$ following \cite{vanEkerenHeluaniChiralHomologyEllipticCurvesZhu}. The fact that $\mu_2$ is a $\D_{(\AA^n)^2}$-module map follows from the construction, and the antisymmetry can be checked directly after we introduce the symmetric group action in \cref{sec: operad}. To see that $\mu_2$ is closed, we use Stokes' theorem and the defining boundary conditions \cref{def: TS} of the model.
\end{proof}
\subsection{The main theorem}
Our main result is the following.

\begin{thm}\label{thm: ChiralIsoLie}
    The homology of the \polysimplicial unit chiral operad $\P^{ch}_{\AA^n,\bul}$ is isomorphic to the usual Lie operad:
\be \LieOperad \cong H(\P^{ch}_{\AA^n,\bul}). \nn\ee
\end{thm}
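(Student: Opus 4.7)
The plan is to deduce the theorem as an immediate consequence of \cref{thm: quasi-isomorphism from LieInfinity}, which we may assume. That theorem provides a quasi-isomorphism of dg operads
\[
\LieInfinityOperad \xrightarrow{\sim} \P^{ch}_{\AA^n}.
\]
Taking homology in each arity yields an isomorphism of operads in graded $\kk$-vector spaces
\[
H(\LieInfinityOperad) \xrightarrow{\sim} H(\P^{ch}_{\AA^n,\bul}),
\]
the compatibility with operadic composition and symmetric group actions being automatic from the fact that the chain-level map is a morphism of dg operads and that homology is a symmetric monoidal functor from chain complexes to graded vector spaces.

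It then suffices to identify the left-hand side with the usual Lie operad. This is a classical result from Koszul duality for operads. The dg operad $\LieInfinityOperad$, as introduced in the excerpt via generators $\ell_p$ of homological degree $p-2$ (for $p\geq 2$) together with the differential
\[
\dd \ell_p = \sum_{\substack{p_1+p_2=p+1\\ p_1,p_2>1}} \sum_{\sigma\in \mathrm{Sh}^{-1}_{p_1-1,p_2}} \mathrm{sgn}(\sigma)(-1)^{(p_1-1)p_2}\left(\ell_{p_1}\circ_1\ell_{p_2}\right)^{\sigma},
\]
coincides with the cobar construction of the Koszul dual cooperad of $\LieOperad$. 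Because $\LieOperad$ is Koszul, this cobar construction is a quasi-free resolution of $\LieOperad$, so that $H(\LieInfinityOperad(p))$ is concentrated in homological degree zero and, as an $S_p$-representation, equal to $\LieOperad(p)$ for every $p\geq 1$; see, e.g., \cite[Ch.\ 10 and 13]{LodayVallette}. Composing the two isomorphisms gives the desired identification $\LieOperad \cong H(\P^{ch}_{\AA^n,\bul})$.

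The substantive work is of course hidden in \cref{thm: quasi-isomorphism from LieInfinity}, where one must explicitly construct the higher operations $\mu_p\in \P^{ch}_{\AA^n,p-2}(p)$, verify the $L_\infty$ coherence relations \cref{mucoherence}, and establish the quasi-isomorphism arity-by-arity (the nontrivial content being that $H(\P^{ch}_{\AA^n,\bul}(p))$ is concentrated in a single homological degree and, via the map from $\LieInfinityOperad$, is spanned by Lie-like classes built from iterated residues of propagators). Once that is granted, the statement of the present theorem is a purely formal consequence.
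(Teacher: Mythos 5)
There is a genuine gap: your argument is circular relative to the logical structure of the paper. In the paper, \cref{thm: quasi-isomorphism from LieInfinity} is \emph{deduced from} \cref{thm: ChiralIsoLie}, not the other way around: one truncates $\P^{ch}_{\AA^n}$ at degree zero, observes that the truncation maps onto $H\P^{ch}_{\AA^n}\cong\LieOperad$ by an acyclic fibration \emph{precisely because of} \cref{thm: ChiralIsoLie}, and then lifts against the cofibration $\bullet\to\LieInfinityOperad$. So you cannot take \cref{thm: quasi-isomorphism from LieInfinity} as given when proving \cref{thm: ChiralIsoLie}; the statement you are asked to prove is the one that carries all the substantive content, and your proposal defers exactly that content (you say so yourself in the final paragraph). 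The formal steps you do carry out --- that homology of a quasi-isomorphism of dg operads is an isomorphism of operads, and that $H(\LieInfinityOperad)\cong\LieOperad$ by Koszulity --- are correct but empty here.

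What is actually needed, and what the paper does, is a direct computation of $H(\P^{ch}_{\AA^n,\bul}(k))$. The paper's route is: (i) replace $\P^{ch}_{\AA^n}(k)$ by an equivalent translation-invariant Hom space (\cref{prop: translation invariant Pch}); (ii) prove an injectivity property (\cref{prop: Injectivity}), namely that a chiral operation vanishing on $\ker(\ddts)$ is $\dd_\P$-exact, which uses the reduction of $\TS$ to spaces spanned by products of propagators indexed by trees and the fact that maximal-tree monomials intersect the non-maximal-tree submodule trivially; (iii) show that passing further to the de Rham quotient is still injective (\cref{prop: dR injective}); (iv) compute that de Rham cohomology explicitly, using the basis of propagator products from \cref{cor: basis} and the Arnold relations, and identify it with $\LieOperad(k)^\vee$ via the Orlik--Solomon algebra (\cref{lem: de rham}); and (v) obtain surjectivity from iterated composites of the residue operation $\mu_2$ pairing against propagator trees. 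None of this appears in your proposal, so as written it does not constitute a proof of the theorem.
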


The proof is given in \cref{sec: main proof} and consists of two steps. First for given $k\geq 2$, we fixed a base point to reduce the complex $\P^{ch}_{\AA^n,\bul}(k)$ to a complex of Hom spaces of translation invariant $\D$-modules. Then we show that these in turn map injectively to the dual of the top degree part of the de Rham cohomology of the configuration space, which is isomorphic to the Lie operad (see \cite{totaro1996configuration} and references therein). Then we use the chain level residue map to show that this map is also surjective.

Beilinson and Drinfeld prove (\cite[Theorem 3.1.5]{BDChiralAlgebras} with $X=\AA^1$)  that the operad of chiral operations on $\omega_{\AA^1}$ is isomorphic to the Lie operad:
\be \LieOperad \cong \P^{ch}_{\AA^1} .\nn\ee
Our theorem generalizes this to affine spaces $\AA^n$ of any dimension $n$.

Theorem \ref{thm: ChiralIsoLie}
is equivalent to the following result (whose meaning we described more explicitly in \cref{sec: higher operations} above).
Let $\LieInfinityOperad$ denote the dg operad governing  \Linfinity-algebras.
\begin{thm}\label{thm: quasi-isomorphism from LieInfinity}
    There is a quasi-isomorphism, of operads in chain complexes of $\kk$-vector spaces,
\[\LieInfinityOperad\xrightarrow\sim \P_{\AA^n}^{ch}.\]
\end{thm}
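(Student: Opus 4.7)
The plan is to build the map $f : \LieInfinityOperad \to \P^{ch}_{\AA^n}$ explicitly, by an inductive obstruction-theoretic argument, and then to observe that $\cref{thm: ChiralIsoLie}$ immediately promotes it to a quasi-isomorphism. Recall that as a graded operad, $\LieInfinityOperad$ is free on the skewsymmetric generators $\ell_p\in\LieInfinityOperad(p)$ of homological degree $p-2$, with differential $\dd\ell_p$ determined by the coherence formula stated just before \cref{mucoherence}. Thus specifying a dg operad map $f$ is equivalent to a choice, for each $p\geq 2$, of an antisymmetric element $\mu_p := f(\ell_p)\in \P^{ch}_{\AA^n,p-2}(p)$ satisfying the relations \cref{mucoherence}.

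First I would set $\mu_2$ to be the chiral bracket of \cref{sec: higher operations}, which by \cref{prop: mu2Residue} is antisymmetric and closed — this gives the $p=2$ case of \cref{mucoherence}. Then, proceeding by induction on $p\geq 3$, assume antisymmetric $\mu_2,\dots,\mu_{p-1}$ satisfying \cref{mucoherence} have been constructed, and define
\[
\Phi_p \;:=\; \sum_{\substack{p_1+p_2=p+1\\ p_1,p_2>1}}\sum_{\sigma\in\mathrm{Sh}^{-1}_{p_1-1,p_2}}\mathrm{sgn}(\sigma)(-1)^{(p_1-1)p_2}\bigl(\mu_{p_1}\circ_1\mu_{p_2}\bigr)^{\sigma}\;\in\;\P^{ch}_{\AA^n,p-3}(p).
\]
A standard (if tedious) operadic calculation, identical to the one performed in the verification that $\Omega(\LieOperad^{\text{!`}})$ has square-zero differential, shows that $\dd_\P(\Phi_p)=0$: every term of $\dd_\P(\Phi_p)$ comes either from applying the inductive hypothesis to replace $\dd\mu_{p_i}$ by a nested composition, or from the operadic associativity of $\circ_1$ composed with the symmetric group action, and these contributions cancel in pairs. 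I would then argue that $[\Phi_p]=0$ in $H_{p-3}\bigl(\P^{ch}_{\AA^n}(p)\bigr)$. For $p=3$ this is precisely the Jacobi identity for the Lie bracket class $[\mu_2]$, which holds in homology by \cref{thm: ChiralIsoLie}; for $p\geq 4$ we have $p-3\geq 1$, and by \cref{thm: ChiralIsoLie} the homology of $\P^{ch}_{\AA^n}(p)$ is concentrated in degree zero, so $\Phi_p$ is automatically exact. Choose any primitive $\tilde\mu_p$ with $\dd_\P\tilde\mu_p=\Phi_p$, and define
\[
\mu_p := \tfrac{1}{p!}\sum_{\tau\in S_p}\mathrm{sgn}(\tau)\tilde\mu_p^{\tau},
\]
which is antisymmetric (using $\mathrm{char}\,\kk=0$) and still primitive of $\Phi_p$, since $\Phi_p$ is itself antisymmetric and the differential commutes with the $S_p$-action.

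Having defined $f$, to see that it is a quasi-isomorphism of dg operads I note that $H_\bul(\LieInfinityOperad)\cong\LieOperad$ is concentrated in homological degree zero, where it is generated (as an operad) by $[\ell_2]$; similarly, by \cref{thm: ChiralIsoLie}, $H_\bul(\P^{ch}_{\AA^n})\cong\LieOperad$ is concentrated in degree zero and generated by $[\mu_2]$. The induced map $H(f)$ sends $[\ell_2]\mapsto[\mu_2]$ and corresponds, under the two isomorphisms with $\LieOperad$, to the identity; hence it is an isomorphism of operads in each arity, and $f$ is a quasi-isomorphism.

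The main obstacle is the verification that each obstruction $\Phi_p$ is a cycle; everything else in the argument is either the content of \cref{thm: ChiralIsoLie} already proved or a formal consequence of $\LieInfinityOperad$ being the cobar construction of the Lie cooperad. One can bypass the explicit check by invoking the model-categorical statement that $\LieInfinityOperad$ is a cofibrant replacement of $\LieOperad$ in the Hinich/Berger--Moerdijk model structure on dg operads over $\kk$ (see \cite{HinichHomotopyAlgebras,berger2003axiomatic,LodayVallette}); the quasi-isomorphism $\P^{ch}_{\AA^n}\to\LieOperad$ guaranteed by \cref{thm: ChiralIsoLie} (after an intermediate cofibrant replacement of $\P^{ch}_{\AA^n}$) then admits a lift $\LieInfinityOperad\to\P^{ch}_{\AA^n}$, and any such lift is a quasi-isomorphism. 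We prefer the hands-on inductive construction because it produces the maps $\mu_p$ that are the object of genuine interest.
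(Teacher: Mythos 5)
Your argument is correct, and it reaches the theorem by a genuinely different route than the paper. The paper's proof is purely model-categorical: it invokes the Hinich/Berger--Moerdijk model structure on dg operads, observes that $\LieInfinityOperad$ is the minimal cofibrant resolution of $\LieOperad$, truncates $\P^{ch}_{\AA^n}$ to $\P^{ch}_{\AA^n,\geq 0}$ so that the map to $H\P^{ch}_{\AA^n}\cong\LieOperad$ (an isomorphism by \cref{thm: ChiralIsoLie}) becomes an acyclic fibration, and produces the lift $\LieInfinityOperad\to\P^{ch}_{\AA^n,\geq 0}\to\P^{ch}_{\AA^n}$ by the left lifting property --- exactly the ``bypass'' you describe in your last paragraph. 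Your primary route instead builds the lift by hand: the obstruction $\Phi_p$ is a cycle because $f$ is already a chain map on the suboperad generated by $\ell_2,\dots,\ell_{p-1}$ and $\dd^2\ell_p=0$ (your ``cancellation in pairs'' is the unpacked version of this); $[\Phi_3]$ vanishes because the Jacobiator of $[\mu_2]$ is computed in the homology operad, which is $\LieOperad$; and $[\Phi_p]$ for $p\geq 4$ vanishes for the degree reason that $H_\bul(\P^{ch}_{\AA^n}(p))$ is concentrated in degree $0$ while $\Phi_p$ sits in degree $p-3\geq 1$. Both arguments lean entirely on \cref{thm: ChiralIsoLie}; what your version buys is an explicit inductive recipe for the higher operations $\mu_p$ (consistent with the explicit $\mu_3$ of \cref{sec: examples}), at the cost of the routine but unavoidable verification that $\Phi_p$ is a cycle and is $S_p$-antisymmetric, whereas the paper's version is shorter and outsources the homotopy theory to standard references. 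One small point worth making explicit in your final step: the identification of $H(f)$ with the identity of $\LieOperad$ uses that the isomorphism of \cref{thm: ChiralIsoLie} matches $[\mu_2]$ with the binary generator of $\LieOperad$, which is indeed how surjectivity is established in the paper's proof of that theorem.
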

\begin{proof}
The argument is standard in the theory of dg operads; see for instance \cite{berger2003axiomatic,HinichHomotopyAlgebras,LodayVallette} and also the recent survey \cite{campos2023operadic}.
The category of dg operads has a model category structure in which the weak equivalences are quasi-isomorphisms and the fibrations are degree-wise surjections; see \cite[\S1.4.3]{campos2023operadic} and references given there.
The dg-operad $\LieInfinityOperad$ is the minimal cofibrant resolution of the Lie operad $\LieOperad$; see \cite[\S1.5]{campos2023operadic}.
We have the following diagram
\be \begin{tikzcd}                                                                        &                                       & \P_{\AA^n}^{ch}                                                                                              \\
\bullet \arrow[d, "\text{cofibration}"'] \arrow[rr]                      &                                       & \P^{ch}_{\AA^n,\geq 0} \arrow[d, "\text{fibration}\cap\text{W}", two heads] \arrow[u, "\text{quasi-iso}"'] \\
\LieInfinityOperad \arrow[r, "\text{quasi-iso}"'] \arrow[rru, dashed] & \LieOperad \arrow[r, "\text{iso}"] & {H\P_{\AA^n}^{ch}}
\end{tikzcd}\nn\ee
Here $\P_{\AA^n,\geq 0}^{ch}$ is the truncation
\be
\begin{tikzcd}
\P_{\AA^n}^{ch}(k):                                        & \dots \arrow[r] & \P_{\AA^n,1}^{ch}(k) \arrow[r,"d_\mathcal{P}"]                                & \P_{\AA^n,0}^{ch}(k)\arrow[r, "d_\mathcal{P}"]             & \P_{\AA^n,-1}^{ch}(k) \arrow[r] & \dots \\
\P_{\AA^n,\geq 0}^{ch}(k): \arrow[u, "\text{quasi-iso}"] & {\cdots} \arrow[r]    & \P_{\AA^n,1}^{ch}(k) \arrow[r,"d_\mathcal{P}"] \arrow[u] & \mathrm{Ker}(\dd_\P) \arrow[r] \arrow[u] & 0 \arrow[u]           &
\end{tikzcd}\nn\ee
and the vertical map is a quasi-isomorphism by virtue of \cref{thm: ChiralIsoLie}.
By the left lifting property of the cofibration $\bullet\rightarrow \LieInfinityOperad$ we have the quasi-isomorphism $\LieInfinityOperad\xrightarrow\sim \P_{\AA^n,\geq 0}^{ch}$ and hence the required quasi-isomorphism
$\LieInfinityOperad\xrightarrow\sim \P_{\AA^n}^{ch}$.
\end{proof}

\subsection{Homotopy polysimplicial chiral algebras on $\AA^n$}\label{sec: homotopy polysimplicial chiral algebras}
Malikov and Schechtman \cite{MalikovSchechtman} define the notion of a \dfn{homotopy chiral algebra} on a curve $X$ as a map
from $\LieInfinityOperad$ to the usual chiral operad $\P_{X}^{ch}[\A](k):=\Hom_{\D_{X^k}}\left(j_*j^{*}\mathcal{A}^{\boxtimes k},\Delta^{}_*\A\right)$ associated to a dg $\D_X$-module $\mathcal{A}$ (here $j:\Conf_k(X)\hookrightarrow X^k\hookleftarrow X:\Delta$). It is natural to generalize their definition to higher dimensions, at least for affine spaces $\AA^n$, using the \polysimplicial model, as follows.

Suppose that $(\A,\dd_\A)$ is a quasi-coherent dg $\D_{\AA^n}$-module.
We define
\[\P^{ch}_{\AA^n}[\A](k):=\Hom_{\DAAnk}\left(\TS^{k}_{\AA^n}(\mathcal{A}^{\boxtimes k}),\Delta_*\A\right)\]
with the differential $\dd_\P[\A]$ given by
\[\left(\dd_\P[\A]\mu\right)(-)=\dd_\A\left(\mu(-)\right)\pm\mu\left((\dd_{\A^{\boxtimes k}}+\ddts)(-)\right)\]
(where $\pm$ is the required Koszul sign).

This defines a dg operad (by a generalization of \cref{prop: operad}).
\begin{defn}
A \dfn{homotopy polysimplicial chiral algebra} structure on $\mathcal{A}$ is a map of dg operads
\[  \LieInfinityOperad\rightarrow \P_{\AA^n}^{ch}[\A].\]
\end{defn}

In this language, our result \cref{thm: quasi-isomorphism from LieInfinity} implies the following:
\begin{cor}
The degree-shifted canonical sheaf $\oms := \omega_{\AA^n}[n-1]$ on $\AA^n$ has the structure of a homotopy polysimplicial chiral algebra.  \qed
\end{cor}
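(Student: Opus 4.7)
The plan is to deduce the corollary immediately from \cref{thm: quasi-isomorphism from LieInfinity} by identifying the two dg operads in question. Concretely, I would observe that when one specializes the general construction $\P^{ch}_{\AA^n}[\A]$ to $\A = \oms$, the resulting dg operad coincides on the nose with the polysimplicial unit chiral operad $\P^{ch}_{\AA^n}$ defined in \cref{sec: TS unit chiral operad}. The underlying graded vector spaces match tautologically: both are
\[ \Hom_{\DAAnk}\bigl(\TS^{k}_{\AA^n}((\oms)^{\boxtimes k}),\, \Delta_*\oms\bigr) \]
for each arity $k \geq 1$. The composition, unit, and symmetric group actions in $\P^{ch}_{\AA^n}[\oms]$ are defined by the same formulas as in $\P^{ch}_{\AA^n}$, so the operadic structures agree.

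For the differentials, note that $\oms = \omega_{\AA^n}[n-1]$ is simply the canonical sheaf placed in cohomological degree $1-n$, equipped with zero internal differential $\dd_\A = 0$. The definition
\[ (\dd_\P[\A]\,\mu)(-) = \dd_\A(\mu(-)) \pm \mu\bigl((\dd_{\A^{\boxtimes k}} + \ddts)(-)\bigr) \]
therefore reduces to $(\dd_\P[\oms]\,\mu)(-) = \pm\,\mu(\ddts(-))$, which (up to the uniform Koszul sign) is exactly the differential $\dd_\P$ of the polysimplicial unit chiral operad. Hence $\P^{ch}_{\AA^n}[\oms] = \P^{ch}_{\AA^n}$ as dg operads.

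Consequently, the quasi-isomorphism of dg operads $\LieInfinityOperad \xrightarrow{\sim} \P^{ch}_{\AA^n}$ provided by \cref{thm: quasi-isomorphism from LieInfinity} is in particular a map of dg operads $\LieInfinityOperad \to \P^{ch}_{\AA^n}[\oms]$, which by definition furnishes the desired homotopy polysimplicial chiral algebra structure on $\oms$. There is no real obstacle here: the entire content has been packaged into the preceding theorem, and the only thing to check is the straightforward compatibility of the two definitions of the chiral operad when $\A$ is a module (rather than a genuine dg module) concentrated in a single cohomological degree. The only minor bookkeeping is to confirm the sign convention matches, which amounts to checking that the Koszul sign contributes trivially when $\dd_\A = 0$.
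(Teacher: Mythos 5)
Your proposal is correct and is essentially the paper's own (implicit) argument: the corollary is stated as an immediate consequence of \cref{thm: quasi-isomorphism from LieInfinity}, precisely because $\P^{ch}_{\AA^n}[\oms]$ coincides with the polysimplicial unit chiral operad $\P^{ch}_{\AA^n}$ once one notes that $\dd_{\oms}=0$, so the map of dg operads from $\LieInfinityOperad$ is by definition the required structure. The only thing you add is the explicit (and correct) unwinding of the definitions, including the sign bookkeeping, which the paper leaves to the reader.
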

\begin{rem}
Our definition is not equivalent to that in \cite{FrancisGaitsgory} since we are working with a specific model (the \polysimplicial model) and the dg operad depends on the choice of model. In our definition, the hom spaces are not derived; one needs to work with $\infty-$operads to handle the fully derived chiral operations.  However, using Drinfeld's dg quotient construction, one can embed our polysimplicial hom space into the derived one. Thus, we expect homotopy chiral algebras in the \polysimplicial model can be turned into special examples of higher dimensional chiral algebras in \cite{FrancisGaitsgory}. Details are left to future work.
\end{rem}

\section{Operad structure of $\P^{ch}_{\AA^n,\bul}$}\label{sec: operad}

In \cite[\S1.3.1]{BDChiralAlgebras}, an operad consisting of the chiral operations of a $\mathcal{D}_X$-module $M$ on a smooth curve $X$ is constructed. For a more detailed study of this operad and its relation to vertex algebra cohomology, we refer the reader to \cite{bakalov2020chiral} and the references given there. In this section, we extend the construction in \cite[\S1.3.1]{BDChiralAlgebras} to $X=\AA^n$ for all $n\geq 1$ using our polysimplicial model.

We will first define the symmetric group action on each component of $\P^{ch}_{\AA^n}[M]$ explicitly. For a nonempty finite index set $I$ with $k>0$-many elements, the \polysimplicial chiral $I$-operations are the chain complex of $\kk$-vector spaces
\be \P^{ch}_{\AA^n}[M](k) := \P^{ch}_{\AA^n}[M](I\to \{\star\}) := \Hom_{\D_{(\AA^n)^I}}\left(\TS_{\AA^n}^{I}(M^{\boxtimes I}), \Delta_*^{I/\{\star\}}M \right).\nn\ee
Here, $\Delta^{I/\{\star\}}$ is defined as the diagonal embedding $\Delta^{I/\{\star\}}:(\AA^n)^{\{\star\}}\cong\AA^n\into (\AA^n)^I$.
\begin{defn}
We define the action of the symmetric group $S_{|I|}$ on  $\P^{ch}_{\AA^n}[M](I\to \{\star\})$ by
  \[
 (\sigma\mu)(-):=\sigma\mu(\sigma^{-1}-)
  \]
where
\begin{enumerate}[-]
\item
the action on $\TS_{\AA^n}^I(M^{\boxtimes I})$ is given by
\begin{align}
&\sigma\left(\alpha(z^r_i, u_{ij},\dd u_{ij})\cdot m_1\boxtimes \cdots\boxtimes m_k\right)\nn\\
&
\quad :=(-1)^{\chi(m_1,\dots,m_k;\sigma)}\alpha(z^r_{\sigma(i)},u^r_{\sigma(i)\sigma(j)},\dd u^r_{\sigma(i)\sigma(j)}) \cdot m_{\sigma^{-1}(1)}\boxtimes \cdots\boxtimes m_{\sigma^{-1}(k)}\nn
\end{align}
where $m_i\in \Gamma(\AA^n,M)$ and $\chi(m_1,\dots,m_k;\sigma)$ is the Koszul sign.\footnote{
Note here that $z^r_1 = z^r \ox 1 \ox \dots \ox 1 \in \O_{\AA^n}^{\ox k} \cong \O_{(\AA^{n})^k}$ and so on, and hence
\[
\cdots\boxtimes \underbrace{f(z)m}_{\text{i-th}}\boxtimes\cdots =f(z_i)\cdot \left(\cdots\boxtimes \underbrace{m}_{\text{i-th}}\boxtimes\cdots \right).
\]
}
\item the action on $\Delta_*M$ is given by
\[
\sigma\left(m\otimes f(\lambda^r_1,\dots,\lambda^r_k)\right):=m\otimes f(\lambda^r_{\sigma(1)},\dots,\lambda^r_{\sigma(k)}).
\]
\end{enumerate}
\end{defn}

Our main result of this section is the following.

\begin{prop}\label{prop: operad}
Let $M$ be a quasicoherent right $\D$-module on $\AA^n$.
  The dg $S$-module defined by
  \[
 \P^{ch}_{\AA^n}[M](k)= \begin{cases}
    0, & \mbox{if } k=0 \\
    \Hom_{\D_{(\AA^n)^k}}\left(\TS_{\AA^n}^{k}(M^{\boxtimes k}), \Delta_*^{\{1,\dots,k\}/\{\star\}}M\right) , & \mbox{}k\geq 1.
  \end{cases}
  \]
  has a dg operad structure.
\end{prop}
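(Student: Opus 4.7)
The plan is to adapt the one-dimensional BD construction of the chiral operad \cite[\S1.3.1]{BDChiralAlgebras} (see also \cite{bakalov2020chiral}) to the polysimplicial model on $\AA^n$. I would proceed in three stages: (i) construct composition maps $\gamma_\pi$ indexed by surjections $\pi: J \twoheadrightarrow I$ of finite index sets; (ii) verify $\D$-linearity and $S$-equivariance of the composition; and (iii) establish associativity together with the Leibniz rule for $\dd_\P$.

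For (i), the essential ingredient is a structural $\D_{(\AA^n)^J}$-linear map
\[c_\pi: \TS^J_{\AA^n}(M^{\boxtimes J}) \longrightarrow \Delta^{(J/I)}_*\Bigl[\TS^I_{\AA^n}\Bigl(\boxtimes_{i\in I} \Delta^{(J_i/i)}_* M\Bigr)\Bigr]\]
reflecting the factorization of $j_*j^*M^{\boxtimes J}$ near the cluster diagonal $\Delta^{(J/I)}$. The polysimplex $\triangle_{n-1}^{\times \binom{|J|}{2}}$ decomposes naturally into ``short'' factors, indexed by pairs $\{j,j'\}$ both lying in one cluster $J_i = \pi^{-1}(i)$, and ``long'' factors, indexed by pairs in distinct clusters. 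The short factors feed the inner $\TS^{J_i}$'s used by the $\nu_i$, while the long factors feed the outer $\TS^I$ used by $\mu$. Using \cref{thm: TSIE gens and rels} one can write $c_\pi$ explicitly on generators. Composition is then
\[\gamma_\pi(\mu, \{\nu_i\}) := \mu \circ \TS^I(\boxtimes_i \nu_i) \circ c_\pi,\]
which lands in $\Delta^{(J/I)}_* \Delta^{(I/\star)}_* M = \Delta^{(J/\star)}_* M$, as required.

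For (ii) and (iii), $\D$-linearity of $c_\pi$ is inherited from the $\D$-module structures on $\TS$ (\cref{sec: shifted canonical bundle}) and on $\Delta_*$; preservation of the boundary conditions from \cref{def: TS} is a direct verification once the short/long partition is set up, the $\nu_i$ absorbing singularities along short edges while long-edge conditions pass through. Equivariance under $S_{|J|}$ reduces to relabeling of polysimplex factors, which matches the symmetric action defined just before the proposition since that action permutes the $z^r_i$ and the $u^r_{ij}$ coherently. Associativity for iterated surjections $K \twoheadrightarrow J \twoheadrightarrow I$ follows from the transitivity of the short/long classification under cluster refinement: a pair in $K$ that is long for $K \twoheadrightarrow I$ is routed either through the outer $I$-factor or through the intermediate $J$-cluster, and the two orders of composition produce the same $c_\pi$. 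Finally, the Leibniz rule for $\dd_\P = (-)\circ \dd_{\TS}$ holds since $c_\pi$ commutes with the de Rham differential on the polysimplex, which acts factor-wise and therefore respects the short/long split.

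The principal obstacle is constructing $c_\pi$ explicitly and verifying its associativity. In dimension $n=1$ the polysimplex is trivial and the factorization reduces to the standard BD factorization of $j_*j^*$; in higher dimensions, one must additionally prescribe how several ``long'' polysimplex factors $\triangle_{n-1}^{(j,j')}$ with $j \in J_i$, $j' \in J_{i'}$ are to be packaged into the single outer factor $\triangle_{n-1}^{(i,i')}$ associated to the cluster pair $\{i,i'\}$, once the collapsing $z^r_j = z^r_{j'}$ within each cluster has been performed. This reconciliation of the $u^r_{jj'}$ coordinates is the genuinely new content beyond BD, and I expect the generators-and-relations description of \cref{thm: TSIE gens and rels} to reduce it to a finite combinatorial check on the generators $\tfrac{u^r_{jj'}}{(z^r_j - z^r_{j'})^m}$ and $\tfrac{\dd u^r_{jj'}}{(z^r_j - z^r_{j'})^m}$.
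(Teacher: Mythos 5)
Your overall strategy is the paper's: split the polysimplicial data for $K\onto J$ into intra-cluster and inter-cluster pieces, feed the former to the inner operations and the latter to the outer one. Indeed the split is not something you need to ``construct'' -- by \cref{thm: TSIE gens and rels} one has on the nose $\TS^K(M^{\boxtimes K})=\TS^{\{K_j\}_{j\in J}}\ox_{\O_{(\AA^n)^K}}\bigl(\boxtimes_{j\in J}\TS^{K_j}(M^{\boxtimes K_j})\bigr)$. But the proposal stops exactly where the proof begins. First, your displayed $c_\pi$ does not typecheck: its target already contains $\Delta^{(J_i/i)}_*M$, which can only be produced by applying the $\nu_i$, yet your composition $\mu\circ\TS^I(\boxtimes_i\nu_i)\circ c_\pi$ applies the $\nu_i$ \emph{after} $c_\pi$. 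The correct order is: apply $\boxtimes_j\nu_j$ to the intra-cluster factors first, then transport the result to $\Delta^{K/J}_*\TS^J(M^{\boxtimes J})$, then apply $\Delta_*\mu_J$. Second, the transport map -- the ``reconciliation of the $u^r_{jj'}$ coordinates'' that you correctly flag as the genuinely new content -- is precisely what you leave as an expectation. The paper's answer (\cref{def: c maps}, \cref{def: Imap}) is that the packaging of the many long factors $\triangle_{n-1}^{(k,k')}$ into the single $\triangle_{n-1}^{(i,i')}$ is the diagonal pullback $u^r_{kk'}\mapsto u^r_{\pi(k)\pi(k')}$, $z^r_k\mapsto z^r_{\pi(k)}$, well defined because it sends the generator $u^r_{kk'}/(z^r_{kk'})^m$ to the generator $u^r_{\pi(k)\pi(k')}/(z^r_{\pi(k)\pi(k')})^m$; this is where \cref{thm: TSIE gens and rels} is actually used, and it is a construction, not a check one can defer.

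There is a second omission that would make your composition fail to be a map of $\D$-modules even granting $c_\pi$. After the inner operations one lands in $\TS^{\{K_j\}}\bigl(\boxtimes_j\Delta^{K_j/\{j\}}_*M\bigr)$, whose elements carry polynomial factors in the formal derivatives $\lambda^r_k$, $k\in K$, coming from the inner pushforwards. Before $\mu_J$ can act these must be commuted to the right past the $\TS^{\{K_j\}}$ factor via the Leibniz rule (the map $v$ in the paper, with $\mathbf I = c\circ v$); your sketch never mentions this, and without it the composite does not land in $\Delta^{K/\{\star\}}_*M$ as a right $\D_{(\AA^n)^K}$-module map. Finally, associativity is asserted via ``transitivity of the short/long classification,'' but what is actually needed is the three separate compatibilities of \cref{OperadLem}: that $\mathbf I$ composes under refinement of clusters, that it commutes with pushforwards of the $\D$-linear $\mu_{J_i}$, and that the total collapse factors through the partial ones. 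These are each short but none is automatic, and the chain of equalities proving $\gamma(\gamma(\mu_I\ox\cdots)\ox\cdots)=\gamma(\mu_I\ox\gamma(\cdots))$ uses all three.
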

\begin{proof}
In the remainder of this section, we shall define the composition map
\[
\gamma:
\P^{ch}_{\AA^n}[M](J\onto \{\star\})\ox_{\mathbf{k}}\left(\ox_{\mathbf{k}}^{j\in J}
\P^{ch}_{\AA^n}[M](K_j\onto \{j\})\right)\rightarrow \P^{ch}_{\AA^n}[M](K\onto \{\star\})
\]
for a surjective map $K\onto J$ (here $K_j\subset K$ denotes the preimage of $j\in J$) and check in detail that composition is associative. The other axioms of the classical definition of an operad due to \cite{May} are straightforward to verify.
\end{proof}
Recall the notation $\TS^{I,E}_{\AA^n}$ from \cref{sec: gens and rels for TS}, where $E$ are edges. Given an $I$-family of disjoint sets $\{J_i\}_{i\in I}$, let us write
\begin{equation} \TS^{\{J_i\}_{i\in I}} := \TS^{(\sqcup_{i\in I} J_{i}), (\sqcup_{i\neq i'} J_i \times J_{i'})}. \nn\end{equation}
(In this section, we shall sometimes omit the subscript $\AA^n$.)

By \cref{thm: TSIE gens and rels}, we have
\[
\TS^K(M^{\boxtimes K})=\TS^{\{K_j\}_{j\in J}}
\otimes_{\O_{(\AA^n)^{K}}}\left(\mathop{\boxtimes}_{j\in J}\TS^{K_j}(M^{\boxtimes K_j})\right)=\TS^{\{K_j\}_{j\in J}}
\left(\mathop{\boxtimes}_{j\in J}\TS^{K_j}(M^{\boxtimes K_j})\right).
\]
Given chiral operations $\mu_{K_j} \in \P^{ch}_{\AA^n}[M](K_j\onto \{j\})$ for each $j\in J$, we get the map
\[
\mathop{\boxtimes}_{j\in J}\mu_{K_j}: \TS^K\left(M^{\boxtimes K}\right)
\rightarrow  \TS^{\{K_j\}_{j\in J}}
\left(\mathop{\boxtimes}_{j\in J}\Delta^{ K_{j}/\{j\}}_*M\right).
\]
In order to apply a chiral $J$-operation $\mu_J$ to the result, we need a map
\begin{equation}\label{DiagonalPullBack}
\TS^{\{K_j\}_{j\in J}}\left(\mathop{\boxtimes}_{j\in J}\Delta^{ K_{j}/\{j\}}_*M\right)\rightarrow \Delta^{ K/J}_*\TS^{J}\left(M^{\boxtimes J}\right).
\end{equation}
Here, $\Delta^{K/J}$ is denotes the diagonal embedding $\Delta^{k/J}:(\AA^n)^J\into (\AA^n)^K$ induced by the surjection $K\onto J$.

With the next step in mind, namely checking associativity of composition, we will construct such maps in a slightly more general setting.

Let $K\onto J\onto I$ be a sequence of surjective maps, $J_i\subset J$ the preimage of $i\in I$ and $K_{J_i}\subset K$ the preimage of $J_i$. Such a situation is depicted in \cref{composition}. Suppose that $M_{J_i}$ are $\D$-modules on $(\AA^n)^{J_i}$. We will define, in \cref{def: Imap} below, a map
\[
\mathbf{I}^{[K_{J_i}]_{i\in I}}_{[J_i]_{i\in I}}: \TS^{\{K_{J_i}\}_{i\in I}}\left(\mathop{\boxtimes}_{i\in I}\Delta^{ K_{J_i}/J_i}_*M_{J_i}\right)\rightarrow \Delta^{K/J}_*\TS^{\{J_i\}_{i\in I}}\left(\mathop{\boxtimes}_{i\in I}M_{J_i}\right).
\]
(Taking $J=I$ and $M_{J_i}=M$, we recover (\ref{DiagonalPullBack}).)
Here, the source of this map is
\[\TS^{\{K_{J_i}\}_{i\in I}}\left(\mathop{\boxtimes}_{i\in I}\Delta^{ K_{J_i}/J_i}_*M_{J_i}\right):=\left(  (\mathop{\boxtimes}_{i\in I}M_{J_i})\ox_{\kk[\lambda^r_{j}]^{1\leq r\leq n}_{j\in J}}\kk[\lambda^r_{k}]^{1\leq r\leq n}_{k\in K}\right)\ox_{\O_{(\AA^n)^K}}\TS^{\{K_{J_i}\}_{i\in I}},\]
where $\mathop{\boxtimes}\limits_{i\in I}M_{J_i}$ is a $\O_{(\AA^n)^K}$-module via the map $\O_{(\AA^n)^K}\rightarrow \O_{(\AA^n)^J}$. It is generated by $(\mathop{\boxtimes}\limits_{i\in I}M_{J_i})\ox_{\O_{(\AA^n)^K}}\TS^{\{K_{J_i}\}_{i\in I}}$ as a $\D$-module. The identity map
  \[
  \id:(\mathop\boxtimes_{i\in I}M_{J_i})\ox_{\O_{(\AA^n)^K}}\TS^{\{K_{J_i}\}_{i\in I}}\rightarrow (\mathop\boxtimes_{i\in I}M_{J_i})\ox_{\O_{(\AA^n)^K}}\TS^{\{K_{J_i}\}_{i\in I}}
  \]
induces a map of $\D$-modules
  \begin{gather}
v^{[K_{J_i}]_{i\in I}}_{[J_i]_{i\in I}}: \left(  (\mathop\boxtimes_{i\in I}M_{J_i})\ox_{\kk[\lambda^r_{j}]^{1\leq r\leq n}_{j\in J}}\kk[\lambda^r_{k}]^{1\leq r\leq n}_{k\in K}\right)\ox_{\O_{(\AA^n)^K}}\TS^{\{K_{J_i}\}_{i\in I}}\nn\\
\rightarrow\left(  (\mathop\boxtimes_{i\in I}M_{J_i})\ox_{\O_{(\AA^n)^K}}\TS^{\{K_{J_i}\}_{i\in I}}\right)\ox_{\kk[\lambda^r_{j}]^{1\leq r\leq n}_{j\in J}}\kk[\lambda^r_{k}]^{1\leq r\leq n}_{k\in K}.\nn
\end{gather}
given by the Leibniz rule (by regarding $\lambda^r_k$ as derivatives $\del_{z^r_k}$ and moving them out to the right).

\begin{figure}[htp]

\tikzset{every picture/.style={line width=0.75pt}} 

\begin{tikzpicture}[]
\begin{scope}[canvas is xz plane at y=6]
\draw[thick,dashed] (-2,0,0) rectangle (8,7,0) node[label={[black]right:$ K$}] {};
   \draw[dashed] (6,3,0) circle (1.5cm);
        \draw[thick] (6.8,3.5,0) circle (0.5cm);
            \draw[fill=black] (6.8,3.7,0) circle (0.1cm);
            \draw[fill=black] (6.6,3.5,0) circle (0.1cm);
            \draw[fill=black] (7.0,3.5,0) circle (0.1cm);
        \draw[thick] (5.3,3.5,0) circle (0.5cm);
            \draw[fill=black] (5.3,3.4,0) circle (0.1cm);
        \draw[thick] (6,2.5,0) circle (0.5cm);
            \draw[fill=black] (6.3,2.5,0) circle (0.1cm);
            \draw[fill=black] (6.0,2.3,0) circle (0.1cm);
    \draw[dashed] (2,2,0) circle (1.5cm) node[label={[black]right:$\;\;\quad\qquad K_{J_i}$}] {};
        \draw[thick] (2,2.8,0) circle (0.5cm);
            \draw[fill=black] (1.8,3.0,0) circle (0.1cm);
            \draw[fill=black] (2.3,2.6,0) circle (0.1cm);
            \draw[fill=black] (2.2,3.0,0) circle (0.1cm);
            \draw[fill=black] (1.8,2.7,0) circle (0.1cm);
        \draw[thick] (2,1.3,0) circle (0.5cm);
            \draw[fill=black] (2,1.5,0) circle (0.1cm);
            \draw[fill=black] (2.1,1.1,0) circle (0.1cm);
    \draw[dashed] (0,5,0) circle (1.5cm);
        \draw[thick] (0,5,0) circle (0.5cm) node[label={[black]right:$\quad K_j$}] {};
            \draw[fill=black] (0,5.2,0) circle (0.1cm);
            \draw[fill=black] (0.2,4.8,0) circle (0.1cm);
            \draw[fill=black] (-0.1,4.8,0) circle (0.1cm);
\end{scope}
\begin{scope}[canvas is xz plane at y=3]
\draw[thick,dashed] (-2,0,0) rectangle (8,7,0) node[label={[black]right:$ J$}] {};
    \draw[thick] (6,3,0) circle (1.5cm);
        \draw[fill=black] (6.8,3.5,0) circle (0.1cm);
        \draw[fill=black] (5.3,3.5,0) circle (0.1cm);
        \draw[fill=black] (6,2.5,0) circle (0.1cm);
    \draw[thick] (2,2,0) circle (1.5cm) node[label={[black]right:$\;\;\quad\qquad J_i$}] {};
        \draw[fill=black] (2,2.8,0) circle (0.1cm);
        \draw[fill=black] (2,1.3,0) circle (0.1cm);
    \draw[thick] (0,5,0) circle (1.5cm);
        \draw[fill=black] (0,5,0) circle (0.1cm) node[label={[black]right:$ j$}] {};
\end{scope}
\begin{scope}[canvas is xz plane at y=0]
\draw[thick,dashed] (-2,0,0) rectangle (8,7,0) node[label={[black]right:$ I$}] {};
    \draw[fill=black] (6,3,0) circle (0.1cm);
    \draw[fill=black] (2,2,0) circle (0.1cm) node[label={[black]right:$ i$}] {};
    \draw[fill=black] (0,5,0) circle (0.1cm);
\end{scope}
\end{tikzpicture}
  \centering
  \caption{Depiction of $K\onto J\onto I$.}\label{composition}
\end{figure}

\begin{subequations}\label{def: c maps}
Next we observe that there is a well-defined map
\begin{equation}
 \TS^{\{K_{J_i}\}_{i\in I}}\rightarrow \TS^{\{J_i\}_{i\in I}}
\nn\end{equation}
defined by sending
\be z^r_k \mapsto z^r_{\pi(k)},\qquad
 u^r_{kk'} \mapsto u^r_{\pi(k)\pi(k')},\ee
where $\pi : K \onto J$ is the surjection sending $K_j \onto \{j\}$ for each $j\in J$. This is a $\kk[\lambda^r_j]^{1\leq r\leq n}_{j\in J}-$module map if we use $\lambda^r_j=\sum\limits_{k\in K_j} \lambda^r_k$ to define the $\kk[\lambda^r_j]^{1\leq r\leq n}_{j\in J}-$module structure on $ \TS^{\{K_{J_i}\}_{i\in I}}$.
Thus we get a $\D$-module map
\begin{gather}
c^{[K_{J_i}]_{i\in I}}_{[J_i]_{i\in I}}:  \left(  (\mathop\boxtimes_{i\in I}M_{J_i})\ox_{\O_{(\AA^n)^K}}\TS^{\{K_{J_i}\}_{i\in I}}\right)\ox_{\kk[\lambda^r_{j}]^{1\leq r\leq n}_{j\in J}}\kk[\lambda^r_{k}]^{1\leq r\leq n}_{k\in K}\nn\\
\rightarrow \left(  (\mathop\boxtimes_{i\in I}M_{J_i})\ox_{\O_{(\AA^n)^J}}\TS^{\{J_i\}_{i\in I}}\right)\ox_{\kk[\lambda^r_{j}]^{1\leq r\leq n}_{j\in J}}\kk[\lambda^r_{k}]^{1\leq r\leq n}_{k\in K}.
\end{gather}
\end{subequations}

\begin{defn}\label{def: Imap}
  We define a $\D$-module map
  \[
\mathbf{I}^{[K_{J_i}]_{i\in I}}_{[J_i]_{i\in I}}:  \TS^{\{K_{J_i}\}_{i\in I}}\left(\mathop\boxtimes_{i\in I}\Delta^{ K_{J_i}/J_i}_*M_{J_i}\right)\rightarrow \Delta^{K/J}_*\TS^{\{J_i\}_{i\in I}}\left(\mathop\boxtimes_{i\in I}M_{J_i}\right)
  \]
  to be the composition $c^{[K_{J_i}]_{i\in I}}_{[J_i]_{i\in I}}\circ v^{[K_{J_i}]_{i\in I}}_{[J_i]_{i\in I}}$.
\end{defn}
\begin{rem}
  In the case of $\AA^1$, the above map is an isomorphism of $\D$-modules. One can show that it is a quasi-isomorphism in the general case. In fact, the map $\mathbf{I}^{[K_{J_i}]_{i\in I}}_{[J_i]_{i\in I}}$ defined above is an explicit construction of the quasi-isomorphism in the projection formula \cite[Corollary 1.7.5]{HTT_Dmodules2008}. 
\end{rem}

We are now ready to define \dfn{operadic composition}. Given a surjection $K \onto J$, define
\[
\gamma:\P^{ch}_{\AA^n}[M](J\onto \{\star\})\ox_{\mathbf{k}}\left(\ox_{\mathbf{k}}^{j\in J}\P^{ch}_{\AA^n}[M](K_j\onto \{j\})\right)\rightarrow \P^{ch}_{\AA^n}[M](K\onto \{\star\})
\]
by
\[
\gamma\left(\mu_{J}\otimes (\otimes^{j\in J}\mu_{K_j})\right):=\left(\Delta^{K/J}_*\mu_J\right)\circ\left(\mathbf{I}^{[K_{j}]_{j\in J}}_{[\{j\}]_{j\in J}}\circ(\mathop\boxtimes_{j\in J}\mu_{K_j})\right).
\]

Now we check the associativity of composition. We need to show that
\[
\gamma\left(\gamma\left(\mu_{I}\otimes(\otimes^{i\in I}\mu_{J_i})\right)\otimes (\otimes^{j\in J_i}\mu_{K_j})\right)=\gamma\left(\mu_{I}\otimes\left(\otimes^{i\in I}\gamma\left(\mu_{J_i}\otimes (\otimes^{j\in J_i}\mu_{K_j})\right)\right)\right)
\]
We first prove the following lemma.

\begin{lem}\label{OperadLem}
\begin{enumerate}[(1)]
\item  We have  $\mathbf{I}^{[K_j]_{j\in J}}_{[\{j\}]_{j\in J}}=\mathbf{I}^{[K_{J_i}]_{i\in I}}_{[{J_i}]_{i\in I}}\circ\mathop{\boxtimes}\limits_{i\in I}\mathbf{I}^{[K_j]_{j\in J_i}}_{[\{j\}]_{j\in J_i}}$. That is, the following diagram commutes
  \[
\begin{tikzcd}
{\TS^{\{K_j\}_{j\in J}}\left(\mathop{\boxtimes}\limits_{j\in J}\Delta^{ K_j/\{j\}}_*M\right) } \arrow[dd,"\mathop{\boxtimes}\limits_{i\in I}\mathbf{I}^{[K_j]_{j\in J_i}}_{[\{j\}]_{j\in J_i}}"] \arrow[dddd, "\mathbf{I}^{[K_j]_{j\in J}}_{[\{j\}]_{j\in J}}", bend left=78]         \\
                                                                                                                                                              \\
{\TS^{\{K_{J_i}\}_{i\in I}}\left(\mathop{\boxtimes}\limits_{i\in I}\Delta^{ K_{J_i}/J_i}_*\TS^{J_i}(M^{\boxtimes J_i})\right) } \arrow[dd,"\mathbf{I}^{[K_{J_i}]_{i\in I}}_{[{J_i}]_{i\in I}}"] \\
                                                                                                                                                              \\
\Delta^{K/J}_*\TS^J\left(\mathop{\boxtimes}\limits_{i\in I}M^{\boxtimes J_i}\right)
\end{tikzcd}
\]
\item We have $ \Delta^{K/J}_*\left(\mathop{\boxtimes}\limits_{i\in I}\mu_{J_i}\right)\circ \mathbf{I}^{[K_{J_i}]_{i\in I}}_{[J_i]_{i\in I}}=\mathbf{I}^{[K_{J_i}]_{i\in I}}_{[J_i]_{i\in I}}\circ\left(\mathop{\boxtimes}\limits_{i\in I}\Delta^{K_{J_i}/J_i}_*\mu_{J_i}\right)$. That is, the following diagram commutes
\[
\begin{tikzcd}
\TS^{\{K_{J_i}\}_{i\in I}}\left(\mathop{\boxtimes}\limits_{i\in I}\Delta^{ K_{J_i}/J_i}_*\TS^{J_i}(M^{\boxtimes J_i})\right) \arrow[d,"\mathbf{I}^{[K_{J_i}]_{i\in I}}_{[{J_i}]_{i\in I}}"'] \arrow[rr,"\mathop{\boxtimes}\limits_{i\in I}\Delta^{ K_{J_i}/J_i}_*\mu_{J_i}"]       &  & \TS^{\{K_{J_i}\}_{i\in I}}\left(\mathop{\boxtimes}\limits_{i\in I}\Delta^{ K_{J_i}/J_i}_*\Delta^{J_i/\{i\}}_*M\right) \arrow[d,"\mathbf{I}^{[K_{J_i}]_{i\in I}}_{[{J_i}]_{i\in I}}"] \\
\Delta^{K/J}_*\TS^J\left(\mathop{\boxtimes}\limits_{i\in I}M^{\boxtimes J_i}\right) \arrow[rr,"\Delta^{K/J}_*\left(\mathop{\boxtimes}\limits_{i\in I} \mu_{J_i}\right)"] &  & \Delta^{K/J}_*\TS^{\{J_i\}_{i\in I}}\left(\mathop{\boxtimes}\limits_{i\in I}\Delta^{J_i/\{i\}}_*M\right)
\end{tikzcd}
\]
\item We have $\mathbf{I}^{[K_{J_i}]_{i\in I}}_{[\{i\}]_{i\in I}}=\Delta^{K/J}_*\mathbf{I}^{[{J_i}]_{i\in I}}_{[\{i\}]_{i\in I}}\circ \mathbf{I}^{[K_{J_i}]_{i\in I}}_{[J_i]_{i\in I}}$. That is, the following diagram commutes
\[
\begin{tikzcd}
{\TS^{\{K_{J_i}\}_{i\in I}}\left(\mathop{\boxtimes}\limits_{i\in I}\Delta^{ K_{J_i}/J_i}_*\Delta^{J_i/\{i\}}_*M\right)} \arrow[dd,"\mathbf{I}^{[K_{J_i}]_{i\in I}}_{[{J_i}]_{i\in I}}"] \arrow[dddd, "\mathbf{I}^{[K_{J_i}]_{i\in I}}_{[\{i\}]_{i\in I}}", bend left=78] \\
                                                                                                                                                                                    \\
\Delta^{K/J}_*\TS^{\{J_i\}_{i\in I}}\left(\mathop{\boxtimes}\limits_{i\in I}\Delta^{J_i/\{i\}}_*M\right) \arrow[dd,"\Delta^{K/J}_*\mathbf{I}^{[J_i]_{i\in I}}_{[\{i\}]_{i\in I}}"]                                                                                    \\
                                                                                                                                                                                    \\
\Delta^{K/I}_*\TS^{I}\left(M^{\boxtimes I}\right)
\end{tikzcd}
\]
\end{enumerate}
\end{lem}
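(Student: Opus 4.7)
The plan is to verify each of the three commutative diagrams by unwinding the definition $\mathbf{I} = c \circ v$ from \cref{def: Imap} and tracking a generator through the diagram. In all three parts, the source is (after the Leibniz step $v$) generated as a $\D$-module by elements of the form $(\mathop{\boxtimes}_i m_i) \otimes \tau$ with $m_i \in \Gamma((\AA^n)^{J_i}, M_{J_i})$ (or appropriate pushforward thereof) and $\tau \in \TS^{\{K_{J_i}\}_{i \in I}}$, so it suffices to check the equalities on such elements.

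For \textbf{part (1)}, the key observation is that the maps $c$ defined in \cref{def: c maps} are strictly functorial in the surjection used: for the composed surjection $\pi: K \onto J \onto I$ with factorisation through $J$, the induced map
\[\TS^{\{K_{J_i}\}_{i \in I}} \to \TS^{\{J_i\}_{i \in I}}\]
agrees with the composite $\TS^{\{K_{J_i}\}_{i \in I}} \to \TS^{\{K_j\}_{j \in J}}' \to \TS^{\{J_i\}_{i \in I}}$ obtained by first quotienting $K_{J_i} \onto J_i$ inside each block and then $J_i \onto \{i\}$ between blocks, because both prescriptions send $z^r_k \mapsto z^r_{\pi(k)}$ and $u^r_{kk'} \mapsto u^r_{\pi(k)\pi(k')}$. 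Likewise, the Leibniz map $v$ is functorial under nested pushforwards: expanding $\lambda^r_k$ acting on $\tau \in \TS^{\{K_{J_i}\}_{i\in I}}$ can be done in one step, or in two steps by first using $\lambda^r_j = \sum_{k \in K_j} \lambda^r_k$ to move the derivatives down to the intermediate $J$-level and then down to the $K$-level, and the Leibniz rule for iterated derivations guarantees agreement. Combining these two functoriality statements yields the commuting triangle.

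For \textbf{part (2)}, naturality of $\mathbf{I}$ follows because the morphism $\mathop{\boxtimes}_i \mu_{J_i}$ acts only on the $M$-valued tensor factor and is a map of $\D_{(\AA^n)^{J_i}}$-modules (so it commutes with both the lambda-variable action and the relabeling $c$). Tracking a generator $\bigl(\mathop{\boxtimes}_i m_i \bigr) \otimes P(\lambda) \otimes \tau$ through both sides gives $\bigl(\mathop{\boxtimes}_i \mu_{J_i}(m_i)\bigr) \otimes P(\lambda)|_{\lambda^r_j = \sum_{k \in K_j} \lambda^r_k} \otimes (c \tau)$ on each route, using that the $\mu_{J_i}$ are $\kk[\lambda^r_j]_{j \in J_i}$-linear (this is what being a $\D$-module map into $\Delta_*^{J_i/\{i\}} M$ entails).

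For \textbf{part (3)}, the argument is essentially the same as part (1) but for the second surjection $J \onto I$ replacing the inner one: the map $c^{[K_{J_i}]_{i\in I}}_{[\{i\}]_{i \in I}}$ corresponding to the composed surjection $K \onto I$ factors as $c^{[J_i]_{i \in I}}_{[\{i\}]_{i \in I}} \circ c^{[K_{J_i}]_{i \in I}}_{[J_i]_{i \in I}}$ (both simply read off indices via the composite surjection), while the Leibniz expansion $v$ for $K \onto I$ factors accordingly through the intermediate $J$-level since $\lambda^r_i = \sum_{j \in J_i} \lambda^r_j = \sum_{k \in K_{J_i}} \lambda^r_k$. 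The main technical nuisance across all three parts is bookkeeping of the $\D$-module structures (particularly keeping track of the tensor-over-$\kk[\lambda^r_j]$ and tensor-over-$\O$ factorizations), and of the Koszul signs coming from moving shifted sections $\dd \zz\shifted$ past differential forms on the polysimplex; no new ideas beyond direct verification are required.
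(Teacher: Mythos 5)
Your proposal is correct and follows essentially the same route as the paper's (very terse) proof: part (1) rests on the block decomposition $\TS^{\{K_j\}_{j\in J}}(\boxtimes_j\Delta^{K_j/\{j\}}_*M)=\TS^{\{K_{J_i}\}_{i\in I}}\bigl(\boxtimes_i\TS^{\{K_j\}_{j\in J_i}}(\boxtimes_{j\in J_i}\Delta^{K_j/\{j\}}_*M)\bigr)$ together with the compatibility of the relabeling map $c$ and the Leibniz map $v$ with this factorisation, part (2) on the $\mu_{J_i}$ being $\D$-module maps, and part (3) on the composability of $c$ and $v$ under composed surjections — exactly the points you verify. You have simply written out in detail what the paper compresses into one sentence per item.
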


\begin{proof}
  For (1), we only need to note that
  \[
  \TS^{\{K_j\}_{j\in J}}\left(\mathop\boxtimes_{j\in J}\Delta^{ K_j/j}_*M\right)=  \TS^{\{K_{J_i}\}_{i\in I}}\left(\mathop\boxtimes_{i\in I}\TS^{\{K_j\}_{j\in J_i}}\left(\mathop\boxtimes_{j\in J_i}\Delta^{ K_j/\{j\}}_*M\right)\right).
  \]
Part  (2) follows from the fact that $\mu_{J_i}$ are $\D$-module maps and (3) follows from the definition.
\end{proof}

By the above lemma, we have
\begin{align*}
   &\gamma\left(\gamma\left(\mu_{I}\otimes(\otimes^{i\in I}\mu_{J_i})\right)\otimes (\otimes^{j\in J_i}\mu_{K_j})\right)  \\
   & =\Delta^{K/J}_*\left(\Delta^{J/I}_*\mu_I\circ \mathbf{I}^{[J_i]_{i\in I}}_{[\{i\}]_{i\in I}}\circ \left(\mathop\boxtimes_{i\in I}\mu_{J_i}\right)\right)\circ\underline{\mathbf{I}^{[K_j]_{j\in J}}_{[\{j\}]_{j\in J}}}\circ \left(\mathop\boxtimes_{j\in J}\mu_{K_j}\right)\\
   &\underset{\cref{OperadLem}-(1)}{=}\Delta^{K/I}_*\mu_I\circ \Delta^{K/J}_*\mathbf{I}^{[J_i]_{i\in I}}_{[\{i\}]_{i\in I}}\circ \underline{\Delta^{K/J}_*\left(\mathop\boxtimes_{i\in I}\mu_{J_i}\right)\circ \mathbf{I}^{[K_{J_i}]_{i\in I}}_{[J_i]_{i\in I}}}\circ\mathop\boxtimes_{i\in I}\mathbf{I}^{[K_j]_{j\in J_i}}_{[\{j\}]_{j\in J_i}}\circ \left(\mathop\boxtimes_{j\in J}\mu_{K_j}\right)\\
   &\underset{ \cref{OperadLem}- (2)}{=}\Delta^{K/I}_*\mu_I\circ \underline{\Delta^{K/J}_*\mathbf{I}^{[J_i]_{i\in I}}_{[\{i\}]_{i\in I}}\circ \mathbf{I}^{[K_{J_i}]_{i\in I}}_{[J_i]_{i\in I}}}\circ\left(\mathop\boxtimes_{i\in I}\Delta^{K_{J_i}/J_i}_*\mu_{J_i}\right)\circ\mathop\boxtimes_{i\in I}\mathbf{I}^{[K_j]_{j\in J_i}}_{[\{j\}]_{j\in J_i}}\circ \left(\mathop\boxtimes_{j\in J}\mu_{K_j}\right)\\
    &\underset{\cref{OperadLem}-(3)}{=}\Delta^{K/I}_*\mu_I\circ \mathbf{I}^{[K_{J_i}]_{i\in I}}_{[\{i\}]_{i\in I}}\circ\left(\mathop\boxtimes_{i\in I}\Delta^{K_{J_i}/J_i}_*\mu_{J_i}\right)\circ\mathop\boxtimes_{i\in I}\mathbf{I}^{[K_j]_{j\in J_i}}_{[\{j\}]_{j\in J_i}}\circ \left(\mathop\boxtimes_{j\in J}\mu_{K_j}\right)\\
    &=\Delta^{K/I}_*\mu_I\circ \mathbf{I}^{[K_{J_i}]_{i\in I}}_{[\{i\}]_{i\in I}}\circ\left(\mathop\boxtimes_{i\in I}\Delta^{K_{J_i}/J_i}_*\mu_{J_i}\circ\mathbf{I}^{[K_j]_{j\in J_i}}_{[\{j\}]_{j\in J_i}}\circ \left(\mathop\boxtimes_{j\in J_i}\mu_{K_j}\right)\right)\\
    &=\gamma\left(\mu_{I}\otimes\left(\otimes^{i\in I}\gamma\left(\mu_{J_i}\otimes (\otimes^{j\in J_i}\mu_{K_j})\right)\right)\right),
\end{align*}
which is the required associativity property of the composition map $\gamma$.

\section{Propagators and Arnold relations}\label{sec: arnold relations}
In this section we introduce elements
\[ \Prop_{ij}\in \TS_{\AA^n}^k \simeq R\Gamma(\Conf_k(\AA^n),\O) \]
in cohomological degree $n-1$, called \dfn{propagators}. They generate nontrivial cohomology classes
\be [\Prop_{ij}] \in H(\TS_{\AA^n}^k) \cong H(\Conf_k(\AA^n),\O).\nn\ee
We shall show that these cohomology classes obey (a natural analog of) the \dfn{Arnold relations}; see \cref{thm: arnold} below.
Recall that the usual Arnold relations are the identities
\be \frac1{z_{ij} z_{j\ell}} + \frac1{z_{j\ell} z_{\ell i}} + \frac1{z_{\ell i} z_{ij}} = 0, \qquad 1\leq i<j<\ell\leq k, \label{eq: arnold relations}\ee
which hold in the commutative algebra of global sections of the structure sheaf over the configuration space of $k$ points in one dimension,
\be \kk\left[z_1,\dots,z_k\right]\left[\prod\ijk \frac1 {z_{ij}}\right]= \Gamma(\Conf_k(\AA^1),\O) .\nn\ee
Here, as throughout, we continue to 
use the abbreviation $z_{ij} := z_i - z_j$.

Recall that for us $u^1,u^2,\dots,u^n$ denote coordinates on the algebro-geometric $(n-1)$-simplex
\(\triangle_{n-1} := \Spec\left(\kk[u^1,\dots,u^n]/\left< \sum\limits_{r=1}^n u^r - 1 \right> \right).\)
We choose volume form given by, for every $1\leq r\leq n$,
\begin{align} \Vol(\triangle_{n-1}) := (-1)^n\dd u^1 \dd u^2 \dots  \dd u^{n-1} &=
(-1)^r\dd u^1 \dots \widehat{\dd u^r} \dots \dd u^{n} \nn\\
& =\sum_{s=1}^n (-1)^s u^s \dd u^1 \dots \widehat{\dd u^s} \dots \dd u^{n}.\nn\end{align}



\subsection{Propagators}
Recall that $\{u^r_{ij}\}^{1\leq r\leq n}\ijk$ are coordinates on the polysimplex $\triangle_{n-1}^{\times \binom k2}$.
As a matter of notation, we shall write
\be u^r_{ji} := u^r_{ij}, \qquad 1\leq i<j\leq k .\nn\ee
We may then define, for all $1\leq i\neq j\leq k$,
\be \Vol(\triangle_{n-1})_{ij} := (-1)^n\dd u^1_{ij} \dd u^2_{ij} \dots \dd u^{n-1}_{ij} \label{def: vol}\ee
and the \dfn{propagator}
\be \Prop_{ij} := \frac{\Vol(\triangle_{n-1})_{ij}}{z^1_{ij} \dots z^n_{ij}} \in \TS_{\AA^n}^{k,n-1}. \nn\ee
Each $\Prop_{ij}$ defines a nontrivial cohomology class $[\Prop_{ij}] \in H^{n-1}(\TS_{\AA^n}^{k,\bul})$.
For each direction $r$, we have $(z^r_i-z^r_j)[P_{ij}]=0$ by noting that
\[
(z_i^r-z^r_j)P_{ij}=\frac{1}{n-1} \dd_{\TS}\left(\frac{\sum\limits_{s\neq r}\pm u^s_{ij}\dd u^1_{ij}\cdots \widehat{\dd u^{r}_{ij}}\cdots\widehat{\dd u^{s}_{ij}}\cdots \dd u^{n}_{ij}}{z^1_{ij}\cdots \widehat{z^r_{ij}}\cdots z^n_{ij}}\right).
\]
For example, when $n=2$, we have
\[
(z_i^1-z^1_j)P_{ij}=\frac{\dd u^1_{ij}}{z^2_{ij}}=-\ddts\left(\frac{u^2_{ij}}{z^2_{ij}}\right) \quad \text{(recall that}\ \dd u^1_{ij}+ \dd u^2_{ij}=0).
\]
Notice that we should not write $(z_i^1-z^1_j)P_{ij}$ as $\ddts(\frac{u^1_{ij}}{z^2_{ij}})$ because $\frac{u^1_{ij}}{z^2_{ij}}\notin \TS_{\AA^2}^{k,\bul}$ (it does not satisfy the boundary conditions).

The non-exactness of $P_{ij}$ can be seen using the residue operation $\mu_2^{i\rightarrow j}$ of \cref{prop: mu2Residue}: if $P_{ij}$ were $\ddts$-exact, then $\mu_2^{i\rightarrow j}(P_{ij}\dd\zz_i\shifted\boxtimes \dd\zz_j\shifted)$ would vanish since  $\mu_2^{i\rightarrow j}$ is closed, whereas in fact
\[\mu_2^{i\rightarrow j}(P_{ij}\dd\zz_i\shifted\boxtimes \dd\zz_j\shifted  )= \pm\dd\zz_j\shifted. \]

\subsection{Partial application, and the pairing of residues and propagators}\label{sec: pairing of residues and propagators}
The notation $2 \to 1$ in $\mu_2^{2\to 1}$ is a convenient shorthand: the chiral operation $\mu_2^{2\to 1}$ is associated to a surjection of index sets $\{1,2\} \onto \{1\}$. It is instructive to consider an example of \emph{partial} application of this chiral operation:
\[ \mu_2^{2\to 1} \left( P_{12} P_{23} \dd\zz_1\shifted\boxtimes \dd\zz_2\shifted \boxtimes\dd\zz_3\shifted \right) = \pm P_{13} \dd\zz_1\shifted\boxtimes\dd\zz_3\shifted\quad \in \TS^{\{1,3\}}_{\AA^n}\left((\oms)^{\boxtimes \{1,3\}}\right). \]
In practice, here the steps are: first, apply $\mu_2^{2\to 1}$ as defined in \cref{prop: mu2Residue}; second, pull back to a diagonally embedded polysimplex, via the map $u^r_{32} \mapsto u^r_{31}$, $1\leq r\leq n$, cf. \cref{def: c maps}.  More precisely, in operadic language what we are doing here is applying the \emph{parallel composition} of $\mu_2^{\{1,2\}\onto \{1\}}$ with $\id^{\{3\} \to \{3\}}$, which is given by
\[  \mathbf I_{[\{1\},\{3\}]}^{[\{1,2\},\{3\}]} \circ (\mu_2^{2\to 1}\boxtimes \id^{3\to 3}) \]
where $\mathbf I$ is the map of \cref{def: Imap}.
We may then go on to compute
\[ \mu_2^{3\to 1} \circ \mu_2^{2\to 1} \left( P_{12} P_{23} \dd\zz_1\shifted\boxtimes \dd\zz_2\shifted \boxtimes\dd\zz_3\shifted \right) = \pm\dd\zz_1\shifted. \]
where $\mu_2^{3\to 1} \circ \mu_2^{2\to 1}$ is really shorthand for the operadic composition of \cref{sec: operad}:
\[\mu_2^{3\to 1} \circ \mu_2^{2\to 1} := \gamma( \mu_2^{3\to 1} \ox \left( \mu_2^{2\to 1} \ox \id^{3\to 3} \right) := \Delta_*^{2\to 1}(\mu_2^{3\to 1}) \circ \left( \mathbf I_{[\{1\},\{3\}]}^{[\{1,2\},\{3\}]} \circ (\mu_2^{2\to 1}\boxtimes \id^{3\to 3}) \right) .\]

Similarly, one has
\[ \mu_2^{2\to 1} \circ \mu_2^{3\to 2} \left( P_{12} P_{23} \dd\zz_1\shifted\boxtimes \dd\zz_2\shifted \boxtimes\dd\zz_3\shifted \right) = \pm\dd\zz_1\shifted, \]
but for example $\mu_2^{3\to 1} \left( P_{12} P_{23} \dd\zz_1\shifted\boxtimes \dd\zz_2\shifted \boxtimes\dd\zz_3\shifted \right)=0$ and hence
\[ \mu_2^{2\to 1} \circ \mu_2^{3\to 1} \left( P_{12} P_{23} \dd\zz_1\shifted\boxtimes \dd\zz_2\shifted \boxtimes\dd\zz_3\shifted \right) = 0 . \]
These examples illustrate the way in which the residue maps pair with products of propagators,\footnote{In a fashion which is very familiar from the one-dimensional case, where it appears e.g. as the pairing between of flags and forms in \cite{SV}.} a pattern which will be important in our proofs in \cref{sec: main proof}.

\subsection{The Arnold relations}
\begin{thm}\label{thm: arnold} The Arnold relations hold in cohomology:
\be [\Prop_{ij}][\Prop_{j\ell}] + [\Prop_{j\ell}][\Prop_{\ell i}] + [\Prop_{\ell i}][ \Prop_{ij}]  = 0 , \qquad 1\leq i<j<\ell\leq k, \nn\ee
in $H(\TS_{\AA^n}^k)$.
\end{thm}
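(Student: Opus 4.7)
The plan is to reduce to the three-point case and then exhibit an explicit primitive in $\TS^{3,2n-3}_{\AA^n}$ for the cyclic sum.

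First I would reduce to $k=3$. For any $i<j<\ell$ in $\{1,\dots,k\}$, \cref{thm: TSIE gens and rels} applied with $I=\{i,j,\ell\}$ and $E=\{(i,j),(j,\ell),(i,\ell)\}$ produces a canonical embedding of dg commutative algebras $\TS^{\{i,j,\ell\}}_{\AA^n}\hookrightarrow \TS^k_{\AA^n}$ intertwining the propagators of the same names, so it suffices to establish the relation inside $H^{2(n-1)}(\TS^3_{\AA^n})$. Relabel the three indices as $1,2,3$.

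Next, the central tool is the explicit primitive $Q^r_{ij}\in\TS^{\{i,j\},n-2}_{\AA^n}$ displayed in the paper, satisfying $\ddts Q^r_{ij}= z^r_{ij}\,P_{ij}$. Since $\ddts P_{cd}=0$, the Leibniz rule gives
\[ z^r_{ab}\,P_{ab}P_{cd}=\pm\,\ddts\bigl(Q^r_{ab}\cdot P_{cd}\bigr), \qquad z^r_{cd}\,P_{ab}P_{cd}=\pm\,\ddts\bigl(P_{ab}\cdot Q^r_{cd}\bigr). \]
Combined with $z^r_{12}+z^r_{23}+z^r_{31}=0$ this shows that every $z^r_{uv}$ with $\{u,v\}\subset\{1,2,3\}$ annihilates each cohomology class $[P_{ab}P_{bc}]$, so that the three classes are all supported on the small diagonal $\{z_1=z_2=z_3\}$. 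To construct a primitive $\Phi\in\TS^{3,2n-3}_{\AA^n}$ for the full Arnold sum, the idea is to iterate this procedure in all $n$ directions, using the pointwise $1$D Arnold identity $\sum_{\text{cyc}}\frac{1}{z^r_{ij}z^r_{j\ell}}=0$ (which follows from $z^r_{12}+z^r_{23}+z^r_{31}=0$) in each direction to cancel denominators. The expected shape of the primitive is
\[ \Phi=\sum_{r=1}^{n}\sum_{\text{cyc}}\varepsilon^{r}_{ab,bc}\;Q^r_{ab}\wedge P_{bc}\wedge \Psi^r_{ab,bc}, \]
where $\Psi^r_{ab,bc}$ is a polynomial in the $u^s_{uv}$'s that implements, simplex-face by simplex-face, the linear combination needed to convert the three per-direction Arnold identities into a genuine $\ddts$-exact form whose cross-terms cancel in pairs.

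The main obstacle is combinatorial: the $1$D Arnold identity is a per-direction statement, but the propagator denominators are $n$-fold products $\prod_r z^r_{ab}z^r_{bc}$, and the numerators $\Vol_{ab}\wedge\Vol_{bc}$ live on disjoint simplex factors labelled by the \emph{pair} $\{a,b\}$, so one cannot directly factor the Arnold identity out of the denominators. Fixing the weights $\varepsilon^{r}_{ab,bc}$ and the correcting forms $\Psi^r_{ab,bc}$ so that all unwanted cross terms in $\ddts\Phi$ vanish --- in particular the pieces involving mixed $\dd u^s_{ab}\wedge \dd u^t_{cd}$ with $\{a,b\}\neq\{c,d\}$ --- requires a careful choice. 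A cleaner alternative, which I would pursue in parallel, is cohomological: assuming the explicit computation of $H^\bullet(\Conf_3(\AA^n),\O)$ from \cref{sec: cohomology} presents it (in the style of \cite{totaro1996configuration}) as a module over $\kk[z^r_i]$ generated by three degree-$(n-1)$ classes in bidegree $(P_{12},P_{23},P_{31})$ modulo the Arnold relations, then the statement reduces to identifying the generators of that presentation with the propagator classes $[P_{ij}]$. This identification can be carried out by pairing against the residue operations of \cref{sec: pairing of residues and propagators}, which separate classes in this top cohomological degree.
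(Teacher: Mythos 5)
Your first route starts from exactly the right ingredients --- the relations $z^r_{ij}\Prop_{ij}=\ddts(\,\cdot\,)$ with primitives chosen to respect the boundary conditions, the telescoping identity $z^r_{i\ell}=z^r_{ij}+z^r_{j\ell}$, and the insertion of $1=\sum_r u^r_{i\ell}\,z^r_{i\ell}/z^r_{i\ell}$ --- and these are precisely the ingredients of the paper's proof. But the proposal stops where the actual work begins: you write down an ansatz $\Phi=\sum_{r}\sum_{\mathrm{cyc}}\varepsilon^{r}_{ab,bc}\,Q^r_{ab}\wedge P_{bc}\wedge\Psi^r_{ab,bc}$ with undetermined weights $\varepsilon$ and correcting forms $\Psi$, and you yourself identify the determination of these data (so that all mixed $\dd u^s_{ab}\wedge\dd u^t_{cd}$ cross-terms cancel) as ``the main obstacle.'' That cancellation \emph{is} the content of the theorem. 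The paper resolves it by an explicit closed formula: the primitive is $\frac1n\sum_{(I,J,K)}\sign(I,J,K)\,\Prop_{ij}^{\overline I}\Prop_{j\ell}^{\overline J}\Prop_{\ell i}^{\overline K}$, a sum over ordered partitions $\{1,\dots,n\}=I\sqcup J\sqcup K$ of the directions, built from the partial volume forms $P^R_{ij}=\sum_q(-1)^q\,u^{r_q}_{ij}\dd u^{r_1}_{ij}\cdots\widehat{\dd u^{r_q}_{ij}}\cdots\dd u^{r_p}_{ij}/(z^{r_1}_{ij}\cdots z^{r_p}_{ij})$; verifying $\ddts$ of this equals the Arnold sum is the ``somewhat lengthy direct calculation'' and cannot be omitted. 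As it stands, your argument establishes only that each $z^r_{uv}$ annihilates each class $[\Prop_{ab}\Prop_{bc}]$ (which is correct and is a lemma the paper also uses, e.g.\ in \cref{lem: isom}), not that the cyclic sum is exact.

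The ``cleaner alternative'' does not close the gap either, for two reasons. First, \cite{totaro1996configuration} computes topological cohomology of configuration spaces, whereas the relevant object here is the coherent cohomology $H^\bullet(\Conf_k(\AA^n),\O)$ with its \emph{ring} structure; the paper's \cref{thm: cohFin} gives the latter as a graded module via a spectral sequence, but it does not hand you a presentation ``by generators modulo Arnold relations'' independently of \cref{thm: arnold} --- indeed the paper's own remark that \cref{thm: cohFin} ``chooses representatives among the equivalence classes defined by the Arnold relations'' is an interpretation made \emph{after} the Arnold relations are proved, so your route risks circularity. Second, even granting \cref{thm: cohFin}, deducing the relation from residue pairings requires showing those pairings separate the degree-$2(n-1)$ classes and computing all the signs; that is a workable but genuinely different argument which you have sketched rather than executed.
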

\begin{proof}
We must show that $\Prop_{ij} \Prop_{j\ell}+\Prop_{j\ell} \Prop_{\ell i}+\Prop_{\ell i} \Prop_{ij}$ is exact, for all $1\leq i<j<\ell\leq k$.
And indeed, by a somewhat lengthy direct calculation, one checks that
\begin{align}\nn
    &\Prop_{ij} \Prop_{j\ell}+\Prop_{j\ell} \Prop_{\ell i}+\Prop_{\ell i} \Prop_{ij}\\\nonumber
    &\quad=\frac{1}{n}\sum_{(I,J,K)\in \{1,\dots,n\}^{\{1,2,3\}}}\sign(I,J,K)\ddts(\Prop_{ij}^{\overline{I}} \Prop_{j\ell}^{\overline{J}} \Prop_{\ell i}^{\overline{K}}),
\end{align}
where:
\begin{enumerate}[-]
\item
For a subset $I\subset\{1,\dots,n\}$, we write $\overline I$ for the ordered tuple of the elements of the \emph{complement}, $\{1,\dots,n\} \setminus I$, in increasing order.
\item
For a set partition \( \{1,\dots,n\} = I \sqcup J \sqcup K \), we define
 $\sign(I,J,K)$ to be the sign of the unique permutation in $\sigma\in S_{2n}$ such that $\sigma(n+r)>\sigma(r)$ for each $1\leq r\leq n$ and $\sigma\on (1,\dots,n,1,\dots,n) = \overline{I}\#\overline{J}\#\overline{K}$, where $\#$ denotes concatination of tuples.
\item
Given any ordered $p$-tuple $R=(r_1,\dots,r_p)$ of elements of $\{1,\dots,n\}$, we write
\begin{align}
    P^R_{ij} :=\sum_{q=1}^p (-1)^q \frac{u_{ij}^{r_q} \dd u_{ij}^{r_1}\dots \wh{\dd u_{ij}^{r_q}}\dots \dd u_{ij}^{r_p}}{z^{r_1}_{ij}\dots z^{r_p}_{ij}}.
\nn\end{align}
\end{enumerate}
\end{proof}

The following example illustrates the main computational trick.

\begin{exmp} Consider the special case  of $n=2$ dimensions and $k=3$ marked points. We write $u^1_{ij}=u_{ij}$, and $u^2_{ij} = 1-u_{ij}$. The key is to notice that by rewriting the propagator $P_{12}$ in the form
\[ P_{12} = u_{13} \frac{\dd u_{12}}{z^1_{12} z^2_{12}} + (1-u_{13}) \frac{ \dd u_{12}}{z^1_{12} z^2_{12}}\]
one can rewrite $P_{12}P_{23}$ using the usual Arnold relations \cref{eq: arnold relations} while respecting the defining boundary conditions of $\TS_{\AA^2}^3$. One finds
\begin{align*}
    P_{12}P_{23}=\left(\frac{\dd u_{12}}{z^1_{12}}\frac{u_{31}-1}{z^2_{31}}-\frac{u_{31}}{z^1_{31}}\frac{\dd(u_{12}-1)}{z^2_{12}}\right)P_{23}+\left(\frac{u_{31}}{z^1_{31}}\frac{\dd(u_{23}-1)}{z^2_{23}}-\frac{\dd u_{23}}{z^1_{23}}\frac{u_{31}-1}{z^2_{31}}\right)P_{12}.
\end{align*}
Alternatively, one can derive the above identity using $z^1_{ij}P_{ij}=-\ddts(\frac{1-u_{ij}}{z^2_{ij}}),z^2_{ij}P_{ij}=\ddts(\frac{u_{ij}}{z^1_{ij}})$ and
\begin{align*}
     P_{12}P_{23}&=\left(u_{13}\cdot\frac{z^1_{13}}{z^1_{13}}+(1-u_{13})\frac{z^2_{13}}{z^2_{13}}\right)\cdot  P_{12}P_{23}\\
     &=\left(\frac{u_{13}}{z^1_{13}}\cdot (z^1_{12}+z^1_{23})+\frac{1-u_{13}}{z^2_{13}}\cdot (z^2_{12}+z^2_{23})\right)\cdot  P_{12}P_{23}\\
     &=\left(\frac{u_{13}}{z^1_{13}}\cdot z^1_{12}P_{12}+\frac{1-u_{13}}{z^2_{13}}\cdot z^2_{12}P_{12}\right)P_{23}+\left(\frac{u_{13}}{z^1_{13}}\cdot z^1_{23}P_{23}+\frac{1-u_{13}}{z^2_{13}}\cdot z^2_{23}P_{23}\right)P_{12}.
\end{align*}
In this way one sees that
\begin{align*}
&P_{12}P_{23}+P_{23}P_{31}+P_{31}P_{12}=\ddts\left(\left(\frac{u_{12}}{z^1_{12}}\frac{u_{31}-1}{z^2_{31}}-\frac{u_{31}}{z^1_{31}}\frac{u_{12}-1}{z^2_{12}}\right)P_{23}\right.\\
&\left.\qquad+\left(\frac{u_{23}}{z^1_{23}}\frac{u_{12}-1}{z^2_{12}}-\frac{u_{12}}{z^1_{12}}\frac{u_{23}-1}{z^2_{23}}\right)P_{31}
+\left(\frac{u_{31}}{z^1_{31}}\frac{u_{23}-1}{z^2_{23}}-\frac{u_{23}}{z^1_{23}}\frac{u_{31}-1}{z^2_{31}}\right)P_{12}\right).
\end{align*}
\end{exmp}
\begin{rem}
    One can also derive the Arnold relations in the Jouanolou model \cite{GWW25}.
\end{rem}

\section{Cohomology of the structure sheaf on configuration space}\label{sec: cohomology}
In this section we compute the cohomology of the \polysimplicial model
\be \TS_{\AA^n}^{k} \simeq R\Gamma(\Conf_k(\AA^n),\O) \nn\ee
of the derived global sections of the structure sheaf over configuration space for $k$ points in $\AA^n$ we defined in \cref{sec: TS def}. The main result is \cref{thm: cohFin}. As a corollary, we also get an explicit basis of $\Coh_{\dd_{\TS}}(\TS_{\AA^n}^{k})$ as a $\kk$-vector space, \cref{cor: basis}.

As mentioned in the previous section, the \polysimplicial model $\TS_{\AA^n}^{k}$ is not only a dg $\mathcal{O}_{(\AA^{n})^k}$-module but also a dg $\mathcal{D}_{(\AA^{n})^k}$-module. Remarkably, this $\mathcal{D}_{(\AA^{n})^k}$-module structure simplifies the spectral sequence computation.

Suppose we have a finite set of indices $I$, and an additional index $\ast \notin I$.

Let $\mathbf{H}_{\AA^n}^{I\smile\{\ast\}}$ denote the graded module over
\be\mathcal{D}_{(\mathbb{A}^{n})^{I\sqcup \{\ast\}}}= \kk[z_i^r,z_\ast^r,\partial_{z^r_i},\partial_{z^r_{\ast}}]_{i\in I}^{1\leq r\leq n}\nn\ee
with generators $\mathbf{1}$ and $P_{i\ast},i\in I$ such that
\begin{itemize}
  \item $\deg(\mathbf{1})=0$ and $\deg(P_{i\ast})=n-1,i\in I$,
  \item $\partial_{z^r_i}P_{i\ast}=-\partial_{z^r_\ast}P_{i\ast},\partial_{z^r_j}P_{i\ast}=0,i,j\in I, i\neq j,$
      \item$\del_{z^r_\ast} \mathbf 1 =\del_{z^r_i} \mathbf 1=0$ and $(z_i^r - z_\ast^r) \Prop_{i\ast} = 0.$
\end{itemize}

Denote $(\del_{z^1_{\ast}})^{p_1}\cdots (\del_{z^n_{\ast}})^{p_n}\Prop_{i\ast}$ by $\Prop_{i\ast}^{p_1,\dots,p_r,\dots,p_n}$. As an $\mathcal{O}_{(\mathbb{A}^n)^I}$-module, $\mathbf{H}_{\AA^n}^{I\smile\{\ast\}}$ is free:
\be\mathbf{H}_{\AA^n}^{I\smile\{\ast\}}=\mathcal{O}_{(\mathbb{A}^n)^{I\sqcup\{\ast\}}}\oplus\bigoplus^{}_{i\in I,p_1,\dots,p_n\geq 0}\mathcal{O}_{(\mathbb{A}^n)^{I}}\cdot\Prop_{i\ast}^{p_1,\dots,p_n}.\nn\ee
The $\mathcal{O}_{(\mathbb{A}^n)^{I\sqcup\{\ast\}}}$-module structure is given by rewriting the previous relations:
\be z^r_\ast  \cdot \Prop_{i\ast}^{p_1,\dots,p_n} =z^r_i \cdot \Prop_{i\ast}^{p_1,\dots,p_n} - p_r \Prop_{i\ast}^{p_1,\dots,p_r-1,\dots,p_n}.\nn\ee

\begin{thm}[Cohomology of $\TS$]
\label{thm: cohFin}
Let $k>1$, $I=\{1,\dots,k\},I_1=\{1\},\dots,I_{k-1}=\{1,\dots,k-1\}$. We have the isomorphism
\begin{align} \Coh_{\dd_{\TS}}(\TS_{\AA^n}^{I}) &\cong \mathbf{H}_{\AA^n}^{I_1\smile\{2\}} \ox_{\O_{(\AA^n)^{I_2}}} \mathbf{H}_{\AA^n}^{I_2\smile\{3\}} \ox_{\O_{(\AA^n)^{I_3}}} \dots
\ox_{\O_{(\AA^n)^{I_{k-1}}}} \mathbf{H}_{\AA^n}^{I_{k-1}\smile\{k\}}\nn\end{align}
of graded modules over $\mathcal{D}_{(\mathbb{A}^{n})^I}=\kk[z_i^r,\partial_{z^r_i}]_{i\in I}^{1\leq r\leq n}$.
\end{thm}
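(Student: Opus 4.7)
The plan is by induction on $k$. The base case $k = 2$ is treated directly from the definition: an element of $\TS^{\{1,2\}}_{\AA^n}$ is a polynomial form on the $(n-1)$-simplex with coefficients in $\kk[z^r_1, z^r_2, (z^r_{12})^{-1}]^{1\leq r\leq n}$ whose pullback to each face $u^r_{12} = 0$ is regular in $z^r_{12}$. Analyzing these conditions shows $H^0$ equals $\O_{(\AA^n)^{\{1,2\}}}$, while $H^{n-1}$ is generated as an $\O$-module by the propagator class $[\Prop_{12}]$ together with its normal derivatives, subject to the translation invariance $(\partial_{z^r_1} + \partial_{z^r_2})\Prop_{12} = 0$ and the relation $(z^r_1 - z^r_2)[\Prop_{12}] = 0$ established at the end of \cref{sec: arnold relations}. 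This is precisely $\mathbf{H}^{\{1\}\smile\{2\}}_{\AA^n}$.

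For the inductive step I would split the edge set $E = I\two/S_2 = E' \sqcup E''$ with $E' = I_{k-1}\two/S_2$ and $E'' = \{(i,k) : i \in I_{k-1}\}$. Since the boundary conditions in (\ref{def: TS}) are local to each edge's simplex factor, this induces a strict isomorphism of dg algebras
\[ \TS^I_{\AA^n} \;=\; \TS^{I_{k-1}, E'}_{\AA^n} \otimes_{\O_{(\AA^n)^{I_{k-1}}}} \TS^{I, E''}_{\AA^n}, \]
reflecting geometrically the Fadell--Neuwirth fibration $\pi_k \colon \Conf_k(\AA^n) \to \Conf_{k-1}(\AA^n)$ that forgets the last point. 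One then constructs a $\D_{(\AA^n)^I}$-linear map $\Phi$ from the right-hand side of the theorem into $H(\TS^I_{\AA^n})$ by sending, on each tensor factor $\mathbf{H}^{I_j\smile\{j+1\}}_{\AA^n}$, the generator $\mathbf{1}$ to $[1]$ and each $\Prop_{i,j+1}$ to the cohomology class of the corresponding propagator from \cref{sec: arnold relations}, extending multiplicatively through the dg algebra structure of $\TS^I_{\AA^n}$. Well-definedness reduces to the propagator relations verified in \cref{sec: arnold relations} together with translation invariance.

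To show $\Phi$ is an isomorphism I would exhibit a Fadell--Neuwirth basis of monomials $\prod_{j=2}^{k} \Prop_{i_j,j}^{\vec p_j}$ with $i_j < j$, and verify by induction that their images in $H(\TS^I_{\AA^n})$ are linearly independent and spanning. Linear independence follows by pairing with iterated chiral residue operations of the form $\mu_2^{i_k \to \cdots} \circ \cdots \circ \mu_2^{i_2 \to 1}$ as described in \cref{sec: pairing of residues and propagators}, which act non-degeneracy on such products. Spanning is established using the Arnold relations (\cref{thm: arnold}) to reduce an arbitrary product of propagator classes to the Fadell--Neuwirth form. The main obstacle is controlling the non-flatness inherent in the derived tensor product $\lox_{\O_{(\AA^n)^{I_{k-1}}}}$ arising from the edge decomposition: $H(\TS^{I, E''}_{\AA^n})$ a priori carries ``star cross-terms'' $[\Prop_{ik}\Prop_{jk}]$ that lie outside the image of $\Phi$, but the Arnold relations force these to be cohomologous to Fadell--Neuwirth products, and they get correctly absorbed through the Tor contributions in the Künneth spectral sequence to yield exactly the clean tensor product in the statement.
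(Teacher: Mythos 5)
Your overall skeleton matches the paper's: induction on $k$, peeling off the last marked point by splitting the edge set into $E' = I_{k-1}^{[2]}/S_2$ and the ``star'' edges $E'' = \{(i,k)\}_{i<k}$, with the base case being the two-point, one-edge computation (\cref{lem: cohMin}). However, the two steps that actually carry the inductive argument are missing or misidentified. First, the claim that the cross-terms $[\Prop_{ik}][\Prop_{jk}]$ ``get correctly absorbed through the Tor contributions in the K\"unneth spectral sequence'' is not how this works, and the K\"unneth formula does not apply here: although $\TS^{I_{k-1},E'}$ and $\TS^{I,E''}$ are complexes of flat $\O_{(\AA^n)^{I_{k-1}}}$-modules, their modules of coboundaries are not flat (the paper flags exactly this in a remark after the proof). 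The actual mechanism (\cref{lem: isom}) is that after base change to $\TS^{I_{k-1},E'}$ the product $[\Prop_{ik}][\Prop_{jk}]$ is \emph{identically zero}: one writes $1 = \sum_r u^r_{ij}\, z^r_{ij}/z^r_{ij}$, observes that $u^r_{ij}/z^r_{ij}$ lives in the first tensor factor, and uses $z^r_{ij} = z^r_{ik} - z^r_{jk}$ together with the torsion relation $z^r_{ik}[\Prop_{ik}]=0$ to kill each summand. This is what forces the cohomology of the star complex, over $\TS^{I_{k-1}}$, to be \emph{linearly} (not just multiplicatively) generated by the propagators ending at $k$, and it is not a consequence of the Arnold relations alone.

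Second, even with that in hand, one still has to show the spectral sequence of the bicomplex $(\dd'_{\mathbf P}, \dd_{\mathbf Q})$ degenerates at $E_2$. For $n\geq 3$ this is automatic because the nonzero entries sit only in bidegrees that are multiples of $n-1$, but for $n=2$ there is a potentially nonzero $d_2\colon E_2^{p,1}\to E_2^{p+2,0}$, and your proposal offers no way to rule it out. The paper kills it using the $\D$-module structure: the target consists of polynomials in $z^r_\ast$, so $(\partial_{z^1_\ast})^N d_2(\rho\otimes\Prop_{i\ast})=0$ for $N\gg 0$, and rewriting $\rho\otimes\Prop_{i\ast} = \frac{(-1)^N}{N!}(z^1_{i\ast})^N\rho\otimes(\partial_{z^1_\ast})^N\Prop_{i\ast}$ then forces $d_2=0$. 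Without substitutes for these two lemmas your injectivity/surjectivity argument for $\Phi$ has nothing to rest on; the residue-pairing idea you invoke does detect the maximal-tree classes (and the paper uses it later, in \cref{lem: de rham} and \cref{lem: IntersectionZero}), but by itself it cannot separate the full cohomology, which also contains non-maximal trees dressed with polynomial and derivative factors.
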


The proof of \cref{thm: cohFin} is given in section \cref{sec: proof of thm cohFin} below.

Intuitively speaking, \cref{thm: cohFin} says that the cohomology is generated by products of propagators such that no two propagators ever ``end'' at the same vertex (where $\Prop_{ij}$, $i<j$ ``starts'' at $i$ and ``ends'' at $j$). For example the following is allowed,
\be    \begin{tikzpicture}[]
        \foreach \x in {1,2,3,4,5,6}{
            \coordinate (\x) at (\x,0);
           \node[draw,circle,fill=black,minimum size=0.3pt,inner sep=.5pt,label=below:{$\x$}]() at (\x) {};
        }
        \draw (1) to[out=60,in=120] (2);
        \draw (3) to[out=60,in=120] (4);
        \draw (3) to[out=60,in=120] (5);
    \end{tikzpicture}\nn\ee
as also is
\be\begin{tikzpicture}[]
        \foreach \x in {1,2,3,4,5,6}{
            \coordinate (\x) at (\x,0);
           \node[draw,circle,fill=black,minimum size=0.3pt,inner sep=.5pt,label=below:{$\x$}]() at (\x) {};
        }
        \draw (1) to[out=60,in=120] (2);
        \draw (1) to[out=60,in=120] (3);
        \draw (3) to[out=60,in=120] (6);
        \draw (3) to[out=60,in=120] (5);
    \end{tikzpicture}\nn\ee
but the following is \emph{not}
\be\begin{tikzpicture}[]
        \foreach \x in {1,2,3,4,5,6}{
            \coordinate (\x) at (\x,0);
           \node[draw,circle,fill=black,minimum size=0.3pt,inner sep=.5pt,label=below:{$\x$}]() at (\x) {};
        }
        \draw (1) to[out=60,in=120] (4);
        \draw (2) to[out=60,in=120] (4);
        \draw (5) to[out=60,in=120] (6);
    \end{tikzpicture}\nn\ee
Thus, one can think that what \cref{thm: cohFin} is doing is choosing certain representatives from among the equivalence classes of the product of propagators defined by the Arnold relations in cohomology, cf. \cref{sec: arnold relations}.

From \cref{thm: cohFin} we can write down a $\kk$-linear basis of the cohomology $\Coh_{\dd}(\TS_{\AA^n}^{k})$ regarded as a $\kk$-vector space. Indeed, consider the following collection of vectors,
   \begin{align}
        \label{basisFin}
        \left(\prod_{i\in  I_{J,(\ell_j)}}\prod_{r=1}^n(z_{i}^r-z_{k}^r)^{n_{r,i}}\right)\left(\prod_{r=1}^n(z_{k}^r)^{n_r}\right)
        \left(\prod_{j\in J}[\Prop_{\ell_jj}^{m_{1,j},\dots,m_{n,j}}]\right),
    \end{align}
each of which is labelled by:
\begin{enumerate}[--]
\item a choice of subset $J\subset\{2,\dots,k\}$\\ (for each $j\in J$ there will be a propagator that ``ends'' at $j$)
\item for each $j\in J$ an integer $\ell_j$ such that $1\leq \ell_j< j$ \\(specifying where the corresponding propagator ``starts'')
\item additional labels $n_{r,i},n_r,m_{r,j}\geq 0$, where $1\leq r\leq n$ and $j \in I$, and where $i\in I_{J,(\ell_j)}$ runs over the following set
\begin{align*}
        I_{J,(\ell_j)}:=\{1,\dots,k-1\}\setminus\{\ell_j:j\in J\}\setminus \{j\in J : \exists j'\in J\text{ with } j'>j\text{ and }l_{j'}=l_j\}
    \end{align*}
(controlling which factors $(z_i-z_{k})$ may appear for a given product of propagators).
\end{enumerate}

For example when $k=6$ we may choose $J=\{2,4,5\}$ and $l_1=1,l_3=3,l_4=4$. Then $I_{J,(\ell_j)}=\{2,5\}$. This can be visualised as follows, where the solid lines again represent propagators $\Prop_{\ell_jj}^{m_{1,j},\dots,m_{n,j}}$ while the dashed lines represent allowed factors of $(z_{i}^r-z_{k}^r)^{n_{r,i}}$.
\begin{figure}[h!]
    \begin{tikzpicture}[]
        \foreach \x in {1,2,3,4,5,6}{
            \coordinate (\x) at (\x,0);
           \node[draw,circle,fill=black,minimum size=0.3pt,inner sep=.5pt,label=below:{$\x$}]() at (\x) {};
        }
        \draw (1) to[out=60,in=120] (2);
        \draw (3) to[out=60,in=120] (4);
        \draw (3) to[out=60,in=120] (5);
        \draw[dashed] (2) to[out=-45,in=-135] (6);
        \draw[dashed] (5) to[out=-45,in=-135] (6);
    \end{tikzpicture}
\end{figure}

\begin{cor}
    \label{cor: basis}
The elements \cref{basisFin} form a $\kk$-linear basis of $\Coh_{\dd_{\TS}}(\TS_{\AA^n}^{k})$.
\qed\end{cor}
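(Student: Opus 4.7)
The plan is to read off the basis statement from the tensor product presentation of $\Coh_{\dd_{\TS}}(\TS_{\AA^n}^{k})$ provided by \cref{thm: cohFin}. By that theorem the cohomology is isomorphic to
\[ T := \mathbf{H}_{\AA^n}^{I_1\smile\{2\}} \ox_{\O_{(\AA^n)^{I_2}}} \cdots \ox_{\O_{(\AA^n)^{I_{k-1}}}} \mathbf{H}_{\AA^n}^{I_{k-1}\smile\{k\}}. \]
Each factor $\mathbf{H}_{\AA^n}^{I_{j-1}\smile\{j\}}$ decomposes, as an $\O_{(\AA^n)^{I_j}}$-module, into the trivial summand $\O_{(\AA^n)^{I_j}}$ and, for each $\ell\in I_{j-1}$, the summand $M_{\ell,j}:=\bigoplus_{p_1,\dots,p_n\geq 0}\O_{(\AA^n)^{I_{j-1}}}\cdot\Prop_{\ell j}^{p_1,\dots,p_n}$. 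Distributing this decomposition over the tensor product produces a direct sum $T = \bigoplus_{(J,(\ell_j))} T_{J,(\ell_j)}$ indexed by pairs $(J,(\ell_j)_{j\in J})$ exactly as in the corollary, so it suffices to exhibit the claimed $\kk$-basis of each summand $T_{J,(\ell_j)}$.

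For spanning I would apply the invertible change of coordinates $z_i^r = z_k^r + (z_i^r - z_k^r)$ and then use two reductions coming from the module structure of the propagator summands. Writing $\Prop_{\ell_j j}^{(p)}:=\Prop_{\ell_j j}^{p_1,\dots,p_n}$, the defining relation $z_j^r\Prop_{\ell_j j}^{(p)} = z_{\ell_j}^r\Prop_{\ell_j j}^{(p)} - p_r\Prop_{\ell_j j}^{(p-e_r)}$ rearranges as
\[ (z_{\ell_j}^r - z_k^r)\Prop_{\ell_j j}^{(p)} = (z_j^r - z_k^r)\Prop_{\ell_j j}^{(p)} + p_r\Prop_{\ell_j j}^{(p-e_r)}, \]
so every $(z_{\ell_j}^r - z_k^r)$ factor with $j\in J$ can be eliminated modulo lower total propagator degree. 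When $j,j' \in J$ satisfy $j<j'$ and $\ell_j=\ell_{j'}$, combining this identity for both propagators yields
\[ (z_j^r-z_k^r)\Prop_{\ell_j j}^{(p_j)}\Prop_{\ell_{j'} j'}^{(p_{j'})} = (z_{j'}^r-z_k^r)\Prop_{\ell_j j}^{(p_j)}\Prop_{\ell_{j'} j'}^{(p_{j'})} + (\text{lower order in }|p|), \]
which eliminates $(z_j^r-z_k^r)$ in favor of $(z_{j'}^r-z_k^r)$. Iterating the two reductions removes precisely the $(z_i^r - z_k^r)$ with $i$ in one of the two excluded subsets in the definition of $I_{J,(\ell_j)}$, giving the spanning property.

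For linear independence I would equip $T_{J,(\ell_j)}$ with the filtration by total propagator shift degree $|p|:=\sum_{j\in J}\sum_r (p_j)_r$ and pass to the associated graded, in which the relations above collapse to the strict identifications $z_j^r \Prop = z_{\ell_j}^r \Prop$. The associated graded then becomes a tensor product of the polynomial ring in $z_k^r$ and the $(z_i^r - z_k^r)$ for $i\in I_{J,(\ell_j)}$ with the free $\kk$-module on the monomials $\prod_{j\in J} \Prop_{\ell_j j}^{(p_j)}$, and matching the leading terms of the proposed basis elements against monomials in this associated graded completes the argument.

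The main obstacle I anticipate is the combinatorial verification that the two reductions above remove exactly the $(z_i^r - z_k^r)$ factors prescribed by the two excluded subsets in the definition of $I_{J,(\ell_j)}$, and no others. This requires an induction on the partial order of the propagator ``graph'' on $J$, with special attention to ``nested'' vertices that are simultaneously the start of one propagator and the end of another; once this combinatorial identification is established, the rest of the argument is an essentially mechanical unwinding of \cref{thm: cohFin}.
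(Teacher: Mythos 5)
Your overall architecture coincides with the paper's (omitted) argument: both start from the tensor decomposition of \cref{thm: cohFin}, produce an overcomplete spanning set from the free-module presentations of the factors $\mathbf{H}_{\AA^n}^{I_{j-1}\smile\{j\}}$, and reduce it using the relations $z_j^r\Prop_{\ell_j j}^{(p)}=z_{\ell_j}^r\Prop_{\ell_j j}^{(p)}-p_r\Prop_{\ell_j j}^{(p-e_r)}$ (which is what the paper means by ``absorbing dashed lines into propagators''). Your explicit direct-sum decomposition indexed by $(J,(\ell_j))$ and the filtration by total shift degree are cleaner bookkeeping for the same computation, and the spanning half of your argument is fine.

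The gap sits exactly at the step you flag as your anticipated obstacle, and it is not surmountable as stated: the two reductions do \emph{not} remove precisely the factors indexed by the two excluded subsets, and your description of the associated graded is wrong in general. In the associated graded of $T_{J,(\ell_j)}$ the identifications $z_j^r\equiv z_{\ell_j}^r$ propagate along the propagator forest, so the polynomial part is free on \emph{one} class of variables per connected component of the graph on $\{1,\dots,k\}$ with edges $\{(\ell_j,j)\}_{j\in J}$ --- not on the set $\{k\}\cup I_{J,(\ell_j)}$, which can contain two vertices of the same component. Concretely, take $k=5$, $J=\{2,3,4\}$, $\ell_2=1$, $\ell_3=2$, $\ell_4=1$, i.e.\ propagators $\Prop_{12},\Prop_{23},\Prop_{14}$. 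Then $\{\ell_j\}=\{1,2\}$ and the second excluded set is $\{2\}$, so $I_{J,(\ell_j)}=\{3,4\}$; but $3$ and $4$ lie in the single component $\{1,2,3,4\}$, and
\[ (z_3^r-z_5^r)\,[\Prop_{12}\Prop_{23}\Prop_{14}]\;=\;(z_1^r-z_5^r)\,[\Prop_{12}\Prop_{23}\Prop_{14}]\;=\;(z_4^r-z_5^r)\,[\Prop_{12}\Prop_{23}\Prop_{14}] \]
because $(z_1^r-z_2^r)[\Prop_{12}]=(z_2^r-z_3^r)[\Prop_{23}]=(z_1^r-z_4^r)[\Prop_{14}]=0$. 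Hence two distinct members of \cref{basisFin} coincide and the proposed family is linearly dependent; a dimension count in the summand $T_{J,(\ell_j)}$ (which is free on only the two variable classes $z_1^r,z_5^r$) confirms the overcount. This is a defect of the statement --- the definition of $I_{J,(\ell_j)}$ mishandles components that branch and contain a leaf at depth at least two --- rather than of your strategy: if you replace $I_{J,(\ell_j)}$ by any system of representatives consisting of one vertex from each connected component of the propagator forest other than the component containing $k$, then your spanning and associated-graded arguments both go through.
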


The proof is straightforward and we omit the details, but the idea is the following.
First, a $\kk$-linear basis of $\mathbf{H}_{\AA^n}^{I\smile\{\ast\}}$ is evidently given by
\begin{align}
    \label{basisInterm}
    &\prod_{j\in I}\prod_{r=1}^n(z_j^r-z_\ast^r)^{n_{r,j}}\prod_{r=1}^n(z_\ast^r)^{n_r},\quad n_{r,j},n_r\geq0\\
    &\prod_{j\in I\setminus\{i\}}\prod_{r=1}^n(z_j^r-z_\ast^r)^{n_{r,j}}\prod_{r=1}^n(z_\ast^r)^{n_r}[\Prop_{i\ast}^{m_1,\dots,m_n}],\quad i\in I, \qquad n_{r,j},n_r,m_r\geq 0.\nn
\end{align}
In view of \cref{thm: cohFin}, by taking tensor products of such basis vectors we arrive at an (overcomplete) spanning set of $\Coh_{\ddts}(\TS_{\AA^n}^{k})$.
Then we note that, using $(z_i-z_j)=(z_i-z_{\ell}) - (z_j-z_\ell)$, we can perform replacements of dotted lines in favour of dashed lines as in the following examples:

\begin{figure}[!h]
    \centering
    \begin{tikzpicture}[]
        \foreach \x in {1,2,3,4,5,6}{
            \coordinate (\x) at (\x,0);
           \node[draw,circle,fill=black,minimum size=0.3pt,inner sep=.5pt,label=below:{$\x$}]() at (\x) {};
        }
        \draw (1) to[out=60,in=120] (2);
        \draw (3) to[out=60,in=120] (4);
        \draw (3) to[out=60,in=120] (5);
        \draw[densely dotted] (1) to[out=-45,in=-135] (3);
        \draw[dashed] (1) to[out=-45,in=-135] (6);
        \draw[dashed] (3) to[out=-45,in=-135] (6);
    \end{tikzpicture}
    \hspace{1cm}
    \begin{tikzpicture}[]
        \foreach \x in {1,2,3,4,5,6}{
            \coordinate (\x) at (\x,0);
           \node[draw,circle,fill=black,minimum size=0.3pt,inner sep=.5pt,label=below:{$\x$}]() at (\x) {};
        }
        \draw (1) to[out=60,in=120] (2);
        \draw (3) to[out=60,in=120] (4);
        \draw (3) to[out=60,in=120] (5);
        \draw[densely dotted] (4) to[out=-45,in=-135] (6);
        \draw[dashed] (3) to[out=-45,in=-135] (6);
        \draw[dashed] (3) to[out=-45,in=-135] (4);
        \draw[dashed,opacity=0] (1) to[out=-45,in=-135] (6); 
    \end{tikzpicture}
    \begin{tikzpicture}[]
        \foreach \x in {1,2,3,4,5,6}{
            \coordinate (\x) at (\x,0);
           \node[draw,circle,fill=black,minimum size=0.3pt,inner sep=.5pt,label=below:{$\x$}]() at (\x) {};
        }
        \draw (1) to[out=60,in=120] (2);
        \draw (3) to[out=60,in=120] (4);
        \draw (3) to[out=60,in=120] (5);
        \draw[densely dotted] (3) to[out=-45,in=-135] (6);
        \draw[dashed] (5) to[out=-45,in=-135] (6);
        \draw[dashed] (3) to[out=-45,in=-135] (5);
\end{tikzpicture}
\end{figure}

Dashed lines parallel to solid lines can immediately be absorbed into propagators (which then might become exact and vanish at the level of cohomology). Iterating, we arrive at the vectors of the form \cref{basisFin} above.

\subsection{Proof of \cref{thm: cohFin}}\label{sec: proof of thm cohFin}
Our general strategy is inspired by \cite{CLM}: we shall consider a ``relative'' version of the complex in which all but one of the points are ``fixed'', leaving only one ``moving'' point, labelled $*$. This relative complex is sufficiently simple that its cohomology can be computed directly. Then we compute the cohomology of the original complex using induction and a spectral sequence.

Recall from \cref{sec: gens and rels for TS} the definition of $\TS_{\AA^n}^{I,E}$ for a finite set $I$ of vertices and $E\subset I^{[2]}/S_2$ of undirected edges. Given a set of directed edges $E\subset I\two$, we shall write
$\TS_{\AA^n}^{I,E} := \TS_{\AA^n}^{I,(E\cup \sigma E)/S_2}$,
where $S_2 = \{\id, \sigma\}$.


We begin with the simple case of two points $\{i,*\}$ and one edge $e = (i,*)$. Notice that the differential $\dd$ on $\TS_{\AA^n}^{\{i,\ast\},\{(i,\ast)\}}$ is
$$
\dd_{(i,\ast)}=\sum_{r=1}^n \dd u_{i\ast}^r \frac{\del}{\del u^r_{i\ast}} .
$$
\begin{lem} There is an isomorphism of graded $\D_{(\AA^n)^{\{i,*\}}}$-modules
    \label{lem: cohMin}
    \begin{align*}
        \Coh_{\dd_{(i,\ast)}}(\TS_{\AA^n}^{\{i,\ast\},\{(i,\ast)\}})\cong \mathbf{H}_{\AA^n}^{\{i\}\smile\{\ast\}}.
    \end{align*}
\end{lem}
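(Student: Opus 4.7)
The plan is to construct an explicit $\D$-module map
\[
\Phi \colon \mathbf{H}_{\AA^n}^{\{i\}\smile\{\ast\}} \longrightarrow \Coh_{\dd_{(i,\ast)}}(\TS_{\AA^n}^{\{i,\ast\},\{(i,\ast)\}})
\]
by sending the generators to the evident cohomology classes, $\mathbf{1} \mapsto [1]$ and $\Prop_{i\ast} \mapsto [\Prop_{i\ast}]$, and then to prove that $\Phi$ is an isomorphism of graded $\D_{(\AA^n)^{\{i,\ast\}}}$-modules. Well-definedness requires checking the defining relations of $\mathbf{H}$. The relation $(z^r_i - z^r_\ast)[\Prop_{i\ast}] = 0$ was verified in the discussion of propagators in \cref{sec: arnold relations}, and the relation $\partial_{z^r_i}\Prop_{i\ast} = -\partial_{z^r_\ast}\Prop_{i\ast}$ follows immediately from the explicit formula for $\Prop_{i\ast}$ since the differential depends only on $w^r := z^r_i - z^r_\ast$.

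The degree $0$ part is straightforward: a closed $0$-form must have vanishing $\partial_{u^r}$-derivatives, so it is independent of the simplicial coordinates; the boundary conditions of \cref{def: TS} then force regularity in every $w^r$, which means it lies in $\kk[z^r_i, z^r_\ast]^{1\leq r\leq n}$. This matches the degree $0$ piece of $\mathbf{H}_{\AA^n}^{\{i\}\smile\{\ast\}}$.

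For the higher degrees I would filter $\TS_{\AA^n}^{\{i,\ast\},\{(i,\ast)\}}$ by the subset $S \subseteq \{1,\dots,n\}$ of directions in which a pole in $w^r$ actually occurs. The graded piece with $S = \emptyset$ is $\kk[z^r_i, z^r_\ast] \otimes \Omega^\bullet(\triangle_{n-1})$, which by the Poincar\'e lemma for the simplex contributes $\kk[z^r_i, z^r_\ast]$ concentrated in degree $0$. On a graded piece with nonempty $S$, the boundary condition forces every factor of $(w^r)^{-m_r}$ with $r \in S$ to be accompanied by $u^r$ or $\dd u^r$; introducing the ``regularized'' coordinates $V^r := u^r/(w^r)^{m_r}$ and their differentials one recognizes the complex as computing the cohomology of the simplex relative to the union of the faces $u^r = 0$ for $r \in S$. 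By the Poincar\'e lemma for this relative simplex, the associated graded is acyclic unless $S = \{1,\dots,n\}$, in which case the cohomology is concentrated in top simplicial degree $n-1$ and, after tracking the $\O$-action, is spanned precisely by the classes $[\Prop_{i\ast}^{p_1,\dots,p_n}]$.

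The main obstacle will be executing the relative Poincar\'e-lemma step cleanly and handling the bookkeeping of boundary conditions for all subsets $S$ simultaneously, together with the delicate interaction between the simplex constraint $\sum_r u^r = 1$ and the pole orders in the $w^r$. A cleaner but less self-contained alternative is to invoke \cref{thm: F model} to identify the complex with $R\Gamma(\Conf_2(\AA^n),\O)$, use the projection $\Conf_2(\AA^n) \cong (\AA^n \setminus \{0\}) \times \AA^n$, and compute $H^\bullet(\AA^n \setminus \{0\},\O)$ by \v{C}ech cohomology on the standard affine cover $\{w^r \neq 0\}_{r=1}^n$; one then matches the resulting classes $\prod_r (w^r)^{-p_r - 1}$ with the derivatives $\Prop_{i\ast}^{p_1,\dots,p_n}$ and tensors with $\kk[z^r_\ast]^{1\leq r\leq n}$.
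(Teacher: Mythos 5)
Your proposal is correct and follows essentially the same route as the paper: the paper likewise splits the complex as a direct sum over monomials in the $z$-variables, indexed by the set $S$ of directions in which a pole occurs, and identifies each summand with the complex $\Omega^\bul_S(\triangle_{n-1})$ of forms on the simplex vanishing on the faces $u^s=0$, $s\in S$, whose cohomology (nonzero only for $S=\emptyset$ in degree $0$ and $S=\{1,\dots,n\}$ in degree $n-1$, by the standard relative Poincar\'e lemma, cf.\ \cite[Lemma 9.5]{GriffithsMorgan}) yields exactly the classes $[1]$ and $[\Prop_{i\ast}^{p_1,\dots,p_n}]$. The only cosmetic difference is that you phrase the decomposition as a filtration and also sketch a \v{C}ech alternative, whereas the paper works directly with the direct sum.
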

\begin{proof}
For $S\subset \{1,\dots,n\}$ consider the algebra $\Omega_S^\bul(\triangle_{n-1})=\{\tau \in   \Omega^\bul(\triangle_{n-1}) : \tau\vert_{\cap_{s\in S}(u^s=0)}=0\}\nn$ of polynomial differential forms on the simplex $\triangle_{n-1}$ vanishing on boundaries along directions $s\in S$. It is standard that
    \begin{align*}
        \Coh^p(\Omega^\bul_S(\triangle_{n-1}))=\left.
        \begin{cases}
            \kk, & \text{for $S=\emptyset$ and $p=0$} \\
            \kk [\Vol(\triangle_{n-1})], & \text{for $S=\{1,\dots,n\}$ and $p=n-1$} \\
             0, & \text{else},
        \end{cases}
        \right.
    \end{align*}
where $\Vol(\triangle_{n-1})_{ij} := (-1)^n\dd u^1_{ij} \dd u^2_{ij} \dots \dd u^{n-1}_{ij}$.
See for example \cite[Lemma 9.5]{GriffithsMorgan}

Now note that (since $z_i^r=z^r_\ast+(z_i^r-z^r_\ast)$) we have
    \begin{align*}
        \TS_{\AA^n}^{\{i,\ast\},\{(i,\ast)\}}
        &= \{\tau \in  \kk[z_\ast^r,(z_i^r-z_\ast^r)^{\pm 1}]^{1\leq r\leq n}\ox\Omega(\triangle_{n-1})\nn\\
        &\qquad: \tau|_{u^r_{i\ast} =0} \text{ regular in $(z_i^r-z_\ast^r)$, for $1\leq r\leq n$}\}.
    \end{align*}
As a dg vector space,
\be \TS_{\AA^n}^{\{i,\ast\},\{(i,\ast)\}} \cong \oplus_{f}\Omega_{S_f}^\bul(\triangle_{n-1})\nn\ee where $f$ runs over monic  monomials in $\kk[z_\ast^r,(z_i^r-z_\ast^r)^{\pm1}]^{1\leq r\leq n}$ and $S_f:=\{s\in\{1,\dots,n\} : f \text{ singular in $(z_i^s-z_\ast^s)$}\}$.
   The monomials $f$ such that $S_f=\{1,\dots,n\}$ are of the form
\be (z_*^1)^{m_1}\dots (z_*^n)^{m_n} \left(\frac{\del}{\del z^1_{*}}\right)^{p_1} \dots \left(\frac{\del}{\del z^n_{*}}\right)^{p_n} \prod_{r=1}^n \frac1 {z^r_i-z^r_*} ,\qquad m_1,\dots,m_n,p_1,\dots,p_n\geq 0.\nn\ee
Thus, with the identification
\be\label{propIdentify} \Prop_{i*} \simeq  \left(\prod_{r=1}^n \frac1 {z^r_i-z^r_*}\right)[\Vol(\triangle_{n-1})], \ee
we have the result.
\end{proof}
Next we note that
\be\label{decompTS}\TS_{\AA^n}^{I\sqcup\{\ast\},I\times\{\ast\}} \cong \bigotimes_{i\in I}\TS_{\AA^n}^{\{i,\ast\},\{(i,\ast)\}}.\ee
Here the tensor product on the right-hand side is of modules over $\O_{\AA^n}=\kk[z_*^r]^{1\leq r\leq n}$. Note that both $\TS_{\AA^n}^{\{i,*\},\{(i,*)\}}$ and its cohomology $\mathbf{H}_{\AA^n}^{\{i\}\smile\{\ast\}}$ are projective (in fact, free) over $\O_{\AA^n}=\kk[z_*^r]^{1\leq r\leq n}$. Therefore by the K\"unneth theorem \cite[Theorem 3.6.3]{Weibel} we obtain that
\be\label{decompTS2} \Coh_{\dd_{I\times\{\ast\}}}(\TS_{\AA^n}^{I\sqcup\{\ast\},I\times\{\ast\}}) \cong \bigotimes_{i\in I}\Coh(\TS_{\AA^n}^{\{i,\ast\},\{(i,\ast)\}}) = \bigotimes_{i\in I}  \mathbf{H}_{\AA^n}^{\{i\}\smile\{\ast\}}\nn\ee
as an isomorphism of graded vector spaces. This isomorphism is compatible with $\mathcal{D}_{(\mathbb{A}^{n})^{I\sqcup \{\ast\}}}$-structure. Here we have introduced an explicit notation for the differential
\be \dd_{I\times\{\ast\}} = \sum_{1\leq i\leq k} \sum_{r=1}^n \dd u_{i\ast}^r \frac{\del}{\del u^r_{i\ast}} \nn\ee
of the complex $\TS_{\AA^n}^{I\sqcup\{\ast\},I\times\{\ast\}}$.

At this point, roughly speaking, what we have shown is that the cohomology $\Coh_{\dd_{I\times\{*\}}}(\TS_{\AA^n}^{I\sqcup\{\ast\},I\times\{\ast\}})$ is generated by \emph{products} of propagators $\Prop_{i*}$ (and their derivatives). The next result shows that when we change base (dg) ring and work over $\TS_{\AA^n}^{I}$, then the cohomology of $\dd_{I\times \{*\}}$ is \emph{linearly} generated by such propagators (and their derivatives).
\begin{lem}\label{lem: isom}
There is an isomorphism of graded $\mathcal{D}_{(\mathbb{A}^{n})^{I\sqcup \{\ast\}}}$-modules
\begin{align}           \Coh_{\dd_{I\times\{\ast\}}}\left(\TS_{\AA^n}^{I,I^{[2]}}\ox_{\O_{(\AA^n)^{I}}} \TS_{\AA^n}^{I\sqcup\{\ast\},I\times\{\ast\}}\right)\cong\TS_{\AA^n}^{I,I^{[2]}} \ox_{\O_{(\AA^n)^{I}}} \mathbf{H}_{\AA^n}^{I\smile\{\ast\}}.
\end{align}
\end{lem}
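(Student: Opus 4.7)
The plan is to reduce the computation to the K\"unneth-type isomorphism established immediately before the lemma (see \cref{decompTS2}), by exploiting the fact that the differential $\dd_{I\times\{\ast\}}=\sum_{i\in I}\sum_{r=1}^n \dd u^r_{i\ast}\,\partial/\partial u^r_{i\ast}$ acts only on the second tensor factor. First I would observe that no variable $u^r_{i\ast}$ appears in $\TS_{\AA^n}^{I,I^{[2]}}$, so every element of the first factor is $\dd_{I\times\{\ast\}}$-closed; the differential on the tensor product therefore has the form $\mathrm{id}\ox \dd_{I\times\{\ast\}}$, making the first factor a ``coefficient module''.

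The key step is then a flat base change argument. I would show that $\TS_{\AA^n}^{I\sqcup\{\ast\},I\times\{\ast\}}$, regarded as an $\O_{(\AA^n)^I}=\kk[z^r_i]_{i\in I}^{1\leq r\leq n}$-module (in each cohomological degree), is flat --- in fact free. By \cref{decompTS} it suffices to check this for each factor $\TS_{\AA^n}^{\{i,\ast\},\{(i,\ast)\}}$ and then tensor. From the explicit description in the proof of \cref{lem: cohMin}, we have
\[
\TS_{\AA^n}^{\{i,\ast\},\{(i,\ast)\}} \cong \bigoplus_f \Omega^\bul_{S_f}(\triangle_{n-1}),
\]
with $f$ running over monic monomials in $\kk[z^r_\ast,(z^r_i-z^r_\ast)^{\pm 1}]^{1\leq r\leq n}$; rewriting $z^r_\ast=z^r_i-(z^r_i-z^r_\ast)$ exhibits this as a free $\kk[z^r_i]^{1\leq r\leq n}$-module, and the tensor product over $\kk[z^r_\ast]^{1\leq r\leq n}$ then gives a free $\O_{(\AA^n)^I}$-module. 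By the standard (flat) K\"unneth formula \cite[Thm.~3.6.3]{Weibel}, taking cohomology commutes with $\TS_{\AA^n}^{I,I^{[2]}}\ox_{\O_{(\AA^n)^I}}(\blank)$:
\[
\Coh_{\dd_{I\times\{\ast\}}}\!\left(\TS_{\AA^n}^{I,I^{[2]}}\ox_{\O_{(\AA^n)^I}}\TS_{\AA^n}^{I\sqcup\{\ast\},I\times\{\ast\}}\right)\cong \TS_{\AA^n}^{I,I^{[2]}}\ox_{\O_{(\AA^n)^I}}\Coh_{\dd_{I\times\{\ast\}}}\!\left(\TS_{\AA^n}^{I\sqcup\{\ast\},I\times\{\ast\}}\right).
\]

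Finally, I would identify the rightmost cohomology via the isomorphism already noted in \cref{decompTS2}, namely
\[
\Coh_{\dd_{I\times\{\ast\}}}\!\left(\TS_{\AA^n}^{I\sqcup\{\ast\},I\times\{\ast\}}\right)\cong\bigotimes_{i\in I}\mathbf{H}_{\AA^n}^{\{i\}\smile\{\ast\}}\cong \mathbf{H}_{\AA^n}^{I\smile\{\ast\}},
\]
where the second isomorphism uses the generators-and-relations description of $\mathbf{H}_{\AA^n}^{I\smile\{\ast\}}$. The compatibility with the $\mathcal D_{(\AA^n)^{I\sqcup\{\ast\}}}$-action is automatic: the $\del_{z^r_i}, \del_{z^r_\ast}$ differentiations commute with the simplex differential $\dd_{I\times\{\ast\}}$ (the polysimplex forms being treated as constants), the isomorphism of \cref{lem: cohMin} is $\D$-equivariant by construction, and both the K\"unneth isomorphism and the flat base change preserve the $\D$-module structure.

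The main obstacle will be verifying cleanly the flatness of $\TS_{\AA^n}^{I\sqcup\{\ast\},I\times\{\ast\}}$ over $\O_{(\AA^n)^I}$ --- especially the compatibility of the two relevant $\kk[z^r_i]$-module structures (the ``native'' one inside each $\TS_{\AA^n}^{\{i,\ast\},\{(i,\ast)\}}$ and the one surviving after tensoring over $\kk[z^r_\ast]^{1\leq r\leq n}$). The change of variables $z^r_\ast=z^r_i-(z^r_i-z^r_\ast)$ resolves this, but one needs to record an explicit $\O_{(\AA^n)^I}$-basis (indexed by multi-exponents of $z^r_\ast,(z^r_i-z^r_\ast)^{\pm 1}$ and simplicial form-monomials satisfying the boundary conditions of \cref{def: TSIE}) in order to make the flatness claim rigorous.
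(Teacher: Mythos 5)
Your first two steps are essentially the paper's: you commute the cohomology past the tensor factor carrying no differential, and you identify $\Coh_{\dd_{I\times\{\ast\}}}\bigl(\TS_{\AA^n}^{I\sqcup\{\ast\},I\times\{\ast\}}\bigr)$ with $\bigotimes_{i\in I}\mathbf{H}_{\AA^n}^{\{i\}\smile\{\ast\}}$. But your final step is where the actual content of the lemma lives, and it is missing. The claimed isomorphism $\bigotimes_{i\in I}\mathbf{H}_{\AA^n}^{\{i\}\smile\{\ast\}}\cong\mathbf{H}_{\AA^n}^{I\smile\{\ast\}}$ is false: the left-hand side contains the products $[\Prop_{i\ast}]\ox[\Prop_{j\ast}]$, $i\neq j$, in degree $2(n-1)$ and higher, whereas $\mathbf{H}_{\AA^n}^{I\smile\{\ast\}}$ is by definition concentrated in degrees $0$ and $n-1$ and is only \emph{linearly} generated by the propagators. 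The whole point of the lemma --- flagged in the paragraph immediately preceding it --- is that these products become zero only \emph{after} base change to $\TS_{\AA^n}^{I,I^{[2]}}$, because the tensor product is taken over $\O_{(\AA^n)^{I}}$ and the first factor contains the elements $u^r_{ij}/z^r_{ij}$. Concretely, using $\sum_r u^r_{ij}=1$ one writes
\begin{align*}
1\ox[\Prop_{i\ast}\Prop_{j\ast}]
&=\sum_{r=1}^n \frac{u^r_{ij}}{z^r_{ij}}\ox\bigl[z^r_{ij}\,\Prop_{i\ast}\Prop_{j\ast}\bigr]
=\sum_{r=1}^n \frac{u^r_{ij}}{z^r_{ij}}\ox\bigl[(z^r_{i\ast}\Prop_{i\ast})\Prop_{j\ast}-\Prop_{i\ast}(z^r_{j\ast}\Prop_{j\ast})\bigr]=0,
\end{align*}
since $z^r_{i\ast}\Prop_{i\ast}$ is exact. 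Without this Arnold-type cancellation your argument only produces $\TS_{\AA^n}^{I,I^{[2]}}\ox_{\O_{(\AA^n)^{I}}}\bigotimes_{i\in I}\mathbf{H}_{\AA^n}^{\{i\}\smile\{\ast\}}$, which is strictly larger than the right-hand side of the lemma.

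A secondary point: the commutation of cohomology with the tensor product should be justified by flatness of the \emph{first} factor $\TS_{\AA^n}^{I,I^{[2]}}$ over $\O_{(\AA^n)^{I}}$ (this is \cref{lem: FlatE} via the generators-and-relations description); then $\TS_{\AA^n}^{I,I^{[2]}}\ox_{\O_{(\AA^n)^{I}}}(-)$ is exact and no K\"unneth theorem is needed. Your route via freeness of the second factor plus the K\"unneth formula is problematic: that theorem also requires the coboundary modules $\dd_{I\times\{\ast\}}\bigl(\TS_{\AA^n}^{I\sqcup\{\ast\},I\times\{\ast\},q}\bigr)$ to be flat over $\O_{(\AA^n)^{I}}$, and the paper explicitly notes (in the remark following the proof of \cref{thm: cohFin}) that they are not.
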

\begin{proof}
We have that $\TS_{\AA^n}^{I,I^{[2]}}$ is flat over $\ox_{\O_{(\AA^n)^{I}}} $ (by \cref{thm: TSIE gens and rels} together with \cref{lem: FlatE}). Hence, we find using \cref{decompTS} that
\begin{align*}
    \Coh_{\dd_{I\times\{\ast\}}}\left(\TS_{\AA^n}^{I,I^{[2]}} \ox_{\O_{(\AA^n)^{I}}} \TS_{\AA^n}^{I\sqcup\{\ast\},I\times\{\ast\}}\right)
&\cong \TS_{\AA^n}^{I,I^{[2]}} \ox_{\O_{(\AA^n)^{I}}} \Coh_{\dd_{I\times\{\ast\}}}\left( \TS_{\AA^n}^{I\sqcup\{\ast\},I\times\{\ast\}}\right)\nn\\
&\cong \TS_{\AA^n}^{I,I^{[2]}} \ox_{\O_{(\AA^n)^{I}}}  \bigotimes_{i\in I} \mathbf{H}_{\AA^n}^{\{i\}\smile\{\ast\}} \nn
\end{align*}
The result follows if we can show that, in this graded ring, all products of the form $[\Prop^{p_1,\dots,p_n}_{i\ast}][\Prop^{p'_1,\dots,p'_n}_{j\ast}]$ are zero for $i\neq j\in I$.
Indeed, we have that
\begin{align*}
    &1\ox[\Prop_{i\ast}][\Prop_{j\ast}]=1\ox[\Prop_{i\ast}\Prop_{j\ast}]\\
    &\qquad\qquad=\sum_{r=1}^nu_{ij}^r\ox[\Prop_{i\ast}\Prop_{j\ast}]=\sum_{r=1}^nu_{ij}^r\frac{z^r_{ij}}{z^r_{ij}}\ox[\Prop_{i\ast}\Prop_{j\ast}]\\
    &\qquad\qquad=\sum_{r=1}^n\frac{u_{ij}^r}{z^r_{ij}}\ox[z^r_{ij}\Prop_{i\ast}\Prop_{j\ast}]\\
    &\qquad\qquad=\sum_{r=1}^n\frac{u_{ij}^r}{z^r_{ij}}\ox[z^r_{i\ast}\Prop_{i\ast}\cdot\Prop_{j\ast}]-\sum_{r=1}^n\frac{u_{ij}^r}{z^r_{ij}}\ox[\Prop_{i\ast}\cdot z^r_{j\ast}\Prop_{j\ast}]=0.
\end{align*}

The same argument generalizes to arbitrary elements in $\mathbf{H}_{\AA^n}^{\{i\}\smile\{\ast\}}$ and $\mathbf{H}_{\AA^n}^{\{j\}\smile\{\ast\}}$ of degree $n-1$ if we replace $\frac{z^r_{ij}}{z^r_{ij}}\to (\frac{z^r_{ij}}{z^r_{ij}})^N$ for $N$ sufficiently large.
\end{proof}

Next we observe that (by \cref{thm: TS gens and rels})
\be  \TS_{\AA^n}^{I\sqcup\{\ast\}} \cong \TS_{\AA^n}^{I} \ox_{\O_{(\AA^n)^I}} \mathbf{Q}^I_{\AA^n},\quad \mathbf{Q}^I_{\AA^n}:=\TS_{\AA^n}^{I\sqcup \{\ast\}, I \times \{\ast\}}.\nn\ee
In this way $\TS_{\AA^n}^{I\sqcup\{\ast\}}$ is a bigraded $\kk$-vector space. There is a corresponding decomposition of the differential $\dd$ on $\TS_{\AA^n}^{I\sqcup\{\ast\}}$ as a sum of two anticommuting differentials,
\be \dd_{\TS} = \dd'_{\mathbf{P}} + \dd_{\mathbf{Q}},\nn\ee
\be \dd'_{\mathbf{P}} = \sum_{1\leq i<j\leq k} \sum_{r=1}^n \dd u_{ij}^r \frac{\del}{\del u^r_{ij}}, \qquad
    \dd_{\mathbf{Q}} =\dd_{I\times\{\ast\}}= \sum_{1\leq i\leq k} \sum_{r=1}^n \dd u_{i\ast}^r \frac{\del}{\del u^r_{i\ast}} .\nn\ee
In this way, the complex $\TS_{\AA^n}^{I\sqcup\{\ast\}}$ becomes the totalization a bicomplex:
\be \left(\TS_{\AA^n}^{I\sqcup\{\ast\}},\dd_{\TS}\right)  \cong \Tot\left( \TS_{\AA^n}^{I} \ox_{\O_{(\AA^n)^I}} \mathbf{Q}^I_{\AA^n} , \dd'_{\mathbf{P}}, \dd_{\mathbf{Q}}\right).\nn\ee

Now we can complete the proof of \cref{thm: cohFin}.
\begin{proof}[Proof of \cref{thm: cohFin}]
The proof is by induction on $k$. The base case is the case $I=\{1\}$, $\ast=2$ of \cref{lem: cohMin}.
Now suppose inductively that we know that
\begin{align} \Coh_{\dd_{\TS}}(\TS_{\AA^n}^{I}) &\cong \mathbf{H}_{\AA^n}^{I_1\smile\{2\}} \ox_{\O_{(\AA^n)^{I_2}}} \mathbf{H}_{\AA^n}^{I_2\smile\{3\}} \ox_{\O_{(\AA^n)^{I_3}}} \dots
\ox_{\O_{(\AA^n)^{I_{k-1}}}} \mathbf{H}_{\AA^n}^{I_{k-1}\smile\{k\}}\nn\end{align}
as $\mathcal{D}_{(\mathbb{A}^{n})^I}$-modules. We compute
$$
H\left(\TS_{\AA^n}^{I\sqcup\{\ast\}},\dd_{\TS}\right)  =H\left(\mathbf{P}^I_{\AA^n}\ox_{\O_{(\AA^n)^I}}\mathbf{Q}^I_{\AA^n},\dd'_{\mathbf{P}}+\dd_\mathbf{Q}\right)
$$
by using the following spectral sequence
\begin{itemize}
  \item $E^{p,q}_0=\mathbf{P}^{I,p}_{\AA^n}\ox_{\O_{(\AA^n)^I}}\mathbf{Q}^{I,q}_{\AA^n}$ with the differential
      \be d_0=\dd_{\mathbf{Q}}:\mathbf{P}^{I,p}_{\AA^n}\ox_{\O_{(\AA^n)^I}}\mathbf{Q}^{I,q}_{\AA^n}\rightarrow \mathbf{P}^{I,p}_{\AA^n}\ox_{\O_{(\AA^n)^I}}\mathbf{Q}^{I,q+1}_{\AA^n}.\nn\ee
  \item $E^{p,q}_1=\mathbf{P}^{I,p}_{\AA^n}\ox_{\O_{(\AA^n)^I}}H^q_{d_{\mathbf{Q}}}(\mathbf{Q}^{I}_{\AA^n})\cong \mathbf{P}^{I,p}_{\AA^n}\ox_{\O_{(\AA^n)^I}}   \mathbf{H}_{\AA^n}^{I\smile\{\ast\},q}$ with the differential (here we use \cref{lem: isom}) \be d_1=\dd'_{\mathbf{P}}:\mathbf{P}^{I,p}_{\AA^n}\ox_{\O_{(\AA^n)^I}}   \mathbf{H}_{\AA^n}^{I\smile\{\ast\},q}\rightarrow \mathbf{P}^{I,p+1}_{\AA^n}\ox_{\O_{(\AA^n)^I}}   \mathbf{H}_{\AA^n}^{I\smile \{\ast\},q}.\nn\ee
\end{itemize}
Since $\mathbf{H}_{\AA^n}^{I\smile \{\ast\}}$ is a free $\O_{(\AA^n)^I}$-module, on the $E_2$-page we have
\be E^{\bullet,q}_2=  \Coh_{\dd_{\TS}}^\bullet(\TS_{\AA^n}^{I})\ox_{\O_{(\AA^n)^{I}}} \mathbf{H}_{\AA^n}^{I\smile\{\ast\},q}\nn\ee
\be=\mathbf{H}_{\AA^n}^{I_1\smile\{2\}} \ox_{\O_{(\AA^n)^{I_2}}} \mathbf{H}_{\AA^n}^{I_2\smile\{3\}} \ox_{\O_{(\AA^n)^{I_3}}} \dots
\ox_{\O_{(\AA^n)^{I_{k-1}}}} \mathbf{H}_{\AA^n}^{I_{k-1}\smile\{k\}}\ox_{\O_{(\AA^n)^{I}}} \mathbf{H}_{\AA^n}^{I\smile\{\ast\},q}.\nn\ee
As $\mathbf{H}^{I\smile\{\ast\}}_{\AA^n}$ is concentrated at degree 0 and $n-1$, we only have two non-zero rows on the $E_2$-page. By the induction hypothesis and the fact that $\mathbf{H}_{\AA^n}^{I_l\smile\{l+1\}}$ are all concentrated at degree 0 and $n-1$, we have
$$
E^{p,q}_0\neq 0\Longleftrightarrow p=r(n-1),q=s(n-1),\quad r\geq 0, s=0,1.
$$

When $n\geq 3$, from Fig. \ref{SpectralAn3} we can see that the edge maps, $d_r=0,r\geq 2$, are zero.

\begin{figure}[htp]
  \centering

\tikzset{every picture/.style={line width=0.75pt}} 

\begin{tikzpicture}[x=0.75pt,y=0.75pt,yscale=-1,xscale=1]

\draw [dash pattern={on 15pt off 6pt}] (169.08,202.82) -- (417.08,202.82)(193.88,40) -- (193.88,220.91) (410.08,197.82) -- (417.08,202.82) -- (410.08,207.82) (188.88,47) -- (193.88,40) -- (198.88,47)  ;
\draw    (194,148) -- (240.26,168.92) ;
\draw [shift={(242.08,169.74)}, rotate = 204.33] [color={rgb, 255:red, 0; green, 0; blue, 0 }  ][line width=0.75]    (10.93,-3.29) .. controls (6.95,-1.4) and (3.31,-0.3) .. (0,0) .. controls (3.31,0.3) and (6.95,1.4) .. (10.93,3.29)   ;
\draw [shift={(194,148)}, rotate = 24.33] [color={rgb, 255:red, 0; green, 0; blue, 0 }  ][fill={rgb, 255:red, 0; green, 0; blue, 0 }  ][line width=0.75]      (0, 0) circle [x radius= 3.35, y radius= 3.35]   ;
\draw    (194,203) ;
\draw [shift={(194,203)}, rotate = 0] [color={rgb, 255:red, 0; green, 0; blue, 0 }  ][fill={rgb, 255:red, 0; green, 0; blue, 0 }  ][line width=0.75]      (0, 0) circle [x radius= 3.35, y radius= 3.35]   ;
\draw    (249,203) ;
\draw [shift={(249,203)}, rotate = 0] [color={rgb, 255:red, 0; green, 0; blue, 0 }  ][fill={rgb, 255:red, 0; green, 0; blue, 0 }  ][line width=0.75]      (0, 0) circle [x radius= 3.35, y radius= 3.35]   ;
\draw    (305,203) ;
\draw [shift={(305,203)}, rotate = 0] [color={rgb, 255:red, 0; green, 0; blue, 0 }  ][fill={rgb, 255:red, 0; green, 0; blue, 0 }  ][line width=0.75]      (0, 0) circle [x radius= 3.35, y radius= 3.35]   ;
\draw    (250,148) ;
\draw [shift={(250,148)}, rotate = 0] [color={rgb, 255:red, 0; green, 0; blue, 0 }  ][fill={rgb, 255:red, 0; green, 0; blue, 0 }  ][line width=0.75]      (0, 0) circle [x radius= 3.35, y radius= 3.35]   ;
\draw    (305,148) ;
\draw [shift={(305,148)}, rotate = 0] [color={rgb, 255:red, 0; green, 0; blue, 0 }  ][fill={rgb, 255:red, 0; green, 0; blue, 0 }  ][line width=0.75]      (0, 0) circle [x radius= 3.35, y radius= 3.35]   ;
\draw    (194,148) -- (267.42,196.63) ;
\draw [shift={(269.08,197.74)}, rotate = 213.52] [color={rgb, 255:red, 0; green, 0; blue, 0 }  ][line width=0.75]    (10.93,-3.29) .. controls (6.95,-1.4) and (3.31,-0.3) .. (0,0) .. controls (3.31,0.3) and (6.95,1.4) .. (10.93,3.29)   ;
\draw    (194,148) -- (302.5,231.52) ;
\draw [shift={(304.08,232.74)}, rotate = 217.59] [color={rgb, 255:red, 0; green, 0; blue, 0 }  ][line width=0.75]    (10.93,-3.29) .. controls (6.95,-1.4) and (3.31,-0.3) .. (0,0) .. controls (3.31,0.3) and (6.95,1.4) .. (10.93,3.29)   ;
\draw    (362,203) ;
\draw [shift={(362,203)}, rotate = 0] [color={rgb, 255:red, 0; green, 0; blue, 0 }  ][fill={rgb, 255:red, 0; green, 0; blue, 0 }  ][line width=0.75]      (0, 0) circle [x radius= 3.35, y radius= 3.35]   ;
\draw    (362,148) ;
\draw [shift={(362,148)}, rotate = 0] [color={rgb, 255:red, 0; green, 0; blue, 0 }  ][fill={rgb, 255:red, 0; green, 0; blue, 0 }  ][line width=0.75]      (0, 0) circle [x radius= 3.35, y radius= 3.35]   ;

\draw (222.37,197) node [anchor=north] [inner sep=0.75pt]  [font=\footnotesize,color={rgb, 255:red, 0; green, 0; blue, 0 }  ,opacity=1 ] [align=left] {0};
\draw (195.37,170) node [anchor=north] [inner sep=0.75pt]  [font=\footnotesize,color={rgb, 255:red, 0; green, 0; blue, 0 }  ,opacity=1 ] [align=left] {0};
\draw (222.37,170) node [anchor=north] [inner sep=0.75pt]  [font=\footnotesize,color={rgb, 255:red, 0; green, 0; blue, 0 }  ,opacity=1 ] [align=left] {0};
\draw (195.37,114) node [anchor=north] [inner sep=0.75pt]  [font=\footnotesize,color={rgb, 255:red, 0; green, 0; blue, 0 }  ,opacity=1 ] [align=left] {0};
\draw (195.37,86) node [anchor=north] [inner sep=0.75pt]  [font=\footnotesize,color={rgb, 255:red, 0; green, 0; blue, 0 }  ,opacity=1 ] [align=left] {0};
\draw (195.37,58) node [anchor=north] [inner sep=0.75pt]  [font=\footnotesize,color={rgb, 255:red, 0; green, 0; blue, 0 }  ,opacity=1 ] [align=left] {0};
\draw (278.37,197) node [anchor=north] [inner sep=0.75pt]  [font=\footnotesize,color={rgb, 255:red, 0; green, 0; blue, 0 }  ,opacity=1 ] [align=left] {0};
\draw (250.37,170) node [anchor=north] [inner sep=0.75pt]  [font=\footnotesize,color={rgb, 255:red, 0; green, 0; blue, 0 }  ,opacity=1 ] [align=left] {0};
\draw (278.37,170) node [anchor=north] [inner sep=0.75pt]  [font=\footnotesize,color={rgb, 255:red, 0; green, 0; blue, 0 }  ,opacity=1 ] [align=left] {0};
\draw (305.37,170) node [anchor=north] [inner sep=0.75pt]  [font=\footnotesize,color={rgb, 255:red, 0; green, 0; blue, 0 }  ,opacity=1 ] [align=left] {0};
\draw (222.37,114) node [anchor=north] [inner sep=0.75pt]  [font=\footnotesize,color={rgb, 255:red, 0; green, 0; blue, 0 }  ,opacity=1 ] [align=left] {0};
\draw (250.37,114) node [anchor=north] [inner sep=0.75pt]  [font=\footnotesize,color={rgb, 255:red, 0; green, 0; blue, 0 }  ,opacity=1 ] [align=left] {0};
\draw (278.37,114) node [anchor=north] [inner sep=0.75pt]  [font=\footnotesize,color={rgb, 255:red, 0; green, 0; blue, 0 }  ,opacity=1 ] [align=left] {0};
\draw (305.37,114) node [anchor=north] [inner sep=0.75pt]  [font=\footnotesize,color={rgb, 255:red, 0; green, 0; blue, 0 }  ,opacity=1 ] [align=left] {0};
\draw (222.37,142) node [anchor=north] [inner sep=0.75pt]  [font=\footnotesize,color={rgb, 255:red, 0; green, 0; blue, 0 }  ,opacity=1 ] [align=left] {0};
\draw (278.37,142) node [anchor=north] [inner sep=0.75pt]  [font=\footnotesize,color={rgb, 255:red, 0; green, 0; blue, 0 }  ,opacity=1 ] [align=left] {0};
\draw (334.37,197) node [anchor=north] [inner sep=0.75pt]  [font=\footnotesize,color={rgb, 255:red, 0; green, 0; blue, 0 }  ,opacity=1 ] [align=left] {0};
\draw (390.37,197) node [anchor=north] [inner sep=0.75pt]  [font=\footnotesize,color={rgb, 255:red, 0; green, 0; blue, 0 }  ,opacity=1 ] [align=left] {0};
\draw (334.37,170) node [anchor=north] [inner sep=0.75pt]  [font=\footnotesize,color={rgb, 255:red, 0; green, 0; blue, 0 }  ,opacity=1 ] [align=left] {0};
\draw (362.37,170) node [anchor=north] [inner sep=0.75pt]  [font=\footnotesize,color={rgb, 255:red, 0; green, 0; blue, 0 }  ,opacity=1 ] [align=left] {0};
\draw (334.37,142) node [anchor=north] [inner sep=0.75pt]  [font=\footnotesize,color={rgb, 255:red, 0; green, 0; blue, 0 }  ,opacity=1 ] [align=left] {0};
\draw (231,150.4) node [anchor=north west][inner sep=0.75pt]  [font=\scriptsize]  {$d_{2}$};
\draw (261,178.4) node [anchor=north west][inner sep=0.75pt]  [font=\scriptsize]  {$d_{3}$};
\draw (291,209.4) node [anchor=north west][inner sep=0.75pt]  [font=\scriptsize]  {$d_{4}$};
\draw (170,33.4) node [anchor=north west][inner sep=0.75pt]    {$q$};
\draw (413,210.4) node [anchor=north west][inner sep=0.75pt]    {$p$};

\end{tikzpicture}
  \caption{The $\mathbb{A}^3$ case}\label{SpectralAn3}
\end{figure}

Now we assume that $n=2$. From Fig. \ref{SpectralAn2}, we only need to show that $d_2=0$. Notice that $d_2$ is a $\mathcal{D}_{(\AA^n)^{I\sqcup\{\ast\}}}$-map
$$
d_2:E^{p,1}_2\rightarrow E^{p+2,0}_2.
$$
Suppose that $\tau=\rho\otimes P_{i\ast}\in E^{p,1}_2=\Coh^p_{\dd_{\TS}}(\TS_{\AA^n}^{I})\ox_{\O_{(\AA^n)^{I}}} \mathbf{H}_{\AA^n}^{I\smile\{\ast\},1}$, then we can find $N\gg 0$ such that
$$
(\partial_{z^1_\ast})^Nd_2(\rho\otimes P_{i\ast})=0
$$
since $ E^{p+2,0}_2=\Coh^{p+2}_{\dd_{\TS}}(\TS_{\AA^n}^{I})\ox_{\O_{(\AA^n)^{I}}} \mathbf{H}_{\AA^n}^{I\smile\{\ast\},0}=\Coh^{p+2}_{\dd_{\TS}}(\TS_{\AA^n}^{I})\ox_{\O_{(\AA^n)^{I}}}\O_{(\AA^n)^{I\sqcup\{\ast\}}}$ contains only polynomials in $z^1_\ast$. Then
\begin{align*}
   d_2(\rho\otimes P_{i\ast})&=\frac{(-1)^N}{N!}d_2\left((z^1_i-z^1_{\ast})^N\cdot\rho\otimes (\partial_{z^1_\ast})^NP_{i\ast}\right) \\
   &=\frac{(-1)^N}{N!}(z^1_i-z^1_{\ast})^N\cdot(\partial_{z^1_\ast})^Nd_2\left(\rho\otimes P_{i\ast}\right)\\
   &=0.
\end{align*}
Since $E^{p,1}_2$ is generated by elements like $\rho\otimes P_{i\ast}$ as a $\mathcal{D}$-module, we conclude that $d_2=0$. The proof is complete.
\begin{figure}[htp]
  \centering

\tikzset{every picture/.style={line width=0.75pt}} 

\begin{tikzpicture}[x=0.75pt,y=0.75pt,yscale=-1,xscale=1]

\draw [dash pattern={on 15pt off 6pt}] (169.08,202.82) -- (417.08,202.82)(193.88,40) -- (193.88,220.91) (410.08,197.82) -- (417.08,202.82) -- (410.08,207.82) (188.88,47) -- (193.88,40) -- (198.88,47)  ;
\draw    (194,203) ;
\draw [shift={(194,203)}, rotate = 0] [color={rgb, 255:red, 0; green, 0; blue, 0 }  ][fill={rgb, 255:red, 0; green, 0; blue, 0 }  ][line width=0.75]      (0, 0) circle [x radius= 3.35, y radius= 3.35]   ;
\draw    (249,203) ;
\draw [shift={(249,203)}, rotate = 0] [color={rgb, 255:red, 0; green, 0; blue, 0 }  ][fill={rgb, 255:red, 0; green, 0; blue, 0 }  ][line width=0.75]      (0, 0) circle [x radius= 3.35, y radius= 3.35]   ;
\draw    (305,203) ;
\draw [shift={(305,203)}, rotate = 0] [color={rgb, 255:red, 0; green, 0; blue, 0 }  ][fill={rgb, 255:red, 0; green, 0; blue, 0 }  ][line width=0.75]      (0, 0) circle [x radius= 3.35, y radius= 3.35]   ;
\draw    (277,203) ;
\draw [shift={(277,203)}, rotate = 0] [color={rgb, 255:red, 0; green, 0; blue, 0 }  ][fill={rgb, 255:red, 0; green, 0; blue, 0 }  ][line width=0.75]      (0, 0) circle [x radius= 3.35, y radius= 3.35]   ;
\draw    (221,203) ;
\draw [shift={(221,203)}, rotate = 0] [color={rgb, 255:red, 0; green, 0; blue, 0 }  ][fill={rgb, 255:red, 0; green, 0; blue, 0 }  ][line width=0.75]      (0, 0) circle [x radius= 3.35, y radius= 3.35]   ;
\draw    (194,176) -- (240.25,196.31) ;
\draw [shift={(242.08,197.11)}, rotate = 203.71] [color={rgb, 255:red, 0; green, 0; blue, 0 }  ][line width=0.75]    (10.93,-3.29) .. controls (6.95,-1.4) and (3.31,-0.3) .. (0,0) .. controls (3.31,0.3) and (6.95,1.4) .. (10.93,3.29)   ;
\draw [shift={(194,176)}, rotate = 23.71] [color={rgb, 255:red, 0; green, 0; blue, 0 }  ][fill={rgb, 255:red, 0; green, 0; blue, 0 }  ][line width=0.75]      (0, 0) circle [x radius= 3.35, y radius= 3.35]   ;
\draw    (222,176) ;
\draw [shift={(222,176)}, rotate = 0] [color={rgb, 255:red, 0; green, 0; blue, 0 }  ][fill={rgb, 255:red, 0; green, 0; blue, 0 }  ][line width=0.75]      (0, 0) circle [x radius= 3.35, y radius= 3.35]   ;
\draw    (251,176) ;
\draw [shift={(251,176)}, rotate = 0] [color={rgb, 255:red, 0; green, 0; blue, 0 }  ][fill={rgb, 255:red, 0; green, 0; blue, 0 }  ][line width=0.75]      (0, 0) circle [x radius= 3.35, y radius= 3.35]   ;
\draw    (278,176) ;
\draw [shift={(278,176)}, rotate = 0] [color={rgb, 255:red, 0; green, 0; blue, 0 }  ][fill={rgb, 255:red, 0; green, 0; blue, 0 }  ][line width=0.75]      (0, 0) circle [x radius= 3.35, y radius= 3.35]   ;
\draw    (306,176) ;
\draw [shift={(306,176)}, rotate = 0] [color={rgb, 255:red, 0; green, 0; blue, 0 }  ][fill={rgb, 255:red, 0; green, 0; blue, 0 }  ][line width=0.75]      (0, 0) circle [x radius= 3.35, y radius= 3.35]   ;
\draw    (194,176) -- (275.39,227.05) ;
\draw [shift={(277.08,228.11)}, rotate = 212.1] [color={rgb, 255:red, 0; green, 0; blue, 0 }  ][line width=0.75]    (10.93,-3.29) .. controls (6.95,-1.4) and (3.31,-0.3) .. (0,0) .. controls (3.31,0.3) and (6.95,1.4) .. (10.93,3.29)   ;
\draw    (194,176) -- (297.44,247.97) ;
\draw [shift={(299.08,249.11)}, rotate = 214.83] [color={rgb, 255:red, 0; green, 0; blue, 0 }  ][line width=0.75]    (10.93,-3.29) .. controls (6.95,-1.4) and (3.31,-0.3) .. (0,0) .. controls (3.31,0.3) and (6.95,1.4) .. (10.93,3.29)   ;

\draw (195.37,114) node [anchor=north] [inner sep=0.75pt]  [font=\footnotesize,color={rgb, 255:red, 0; green, 0; blue, 0 }  ,opacity=1 ] [align=left] {0};
\draw (195.37,86) node [anchor=north] [inner sep=0.75pt]  [font=\footnotesize,color={rgb, 255:red, 0; green, 0; blue, 0 }  ,opacity=1 ] [align=left] {0};
\draw (195.37,58) node [anchor=north] [inner sep=0.75pt]  [font=\footnotesize,color={rgb, 255:red, 0; green, 0; blue, 0 }  ,opacity=1 ] [align=left] {0};
\draw (224.37,142) node [anchor=north] [inner sep=0.75pt]  [font=\footnotesize,color={rgb, 255:red, 0; green, 0; blue, 0 }  ,opacity=1 ] [align=left] {0};
\draw (252.37,142) node [anchor=north] [inner sep=0.75pt]  [font=\footnotesize,color={rgb, 255:red, 0; green, 0; blue, 0 }  ,opacity=1 ] [align=left] {0};
\draw (279.37,142) node [anchor=north] [inner sep=0.75pt]  [font=\footnotesize,color={rgb, 255:red, 0; green, 0; blue, 0 }  ,opacity=1 ] [align=left] {0};
\draw (195.37,142) node [anchor=north] [inner sep=0.75pt]  [font=\footnotesize,color={rgb, 255:red, 0; green, 0; blue, 0 }  ,opacity=1 ] [align=left] {0};
\draw (334.37,197) node [anchor=north] [inner sep=0.75pt]  [font=\footnotesize,color={rgb, 255:red, 0; green, 0; blue, 0 }  ,opacity=1 ] [align=left] {0};
\draw (390.37,197) node [anchor=north] [inner sep=0.75pt]  [font=\footnotesize,color={rgb, 255:red, 0; green, 0; blue, 0 }  ,opacity=1 ] [align=left] {0};
\draw (308.37,142) node [anchor=north] [inner sep=0.75pt]  [font=\footnotesize,color={rgb, 255:red, 0; green, 0; blue, 0 }  ,opacity=1 ] [align=left] {0};
\draw (362.37,197) node [anchor=north] [inner sep=0.75pt]  [font=\footnotesize,color={rgb, 255:red, 0; green, 0; blue, 0 }  ,opacity=1 ] [align=left] {0};
\draw (232,177.4) node [anchor=north west][inner sep=0.75pt]  [font=\scriptsize]  {$d_{2}$};
\draw (265,205.4) node [anchor=north west][inner sep=0.75pt]  [font=\scriptsize]  {$d_{3}$};
\draw (170,33.4) node [anchor=north west][inner sep=0.75pt]    {$q$};
\draw (413,210.4) node [anchor=north west][inner sep=0.75pt]    {$p$};
\draw (334.37,171) node [anchor=north] [inner sep=0.75pt]  [font=\footnotesize,color={rgb, 255:red, 0; green, 0; blue, 0 }  ,opacity=1 ] [align=left] {0};
\draw (363.37,171) node [anchor=north] [inner sep=0.75pt]  [font=\footnotesize,color={rgb, 255:red, 0; green, 0; blue, 0 }  ,opacity=1 ] [align=left] {0};
\draw (284,226.4) node [anchor=north west][inner sep=0.75pt]  [font=\scriptsize]  {$d_{4}$};

\end{tikzpicture}
  \caption{The $\mathbb{A}^2$ case}\label{SpectralAn2}
\end{figure}
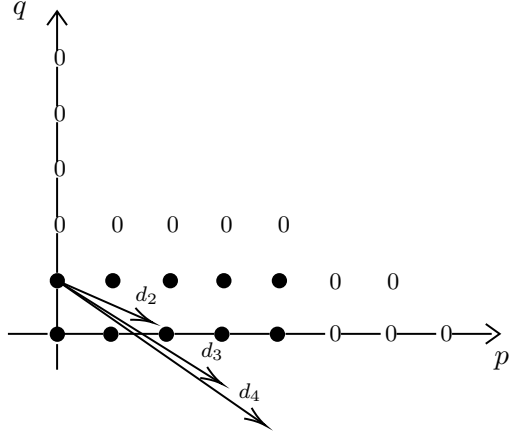
\end{proof}

\begin{rem}
 One may expect to use the K\"unneth formula \cite[Theorem 3.6.3]{Weibel}. However, although $\mathbf{P}^I_{\AA^n},\mathbf{Q}^I_{\AA^n}$ are complexes of flat $\mathcal{O}_{(\mathbb{A}^n)^I}$-module, the modules of coboundary $d_{\mathbf{P}}(\mathbf{P}^{I,p}_{\AA^n}),d_{\mathbf{Q}}(\mathbf{Q}^{I,q}_{\AA^n})$ are not flat. Thus, the K\"unneth formula does not apply directly.
\end{rem}

\subsection{Translation invariant version}\label{sec:TransInv-Coh}
Later in this paper, we will use a translation invariant version of \cref{thm: cohFin}.  If we choose a base point $\bp=1\in I=\{1,\dots,k\}$, then we can define the translation invariant subcomplex of $\TS_{\AA^n}^{I}$
\begin{align} \TS_{\AA^n,\circ}^{I}&=\TS_{\AA^n,\circ}^{I,\bul}  := \bigl\{ \tau \in
 \kk[(z^r_i-z^r_{\circ}),(z^r_j-z^r_\ell)^{-1}]^{1\leq r\leq n}_{{i\in I- \{\circ\}, j\neq \ell\in I}} \ox \Omega^\bul(\triangle_{n-1}^{\times \binom k 2}) \\
& \quad :          \text{$\tau|_{u^r_{ij}=0}$ is regular in $(z^r_i-z^r_j)$,  for $1\leq r\leq n$ and $i\neq j\in I$} \bigr\}. \nn
\end{align}

By an argument that parallels to the proof of \cref{thm: TS gens and rels}, there is an isomorphism of commutative dg algebras
\be \TS^I_{\AA^n,\circ} \cong \mathbf{B}^I_{\mathbb{A}^n,\circ} :=\directlim_{m\geq 0}\frac{\kk\left[ (z^r_i-z^r_{\circ}), \frac{u^r_{j\ell}}{(z^r_j-z^r_\ell)^m}, \frac{\dd u^r_{j\ell}}{(z^r_j-z^r_\ell)^m}, \right]^{1\leq r\leq n}_{i\in I-\{\circ\},j\neq l\in I}}{\langle \sum\limits_{r=1}^{n} u^r_{ij} -1, \sum\limits_{r=1}^{n} \dd u^r_{ij} \rangle\ijk} \nn. \ee

Then
$$
\TS_{\AA^n}^{I}\cong \mathbf{B}_{\AA^n}^{I}=\mathbf{k}[z^r_{\circ}]^{1\leq r\leq n}\otimes_{\mathbf{k}}\mathbf{B}_{\AA^n,\circ}^{I}\cong \mathbf{k}[z^r_{\circ}]^{1\leq r\leq n}\otimes_{\mathbf{k}}\TS_{\AA^n,\circ}^{I}.
$$
Thus the cohomology of $\TS_{\AA^n}^{I}$ can be identified as the cohomology of the translation invariant part with a tensor factor $\mathbf{k}[z^r_{\circ}]^{1\leq r\leq n}$
\be
 \Coh_{\dd_{\TS}}(\TS_{\AA^n}^{I}) \cong  \mathbf{k}[z^r_{\circ}]^{1\leq r\leq n}\otimes_{\mathbf{k}} \Coh_{\dd_{\TS}}(\TS_{\AA^n,\circ}^{I}) .\nn\ee
For $I_l=\{1=\circ,\dots,l\},l<k$, we have
 \be
 \mathbf{H}_{\AA^n}^{I_l\smile\{l+1\}}=\mathbf{k}[z^r_{\circ}]^{1\leq r\leq n}\otimes_{\mathbf{k}} \mathbf{H}_{\AA^n,\circ}^{I_l\smile\{l+1\}}
 \nn\ee
 where\be\mathbf{H}_{\AA^n,\circ}^{I_l\smile\{l+1\}}=\mathcal{O}_{(\mathbb{A}^n)^{I_l\setminus\{\circ\}\sqcup\{l+1\}}}\oplus\bigoplus^{}_{i\in I_l,p_1,\dots,p_n\geq 0}\mathcal{O}_{(\mathbb{A}^n)^{I_l\setminus\{\circ\}}}\cdot\Prop_{i\ast}^{p_1,\dots,p_n},\ast=l+1,\nn\ee
 \be\mathcal{O}_{(\mathbb{A}^n)^{I_l\setminus\{\circ\}}}\cong \mathbf{k}[(z^r_i-z^r_{\circ})]_{i\in I_l\setminus\{\circ\}}^{1\leq r\leq n}.\nn\ee

With the above notation, we have the following theorem.
\begin{thm}
\label{thm: cohFinTransInv}
 We have the isomorphism
\begin{align} \Coh_{\dd_{\TS}}(\TS_{\AA^n,\circ}^{I}) &\cong \mathbf{H}_{\AA^n,\circ}^{I_1\smile\{2\}} \ox_{\O_{(\AA^n)^{I_2\setminus\{\circ\}}}} \mathbf{H}_{\AA^n,\circ}^{I_2\smile\{3\}} \ox_{\O_{(\AA^n)^{I_3\setminus\{\circ\}}}} \dots
\ox_{\O_{(\AA^n)^{I_{k-1}\setminus\{\circ\}}}} \mathbf{H}_{\AA^n,\circ}^{I_{k-1}\smile\{k\}}\nn\end{align}
of graded modules over $\kk[(z_i^r-z_\circ^r),\partial_{z^r_i},\partial_{z^r_\circ}]_{i\in I\setminus\{\circ\}}^{1\leq r\leq n}$.
\end{thm}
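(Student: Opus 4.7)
The plan is to reduce \cref{thm: cohFinTransInv} to \cref{thm: cohFin} by using the translation action that separates the base point coordinates $z_\circ^r$ from the other coordinates.

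The first step will be to establish, at the level of complexes, the tensor decomposition
\[
\TS_{\AA^n}^{I} \;\cong\; \kk[z_\circ^r]^{1\leq r\leq n}\otimes_\kk \TS_{\AA^n,\circ}^{I}.
\]
This is immediate from the generators-and-relations description (cf.\ \cref{thm: TS gens and rels} and the analogous description of $\TS^I_{\AA^n,\circ}$ recalled just before the theorem) via the change of variables $z_i^r = (z_i^r - z_\circ^r) + z_\circ^r$. The differential $\ddts$ is the polysimplicial de Rham differential, which annihilates the $z_\circ^r$, so the factor $\kk[z_\circ^r]^{1\leq r\leq n}$ sits in cohomological degree zero with zero differential.

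Second, since $\kk[z_\circ^r]^{1\leq r\leq n}$ is free over $\kk$, the K\"unneth formula \cite[Theorem~3.6.3]{Weibel} gives
\[
\Coh_{\ddts}(\TS_{\AA^n}^{I}) \;\cong\; \kk[z_\circ^r]^{1\leq r\leq n}\otimes_\kk \Coh_{\ddts}(\TS_{\AA^n,\circ}^{I}).
\]
On the other hand, \cref{thm: cohFin} identifies the left-hand side with the tensor product $\mathbf{H}_{\AA^n}^{I_1\smile\{2\}} \otimes_{\O_{(\AA^n)^{I_2}}} \cdots \otimes_{\O_{(\AA^n)^{I_{k-1}}}} \mathbf{H}_{\AA^n}^{I_{k-1}\smile\{k\}}$. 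Using the analogous decomposition $\mathbf{H}_{\AA^n}^{I_l\smile\{l+1\}}\cong \kk[z_\circ^r]^{1\leq r\leq n}\otimes_\kk \mathbf{H}_{\AA^n,\circ}^{I_l\smile\{l+1\}}$ recalled in \cref{sec:TransInv-Coh}, and rewriting each tensor product over $\O_{(\AA^n)^{I_l}}=\kk[z_i^r]_{i\in I_l}^{1\leq r\leq n}$ as a tensor product over $\O_{(\AA^n)^{I_l\setminus\{\circ\}}}=\kk[(z_i^r-z_\circ^r)]_{i\in I_l\setminus\{\circ\}}^{1\leq r\leq n}$ tensored with $\kk[z_\circ^r]^{1\leq r\leq n}$, the factor $\kk[z_\circ^r]^{1\leq r\leq n}$ may be pulled out.

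Third, comparing the two expressions thus obtained for $\Coh_{\ddts}(\TS_{\AA^n}^{I})$, both are free modules over $\kk[z_\circ^r]^{1\leq r\leq n}$, so the common factor can be cancelled to yield the claimed isomorphism. The only bookkeeping to verify will be compatibility with the action of $\kk[(z_i^r-z_\circ^r),\del_{z_i^r},\del_{z_\circ^r}]$; this follows by tracking through the change of variables, noting in particular that $\del_{z_\circ^r}$ acts on $\kk[z_\circ^r]$ in the first factor while translating derivatives on the second factor via $\del_{z_\circ^r}=-\sum_{i\in I\setminus\{\circ\}}\del_{z_i^r}$ when acting on translation-invariant elements. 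No substantive obstacle is expected, as the argument is essentially a Künneth-plus-change-of-variables calculation.
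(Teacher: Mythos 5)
Your proposal is correct in substance, but it takes a genuinely different route from the paper. The paper's proof of \cref{thm: cohFinTransInv} is literally ``the same argument as in the proof of \cref{thm: cohFin}'': one reruns the whole induction on $k$ --- the relative complex with one moving point, the flatness/K\"unneth step, and the two-row spectral sequence with the $\D$-module argument killing $d_2$ --- with $\TS_{\AA^n,\circ}$ everywhere in place of $\TS_{\AA^n}$. You instead \emph{deduce} the translation-invariant statement from the already-proven \cref{thm: cohFin} by splitting off the $\kk[z_\circ^r]$ factor on both sides and comparing. This is more economical (no second spectral sequence) and your intermediate identifications --- $\TS^I_{\AA^n}\cong\kk[z_\circ^r]\ox_\kk\TS^I_{\AA^n,\circ}$, the K\"unneth step over $\kk$, and the base change $(\kk[z_\circ]\ox A)\ox_{\kk[z_\circ]\ox R}(\kk[z_\circ]\ox B)\cong\kk[z_\circ]\ox(A\ox_R B)$ iterated along the chain --- are all valid and indeed already recorded in \cref{sec:TransInv-Coh} of the paper.

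The one step you should firm up is the final ``cancellation of the common factor.'' An abstract isomorphism $M\ox N\cong M\ox N'$ of free $\kk[z_\circ^r]$-modules does not by itself yield a compatible isomorphism $N\cong N'$; you need the isomorphism of \cref{thm: cohFin} to respect the decomposition. The clean fix is to observe that the translation-invariant factor on each side is precisely the joint kernel of the total translation operators $\sum_{i\in I}\del_{z^r_i}$, $1\leq r\leq n$ (under the splitting $z_i^r=(z_i^r-z_\circ^r)+z_\circ^r$ these operators act as $\del_{z_\circ^r}$ on the $\kk[z_\circ^r]$ factor alone). Since the isomorphism of \cref{thm: cohFin} is $\D_{(\AA^n)^I}$-linear, it commutes with these operators and hence restricts to an isomorphism of their kernels, which is exactly the claimed statement; compatibility with $\kk[(z_i^r-z_\circ^r),\del_{z_i^r},\del_{z_\circ^r}]$ follows because this subalgebra commutes with the $\sum_{i}\del_{z^r_i}$ and so preserves the kernels. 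With that adjustment your argument is complete.
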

\begin{proof}
    The same argument as in the proof of \cref{thm: cohFin}.
\end{proof}
\begin{cor}\label{cor:TransInvBasis}
    Within the same notation in \cref{cor: basis}, we have the following basis of  $\Coh_{\dd_{\TS}}(\TS_{\AA^n,\circ}^{I})$
    $$
        \left(\prod_{i\in  I_{J,(\ell_j)}}\prod_{r=1}^n(z_{i}^r-z_{k}^r)^{n_{r,i}}\right)
        \left(\prod_{j\in J}[\Prop_{\ell_jj}^{m_{1,j},\dots,m_{n,j}}]\right).
    $$
\end{cor}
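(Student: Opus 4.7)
The strategy is to mimic the proof of Corollary~\ref{cor: basis}, now working inside the translation invariant subcomplex, with Theorem~\ref{thm: cohFinTransInv} playing the role that Theorem~\ref{thm: cohFin} played there.

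First, I would write down an explicit $\kk$-linear basis of each tensor factor $\mathbf{H}^{I_l\smile\{l+1\}}_{\AA^n,\circ}$ appearing in Theorem~\ref{thm: cohFinTransInv}. Since this module is free over $\O_{(\AA^n)^{I_l\setminus\{\circ\}}}$ on the generator $\mathbf{1}$ together with the generators $\Prop_{i,\,l+1}^{p_1,\dots,p_n}$ for $i\in I_l$ and $p_r\geq 0$, such a basis consists of the degree-$0$ monomials $\prod_{j\in I_l\setminus\{\circ\}}\prod_r (z_j^r - z_{l+1}^r)^{n_{r,j}}$ together with the degree-$(n-1)$ elements $[\Prop_{i,\,l+1}^{m_1,\dots,m_n}]\prod_{j\in I_l\setminus\{\circ,i\}}\prod_r (z_j^r - z_{l+1}^r)^{n_{r,j}}$ for each $i\in I_l$. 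This is the translation invariant analog of \eqref{basisInterm}, stripped of the polynomial factors in $z_\ast^r$ that were present in the non-invariant case.

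Next, tensoring these bases together over the rings $\O_{(\AA^n)^{I_l\setminus\{\circ\}}}$ as in Theorem~\ref{thm: cohFinTransInv} yields an (overcomplete) spanning set of $\Coh_{\dd_{\TS}}(\TS_{\AA^n,\circ}^I)$. I would then run the same iterative reduction as in the sketch following Corollary~\ref{cor: basis}: repeatedly apply $(z_i^r - z_{j+1}^r) = (z_i^r - z_k^r) - (z_{j+1}^r - z_k^r)$ to re-express every difference in terms of $(z_i^r - z_k^r)$, and absorb any factor $(z_{\ell_j}^r - z_{j+1}^r)$ that is parallel to an existing propagator $[\Prop_{\ell_j(j+1)}^{\cdots}]$ into that propagator (via shifts in the $\partial_{z_{j+1}^r}$-indices, using the defining relations of $\mathbf{H}^{I_l\smile\{l+1\}}_{\AA^n,\circ}$). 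The same combinatorial bookkeeping as before shows that once this process terminates, the only surviving differences $(z_i^r - z_k^r)^{n_{r,i}}$ are those with $i$ in precisely the set $I_{J,(\ell_j)}$ described in the statement.

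Finally, for $\kk$-linear independence I would invoke the isomorphism $\Coh_{\dd_{\TS}}(\TS_{\AA^n}^{I})\cong \kk[z_\circ^r]^{1\leq r\leq n}\otimes_\kk \Coh_{\dd_{\TS}}(\TS_{\AA^n,\circ}^{I})$ from Section~\ref{sec:TransInv-Coh}. Expanding $z_k^r = z_\circ^r + (z_k^r - z_\circ^r)$ and collecting terms converts the Corollary~\ref{cor: basis} basis into a unitriangular combination of products of monomials $(z_\circ^r)^{n_r}$ with the claimed translation invariant family, so linear independence of the latter inside $\Coh_{\dd_{\TS}}(\TS_{\AA^n,\circ}^I)$ follows from linear independence of the former inside $\Coh_{\dd_{\TS}}(\TS_{\AA^n}^I)$. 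The main (mild) obstacle is keeping the combinatorial bookkeeping for $I_{J,(\ell_j)}$ consistent through the iterative reduction in the previous paragraph; this is exactly the same subtlety already handled in the proof sketch of Corollary~\ref{cor: basis}, so no genuinely new difficulty arises.
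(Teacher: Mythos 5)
Your overall strategy --- rerun the argument of \cref{cor: basis} with \cref{thm: cohFinTransInv} in place of \cref{thm: cohFin}, and obtain linear independence from the splitting $\Coh_{\dd_{\TS}}(\TS^I_{\AA^n})\cong\kk[z^r_\circ]^{1\leq r\leq n}\otimes_\kk\Coh_{\dd_{\TS}}(\TS^I_{\AA^n,\circ})$ --- is the right one, and the independence step is in fact even easier than your unitriangularity argument: the claimed family is literally the subfamily of \cref{basisFin} with all $n_r=0$, so its independence in $\Coh_{\dd_{\TS}}(\TS^I_{\AA^n,\circ})$ follows at once from \cref{cor: basis} and the injectivity of $1\otimes(-)$.

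However, your first step contains a concrete error that would derail the spanning argument. The degree-zero part of $\mathbf{H}^{I_l\smile\{l+1\}}_{\AA^n,\circ}$ is $\O_{(\AA^n)^{(I_l\setminus\{\circ\})\sqcup\{l+1\}}}\cong\kk[(z^r_i-z^r_\circ)]^{1\leq r\leq n}_{i\in(I_l\setminus\{\circ\})\sqcup\{l+1\}}$, a polynomial ring in $l$ vertices' worth of difference variables, whereas your proposed monomials $\prod_{j\in I_l\setminus\{\circ\}}\prod_r(z^r_j-z^r_{l+1})^{n_{r,j}}$ involve only the $l-1$ vertices $j\in I_l\setminus\{\circ\}$ and hence span a proper subring; for $l=1$ your set degenerates to the constants, while the correct degree-zero part is all of $\kk[(z^r_2-z^r_1)]^{1\leq r\leq n}$. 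The same off-by-one occurs in your degree-$(n-1)$ summands whenever $i\neq\circ$. The correct translation-invariant analog of \cref{basisInterm} keeps $j$ ranging over all of $I_l$ (respectively $I_l\setminus\{i\}$) --- in particular it retains the base point $\circ$ in the product --- and drops \emph{only} the pure powers $(z^r_{l+1})^{n_r}$; you have over-corrected by removing both. As written, your spanning set contains no factor involving $z^r_1$, and since no telescoping of differences among $\{z_j\}_{j\neq 1}$ can ever produce $(z^r_1-z^r_k)$, the reduction would never reach the basis vectors of the corollary with $1\in I_{J,(\ell_j)}$ and $n_{r,1}>0$, contradicting the independence you establish at the end. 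With the index sets corrected, the rest of the argument goes through as you describe.
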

\section{Proof of the main theorem}\label{sec: main proof}
In this section we prove the main result of the paper, \cref{thm: ChiralIsoLie}. The following result, \cref{prop: Injectivity}, is a key step.
\subsection{Injectivity property
}\label{sec: InjectivityHomology}
Recall that the chain complex of $k$-ary operations of the chiral operad is
\be \P^{ch}_{\AA^n,\bul}(k) := \Hom_{\DAAnk}\left(\TS_{\AA^n}^{k,\bul-(n-1)}((\oms)^{\boxtimes k}), \Delta_*\om \right)\nn\ee
and it is equipped with the differential given by
\begin{equation} \dd_\P\mu := \mu \circ \ddts .\nn\end{equation}

\begin{prop}[Injectivity property]
\label{prop: Injectivity}
If a chiral operation $\mu\in \P^{ch}_{\AA^n,p}(k)$  is such that
\[ \mu|_{\ker(\ddts)}=0,\]
(which implies that $\dd_{\TS}\mu=0$) then there exists $\tilde{\mu}\in \P^{ch}_{\AA^n,p+1}(k)$
such that
\[ 
\mu=\tilde\mu\circ\ddts
=\dd_\P\mu.\]
Equivalently, the following induced map out of the homology of the chiral operad is injective:
 \begin{align}
H^p_{\dd_P}(\P^{ch}_\bul(k)) 
&\rightarrow\Hom_{\DAAnk}\left(H^p_{\ddts}\left(\TS_{\AA^n}^{k,\bul-(n-1)}((\oms)^{\boxtimes k})\right),\Delta_*\om\right),\quad p\in \ZZ.
 \nn\end{align}
\end{prop}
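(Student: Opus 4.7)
The plan is to show that if $\mu$ vanishes on cocycles, then $\mu$ extends to a map defined one degree higher that composes with $\ddts$ to give back $\mu$. First, I would make the standard observation that $\mu|_{\ker(\ddts)}=0$ forces $\mu$ to factor as $\mu=\bar{\mu}\circ\ddts$ for a $\D_{(\AA^n)^k}$-linear map $\bar{\mu}:\mathrm{im}(\ddts)\to\Delta_*\omega_{\AA^n}$, defined on the submodule $\mathrm{im}(\ddts)\subset \TS^{p-n+2}_{\AA^n}((\oms)^{\boxtimes k})$. The core task is then to extend $\bar{\mu}$ to a $\D$-linear map $\tilde{\mu}$ defined on all of $\TS^{p-n+2}_{\AA^n}((\oms)^{\boxtimes k})$, for then $\tilde\mu\in \P^{ch}_{\AA^n,p+1}(k)$ satisfies $\dd_\P\tilde{\mu}=\tilde{\mu}\circ\ddts=\mu$.

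Second, I would reduce the extension problem to the translation invariant subcomplex, using the decomposition $\TS^{k}_{\AA^n}\cong\kk[z^r_\circ]^{1\leq r\leq n}\otimes_\kk \TS^{k}_{\AA^n,\circ}$ from \cref{sec:TransInv-Coh} with a chosen base point $\circ\in\{1,\dots,k\}$. The right $\D_{(\AA^n)^k}$-module $\Delta_*\omega_{\AA^n}$ is generated as such over $\omega_{\AA^n}$ by the variables $\lambda^r_i=\partial_{z^r_i}$, and equivariance under simultaneous translation in $z^r_1,\dots,z^r_k$ (which commutes with $\ddts$ since $\ddts$ only differentiates in the polysimplex coordinates) means any $\D$-linear map $\TS^{k}_{\AA^n}((\oms)^{\boxtimes k})\to\Delta_*\omega_{\AA^n}$ is determined by its restriction to the translation invariant part. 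Hence it suffices to extend $\bar{\mu}$ on the translation invariant subcomplex.

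Third, I would use the explicit cohomology from \cref{thm: cohFinTransInv} and the basis in \cref{cor:TransInvBasis}, which describes $H_{\ddts}^\bullet(\TS^{k}_{\AA^n,\circ})$ as an explicit free module over $\kk[z^r_i-z^r_\circ]_{i\neq\circ}$ with generators given by products of propagators $[\Prop^{m_1,\dots,m_n}_{\ell_j j}]$. These generators admit canonical lifts to elements of $\TS^{k}_{\AA^n,\circ}$, namely the propagators and their derivatives as written. I would use these lifts to construct a $\D$-linear section of the projection to cocycles, and combine it with an explicit splitting of the acyclic complement built inductively from the Poincaré-type homotopies on each simplex factor $\triangle_{n-1}^{(i,j)}$, following the structure of the inductive/spectral sequence proof of \cref{thm: cohFin}: adding one marked point at a time, with the base case \cref{lem: cohMin} providing a $\D$-linear contracting homotopy onto $\mathbf{H}^{\{i\}\smile\{\ast\}}_{\AA^n}$. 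With such a decomposition $\TS^{p-n+2}_{\AA^n,\circ}\cong B^{p-n+2}\oplus H^{p-n+2}\oplus(\text{acyclic complement})$ in hand as $\D$-modules, one defines $\tilde{\mu}$ to agree with $\bar{\mu}$ on $B^{p-n+2}$, vanish on the section of cohomology, and be determined on the acyclic complement by transport through the homotopy.

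The main obstacle is constructing the $\D$-linear splitting/contracting homotopy compatible with the boundary conditions defining $\TS^k_{\AA^n}$: the naive Euler contraction $\iota_E=\sum_r u^r\partial_{\dd u^r}$ on a simplex does not preserve these boundary conditions, nor commutes in general with the $\O_{(\AA^n)^I}$-multiplication by $(z^r_{ij})^{-1}$. The fix, as suggested by the spectral sequence argument in \cref{sec: proof of thm cohFin}, is to build the homotopy one edge $e\in I\two/S_2$ at a time, in each case using the explicit quasi-isomorphism $\TS^{\{i,\ast\},\{(i,\ast)\}}_{\AA^n}\simeq \mathbf{H}^{\{i\}\smile\{\ast\}}_{\AA^n}$ of \cref{lem: cohMin} together with the flatness property implicit in \cref{decompTS}, and checking at each step that the homotopy is $\D$-linear and respects the defining boundary conditions.
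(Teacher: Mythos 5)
Your first two steps (factoring $\mu=\bar\mu\circ\ddts$ through $\mathrm{im}(\ddts)$, and reducing to the translation invariant subcomplex) match the paper, which carries out exactly this reduction in \cref{prop: translation invariant Pch}. The gap is in your third step. The $\D$-linear --- indeed, even $\O_{(\AA^n)^k}$-linear --- decomposition
\[
\TS^{\bul}_{\AA^n,\circ}\cong B^\bul\oplus H^\bul\oplus(\text{acyclic complement})
\]
that you propose to build does not exist, and no amount of care with edge-by-edge homotopies will produce it. The obstruction is not the boundary conditions but torsion. The cohomology contains the class $[\Prop_{ij}]$, which satisfies $(z^r_i-z^r_j)[\Prop_{ij}]=0$ because $(z^r_i-z^r_j)\Prop_{ij}$ is $\ddts$-exact but \emph{nonzero}; on the other hand $\TS^{\bul}_{\AA^n,\circ}$ is a torsion-free $\O$-module (it embeds into a localization of a polynomial ring tensored with a free module). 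Hence any $\O$-submodule $Z'\subset\ker(\ddts)$ complementary to the coboundaries would contain an element $\zeta$ projecting to $[\Prop_{ij}]$ with $(z^r_i-z^r_j)\zeta\in Z'\cap\mathrm{im}(\ddts)=0$, forcing $\zeta=0$, a contradiction. Equivalently, the ``canonical lift'' $[\Prop_{ij}]\mapsto\Prop_{ij}$ on basis vectors does not extend to an $\O$-module map from cohomology to cocycles, and $\mathrm{im}(\ddts)$ is not an $\O$-module direct summand of $\ker(\ddts)$, so you also cannot simply set $\tilde\mu=\bar\mu$ on coboundaries and zero on a complement. This is precisely why the extension of $\bar\mu$ from $\mathrm{im}(\ddts)$ to all of $\TS$ is the hard part of the proposition: $\Delta_*\omega_{\AA^n}$ is not injective in the category of all right $\D$-modules (cf.\ the paper's remark following the proposition).

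The paper's actual argument avoids any such splitting. After the translation invariant reduction, it shows (\cref{prop: Scaling}, \cref{lem: tree less}) that a chiral operation is determined by its values on the space $\mathbf Q^{k,\bul}_{\mathbf{Tree}}$ of monomials with tree-shaped pole structure, kills all non-maximal-tree monomials, and sends maximal-tree monomials to constants. A merely $\kk$-linear primitive $\tilde\mu_{\mathbf{Tree}}$ is then chosen on that space, and the real content is that this $\kk$-linear choice extends consistently to a $\D$-module map on all of $\TS_{\AA^n,\basepoint}$; the consistency check rests on \cref{lem: IntersectionZero}, namely $\mathbf Q^{k,\bul}_{\mTree}\cap\TS^{k,\bul}_{\mathbf{Tree},<}=0$, proved via the Arnold relations and iterated residues. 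To salvage your approach you would need to replace the direct-sum decomposition by this kind of generator-and-relation bookkeeping, or else exhibit a subcategory of $\D$-modules in which $\Delta_*\omega_{\AA^n}$ is injective.
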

\begin{proof} In \cref{prop: translation invariant Pch} below, we shall show that the chiral operad has an equivalent translation-invariant description. It is then enough to establish the statement in that translation-invariant form, and we shall do so in \cref{prop: translation invariant injectivity} below.
\end{proof}
\begin{rem}
  It seems that a better way than below to prove the above proposition is to find a subcategory of $\D$-modules such that $\Delta_*\om$ is an injective object \cite{BDChiralAlgebras}.
\end{rem}

\subsection{Translation invariant description}\label{sec: translation invariant}
Our marked points in $\AA^n$ are labelled by a finite index set $I$. It is convenient to pick one of them as the base point. Let $\bp\in I$ denote the label of this preferred point.
We have
\begin{equation} \O_{(\AA^n)^I}\equiv \kk\left[z^r_i\right]^{1\leq r\leq n}_{i\in I}
   = \kk[z^r_\bp]^{1\leq r\leq n} \ox_\kk \kk[z^r_{\bp j}]^{1\leq r\leq n}_{j\in I\setminus\{\bp\}}  = \O_{\AA^n} \ox_\kk \O_{(\AA^n)^I,\basepoint}, \nn\end{equation}
where we think of
\begin{equation} \O_{(\AA^n)^I,\basepoint}:= \kk[z^r_{\bp j}]^{1\leq r\leq n}_{j\in I\setminus\{\bp\}}\nn\end{equation}
as the translation invariant factor of $\O_{(\AA^n)^I}$.
For $E\subset I\two/S_2$ we define $\TS_{\AA^n,\basepoint}^{I,E}$, $\mathbf B_{\AA^n,\basepoint}^{I,E}$ by obvious analogy with the definitions of $\TS_{\AA^n}^{I,E}$, $\mathbf B_{\AA^n}^{I,E}$ in \cref{sec:TransInv-Coh}. By an argument that parallels the proof of \cref{thm: TSIE gens and rels}, one shows that
there is an isomorphism of commutative dg algebras
\begin{equation} \TS^{I,E}_{\AA^n,\basepoint} \cong \mathbf B^{I,E}_{\AA^n,\basepoint} \nn\end{equation}
for any finite sets of indices $I$ and edges $E\subset I\two/S_2$.

\begin{prop}[Translation invariant description of chiral operad]\label{prop: translation invariant Pch}
We have
\begin{align}
\P^{ch}_{\AA^n,\bul}(k)&:=\Hom_{\DAAnk}\left(\TS_{\AA^n}^{k,\bul-(n-1)}((\oms)^{\boxtimes k}),\Delta_*\om\right)\nn\\
&\cong\Hom_{\kk[z^s_{v\bp},\del_{z^s_v}]^{1\leq s\leq n}_{v\in I\setminus\{\bp\}}}\left(\TS_{{\AA^n},\basepoint}^{k,\bul-(n-1)}((\oms)^{\boxtimes k}),\dd\zz\ox \kk[\lambda^r_v]^{1\leq r\leq n}_{v\in I\setminus\{\bp\}}\right),\nn
\end{align}
where the right $\kk[z^s_{v\bp},\del_{z^s_v}]^{1\leq s\leq n}_{v\in I\setminus\{\bp\}}$-module structure on $\dd\zz\ox \kk[\lambda_v]$ is given by
\[ \dd\zz\cdot z^s_{v\bp}=0,\qquad \dd\zz\cdot \del_{z^s_v}=d\mathbf{z}\cdot\lambda^s_v.\]
\end{prop}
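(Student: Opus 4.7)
The plan is to exploit the translation action associated to the base point $\bp$ to eliminate one ``copy'' of $\AA^n$ from the $\D$-module Hom on the left-hand side. First I would change coordinates by setting $z^r_{v\bp} := z^r_v - z^r_\bp$ for $v\in I\setminus\{\bp\}$, so that $\kk[z^r_i]_{i\in I}\cong \kk[z^r_\bp]\ox_\kk\kk[z^r_{v\bp}]_{v\in I\setminus\{\bp\}}$. By the chain rule the derivations transform as $\del_{z^r_v}^{\mathrm{old}} = \del_{z^r_{v\bp}}^{\mathrm{new}}$ (for $v\neq\bp$) and $\del_{z^r_\bp}^{\mathrm{new}} = \del_{z^r_\bp}^{\mathrm{old}} + \sum_{v\in I\setminus\{\bp\}}\del_{z^r_v}^{\mathrm{old}}$, so that $\DAAnk\cong \kk[z^r_\bp,\del_{z^r_\bp}^{\mathrm{new}}]\ox_\kk \D'$ where
\[\D':=\kk[z^s_{v\bp},\del_{z^s_v}]^{1\leq s\leq n}_{v\in I\setminus\{\bp\}}\]
is the ``transverse'' algebra appearing on the right-hand side of the proposition.

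Next I would decompose both the source and the target of the left-hand Hom as tensor products with a $\kk[z^r_\bp]$-factor that carries all base-point translation dependence. For the source, combining the isomorphism $\TS^k_{\AA^n}\cong\kk[z^r_\bp]\ox_\kk \TS^k_{\AA^n,\bp}$ from the previous subsection with the wedge identity $\dd\zz_1\cdots\dd\zz_k = \dd\zz_\bp\prod_{v\in I\setminus\{\bp\}}\dd\zz_{v\bp}$ yields
\[\TS^k_{\AA^n}((\oms)^{\boxtimes k})\cong \kk[z^r_\bp]^{1\leq r\leq n}\ox_\kk \TS^k_{\AA^n,\bp}((\oms)^{\boxtimes k}),\]
and in this decomposition $\del_{z^r_\bp}^{\mathrm{new}}$ and $z^r_\bp$ act only on the first factor (the former as $-\del_{z^r_\bp}$ on $\kk[z^r_\bp]$, since translation-invariant elements are by construction annihilated by $\del_{z^r_\bp}^{\mathrm{new}}$), while $\D'$ acts only on the second factor. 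For the target, the defining relation $\omega\ox(\sum_{i\in I}\lambda^r_i)P = (\omega\cdot\del_{z^r})\ox P$ in $\Delta_*\om$ allows one to eliminate $\lambda^r_\bp$ in favour of $(\lambda^r_v)_{v\in I\setminus\{\bp\}}$ at the cost of right $\D$-actions on $\om$. This gives
\[\Delta_*\om\cong \kk[z^r_\bp]^{1\leq r\leq n}\dd\zz\ox_\kk \kk[\lambda^r_v]^{1\leq r\leq n}_{v\in I\setminus\{\bp\}}\]
with: $z^r_\bp$ acting by multiplication on the first factor, $\del_{z^r_\bp}^{\mathrm{new}}$ acting as $-\del_{z^r_\bp}$ on the first factor, $z^r_{v\bp}$ acting as zero, and $\del_{z^r_v}^{\mathrm{old}} = \del_{z^r_{v\bp}}^{\mathrm{new}}$ acting by multiplication by $\lambda^r_v$ on the second factor.

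With both sides in the form $\kk[z^r_\bp]\ox_\kk(\textrm{stuff})$, the claimed isomorphism of Hom spaces follows by restriction and extension. Given $\phi$ on the left, I would restrict it to the slice $1\ox \TS^k_{\AA^n,\bp}((\oms)^{\boxtimes k})$; the image is forced by $\D$-equivariance to lie in the kernel of the right action of $\del_{z^r_\bp}^{\mathrm{new}}$ on $\Delta_*\om$, which has just been identified as $\dd\zz\ox_\kk \kk[\lambda^r_v]_{v\in I\setminus\{\bp\}}$. This restriction is automatically $\D'$-equivariant because $\D'$ preserves the slice. Conversely, any $\D'$-map $\psi$ from $\TS^k_{\AA^n,\bp}((\oms)^{\boxtimes k})$ to $\dd\zz\ox_\kk \kk[\lambda^r_v]_{v\in I\setminus\{\bp\}}$ extends uniquely to a $\DAAnk$-map $\tilde\psi$ by $\kk[z^r_\bp,\del_{z^r_\bp}^{\mathrm{new}}]$-equivariance: writing $\psi(m)=\dd\zz\ox\psi'(m)$ one sets $\tilde\psi(f(z^r_\bp)\ox m) := f(z^r_\bp)\dd\zz\ox\psi'(m)$ and checks $\DAAnk$-equivariance directly. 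The two procedures are mutually inverse.

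The main technical point will be the careful bookkeeping of the right $\DAAnk$-module structure on $\Delta_*\om$ after the change of variables, in particular correctly identifying the kernel of the right action of $\del_{z^r_\bp}^{\mathrm{new}}$ on $\Delta_*\om$ with the ``translation-invariant slice'' $\dd\zz\ox_\kk\kk[\lambda^r_v]_{v\in I\setminus\{\bp\}}$; once this is in hand, the rest is a routine verification.
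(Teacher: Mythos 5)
Your proposal is correct and follows essentially the same route as the paper's proof: the paper likewise restricts a chiral operation to the translation-invariant subcomplex $\TS^{k}_{\AA^n,\bp}$, uses equivariance under $\sum_{i\in I}\del_{z^s_i}$ (your $\del^{\mathrm{new}}_{z^r_\bp}$) to show the image lands in the constant-coefficient slice $\dd\zz\ox\kk[\lambda^r_v]_{v\neq\bp}$, and then extends a $\D'$-map uniquely by $\kk[z^r_\bp]$-linearity and checks $\del_{z^s_\bp}$-equivariance. The only caveat is your phrase ``$z^r_{v\bp}$ acting as zero,'' which is accurate only on the generator $\dd\zz$ (on a general element $\dd\zz\ox P(\lambda)$ it acts as $\del_{\lambda^r_v}$, as forced by $[\del_{z^r_v},z^r_{v\bp}]=1$); read that way, everything matches.
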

\begin{proof}
Recall that in \cref{sec: shifted canonical bundle} we have
\begin{equation}
\Delta_*\om=\kk[z^r]^{1\leq r\leq n} \dd\zz\ox_{\kk[\lambda^r]^{1\leq r\leq n}}\kk[\lambda^r_i]^{1\leq r\leq n}_{i\in I}
\label{DAn}\end{equation}
where $\kk[\lambda^r]^{1\leq r\leq n}$ acts on $\kk[\lambda^r]^{1\leq r\leq n}_{i\in I}$  as $\lambda^r \cdot p := \sum\limits_{i\in I} \lambda^r_i p$ and acts on $\kk[z^r]^{1\leq r\leq n} \dd\zz$ as $p(z) \dd\zz \cdot \lambda^r := -\del_{z^r} p(z)\cdot\dd\zz$.
Here the $z^1,\dots,z^n$ are the coordinates on the main-diagonal copy of $\AA^n$.
Relative to our choice of base point $\bp\in I$, we have an isomorphism
\begin{equation} \Delta_*
\om
\cong \kk[z^r]^{1\leq r\leq n}  \dd\zz \ox_\kk \kk[\lambda^r_v]^{1\leq r\leq n}_{v\in I \setminus \{\bp\}}. \nn\end{equation}
Since
\(\TS_{\AA^n}^{k,\bul}=\kk[z^s_\bp]^{1\leq s\leq n}\ox_\kk\TS_{\AA^n,\basepoint}^{k,\bul},\)
we certainly have the map
\begin{align} &\Hom_{\DAAnk}\left(\TS_{\AA^n}^{k,\bul-(n-1)}((\oms)^{\boxtimes k}),\Delta_*\om\right)\nn\\
&\rightarrow\Hom_{\kk[z^s_{v\bp},\del_{z^s_v}]^{1\leq s\leq n}_{v\in I\setminus\{\bp\}}}\left(\TS_{{\AA^n},\basepoint}^{k,\bul-(n-1)}((\oms)^{\boxtimes k}),\kk[z^r]^{1\leq r\leq n}
 \dd\zz\ox_\kk\kk[\lambda^r_i]^{1\leq r\leq n}_{i\in I\setminus\{\bp\}}\right).
\nn\end{align}
We first want to show that for any chiral operation $\mu$ and any $\alpha \in \TS_{{\AA^n},\basepoint}^{k,\bul-(n-1)}((\oms)^{\boxtimes k})$, we in fact have
\begin{equation} \mu(\alpha)\in \dd\zz\ox_\kk\kk[\lambda^r_i]^{1\leq r\leq n}_{i\in I\setminus\{\bp\}}\nn\end{equation}
with no remaining dependence on the coordinates $z^r$. Suppose that
\begin{equation} \mu(\alpha)=\sum_{\mathbf{m}} f_{\mathbf{m}}(z^1,\dots,z^n) \dd\zz\ox \lambda^{\mathbf{m}}.
\nn\end{equation}
Then for every $s$, since $\alpha \cdot \sum\limits_{i\in I} \del_{z_i^s} = 0$, we have
\begin{align*}
 0  =\mu(\alpha\cdot \sum_{i\in I}\del_{z^s_i} )=\mu(\alpha )\cdot \sum_{i\in I}\del_{z^s_i} &=\left( \sum_{\mathbf{m}} f_{\mathbf{m}}(z^1,\dots,z^n) \dd\zz\ox \lambda^{\mathbf{m}}\right) \cdot \sum_{i\in I}\del_{z^s_i} \nn\\
&= - \sum_{\mathbf{m}} \left(\del_{z^s} f_{\mathbf{m}}(z^1,\dots,z^n)\right) \dd\zz\ox \lambda^{\mathbf{m}}.
\end{align*}
This implies that the polynomial coefficients $f_{\mathbf{m}}(z^1,\dots,z^n) $ are indeed constants.
Thus, we in fact have a map
\begin{align} &\Hom_{\DAAnk}\left(\TS_{\AA^n}^{k,\bul-(n-1)}((\oms)^{\boxtimes k}),\Delta_*\om\right)\nn\\
&\rightarrow\Hom_{\kk[z^s_{v\bp},\del_{z^s_v}]^{1\leq s\leq n}_{v\in I\setminus\{\bp\}}}\left(\TS_{{\AA^n},\basepoint}^{k,\bul-(n-1)}((\oms)^{\boxtimes k}), \dd\zz\ox_\kk\kk[\lambda^r_i]^{1\leq r\leq n}_{i\in I\setminus\{\bp\}}\right).
\nn\end{align}
To complete the proof, we must show it is invertible. We shall construct the inverse.
Suppose that we have \[\mu^{\bp}\in \Hom_{\kk[z^s_{v\bp},\del_{z^s_v}]^{1\leq s\leq n}_{v\in I\setminus\{\bp\}}}\left(\TS_{\AA^n,\basepoint}^{k,\bul-(n-1)}((\oms)^{\boxtimes k}), \dd\zz\ox\kk[\lambda^s_v]^{1\leq s\leq n}_{v\in I\setminus\{\bp\}}\right).\]
Using again the fact that $\TS_{\AA^n}^{k,\bul}=\kk[z^s_\bp]^{1\leq s\leq n}\ox_\kk\TS_{\AA^n,\basepoint}^{k,\bul}$. If $\mu$ is to be a $\D$-module map, we certainly must have
\[\mu(f(z^1_\bp,\dots,z^n_\bp)\cdot \alpha):= f(z^1,\dots,z^n)\mu^{\bp}(\alpha).\]
We check that
  \begin{align*}
    \mu\left((f(z_\bp)\cdot \alpha)\del_{z^s_\bp}\right) &=  \mu\left(-(\del_{z^s_\bp}f(z_\bp))\cdot \alpha\right)+    \mu\left(f(z_\bp)\cdot (\alpha\del_{z^s_\bp})\right)\\
    &= -\left(\del_{z^s}f(z)\right)\cdot \mu^{\bp}(\alpha)- f(z)\cdot \mu^{\bp}(\alpha\cdot\sum_{v\in I\setminus\{\bp\}}\del_{z^s_v}) \\
     & =    \mu\left((f(z_\bp)\cdot \alpha)\right)\del_{z^s_\bp},
  \end{align*}
and this suffices to establish that $\mu$ is indeed a $\D$-module map, i.e. \[\mu \in \Hom_{\DAAnk}\left(\TS_{\AA^n}^{k,\bul-(n-1)}((\oms)^{\boxtimes k}),\Delta_*\om\right).\] It is by construction a preimage of $\mu^\bp$.
\end{proof}

In view of \cref{prop: translation invariant Pch}, to prove \cref{prop: Injectivity} it suffices to prove the following translation invariant analog. (It is actually this version we shall ultimately use in the proof of \cref{thm: ChiralIsoLie}.)
\begin{prop}[Injectivity property, translation invariant version]\label{prop: translation invariant injectivity}
If
\[\mu\in \Hom_{\kk[z^s_{v\bp},\del_{z^s_v}]^{1\leq s\leq n}_{v\in I\setminus\{\bp\}}}\left(H_{\ddts}\left(\TS_{\AA^n,\basepoint}^{k,p-(n-1)}((\oms)^{\boxtimes k})\right),\dd\zz\ox\kk[\lambda^s_v]^{1\leq s\leq n}_{v\in I\setminus\{\bp\}}\right)\]
is such that
    \[
    \mu|_{\ker(\ddts)}=0,
    \]
    then there exists
\[ \tilde{\mu}\in \Hom_{\kk[z^s_{v\bp},\del_{z^s_v}]^{1\leq s\leq n}_{v\in I\setminus\{\bp\}}}\left(H_{\ddts}\left(\TS_{\AA^n,\basepoint}^{k,p+1-(n-1)}((\oms)^{\boxtimes k})\right),\dd\zz\ox\kk[\lambda^s_v]^{1\leq s\leq n}_{v\in I\setminus\{\bp\}}\right)\]
such that
    \[
\tilde{\mu}\circ \ddts =\mu.
    \]
Equivalently, the following induced map from the homology is injective:
  \begin{align}
 &H^p_{\dd_{\P}}\left(  \Hom_{\kk[z^s_{v\bp},\del_{z^s_v}]^{1\leq s\leq n}_{v\in I\setminus\{\bp\}}}\left(\TS_{\AA^n,\basepoint}^{k,\bul-(n-1)}((\oms)^{\boxtimes k}),\dd\zz\ox\kk[\lambda^s_v]^{1\leq s\leq n}_{v\in I\setminus\{\bp\}}\right)\right)\nn\\
 &\quad\rightarrow\Hom_{\kk[z^s_{v\bp},\del_{z^s_v}]^{1\leq s\leq n}_{v\in I\setminus\{\bp\}}}\left(H^p_{\ddts}\left(\TS_{\AA^n,\basepoint}^{k,\bul-(n-1)}((\oms)^{\boxtimes k})\right),\dd\zz\ox\kk[\lambda^s_v]^{1\leq s\leq n}_{v\in I\setminus\{\bp\}}\right).\quad p\in  \ZZ.
  \nn\end{align}
\end{prop}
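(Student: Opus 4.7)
The plan is to prove the proposition by reformulating it as a concrete extension problem for right $\kk[z^s_{v\basepoint},\del_{z^s_v}]$-linear maps into the module $M := \dd\zz\otimes\kk[\lambda^s_v]_{v \in I \setminus \{\basepoint\}}^{1\leq s\leq n}$, and then resolving that extension problem using the cohomology calculation from \cref{thm: cohFinTransInv} together with the explicit basis of \cref{cor:TransInvBasis}. Concretely, write $C^\bullet := \TS_{\AA^n,\basepoint}^{k,\bullet-(n-1)}((\oms)^{\boxtimes k})$ with differential $\ddts$, and let $Z^q := \ker(\ddts : C^q \to C^{q+1})$ and $B^q := \operatorname{im}(\ddts : C^{q-1} \to C^q)$. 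Under the hypothesis $\mu|_{Z^q} = 0$, $\mu$ descends through $C^q \twoheadrightarrow C^q/Z^q \xrightarrow{\sim} B^{q+1}$ to a map $\bar\mu : B^{q+1} \to M$, and the required $\tilde\mu$ is precisely a $\kk[z^s_{v\basepoint},\del_{z^s_v}]$-linear extension of $\bar\mu$ along the inclusion $B^{q+1} \hookrightarrow C^{q+1}$.

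The first structural step is to exploit the nature of $M$: since $\dd\zz \cdot z^s_{v\basepoint} = 0$, the module $M$ is set-theoretically supported at the origin $\{z^s_{v\basepoint} = 0\}$, and any right $\kk[z^s_{v\basepoint},\del_{z^s_v}]$-linear map $N \to M$ factors through the $\kk[\lambda^s_v]$-module $N / \sum_{s,v} N \cdot z^s_{v\basepoint}$, on which $\del_{z^s_v}$ acts as multiplication by $\lambda^s_v$. Thus the extension problem reduces to a question about $\kk[\lambda^s_v]$-linear maps between the corresponding quotients of $B^{q+1}$ and $C^{q+1}$, where the target is free of rank one over $\kk[\lambda^s_v]$.

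Next, the extension is constructed in stages via the filtration $B^{q+1} \subset Z^{q+1} \subset C^{q+1}$: first extend $\bar\mu$ from $B^{q+1}$ to $Z^{q+1}$ by declaring it to vanish on the explicit cohomology representatives supplied by \cref{cor:TransInvBasis}, and then extend further from $Z^{q+1}$ to $C^{q+1}$ using the isomorphism $C^{q+1}/Z^{q+1} \xrightarrow{\sim} B^{q+2} \subset C^{q+2}$ together with recursion on cohomological degree (the complex $C^\bullet$ is bounded above on the polysimplex, so this recursion terminates).

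The hard part will be verifying that the extensions produced at each stage remain equivariant with respect to the full $\kk[z^s_{v\basepoint},\del_{z^s_v}]$-action, rather than merely the $\kk$-linear structure. The plan to handle this is to exploit the controlled interaction between $z^s_{v\basepoint}$ and the propagators $\Prop_{ij}$ generating the cohomology --- specifically the exactness relation $(z^r_i - z^r_j)\Prop_{ij} \in B^\bullet$ together with the annihilation $\dd\zz \cdot z^s_{v\basepoint} = 0$ in $M$ --- to show that the obstruction to $\kk[z^s_{v\basepoint},\del_{z^s_v}]$-equivariance of the zero-extension across the cohomology representatives vanishes automatically, so that the stage-by-stage extension does produce a genuine $\kk[z^s_{v\basepoint},\del_{z^s_v}]$-linear map $\tilde\mu$.
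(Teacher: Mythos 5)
Your opening reformulation is fine and matches the paper's starting point: under the hypothesis $\mu|_{\ker(\ddts)}=0$ the map descends to $\bar\mu$ on the coboundaries, and the task is to extend $\bar\mu$ along $B^{q+1}\hookrightarrow C^{q+1}$ as a map of right $\kk[z^s_{v\bp},\del_{z^s_v}]$-modules. But your ``first structural step'' is false. You claim that because $\dd\zz\cdot z^s_{v\bp}=0$, every module map $N\to M$ factors through $N/\sum_{s,v}N\cdot z^s_{v\bp}$. A module map only sends $N\cdot z^s_{v\bp}$ into $M\cdot z^s_{v\bp}$, and the latter is \emph{all} of $M$: from the commutator $[\del_{z^s_v},z^s_{v\bp}]=1$ one gets $(\dd\zz\cdot\del_{z^s_v})\cdot z^s_{v\bp}=\dd\zz$, i.e.\ right multiplication by $z^s_{v\bp}$ acts on $\dd\zz\ox\kk[\lambda]$ as $\del_{\lambda^s_v}$, which is surjective. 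So $M/\sum_{s,v}M\cdot z^s_{v\bp}=0$, and your factorization would force even the identity map $M\to M$ to vanish. The invariant that does control such maps is the de Rham quotient by total $\del_z$-derivatives, $N/\langle N\cdot\del_{z^s_v}\rangle$ --- and that only yields the separate injectivity statement of \cref{prop: dR injective}, not the lifting asserted here. The reduction to ``$\kk[\lambda]$-linear maps between quotients'' on which your later stages rest therefore collapses.

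Even setting that aside, the step you defer as ``the hard part'' is where essentially all of the content of the proposition lives, and your plan for it is not an argument. Extending by zero on chosen cohomology representatives and then recursing up the filtration $B^{q+1}\subset Z^{q+1}\subset C^{q+1}$ produces at best a $\kk$-linear map; there is a genuine consistency problem because the representatives are tied to exact elements by the module action (for instance $(z^r_i-z^r_j)\Prop_{ij}$ is $\ddts$-exact, so a value prescribed on $B^{q+1}$ propagates to constraints on the representatives via multiplication by $z$). The paper resolves this with three ingredients you would need analogues of: the $\kk^\times$-scaling argument (\cref{prop: Scaling}) forcing $\mu$ to kill all non-maximal-tree monomials and to send maximal-tree monomials into $\kk\cdot\dd\zz$ (\cref{lem: tree less}); the construction of a $\kk$-linear primitive $\tilde\mu_{\mathbf{Tree}}$ on the tree subspace (\cref{lem: tree tilde mu}); and, crucially, \cref{lem: IntersectionZero}, i.e.\ $\mathbf{Q}_{\mTree}^{k,\bul}\cap\TS_{\mathbf{Tree},<}^{k,\bul}=0$, proved by pairing against the residue operators attached to the special maximal trees of \cref{special trees} via the Arnold relations. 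That lemma is what makes the $\D$-linear extension well defined (the well-definedness check proceeds by multiplying by monomials $z^{s_1}_{v_1\bp}\cdots z^{s_d}_{v_d\bp}$ to strip the $\lambda$-dependence). Your proposal contains no substitute for any of these, so as written it does not establish the proposition.
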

To prove this we require various lemmas.

We first introduce a $\kk^{\times}$-action on the coordinates $\rho_t:z^s_i\rightarrow t\cdot z^s_i$, $t\in \kk^{\times}$. We extend this action to other variables as follows
 \[
 \rho_t\left(\dd\zz\shifted_i\right)=t^{n}\cdot \dd\zz\shifted_i,\quad \rho_t\left(\lambda^s_v\right)=t^{-1}\cdot \lambda^s_v.
 \]
We say $\alpha \in \TS_{{\AA^n},\basepoint}^{k,\bul}((\oms)^{\boxtimes k})$ is of homogeneous $z$-degree $d$ if $\rho_t(\alpha)=t^d\cdot \alpha$.
\begin{prop}\label{prop: Scaling}
Chiral operations preserve the homogeneous $z$-degree.
That is,
\[ \mu\left(\rho_t(\alpha (\dd\zz\shifted)^I)\right)=  \rho_t\left(\mu(\alpha (\dd\zz\shifted)^I)\right),\]
for all $\mu\in\P^{ch}_{\AA^n,\bul}(I)$.
\end{prop}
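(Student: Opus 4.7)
The plan is to recognize the one-parameter family $\{\rho_t\}_{t\in \kk^\times}$ as being the exponential of the right action of the Euler vector field
\[E := \sum_{s=1}^n \sum_{i\in I} z^s_i \del_{z^s_i} \in \D_{(\AA^n)^I}.\]
Since $\mu$ is by definition a right $\D_{(\AA^n)^I}$-module homomorphism, it automatically commutes with the right action of any element of $\D_{(\AA^n)^I}$, and in particular of $E$; this will be enough to force $\mu$ to preserve the $\rho_t$-weight decomposition.

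First I will record that both the source $\TS^{I}_{\AA^n}((\oms)^{\boxtimes I})$ and target $\Delta_*\omega_{\AA^n}$ of $\mu$ decompose naturally into $\rho_t$-weight spaces, where the weight of a homogeneous element is read off by assigning weights $+1$, $-1$, $+n$, $+n$, $0$, $0$, and $-1$ to $z^s_i$, $\del_{z^s_i}$, $\dd\zz\shifted_i$, $\dd\zz$, $u^r_{ij}$, $\dd u^r_{ij}$, and $\lambda^s_i$ respectively.

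The central step is to verify, by direct computation, that on both sides the right action of $E$ on a weight-$w$ element $\eta$ is multiplication by $-w$. On the source this uses the formula $(f\ox g)\cdot\del = (-\del f)\ox g + f\ox(g\cdot\del)$ defining the right $\D$-module structure on $\TS^{I}_{\AA^n}\ox_{\O}(\oms)^{\boxtimes I}$, together with the observation that $\dd\zz\shifted$ is annihilated on the right by each $\del_{z^s_i}$. On the target it uses the explicit action formulas for $\Delta_*\omega_{\AA^n}$ recalled in \cref{sec: shifted canonical bundle}, combined with the identification, inside $\Delta_*\omega_{\AA^n} = \omega_{\AA^n}\ox_{\kk[\lambda^s]^{1\leq s\leq n}}\kk[\lambda^s_i]^{1\leq s\leq n}_{i\in I}$, of $\lambda^s = \sum_{i\in I}\lambda^s_i$ with $\del_{z^s}$ acting on the $\omega_{\AA^n}$ factor. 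The main (mild) obstacle will be the bookkeeping in this latter computation: one must cross $\lambda^s$ across the tensor product onto the $\omega_{\AA^n}$ side and tabulate the resulting contributions from both the polynomial and the volume form factors, keeping track of signs and multiplicities. Nothing conceptual is anticipated.

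Given both weight formulas, the conclusion is immediate: for $\eta$ of source weight $w$ with $\mu(\eta)$ of (putative) target weight $w'$, the identity $\mu(\eta\cdot E) = \mu(\eta)\cdot E$ becomes $-w\,\mu(\eta) = -w'\,\mu(\eta)$, which forces $w = w'$ whenever $\mu(\eta)\neq 0$. This is precisely the claimed $\rho_t$-equivariance.
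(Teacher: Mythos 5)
Your proposal is correct and is essentially identical to the paper's own proof: both identify $\rho_t$ with the (exponentiated) right action of the Euler vector field $\mathbf E=\sum_{v\in I}\sum_{s=1}^n z^s_v\del_{z^s_v}\in\D_{(\AA^n)^I}$, use that $\mu$ commutes with this action by $\D$-module linearity, and note that $\mathbf E$-eigenvectors of eigenvalue $d$ are exactly the elements of homogeneous $z$-degree $-d$. The weight bookkeeping you flag as the only mild obstacle is precisely the step the paper leaves to the reader.
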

\begin{proof}
Consider the Euler vector field
$\mathbf{E} :=\sum\limits_{v\in I}\sum\limits^n_{s=1}z^s_{v}\del_{z^s_v}$.
Since $\mathbf{E} \in \DAAnk$, chiral operations are $\mathbf{E}$-equivariant by definition:
$\mu(\alpha) \cdot \mathbf E = \mu(\alpha\cdot \mathbf E)$.
It is enough to note that eigenvectors of $\mathbf E$ with eigenvalue $d$ are precisely elements of homogeneous $z$-degree $-d$.
\end{proof}

Now we want to argue that chiral operations are determined by their action on certain preferred elements of $\TS_{\AA^n}^{k,\bul-(n-1)}$, whose pole structures correspond to tuples of maximal trees, as follows.

Suppose we are given a directed graph $\Gamma = (V(\Gamma), E(\Gamma))$ with vertex set $V(\Gamma) = I = \{1,\dots,k\}$ and edge set $E(\Gamma) \subset V(\Gamma)\two$.
We shall write
\[ \frac{1}{(z)_{\Gamma}} :=\prod_{e\in E(\Gamma)}\frac{1}{z_e}\equiv \prod_{(i,j)\in E(\Gamma)}\frac{1}{z_i-z_j} .\]
The graph $\Gamma$ is a tree if its underlying undirected graph contains no cycles.
Let
\[
\mathbf{Q}_{\mathbf{Tree}}^{k,p}
:=\Span_\kk\left\{\frac{F(u)}{(z^1)_{\Gamma_1}\cdots (z^n)_{\Gamma_n}} (\dd\zz\shifted)^I\in \TS_{\AA^n,\basepoint}^{k,p-(n-1)}\left((\oms)^{\boxtimes k}\right) \,\middle|\, \Gamma_1,\dots,\Gamma_n \text{ are trees}
\right\}.
\]
We have the direct sum decomposition as a $\kk$-vector space
\[\mathbf{Q}_{\mathbf{Tree}}^{k,\bul}=\mathbf{Q}_{\mTree}^{k,\bul}\oplus\mathbf{Q}_{\mathbf{Tree},<}^{k,\bul},\]
where:\begin{enumerate}[--]
\item  $\mathbf{Q}_{\mTree}^{k,p}$ is the subspace spanned by monomials such that the trees $\Gamma_1,\dots,\Gamma_n$ are maximal, i.e. $|E(\Gamma_1)/S_2|=\dots=|E(\Gamma_n)/S_2|=k-1$, and where
\item $\mathbf{Q}_{\mathbf{Tree},<}^{k,\bul}$ is the subspace spanned by monomials such that at least one of the trees is non-maximal, i.e., there exists $s$ such that $|E(\Gamma_s)/S_2|<k-1$.
\end{enumerate}
Elements of $\mathbf{Q}_{\mTree}^{k,\bul}$ are homogeneous of $z$-degree \[nk-n(k-1) = n.\] Homogeneous elements of $\mathbf{Q}_{\mathbf{Tree},<}^{k,\bul}$ have $z$-degrees $>n$.
\begin{lem}\label{lem: tree less}
  As a $\kk[z^s_{v\bp},\del_{z^s_v}]^{1\leq s\leq n}_{v\in I\setminus\{\bp\}}$-module,  $\TS_{\AA^n,\basepoint}^{k,\bul-(n-1)}\left((\oms)^{\boxtimes k}\right)$ is generated by the $\kk$-vector subspace
  \[
\mathbf{Q}_{\mathbf{Tree}}^{k,\bul}=\mathbf{Q}_{\mTree}^{k,\bul}\oplus\mathbf{Q}_{\mathbf{Tree},<}^{k,\bul}\subset  \TS_{\AA^n,\basepoint}^{k,\bul-(n-1)}\left((\oms)^{\boxtimes k}\right).
  \]
Let $\TS_{\mathbf{Tree},<}^{k,\bul}$ denote the sub $\kk[z^s_{v\bp},\del_{z^s_v}]^{1\leq s\leq n}_{v\in I\setminus\{\bp\}}$-module generated by $\mathbf{Q}_{\mathbf{Tree},<}^{k,\bul}$; then, furthermore, for any $\mu\in \P^{ch}_{\AA^n,\bul}(k)$ we have
  \[
  \mu|_{\TS_{\mathbf{Tree},<}^{k,\bul}}=0,\quad\text{and}\quad \mu\left(\mathbf{Q}_{\mathbf{Tree}}^{k,\bul}\right)\subset\mathbf{k}\cdot \dd\mathbf z.
  \]

\end{lem}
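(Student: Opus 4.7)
My plan is to dispatch the two vanishing claims quickly using the $z$-degree scaling of \cref{prop: Scaling}, and then treat the generation claim by a direct algebraic reduction.

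For the scaling-based claims, the key observation is that a monomial $\frac{F(u,\dd u)}{(z^1)_{\Gamma_1}\cdots(z^n)_{\Gamma_n}}(\dd\zz\shifted)^I$ with trees $\Gamma_s$ has $z$-degree $nk-\sum_{s=1}^n|E(\Gamma_s)|$, because $u^r_{ij}$ and $\dd u^r_{ij}$ are $\rho_t$-invariant, each $(z^s_e)^{-1}$ contributes $-1$, and $(\dd\zz\shifted)^I$ scales as $t^{nk}$. Since a tree on $k$ vertices has at most $k-1$ edges, this degree is always $\geq n$, with strict inequality exactly when some $\Gamma_s$ is non-maximal. On the target, a vector $\dd\mathbf z\otimes\lambda^{\mathbf m}\in\dd\mathbf z\otimes\kk[\lambda^s_v]$ has $z$-degree $n-|\mathbf m|\leq n$, with equality only when $\mathbf m=\mathbf 0$. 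Because \cref{prop: Scaling} forces $\mu$ to preserve $z$-degree, $\mu$ must kill every element of $z$-degree $>n$ and send every element of $z$-degree exactly $n$ into $\kk\cdot\dd\mathbf z$. This simultaneously gives $\mu|_{\mathbf Q_{\mathbf{Tree},<}^{k,\bul}}=0$ and $\mu(\mathbf Q_{\mathbf{Tree}}^{k,\bul})\subset\kk\cdot\dd\mathbf z$, and the vanishing extends to all of $\TS_{\mathbf{Tree},<}^{k,\bul}$ via the module-map identity $\mu(\alpha\cdot P)=\mu(\alpha)\cdot P=0$ for $\alpha\in\mathbf Q_{\mathbf{Tree},<}^{k,\bul}$ and $P\in\kk[z^s_{v\bp},\del_{z^s_v}]$.

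For the generation claim, by \cref{thm: TSIE gens and rels} every element of $\TS_{\AA^n,\basepoint}^{k,\bul-(n-1)}((\oms)^{\boxtimes k})$ is a finite sum of monomials $\frac{G(u,\dd u)\,\pi(z)}{\prod_{s,e}(z^s_e)^{m_{s,e}}}(\dd\zz\shifted)^I$ with $\pi\in\kk[z^s_{v\bp}]$, so it suffices to show each such monomial lies in the $\kk[z^s_{v\bp},\del_{z^s_v}]$-submodule generated by $\mathbf Q_{\mathbf{Tree}}^{k,\bul}$. I would iterate, direction by direction in $s$, three elementary reductions: first, the polynomial factor $\pi(z)$ is absorbed into the $\kk[z^s_{v\bp}]$-action at the outset; second, cyclic supports are broken using the Arnold identity $(z^s_{ij}z^s_{j\ell})^{-1}+(z^s_{j\ell}z^s_{\ell i})^{-1}+(z^s_{\ell i}z^s_{ij})^{-1}=0$, which rewrites a cycle as a combination of acyclic supports at the cost of some increased multiplicity, while disconnected supports are made connected via the trivial insertion $1=z^s_{\tilde e}/z^s_{\tilde e}$ with $z^s_{\tilde e}=z^s_{v\bp}-z^s_{w\bp}\in\kk[z^s_{v\bp}]$ again absorbed into the polynomial action; third, higher multiplicities $m_{s,e}\geq 2$ are produced from simple edges by the identity $(z^s_e)^{-m}=\tfrac{(-1)^{m-1}}{(m-1)!}\del_{z^s_v}^{m-1}(z^s_e)^{-1}$ applied at an endpoint $v$ of $e$, combined with linear combinations of $\del_{z^s_w}$'s over vertices $w$ lying on one side of the target edge, which cleanly isolates the multiplicity increment at $e$ thanks to the tree structure.

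The hard part will be organizing these reductions into a terminating induction on a suitable complexity measure of $(\Gamma_1,m_1;\dots;\Gamma_n,m_n)$, for instance the lexicographic triple (number of cycles, total multiplicity, number of connected components), and verifying that each step remains inside $\TS_{\AA^n,\basepoint}^{k,\bul-(n-1)}$. The latter is in fact automatic because every $\del_{z^s_v}$ and every polynomial multiplication by $z^s_{w\bp}$ commutes with restriction to the faces $u^r_{ij}=0$, so the defining boundary-regularity conditions are preserved throughout; the induction itself is essentially the same bookkeeping that shows $\B_k$ to be generated, modulo Arnold relations, by its spanning-tree rational functions.
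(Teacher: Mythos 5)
Your handling of the two vanishing statements is correct and is essentially the paper's own argument: homogeneous elements of $\mathbf{Q}_{\mathbf{Tree},<}^{k,\bul}$ have $z$-degree $nk-\sum_s|E(\Gamma_s)|>n$, the target $\dd\zz\ox\kk[\lambda^s_v]$ is concentrated in $z$-degrees $\leq n$ with degree-$n$ part exactly $\kk\cdot\dd\zz$, and \cref{prop: Scaling} together with $\kk[z^s_{v\bp},\del_{z^s_v}]$-equivariance then gives both $\mu|_{\TS_{\mathbf{Tree},<}^{k,\bul}}=0$ and $\mu(\mathbf{Q}_{\mathbf{Tree}}^{k,\bul})\subset\kk\cdot\dd\zz$.

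The generation claim is where there is a genuine gap, concentrated in your cycle-breaking step. The three-term Arnold identity rewrites $\frac{1}{z^s_{ij}z^s_{j\ell}}$ in terms of denominators involving the \emph{chord} $z^s_{\ell i}$. For a cycle of length $\geq 4$ in the pole support, at least one of the three edges $(i,j),(j,\ell),(\ell,i)$ entering the identity is not an edge of the monomial, so the rewriting produces terms with a pole at an edge carrying no factor $u^s_{\ell i}$ or $\dd u^s_{\ell i}$ in the numerator. Such a term fails the boundary condition at $u^s_{\ell i}=0$, hence is not an element of $\TS_{\AA^n,\basepoint}^{k,\bul}$ at all, and a fortiori is neither in $\mathbf{Q}_{\mathbf{Tree}}^{k,\bul}$ (which is by definition a subspace of $\TS_{\AA^n,\basepoint}^{k,\bul-(n-1)}((\oms)^{\boxtimes k})$) nor in the submodule it generates. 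Your assertion that preservation of the boundary conditions is ``automatic'' is true only for the module operations (multiplication by $z^s_{w\bp}$ and the action of $\del_{z^s_v}$); it does not apply to the Arnold rewriting itself, which is precisely the pitfall the paper flags elsewhere (e.g.\ $\frac{u^1_{ij}}{z^2_{ij}}\notin\TS$, and the warning in the proof of \cref{lem: IntersectionZero} that individual terms of an Arnold-rewritten sum need not obey the boundary conditions). The same defect, compounded by being unnecessary, afflicts your ``trivial insertion'' $1=z^s_{\tilde e}/z^s_{\tilde e}$ for disconnected supports: $M/z^s_{\tilde e}$ acquires a pole with no matching numerator, and in any case the paper's ``trees'' are merely acyclic graphs, so forests already lie in $\mathbf{Q}_{\mathbf{Tree}}^{k,\bul}$ and no connecting step is needed.

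The fix, which is what the paper does, is to use the $K$-term telescoping relation $\sum_{r=1}^K z^s_{e_r}=0$ over the edges of the cycle itself rather than the three-term Arnold identity: writing $\alpha=z^s_{e_1}\cdot\frac{\varepsilon u^s_{e_1}}{(z^s_{e_1})^{l_1+1}}\cdots$ and substituting $z^s_{e_1}=-\sum_{i\geq 2}z^s_{e_i}$ raises the pole order at $e_1$ while lowering it at some $e_i$; since every edge involved already carries its factor $\varepsilon u^s_{e_i}$, each resulting term is again a monomial in the generators of \cref{thm: TSIE gens and rels} and stays in the model, and iterating kills some exponent and breaks the cycle. Your remaining step — producing higher multiplicities along a fixed tree from simple poles via $\del_{z^s_v}$ and one-sided sums of derivatives — is sound and is what the paper's parenthetical remark about $\del_{z^s_v}$ increasing pole strength alludes to.
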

\begin{proof} We shall use the algebraic description of the model from \cref{sec: gens and rels for TS} freely without further comment.
  Suppose that we have an element in $\TS_{\AA^n,\basepoint}^{k,\bul-(n-1)}\left((\oms)^{\boxtimes k}\right)$ which contains a factor
  \[
  \alpha=\frac{\varepsilon u^1_{e_1}}{(z^1_{e_1})^{l_1}}\cdots \frac{\varepsilon u^1_{e_K}}{(z^1_{e_K})^{l_K}},\quad\text{where each}\quad \varepsilon u^1_{e_r}=u^1_{e_r} \text{or}\ du^1_{e_r},
  \]
  such that the edges $(e_1,\dots,e_K)$ form a directed cycle, i.e. such that
  \[
  \sum^K_{r=1} z^1_{e_r}=0.
  \]
  Notice that
  \begin{align*}
   -\sum^K_{i=2}\frac{\varepsilon u^1_{e_1}}{(z^1_{e_1})^{l_1+1}}\cdots  \frac{\varepsilon u^1_{e_i}}{(z^1_{e_i})^{l_i-1}}\cdots \frac{\varepsilon u^1_{e_K}}{(z^1_{e_K})^{l_K}}  &=-\sum^K_{i=2}z^1_{e_i}\cdot\frac{\varepsilon u^1_{e_1}}{(z^1_{e_1})^{l_1+1}}\cdots \frac{\varepsilon u^1_{e_i}}{(z^1_{e_i})^{l_i}}\cdots \frac{\varepsilon u^1_{e_K}}{(z^1_{e_K})^{l_K}}  \\
     & =z^1_{e_1}\cdot \frac{\varepsilon u^1_{e_1}}{(z^1_{e_1})^{l_1+1}}\cdots \frac{\varepsilon u^1_{e_K}}{(z^1_{e_K})^{l_K}}=\alpha.
  \end{align*}
Repeating the same procedure to each $\frac{\varepsilon u^1_{e_1}}{(z^1_{e_1})^{l_1+1}}\cdots  \frac{\varepsilon u^1_{e_i}}{(z^1_{e_i})^{l_i-1}}\cdots \frac{\varepsilon u^1_{e_K}}{(z^1_{e_K})^{l_K}} $, we can decrease the power of $z^{1}_{e_i},2\leq i\leq K$ until one of them is zero. We repeat this procedure for other factors again until finally we have reduced the element into tree expressions.

Each such expression is in the $\kk[z^s_{v\bp},\del_{z^s_v}]^{1\leq s\leq n}_{v\in I\setminus\{\bp\}}$-module generated by $\mathbf{Q}_{\mathbf{Tree}}^{k,\bul}$. (Recall that $\del_{z^s_v}$ increases the strength of poles in $z_{jv}$, $j\in I \setminus\{v\}$.)

Now we proceed to the second part of the lemma. Since $\TS_{\mathbf{Tree},<}^{k,\bul-(n-1)}$ is the sub $\kk[z^s_{v\bp},\del_{z^s_v}]^{1\leq s\leq n}_{v\in I\setminus\{\bp\}}$-module generated by $\mathbf{Q}_{\mathbf{Tree},<}^{k,\bul}$ and $\mu$ is a map of $\kk[z^s_{v\bp},\del_{z^s_v}]^{1\leq s\leq n}_{v\in I\setminus\{\bp\}}$-modules, we only need to show that
  \[
  \mu|_{\mathbf{Q}_{\mathbf{Tree},<}^{k,\bul}
}=0,\quad \mu\left(\mathbf{Q}_{\mathbf{Tree}}^{k,\bul}
\right)\subset\mathbf{k}\cdot \dd\mathbf z.
  \]
This follows from Proposition \ref{prop: Scaling}.
\end{proof}

\begin{lem}\label{lem: tree tilde mu}
  Suppose that we are given a $\mu\in \P^{ch,\bul-(n-1)}_{\AA^n}(k)$ satisfying $ \mu|_{\ker(\ddts)}=0$. We can find a $\kk$-linear map
  \[ \tilde{\mu}_{\mathbf{Tree}}:\mathbf{Q}_{\mathbf{Tree}}^{k,\bul+1}
\rightarrow\kk\cdot\dd\mathbf z
  \]
  such that
  \[
\tilde{\mu}_{\mathbf{Tree}}(\ddts\alpha)=\mu(\alpha),\quad\text{for all}\quad \alpha\in\mathbf{Q}_{\mathbf{Tree}}^{k,\bul}
  \]
and
  \[ \tilde{\mu}_{\mathbf{Tree}}|_{\mathbf{Q}_{\mathbf{Tree},<}^{k,\bul+1}
}=0.
  \]
\end{lem}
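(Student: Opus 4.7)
The plan is to build $\tilde\mu_{\mathbf{Tree}}$ by exploiting the direct sum decomposition $\mathbf{Q}_{\mathbf{Tree}}^{k,\bul+1}=\mathbf{Q}_{\mTree}^{k,\bul+1}\oplus\mathbf{Q}_{\mathbf{Tree},<}^{k,\bul+1}$ introduced just before \cref{lem: tree less}. On the summand $\mathbf{Q}_{\mathbf{Tree},<}^{k,\bul+1}$ I would declare $\tilde\mu_{\mathbf{Tree}}$ to be identically zero; on the summand $\mathbf{Q}_{\mTree}^{k,\bul+1}$ I would first prescribe $\tilde\mu_{\mathbf{Tree}}(\ddts\alpha):=\mu(\alpha)$ for each $\alpha\in\mathbf{Q}_{\mTree}^{k,\bul}$, and then extend arbitrarily (say by zero) from $\ddts\bigl(\mathbf{Q}_{\mTree}^{k,\bul}\bigr)$ to all of $\mathbf{Q}_{\mTree}^{k,\bul+1}$ via a choice of vector-space complement.

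The first point to check is that this prescription makes sense. Because $\ddts$ is the polysimplicial de Rham differential, it acts only on the simplicial coordinates $u^r_e$ and leaves the $z$-variables untouched; in particular it preserves the homogeneous $z$-degree and therefore sends $\mathbf{Q}_{\mTree}^{k,\bul}$ into $\mathbf{Q}_{\mTree}^{k,\bul+1}$ and $\mathbf{Q}_{\mathbf{Tree},<}^{k,\bul}$ into $\mathbf{Q}_{\mathbf{Tree},<}^{k,\bul+1}$. Consequently, if $\alpha,\alpha'\in\mathbf{Q}_{\mTree}^{k,\bul}$ satisfy $\ddts\alpha=\ddts\alpha'$, then $\alpha-\alpha'\in\ker(\ddts)$, and the standing hypothesis $\mu|_{\ker(\ddts)}=0$ forces $\mu(\alpha)=\mu(\alpha')$. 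That $\tilde\mu_{\mathbf{Tree}}$ indeed lands in $\kk\cdot\dd\mathbf z$ on this subspace is exactly the second assertion of \cref{lem: tree less}, namely $\mu\bigl(\mathbf{Q}_{\mathbf{Tree}}^{k,\bul}\bigr)\subset\kk\cdot\dd\mathbf z$; extension by zero on the chosen complement keeps the target inside $\kk\cdot\dd\mathbf z$.

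It then remains to verify $\tilde\mu_{\mathbf{Tree}}(\ddts\alpha)=\mu(\alpha)$ for an arbitrary $\alpha\in\mathbf{Q}_{\mathbf{Tree}}^{k,\bul}$. Writing $\alpha=\alpha_{\mathrm{max}}+\alpha_{<}$ with $\alpha_{\mathrm{max}}\in\mathbf{Q}_{\mTree}^{k,\bul}$ and $\alpha_{<}\in\mathbf{Q}_{\mathbf{Tree},<}^{k,\bul}$, the $\ddts$-invariance of the decomposition gives $\ddts\alpha_{\mathrm{max}}\in\mathbf{Q}_{\mTree}^{k,\bul+1}$ and $\ddts\alpha_{<}\in\mathbf{Q}_{\mathbf{Tree},<}^{k,\bul+1}$. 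By construction $\tilde\mu_{\mathbf{Tree}}(\ddts\alpha_{\mathrm{max}})=\mu(\alpha_{\mathrm{max}})$ and $\tilde\mu_{\mathbf{Tree}}(\ddts\alpha_{<})=0$. On the other side, $\mu(\alpha_{<})=0$ by the vanishing clause of \cref{lem: tree less}. Adding, $\tilde\mu_{\mathbf{Tree}}(\ddts\alpha)=\mu(\alpha_{\mathrm{max}})=\mu(\alpha)$, as required; and the vanishing $\tilde\mu_{\mathbf{Tree}}|_{\mathbf{Q}_{\mathbf{Tree},<}^{k,\bul+1}}=0$ holds by construction.

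Nothing here is really an obstacle; the whole argument is a bookkeeping exercise built on two inputs, namely the hypothesis $\mu|_{\ker(\ddts)}=0$ and the $z$-graded structure provided by \cref{lem: tree less}. The only mildly non-canonical ingredient is choosing a complement of $\ddts\bigl(\mathbf{Q}_{\mTree}^{k,\bul}\bigr)$ inside $\mathbf{Q}_{\mTree}^{k,\bul+1}$, which is harmless since we work in the category of $\kk$-vector spaces.
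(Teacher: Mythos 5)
Your proposal is correct and follows essentially the same route as the paper: both split $\mathbf{Q}_{\mathbf{Tree}}^{k,\bul+1}$ into $\mathbf{Q}_{\mTree}^{k,\bul+1}\oplus\mathbf{Q}_{\mathbf{Tree},<}^{k,\bul+1}$ using that $\ddts$ preserves this decomposition, define $\tilde\mu_{\mathbf{Tree}}$ on $\mathrm{Im}\bigl(\ddts|_{\mathbf{Q}_{\mTree}^{k,\bul}}\bigr)$ by transporting $\mu$ through the induced isomorphism $\mathbf{Q}_{\mTree}^{k,\bul}/\ker(\ddts)\xrightarrow{\sim}\mathrm{Im}(\ddts)$ (well-defined by the hypothesis $\mu|_{\ker(\ddts)}=0$), and extend by zero on a chosen complement and on $\mathbf{Q}_{\mathbf{Tree},<}^{k,\bul+1}$, invoking \cref{lem: tree less} for the target $\kk\cdot\dd\mathbf z$ and the vanishing of $\mu$ on the non-maximal part. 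Your write-up merely spells out the well-definedness and the extension to all of $\mathbf{Q}_{\mathbf{Tree}}^{k,\bul}$ that the paper leaves implicit.
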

\begin{proof}
The differential $ \ddts$ decomposes into a direct sum
  \[
  \ddts=  \ddts^{\mTree}\oplus   \ddts^{\mathbf{Tree},<}\]
with
\[ \ddts^{\mTree} : \mathbf{Q}_{\mTree}^{k,p} \to \mathbf{Q}_{\mTree}^{k,p+1},\quad\text{and}\quad
 \ddts^{\mathbf{Tree,<}} : \mathbf{Q}_{\mathbf{Tree,<}}^{k,p} \to \mathbf{Q}_{\mathbf{Tree,<}}^{k,p+1}.\]
The differential $\ddts^{\mTree}$ defines a $\kk$-linear isomorphism
\[
 \mathbf{Q}_{\mTree}^{k,p}
/\ker(  \ddts^{\mTree})\xrightarrow{\sim }\mathrm{Im}(\ddts^{\mTree})\subset \mathbf{Q}_{\mTree}^{k,p+1}.
\]
We may pick a complement $\mathbf{U}$ such that $ \mathbf{Q}_{\mTree}^{k,p+1}=\mathrm{Im}(\ddts^{\mTree})\oplus \mathbf{U}$ as a $\kk$-vector space. Let $\pi_U: \mathbf{Q}_{\mTree}^{k,p+1} \to \mathrm{Im}(\ddts^{\mTree})$ denote the $\kk$-linear projection along $\mathbf{U}$.  Relative to the choice of $\mathbf{U}$ we then define $\tilde\mu$ to be the map
  \[
   \tilde{\mu}_{\mathbf{Tree}}:\mathbf{Q}_{\mathbf{Tree}}^{k,\bul+1}=\mathbf{Q}_{\mTree}^{k,\bul+1}\oplus\mathbf{Q}_{\mathbf{Tree},<}^{k,\bul+1} 
\xrightarrow{\pi_U\oplus 0} \mathrm{Im}(\ddts^{\mTree})\xrightarrow{\sim}\mathbf{Q}_{\mTree}^{k,\bul}
/\ker(  \ddts^{\mTree})\xrightarrow{\mu}\mathbf{k}\dd\mathbf z\shifted.
  \]
Then $ \tilde{\mu}_{\mathbf{Tree}}$ satisfies the property in the lemma.
\end{proof}

\begin{rem}
  In this paper, we focus on the algebraic aspect of the space of chiral operations. It is important to study the topology of the space of chiral operations (which we left for future study).
\end{rem}

\begin{lem}\label{lem: IntersectionZero}
  We have
  \[
 z^s_{v\bp}\cdot  \mathbf{Q}_{\mTree}^{k,\bul}\subset \mathbf{Q}_{\mathbf{Tree},<}^{k,\bul},\quad\text{and}\quad \mathbf{Q}_{\mTree}^{k,\bul}\cap\TS_{\mathbf{Tree},<}^{k,\bul}=0.
  \]
\end{lem}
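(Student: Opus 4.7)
For the first inclusion, I would argue by direct path decomposition in the spanning trees. Fix a generating monomial $\alpha = \frac{F(u)}{\prod_{s}(z^s)_{\Gamma_s}}(\dd\zz\shifted)^I$ in $\mathbf{Q}_{\mTree}^{k,\bul}$ and indices $s, v$. Since $\Gamma_s$ is a spanning tree on $I$, there is a unique path $\bp = v_0, v_1, \ldots, v_m = v$ in $\Gamma_s$ from $\bp$ to $v$, giving the telescoping identity $z^s_{v\bp} = \sum_{a=1}^m z^s_{v_a v_{a-1}}$ (with appropriate edge orientations). Substituting into $z^s_{v\bp}\cdot\alpha$ and using $z^s_e\cdot(z^s)_{\Gamma_s}^{-1} = (z^s)_{\Gamma_s\setminus\{e\}}^{-1}$ for each edge $e \in \Gamma_s$ along this path, one obtains $z^s_{v\bp}\cdot\alpha$ as a sum of monomials whose direction-$s$ denominator $\Gamma_s\setminus\{(v_a,v_{a-1})\}$ is a forest with only $k-2$ edges, hence non-spanning. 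Each summand is in $\mathbf{Q}_{\mathbf{Tree},<}^{k,\bul}$, and linearity finishes the first part.

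For the second part, the plan is to exploit the $z$-grading established in \cref{prop: Scaling}. Elements of $\mathbf{Q}_{\mTree}^{k,\bul}$ are $z$-homogeneous of degree exactly $n$, while $\mathbf{Q}_{\mathbf{Tree},<}^{k,\bul}$ lies in $z$-degrees $\geq n+1$; under the convention that $z^s_{v\bp}$ raises $z$-degree by one and $\del_{z^s_v}$ lowers it by one, the $\kk[z^s_{v\bp},\del_{z^s_v}]$-action preserves the $z$-grading. Given $\beta \in \mathbf{Q}_{\mTree}^{k,\bul}\cap\TS_{\mathbf{Tree},<}^{k,\bul}$, write $\beta=\sum_i P_i(z)D_i(\del)\alpha_i$ with $\alpha_i\in\mathbf{Q}_{\mathbf{Tree},<}^{k,\bul}$; restricting to the $z$-degree-$n$ part, I may assume each summand $P_iD_i\alpha_i$ has $z$-degree $n$, forcing $\deg D_i > \deg P_i$. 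For each $i$, let $s_i$ be a direction in which $\Gamma_{s_i,i}$ has at most $k-2$ edges. Since $\del_{z^s_v}$ only raises pole orders at edges already present in $\Gamma_{s,i}$, and $z^s_{v\bp}$ can only cancel existing poles (along a tree path, when such a path is available in $\Gamma_s$) or contribute polynomial factors, the set of edges at which $P_iD_i\alpha_i$ has a pole in direction $s_i$ remains a proper subset of any spanning tree on $I$.

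To conclude $\beta = 0$, I would apply iterated residues. For every tuple of spanning trees $(\Gamma^0_1,\ldots,\Gamma^0_n)$ on $I$, the operator
\[
R_{(\Gamma^0_s)} \;:=\; \prod_{s=1}^n\prod_{e\in\Gamma^0_s}\Res_{z^s_e\to 0}
\]
extracts, from any element of $\mathbf{Q}_{\mTree}^{k,\bul}$ expressed in terms of spanning-tree monomials, the scalar coefficient of $\frac{F(u)}{\prod_s(z^s)_{\Gamma^0_s}}(\dd\zz\shifted)^I$. On each summand $P_iD_i\alpha_i$, however, $R_{(\Gamma^0_s)}$ must vanish: the direction-$s_i$ factor $\prod_{e\in\Gamma^0_{s_i}}\Res_{z^{s_i}_e\to 0}$ already gives zero because at least one $e\in\Gamma^0_{s_i}$ lies outside the edge support of $P_iD_i\alpha_i$, so $\Res_{z^{s_i}_e\to 0}$ acts on a function regular in $z^{s_i}_e$. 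The main obstacle is to handle the commutation of $R_{(\Gamma^0_s)}$ with the differential operators $D_i$: when $D_i$ involves $\del_{z^{s_i}_v}$ with $v$ incident to some $e\in\Gamma^0_{s_i}$, the residue-derivative interchange is non-trivial and must be dealt with using the integration-by-parts identity $\Res_{z^{s_i}_e\to 0}(\del_{z^{s_i}_e}f)=0$ together with careful tracking of Koszul signs. Once this is controlled, vanishing of $R_{(\Gamma^0_s)}\beta$ for every choice of $(\Gamma^0_s)$ forces all monomial coefficients of $\beta$ to vanish, giving $\beta=0$; should this residue-based argument prove unwieldy, an alternative route is to establish directly the refined structural decomposition $\TS_{\AA^n,\basepoint}^{k,\bul-(n-1)}((\oms)^{\boxtimes k}) = \mathbf{Q}_{\mTree}^{k,\bul}\oplus\TS_{\mathbf{Tree},<}^{k,\bul}$ as $\kk$-vector spaces, from which the claim follows immediately.
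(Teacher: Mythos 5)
Your proof of the first inclusion is exactly the paper's argument: telescope $z^s_{v\bp}$ along the unique path from $v$ to $\bp$ in the spanning tree $\Gamma_s$ and cancel one pole per term, so that each summand has a non-maximal direction-$s$ denominator. No issues there.

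For the second statement there is a genuine gap in your concluding step. The maximal-tree monomials $\frac{1}{(z^1)_{\Gamma_1}\cdots(z^n)_{\Gamma_n}}$ are \emph{not} linearly independent: they satisfy the Arnold relations $\frac{1}{z_{ij}z_{j\ell}}+\frac{1}{z_{j\ell}z_{\ell i}}+\frac{1}{z_{\ell i}z_{ij}}=0$. Consequently your claim that $R_{(\Gamma^0_s)}$ ``extracts the scalar coefficient of $\frac{F(u)}{\prod_s(z^s)_{\Gamma^0_s}}$'' is ill-posed (the coefficient is not well defined), and the final inference ``vanishing of $R_{(\Gamma^0_s)}\beta$ for every tuple forces all monomial coefficients to vanish'' does not follow: one needs the residue pairing to be non-degenerate on the \emph{span} of these monomials, which is precisely what is missing. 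The paper closes this gap by first using the Arnold relations and skewsymmetry to rewrite $\alpha$ over the special subfamily $\mTree_\rightarrow$ of maximal trees with edge set $\{(i,t(i))\}_{2\leq i\leq k}$, $t$ strictly decreasing; for this family the ordered residue operators $\Res^z_\Gamma=\res_{z_2\to z_1}\circ\cdots\circ\res_{z_k\to z_{t(k)}}$ genuinely satisfy $\Res_\Gamma\frac{1}{(z)_{\Gamma'}}\,\dd\zz=\pm\delta_{\Gamma,\Gamma'}$ for $\Gamma,\Gamma'\in\mTree_\rightarrow$, and only then does the conclusion $G_{\Gamma_1,\dots,\Gamma_n}(u)=0$ follow. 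Relatedly, your unordered product $\prod_{e\in\Gamma^0_s}\Res_{z^s_e\to 0}$ is ambiguous: residues in different edge variables do not commute in general, and after one residue is taken poles can migrate to edges outside the original support (e.g.\ $\res_{z_2\to z_1}\frac{1}{z_{12}z_{23}}=\pm\frac{1}{z_{13}}$), so the ``one edge of $\Gamma^0_{s_i}$ lies outside the support, hence the residue vanishes'' step also needs the paper's specific iterated ordering to be sound. Your $z$-grading preamble is correct but not needed, and your fallback ``alternative route'' (proving $\TS_{\AA^n,\basepoint}^{k,\bul-(n-1)}((\oms)^{\boxtimes k})=\mathbf{Q}_{\mTree}^{k,\bul}\oplus\TS_{\mathbf{Tree},<}^{k,\bul}$ directly) is circular, since the trivial-intersection half of that decomposition is exactly the statement being proved.
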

\begin{proof}
Consider the expression
\[
 z^s_{v\bp}\cdot \frac{1}{(z^1)_{\Gamma_1}\cdots (z^n)_{\Gamma_n}}.
\]
Whenver each tree graph $\Gamma_s$ is maximal, we can write \[z^s_{v\bp}=z^{s}_{vv_1}+z^{s}_{v_1v_2}+\cdots+z^s_{v_r\bp}\] in such a way that $(v,v_1),(v_1,v_2),\dots,(v_r,\bp)$ are edges of the underlying undirected graph of $\Gamma_s$. This implies that $ z^s_{v\bp}\cdot  \mathbf{Q}_{\mTree}^{k,\bul}\subset \mathbf{Q}_{\mathbf{Tree},<}^{k,\bul}$.

For the second part, let us suppose that we have an element
\[ \alpha \in \mathbf{Q}_{\mTree}^{k,\bul}\cap\TS_{\mathbf{Tree},<}^{k,\bul}.\]
Such an element is of the form
\[
\alpha=  \sum_{(\Gamma_1,\dots,\Gamma_n)\in \mTree^{\times n}}\frac{F_{\Gamma_1,\dots,\Gamma_n}(u)}{(z^1)_{\Gamma_1}\cdots (z^n)_{\Gamma_n}}(\dd\zz\shifted)^I
  \]
where the sum is over tuples of maximal trees and where, for each such tuple, the polynomial differential form $F_{\Gamma_1,\dots,\Gamma_n}(u)$ on the polysimplex must obey boundary conditions specified by those trees, as in \cref{sec: TS def}. Now let us consider the following way of reexpressing $\alpha$. Suppose we are given any monomial
\be \frac{1}{(z)_\Gamma} = \prod_{(i,j)\in E(\Gamma)} \frac 1 {z_{ij}} \nn\ee
where $\Gamma$ is drawn from the set $\mTree$ of maximal trees.
We may always use the Arnold relations, \cref{eq: arnold relations}, and skewsymmetry, $z_{ij} = - z_{ji}$, to rewrite $\frac{1}{(z)_\Gamma}$ as a linear combination of monomials of the form $\frac{1}{(z)_{\Gamma'}}$ where $\Gamma'$ is a maximal tree of a special type: for each $i\in \{2,\dots,k\}$ there is a unique directed edge $(i,j) \in \Gamma'$ out of $i$ and its target $j$ has $j<i$.
Let $\mTree_\rightarrow$ denote this special set of maximal trees:
\begin{align} &\mTree_\rightarrow\nn\\&\qquad := \left\{ \Gamma \in \mTree\, \middle| \,E(\Gamma) = \{ (i,t(i))\}_{2\leq i\leq k} \text{ for some strictly decreasing map $t$}\right\}. \label{special trees}
\end{align}
In this way we rewrite our element $\alpha$ in the form
\[
\alpha=  \sum_{(\Gamma_1,\dots,\Gamma_n)\in \mTree_\rightarrow^{\times n}}\frac{G_{\Gamma_1,\dots,\Gamma_n}(u)}{(z^1)_{\Gamma_1}\cdots (z^n)_{\Gamma_n}} (\dd\zz\shifted)^I
  \]
(Let us stress that in this new form, it will not in general be true that each individual term in the sum obeys the defining boundary conditions of $\TS$, though of course the sum itself still does.)

Given a special maximal tree $\Gamma \in \mTree_\rightarrow$ consider the residue operation
\[\Res^z_{\Gamma}:=\res_{z^2\rightarrow z^{t(2)=1}}\circ\cdots \circ\res_{z^k\rightarrow z^{t(k)}} \]
where $t$ is the corresponding strictly decreasing function as in \cref{special trees}.
Observe that
\be \Res_{\Gamma} \frac{1}{(z)_{\Gamma'}} \dd\zz = \pm \delta_{\Gamma,\Gamma'} \quad\text{for}\quad \Gamma,\Gamma' \in \mTree_\rightarrow \nn\ee
up to a sign which is unimportant for us here.
Therefore for each tuple
$(\Gamma_1,\dots,\Gamma_n)\in \mTree_\rightarrow^{\times n}$
of such special maximal trees,
we have
\be \prod_{r=1}^n \Res^{z^r}_{\Gamma_r} \alpha = \pm G_{\Gamma_1,\dots,\Gamma_n}(u)[k(n-1)]. \nn\ee
But on the other hand, notice that $\prod\limits_{r=1}^n \Res^{z^r}_{\Gamma_r}$ annihilates the space $\TS_{\mathbf{Tree},<}^{k,\bul}$. Therefore $\prod\limits_{r=1}^n \Res^{z^r}_{\Gamma_r} \alpha=0$. We conclude that $G_{\Gamma_1,\dots,\Gamma_n}(u) =0$ for each tuple $(\Gamma_1,\dots,\Gamma_n)\in \mTree_\rightarrow^{\times n}$, and hence that $\alpha = 0$. That is, $\mathbf{Q}_{\mTree}^{k,\bul}\cap\TS_{\mathbf{Tree},<}^{k,\bul}=0$, as required.
\end{proof}

Now we are ready to prove \cref{prop: translation invariant injectivity}
\begin{proof}[Proof of \cref{prop: translation invariant injectivity}]
Consider the $\kk$-linear map
\[ \tilde{\mu}_{\mathbf{Tree}}:\mathbf{Q}_{\mathbf{Tree}}^{k,\bul+1}
\rightarrow\kk\cdot\dd\mathbf z
\]
we obtain from \cref{lem: tree tilde mu}. We shall first show that it can be uniquely extended to a well-defined $\kk[z^s_{v\bp},\del_{z^s_v}]^{1\leq s\leq n}_{v\in I\setminus\{\bp\}}$-module map
\[
  \tilde{\mu}:\TS_{\AA^n,\basepoint}^{k,\bul+1-(n-1)}\left((\oms)^{\boxtimes k}\right)\rightarrow\kk[\lambda_v]\cdot \dd\mathbf z.\]

Suppose $\alpha\in \TS_{\AA^n,\basepoint}^{k,\bul+1-(n-1)}\left((\oms)^{\boxtimes k}\right)$. Making use of \cref{lem: tree less} and \cref{lem: IntersectionZero}, we may write
\begin{align}
\alpha
&\equiv \sum_{\substack{\mathbf{\Gamma}=(\Gamma_1,\dots,\Gamma_n)\\ \in\mTree^{\times n}}} \frac{F_{\mathbf\Gamma}(u)}{(z^1)_{\Gamma_1}\cdots (z^n)_{\Gamma_n}} (\dd\zz\shifted)^I \cdot P_{\mathbf{\Gamma}}(\del_z) \mod  \TS_{\mathbf{Tree},<}^{k,\bul+1}.
\nn\end{align}
We set
\be \tilde\mu(\alpha) :=
\sum_{\substack{\mathbf{\Gamma}=(\Gamma_1,\dots,\Gamma_n)\\ \in\mTree^{\times n}}} \tilde\mu_{\mathbf{Tree}}\left(\frac{F_{\mathbf\Gamma}(u)}{(z^1)_{\Gamma_1}\cdots (z^n)_{\Gamma_n}} (\dd\zz\shifted)^I\right) \ox P_{\mathbf{\Gamma}}(\lambda) .\nn\ee
To check that this is well defined, suppose
\begin{align}
\alpha
&\equiv\sum_{\substack{\mathbf{\Gamma}=(\Gamma_1,\dots,\Gamma_n)\\ \in\mTree^{\times n}}} \frac{G_{\mathbf\Gamma}(u)}{(z^1)_{\Gamma_1}\cdots (z^n)_{\Gamma_n}} (\dd\zz\shifted)^I \cdot Q_{\mathbf{\Gamma}}(\del_z) \mod \TS_{\mathbf{Tree},<}^{k,\bul+1}
\nn\end{align}
is any other expression for $\alpha$ in the same form. We need to show that
\begin{align}& \sum_{\substack{\mathbf{\Gamma}=(\Gamma_1,\dots,\Gamma_n)\\ \in\mTree^{\times n}}} \tilde\mu_{\mathbf{Tree}}\left(\frac{F_{\mathbf\Gamma}(u)}{(z^1)_{\Gamma_1}\cdots (z^n)_{\Gamma_n}} (\dd\zz\shifted)^I\right) \ox P_{\mathbf{\Gamma}}(\lambda)\nn\\&\qquad =
\sum_{\substack{\mathbf{\Gamma}=(\Gamma_1,\dots,\Gamma_n)\\ \in\mTree^{\times n}}} \tilde\mu_{\mathbf{Tree}}\left(\frac{G_{\mathbf\Gamma}(u)}{(z^1)_{\Gamma_1}\cdots (z^n)_{\Gamma_n}} (\dd\zz\shifted)^I\right) \ox Q_{\mathbf{\Gamma}}(\lambda)\nn
\end{align}
We may suppose without loss of generality that $\alpha$ is homogeneous, of $z$-degree say $n-d$. In that case all the polynomials $P_{\mathbf{\Gamma}}(\lambda)$ and $Q_{\mathbf{\Gamma}}(\lambda)$ appearing here are of $z$-degree $d$ in their arguments (the variables $\lambda^s_v$, which are homogeneous of $z$-degree $-1$).
Therefore it is enough to check that
\begin{align}&
\sum_{\substack{\mathbf{\Gamma}=(\Gamma_1,\dots,\Gamma_n)\\ \in\mTree^{\times n}}} \tilde\mu_{\mathbf{Tree}}\left(\frac{F_{\mathbf\Gamma}(u)}{(z^1)_{\Gamma_1}\cdots (z^n)_{\Gamma_n}} (\dd\zz\shifted)^I\right) \ox \left(\del_{\lambda^{s_1}_{v_1}}\cdots\del_{\lambda^{s_d}_{v_d}} P_{\mathbf{\Gamma}}(\lambda)\right)\nn\\&\qquad =
\sum_{\substack{\mathbf{\Gamma}=(\Gamma_1,\dots,\Gamma_n)\\ \in\mTree^{\times n}}} \tilde\mu_{\mathbf{Tree}}\left(\frac{G_{\mathbf\Gamma}(u)}{(z^1)_{\Gamma_1}\cdots (z^n)_{\Gamma_n}} (\dd\zz\shifted)^I\right) \ox \left(\del_{\lambda^{s_1}_{v_1}}\cdots\del_{\lambda^{s_d}_{v_d}} Q_{\mathbf{\Gamma}}(\lambda)\right)\nn
\end{align}
for all $z$-degree $d$ differential operators $\del_{\lambda^{s_1}_{v_1}}\cdots\del_{\lambda^{s_d}_{v_d}}$.

To prove this, we consider multiplying our two different expressions for $\alpha$ by $z^{s_1}_{v_1\bp}\cdots z^{s_d}_{v_d\bp}$. We find that
\begin{align}
&\sum_{\substack{\mathbf{\Gamma}=(\Gamma_1,\dots,\Gamma_n)\\ \in\mTree^{\times n}}} \frac{F_{\mathbf\Gamma}(u)}{(z^1)_{\Gamma_1}\cdots (z^n)_{\Gamma_n}} (\dd\zz\shifted)^I \cdot P_{\mathbf{\Gamma}}(\del_z) \cdot z^{s_1}_{v_1\bp}\cdots z^{s_d}_{v_d\bp} \nn\\
&\qquad \equiv
\sum_{\substack{\mathbf{\Gamma}=(\Gamma_1,\dots,\Gamma_n)\\ \in\mTree^{\times n}}} \frac{G_{\mathbf\Gamma}(u)}{(z^1)_{\Gamma_1}\cdots (z^n)_{\Gamma_n}} (\dd\zz\shifted)^I \cdot Q_{\mathbf{\Gamma}}(\del_z) \cdot z^{s_1}_{v_1\bp}\cdots z^{s_d}_{v_d\bp} \mod \TS_{\mathbf{Tree},<}^{k,\bul+1}
\nn\end{align}
In view of Lemma \ref{lem: IntersectionZero}, we obtain a strict equality in $\mathbf{Q}_{\mTree}^{k,\bul+1}\subset \TS_{\AA^n,\basepoint}^{k,\bul+1-(n-1)}\left((\oms)^{\boxtimes k}\right)$:
\begin{align}
&\sum_{\substack{\mathbf{\Gamma}=(\Gamma_1,\dots,\Gamma_n)\\ \in\mTree^{\times n}}} \frac{F_{\mathbf\Gamma}(u)}{(z^1)_{\Gamma_1}\cdots (z^n)_{\Gamma_n}} (\dd\zz\shifted)^I \cdot \left[\cdots\left[\Prop_{\mathbf{\Gamma}}(\del_z), z^{s_1}_{v_1\bp}\right],\cdots ,z^{s_d}_{v_d\bp}\right] \nn\\
&\qquad =
\sum_{\substack{\mathbf{\Gamma}=(\Gamma_1,\dots,\Gamma_n)\\ \in\mTree^{\times n}}} \frac{G_{\mathbf\Gamma}(u)}{(z^1)_{\Gamma_1}\cdots (z^n)_{\Gamma_n}} (\dd\zz\shifted)^I \cdot \left[\cdots\left[Q_{\mathbf{\Gamma}}(\del_z), z^{s_1}_{v_1\bp}\right],\cdots ,z^{s_d}_{v_d\bp}\right]
\nn\end{align}
The argument that $\tilde\mu$ is well defined is complete, once we note that
\[ \left(\del_{\lambda^{s_1}_{v_1}}\cdots\del_{\lambda^{s_d}_{v_d}} P_{\mathbf{\Gamma}}(\lambda)\right) = \left[\cdots\left[\Prop_{\mathbf{\Gamma}}(\del_z), z^{s_1}_{v_1\bp}\right],\cdots ,z^{s_d}_{v_d\bp}\right]. \]
are equal in $\kk$, and likewise for $Q_{\mathbf \Gamma}$.

Our map $\tilde\mu$ is $\kk[z^s_{v\bp},\del_{z^s_v}]^{1\leq s\leq n}_{v\in I\setminus\{\bp\}}$-equivariant by construction, given \cref{lem: IntersectionZero}.

It remains to check that we have $\tilde\mu(\ddts\alpha) = \mu(\alpha)$ for all $\alpha\in\TS_{\AA^n,\basepoint}^{k,\bul-(n-1)}\left((\oms)^{\boxtimes k}\right)$ (not merely all $\alpha\in\mathbf{Q}_{\mathbf{Tree}}^{k,\bul}$). To see this we check that
\begin{align}
\tilde\mu(\ddts\alpha) &=
\sum_{\substack{\mathbf{\Gamma}=(\Gamma_1,\dots,\Gamma_n)\\ \in\mTree^{\times n}}} \tilde\mu_{\mathbf{Tree}}\left(\frac{\ddts F_{\mathbf\Gamma}(u)}{(z^1)_{\Gamma_1}\cdots (z^n)_{\Gamma_n}} (\dd\zz\shifted)^I\right) \ox P_{\mathbf{\Gamma}}(\lambda) \nn\\
&=\sum_{\substack{\mathbf{\Gamma}=(\Gamma_1,\dots,\Gamma_n)\\ \in\mTree^{\times n}}} \mu\left(\frac{F_{\mathbf\Gamma}(u)}{(z^1)_{\Gamma_1}\cdots (z^n)_{\Gamma_n}} (\dd\zz\shifted)^I\right) \ox P_{\mathbf{\Gamma}}(\lambda) \nn\\
&=\sum_{\substack{\mathbf{\Gamma}=(\Gamma_1,\dots,\Gamma_n)\\ \in\mTree^{\times n}}} \mu\left(\frac{F_{\mathbf\Gamma}(u)}{(z^1)_{\Gamma_1}\cdots (z^n)_{\Gamma_n}} (\dd\zz\shifted)^I\cdot P_{\mathbf{\Gamma}}(\del_z)\right) \nn\\
&= \mu(\alpha),\nn
\end{align}
where in the final equality we used the fact that $\mu|_{\TS^{k,\bul}_{\mathbf{Tree,<}}}=0$ from \cref{lem: tree less}.
This completes the proof of \cref{prop: translation invariant injectivity}, and hence, in view of \cref{prop: translation invariant Pch}, the proof of \cref{prop: Injectivity}.
\end{proof}

\subsection{Completion of the proof}\label{sec: Surjectivity}
In this section we will finish the proof of the main theorem, \cref{thm: ChiralIsoLie}.


We may define the de Rham cohomology, i.e. the quotient by all total $z$ derivatives, of $ H_{\ddts}\left(\TS_{\AA^n,\basepoint}^{k,\bul-(n-1)}((\oms)^{\boxtimes k})\right)$ as
\begin{align}
&h_{dR}\left( H_{\ddts}\left(\TS_{\AA^n,\basepoint}^{k,\bul-(n-1)}((\oms)^{\boxtimes k})\right)\right)\nn\\&\qquad:=\frac{H_{\ddts}\left(\TS_{\AA^n,\basepoint}^{k,\bul-(n-1)}((\oms)^{\boxtimes k})\right)}{\left< \left[\alpha\cdot \del_{z^s_v}\right]\,\middle|\,\alpha\in \TS_{\AA^n,\basepoint}^{k,\bul-(n-1)}((\oms)^{\boxtimes k}),v\in I\setminus\{\bp\}, 1\leq s\leq n\right>}.
\end{align}


Our goal is now to prove \cref{lem: de rham} below which identifies this de Rham cohomology with the dual $\LieOperad(k)^\vee$ of the space of $k$-ary operations of the Lie operad.

First, the following lemma and its corollary are useful:
\begin{lem}
Pick any nonempty $J\subset \{2,\dots,k\}$ and for each $j\in J$ pick $\ell_j \in \{1,\dots,j-1\}$. Pick $(m_{r,j}\in \ZZ_{\geq 0})^{1\leq r\leq n}_{j\in J}$ and consider the element
\be \alpha = \prod_{j\in J} \left(\prod_{r=1}^n \del_{z^r_j}^{m_{r,j}}\right) \Prop_{\ell_jj}
\nn\ee
If any one of the $m_{r,j}$ is nonzero, then this element is a total $z$ derivative.
\end{lem}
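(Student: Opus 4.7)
The plan is to exhibit $\alpha$ as the result of a single left $\mathcal{D}$-module derivative acting on a simpler class, and then convert this left action into a right action by sliding the derivative through $\dd\zz\shifted_I$.

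Concretely, I would first choose any pair $(r_0,j_0)$ with $m_{r_0,j_0}>0$. Since the operators $\del_{z^r_j}$ commute pairwise as derivations on $\TS_{\AA^n,\basepoint}^{k}$ (and descend to a polynomial algebra action on $H_{\ddts}(\TS_{\AA^n,\basepoint}^{k})$ by \cref{thm: cohFinTransInv}), I may peel off a single copy of $\del_{z^{r_0}_{j_0}}$ and write
\[
\alpha \;=\; \del_{z^{r_0}_{j_0}}\,\beta, \qquad \text{where}\qquad \beta \;:=\; \Biggl(\prod_{(r,j)\neq(r_0,j_0)}\del_{z^r_j}^{m_{r,j}}\Biggr)\,\del_{z^{r_0}_{j_0}}^{m_{r_0,j_0}-1}\,\prod_{j\in J}\Prop_{\ell_j j}.
\]
Here $\beta$ lies in the same cohomology, and it makes no difference in which order the derivatives are applied because they all commute.

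Next I would invoke the standard compatibility between the left $\mathcal{D}$-module structure on $\TS_{\AA^n,\basepoint}^{k}$ and the right $\mathcal{D}$-module structure on $\TS_{\AA^n,\basepoint}^{k}((\oms)^{\boxtimes k})=\TS_{\AA^n,\basepoint}^{k}\otimes_{\O_{(\AA^n)^k}}(\oms)^{\boxtimes k}$. Because $\dd\zz\shifted_I$ has no $z$-dependence, the right action of $\del_{z^{r_0}_{j_0}}$ annihilates it, so that the Leibniz formula for the induced right $\mathcal{D}$-module structure simplifies to
\[
(\beta\otimes\dd\zz\shifted_I)\cdot\del_{z^{r_0}_{j_0}} \;=\; \pm\,(\del_{z^{r_0}_{j_0}}\beta)\otimes\dd\zz\shifted_I \;=\; \pm\,\alpha\otimes\dd\zz\shifted_I.
\]

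Finally, the hypothesis $J\subset\{2,\dots,k\}$ guarantees $j_0\neq\basepoint=1$, so $z^{r_0}_{j_0}$ is one of the non-basepoint coordinates by which we quotient to form the de Rham cohomology. Hence $\alpha\otimes\dd\zz\shifted_I$ lies in the subspace of total $z$-derivatives, as required. I do not anticipate any real obstacle: the argument is almost tautological once one uses the commutativity of partial derivatives together with the fact that $\dd\zz\shifted_I$ is right $\mathcal{D}$-closed; the only point that must be checked is that the distinguished index $j_0$ is indeed non-basepoint, which is built into the statement via $J\subset\{2,\dots,k\}$.
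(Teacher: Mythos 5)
The proposal has a genuine gap at its very first step, and it is exactly the point the lemma is designed to address. You write $\alpha=\del_{z^{r_0}_{j_0}}\beta$, but in the lemma the derivatives are applied \emph{factor by factor}: $\alpha$ is the product over $j\in J$ of the individually differentiated propagators $\bigl(\prod_r\del_{z^r_j}^{m_{r,j}}\bigr)\Prop_{\ell_j j}=\Prop_{\ell_j j}^{m_{1,j},\dots,m_{n,j}}$ (these are the cohomology basis elements), \emph{not} the differential operator $\prod_{j,r}\del_{z^r_j}^{m_{r,j}}$ applied to the product $\prod_{j\in J}\Prop_{\ell_j j}$. When you apply the derivation $\del_{z^{r_0}_{j_0}}$ to the product $\beta$, the Leibniz rule also differentiates every propagator in the product that \emph{starts} at $j_0$, i.e.\ every factor $\Prop_{j_0 j'}$ with $j'\in J$ and $\ell_{j'}=j_0$. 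Hence $\del_{z^{r_0}_{j_0}}\beta=\alpha+(\text{cross\mbox{-}terms})$, and your identity only holds when no propagator emanates from $j_0$ (for instance when $j_0=\max J$), which you cannot arrange: the nonzero exponent $m_{r_0,j_0}$ may sit at a non-maximal vertex, as in the paper's example $(\del_{z^r_3}\Prop_{13})\Prop_{35}\Prop_{36}$. Under the alternative reading that makes your identity trivially true, the lemma would be a tautology and there would be nothing to prove; its actual content is that the factor-wise derived product is \emph{nevertheless} a total derivative.

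The missing ingredients are the identity $\del_{z^r_i}\Prop_{ij}=-\del_{z^r_j}\Prop_{ij}$ and a downward induction on the elements of $J$. One writes $\del_{z^r_j}\bigl(\prod\bigr)=(\del_{z^r_j}\Prop_{\ell_j j})\cdot(\text{rest})+\sum_{j':\,\ell_{j'}=j}(\text{same product with }\del_{z^r_j}\Prop_{jj'})$, and converts each cross-term via $\del_{z^r_j}\Prop_{jj'}=-\del_{z^r_{j'}}\Prop_{jj'}$ into a product carrying a derivative on a propagator ending at the strictly larger index $j'>j$; these are total derivatives by the inductive hypothesis, the base case being the maximal element of $J$, at which no propagator starts and no cross-terms occur. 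The remaining parts of your argument, namely converting the left derivative into the right $\D$-module action on $\beta\otimes(\dd\zz\shifted)^{\boxtimes k}$ and noting $j_0\neq\bp$, are correct but constitute only the easy half of the proof.
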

\begin{proof}[Proof (sketch)]
Suppose $j_1>j_2>\dots$ are the elements of $J$ in decreasing order. By induction on $p$ one checks that if $m_{r,j_p}>0$ for any $r$ then $\alpha$ is a total $z$ derivative. The induction is straightforward, and we shall just give an illustrative an example. The following sketch shows the situation when $J=\{6,5,3,2,1\}$.
\be
\begin{tikzpicture}[]
        \foreach \x in {1,2,3,4,5,6}{
            \coordinate (\x) at (\x,0);
           \node[draw,circle,fill=black,minimum size=0.3pt,inner sep=.5pt,label=below:{$\x$}]() at (\x) {};
        }
        \draw (1) to[out=60,in=120] (2);
        \draw (1) to[out=60,in=120] (3);
        \draw (3) to[out=60,in=120] (5);
        \draw (3) to[out=60,in=120] (6);
    \end{tikzpicture}\nn\ee
Clearly any derivatives $\del_{z^r_6}$ act nontrivially only on $\Prop_{36}$ and can immediately be pulled out. Likewise for any derivatives $\del_{z^r_5}$. For the nontrivial inductive step, we note that
\begin{align}
 (\del_{z^r_3} \Prop_{13}) \Prop_{35} \Prop_{36} &= \del_{z^r_3} \left( \Prop_{13} \Prop_{35} \Prop_{36} \right) - \Prop_{13} \left( \del_{z^r_3} \Prop_{35} \Prop_{36} \right) \nn\\
&= \del_{z^r_3} \left( \Prop_{13} \Prop_{35} \Prop_{36} \right) + \Prop_{13} \left(\del_{z^r_5}  \Prop_{35} \right) \Prop_{36} + \Prop_{13} \Prop_{35}\left( \del_{z^r_6} \Prop_{36}\right)  \nn
\end{align}
where the key observation is that $\del_{z^r_i} \Prop_{ij} = - \del_{z^r_j} \Prop_{ij}$.

(Note that this argument is exactly as in the case of dimension $n=1$, since the index $r$ plays no real role.)
\end{proof}

Now recall the basis of $\Coh_{\dd_{\TS}}(\TS_{\AA^n,\basepoint}^{k,\bul})$ we found in \cref{sec: cohomology}, \cref{cor:TransInvBasis}
\begin{align}\label{b1}
        \left(\prod_{i\in  I_{J,(\ell_j)}}\prod_{r=1}^n(z_{i}^r-z_{k}^r)^{n_{r,i}}\right)
        \left(\prod_{j\in J}[\Prop_{\ell_jj}^{m_{1,j},\dots,m_{n,j}}]\right)
    \end{align}
where the set $I_{J,(\ell_j)}$ is as we defined it in \cref{sec: cohomology}. We have the following immediate corollary.

\begin{cor}
Consider the representative
\begin{align}
      \alpha=  \left(\prod_{i\in  I_{J,(\ell_j)}}\prod_{r=1}^n(z_{i}^r-z_{k}^r)^{n_{r,i}}\right)
        \left(\prod_{j\in J}\Prop_{\ell_jj}^{m_{1,j},\dots,m_{n,j}}\right)
\nn\end{align}
in $\TS_{\AA^n,\basepoint}^{k,\bul}$
of any one of the basis vectors in \cref{b1} above. We can find $\beta\in \TS_{\AA^n,\basepoint}^{k,\bul}$ which is a linear combination of the following terms
$$
\left(\prod_{i\in  I_{J,(\ell_j)}}\prod_{r=1}^n(z_{i}^r-z_{k}^r)^{n'_{r,i}}\right)
        \left(\prod_{j\in J}\Prop_{\ell_jj}\right)
$$
and $\alpha-\beta$ is a total $z$ derivative.
\end{cor}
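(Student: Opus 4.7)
The plan is to prove the corollary by integration by parts, organized as an induction on a suitable weight function. For any expression of the form $f \cdot \prod_{j \in J} \Prop_{\ell_j j}^{m_{1,j},\dots,m_{n,j}}$ with $f$ a polynomial in the variables $(z_i^r - z_k^r)_{i \in I_{J,(\ell_j)}}$, define
\[
W := \sum_{j \in J} w(j)\,\bigl(m_{1,j} + \dots + m_{n,j}\bigr),
\]
where $w \colon \{1,\dots,k\} \to \ZZ_{>0}$ is any strictly decreasing function -- say $w(j) := k+1-j$. The base case $W = 0$ is trivial: all derivative orders vanish, so $\alpha$ is already of the required shape and we may take $\beta := \alpha$.

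For the inductive step I would pick $j^* := \min\{j \in J : (m_{1,j}, \dots, m_{n,j}) \neq 0\}$ and an index $r^*$ with $m_{r^*, j^*} > 0$, rewrite $\Prop_{\ell_{j^*} j^*}^{m_{1,j^*},\dots,m_{n,j^*}}$ as $\del_{z^{r^*}_{j^*}}$ applied to the propagator with $m_{r^*, j^*}$ lowered by one, and apply the Leibniz rule for $\del_{z^{r^*}_{j^*}}$. Modulo total $z$-derivatives this produces two contributions. In the first, the derivative lands on $f$: the result $\del_{z^{r^*}_{j^*}} f$ is again a polynomial in the $(z_i^r - z_k^r)_{i \in I_{J,(\ell_j)}}$ variables (possibly zero), so the term stays in the prescribed shape and $W$ drops by $w(j^*)$. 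In the second, the derivative lands on some other propagator $\Prop_{\ell_{j_0} j_0}^{\dots}$, which by the support of the propagators happens precisely when $j^* = \ell_{j_0}$; using the translation-invariance identity $\del_{z^r_{\ell_{j_0}}} \Prop_{\ell_{j_0} j_0} = -\del_{z^r_{j_0}} \Prop_{\ell_{j_0} j_0}$, the result becomes $-\Prop_{\ell_{j_0} j_0}^{\dots + \mathbf{e}_{r^*}}$, effectively shifting the derivative from position $j^*$ to position $j_0$. Since $\ell_{j_0} < j_0$ forces $j_0 > j^*$ and $w$ is strictly decreasing, $W$ drops here too.

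Both contributions remain in the prescribed structural form -- a polynomial in the $(z_i^r - z_k^r)$ variables times a product of propagators of the prescribed shape -- so the inductive hypothesis applies to each, and summing produces the required $\beta$ for $\alpha$.

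The main obstacle is the second contribution: Leibniz does not simply remove a derivative, but can shift it onto another propagator, which threatens to make the naive induction (on total order $\sum m_{r,j}$) loop. The observation that closes the argument is that such shifts are always strictly forward in the index order, since the source $\ell_{j_0}$ of a propagator is strictly smaller than its target $j_0$; coupled with a strictly decreasing weight function $w$, this forces $W$ to drop at every step and makes the recursion terminate.
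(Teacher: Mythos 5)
Your argument is correct and is essentially the paper's own: the proof there is the same integration by parts via the Leibniz rule together with the key identity $\del_{z^r_i}\Prop_{ij}=-\del_{z^r_j}\Prop_{ij}$, with termination guaranteed because a derivative can only be shifted from the source $\ell_{j_0}$ of a propagator to its strictly larger target $j_0$. Your strictly decreasing weight $W=\sum_{j\in J}w(j)\bigl(m_{1,j}+\dots+m_{n,j}\bigr)$ is just a cleaner bookkeeping device for the paper's induction along the decreasing enumeration of $J$.
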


We are now ready to compute the de Rham cohomology.
\begin{lem}\label{lem: de rham}
The de Rham cohomology is isomorphic to $\LieOperad(k)^{\vee}$. That is,
\[ h_{dR}\left( H^p_{\ddts}\left(\TS_{\AA^n,\basepoint}^{k,\bullet-(n-1)}((\oms)^{\boxtimes k})\right)\right)\cong \begin{cases} \LieOperad(k)^{\vee} & p = 0 ,\nn\\ 0 & \text{otherwise.} \end{cases}\]
Moreover this isomorphism is compatible with the action of the symmetric group $S_k$.
\end{lem}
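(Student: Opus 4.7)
The plan is to use the explicit $\kk$-basis of $H_{\ddts}\bigl(\TS_{\AA^n,\basepoint}^k((\oms)^{\boxtimes k})\bigr)$ provided by \cref{cor:TransInvBasis}, together with the preceding Corollary, to reduce every class in $h_{dR}$ to a pure-propagator product $\prod_{j\in J}[\Prop_{\ell_j j}]$, and then identify these with $\LieOperad(k)^\vee$ via the classical higher-dimensional Orlik--Solomon theory. The first observation is that the preceding Corollary already kills all basis elements carrying any propagator derivative $\Prop_{\ell_j j}^{m_{1,j},\dots,m_{n,j}}$ with some $m_{r,j}>0$: they are total $z$-derivatives, hence zero in $h_{dR}$. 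Next, for the remaining polynomial prefactors $(z^r_i-z^r_k)^{n_{r,i}}$ with $n_{r,i}\geq 1$: if the vertices $i$ and $k$ lie in the same connected component of the undirected forest with edges $\{(\ell_j,j)\}_{j\in J}$, write $z^r_i-z^r_k$ as a telescoping sum along the unique path from $i$ to $k$ in the forest, and use the relation $(z^r_a-z^r_b)[\Prop_{ab}]=0$ in $H_{\ddts}$ edge by edge to see that the class is already zero before quotienting; if instead $i$ and $k$ lie in different components, the class still dies in $h_{dR}$ because the multiplicative action of the translation-invariant linear coordinate $(z^r_i-z^r_k)$ on the corresponding disconnected tensor factor is cohomologous to a derivative action, reflecting the vanishing $H^n_{dR}(\AA^n)=0$ for $n\geq 1$.

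A degree count then singles out the relevant classes. A pure-propagator product $\prod_{j\in J}[\Prop_{\ell_j j}]$ has chiral-operad degree $p=(|J|-k+1)(n-1)$, so $p=0$ forces $|J|=k-1$ (the forest becomes a spanning tree), while $p\neq 0$ forces $|J|<k-1$ and hence the existence of $>1$ connected components. For $p\neq 0$, each such class is killed in $h_{dR}$ by essentially the same disconnected-factor argument as in the second part of the first step (the isolated or ``independent'' component contributes an $\oms$-on-$\AA^n$ tensor factor whose $h_{dR}$ vanishes), giving $h_{dR}(H^p_{\ddts})=0$ for $p\neq 0$. For $p=0$ we are left with the $(k-1)!$ maximal-tree classes $\prod_{j=2}^k[\Prop_{\ell_j j}]$ indexed by maps $\ell\colon\{2,\ldots,k\}\to\{1,\ldots,k-1\}$ with $\ell_j<j$. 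Their linear independence in $h_{dR}$ follows from pairing with the residue operations $\prod_{r=1}^n\Res^{z^r}_\Gamma$ for the special trees $\Gamma\in\mTree_\rightarrow$ from the proof of \cref{lem: IntersectionZero}: after Arnold rewriting, the pairing matrix is triangular with nonzero diagonal on the tree basis.

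Finally, I would identify the resulting $(k-1)!$-dimensional $\kk$-vector space at $p=0$ with $\LieOperad(k)^\vee$ via the classical bijection assigning to each tree map $(\ell_j)_{j=2}^k$ the corresponding left-normed Lie monomial on the leaves, and check that the $S_k$-action by relabelling the marked points matches the standard $S_k$-action on $\LieOperad(k)^\vee$, up to the sign twist $(-1)^n$ that encodes the relation $[\Prop_{ij}]=(-1)^n[\Prop_{ji}]$. This is the well-known higher-dimensional Orlik--Solomon/Arnold presentation of the top piece of the de Rham cohomology of $\Conf_k(\AA^n)$, going back to Arnold for $n=1$ and extended to all $n\geq 1$ by F.~Cohen, Cohen--Taylor, and Totaro \cite{totaro1996configuration}. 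The principal obstacle I foresee is the second part of the first step, namely the vanishing of $(z^r_i-z^r_k)$-decorated classes in the disconnected-forest case: this requires the translation-invariant antiderivative construction -- that on any $\AA^n$ factor of positive dimension, every polynomial coefficient is a derivative -- together with compatibility under the $\D$-module tensor structure on $\TS^k((\oms)^{\boxtimes k})$. Once this vanishing is established, the remaining $S_k$-equivariant identification with $\LieOperad(k)^\vee$ is purely classical.
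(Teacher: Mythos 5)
Your overall architecture matches the paper's: reduce, via the basis of \cref{cor:TransInvBasis} and the preceding corollary, to pure products of underived propagators; observe that only the maximal-tree products survive in $h_{dR}$ and that they sit in degree $p=0$; prove their linear independence by pairing with residue operations; and identify the result with the top Orlik--Solomon piece $OS(k)^{k-1}\cong\LieOperad(k)^\vee$. The residue-pairing step and the final identification are essentially the paper's (the paper uses the $\D$-module maps $R_{(\ell_j)}=\mu_2^{2\to1}\circ\cdots\circ\mu_2^{k\to\ell_k}$, for which the pairing is diagonal and which manifestly send total $z$ derivatives into $\bigoplus_{r,i}\lambda^r_i\kk[\lambda]$; your naive $\prod_r\Res^{z^r}_\Gamma$ needs the same compatibility with total derivatives spelled out, but this is routine).

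Where you genuinely diverge is in the reduction step, and this is also where your argument has a flaw. The paper disposes of \emph{all} non-maximal-tree classes and all polynomial prefactors in one stroke with the Euler vector field $\mathbf{E}_\bp=\sum_{v\neq\bp}\sum_s z^s_{v\bp}\del_{z^s_v}$: any eigenvector of nonzero weight is automatically a total $z$ derivative, and the weight of $\bigl(\prod(z_i^r-z_k^r)^{n_{r,i}}\bigr)\prod_{j\in J}\Prop_{\ell_jj}(\dd\zz\shifted)^I$ is $\sum n_{r,i}+n\bigl(|J|-(k-1)\bigr)$, which vanishes only for maximal trees with trivial prefactor. Your case analysis replaces this with (a) telescoping within a component (correct, though largely vacuous for the actual basis elements) and (b) a ``disconnected tensor factor'' argument which you justify by asserting that the $h_{dR}$ of the isolated $\oms$-on-$\AA^n$ factor vanishes. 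That justification is false: $h_{dR}(\omega_{\AA^n})=\omega_{\AA^n}\big/\sum_r\omega_{\AA^n}\cdot\del_{z^r}\cong\kk\,\dd\zz$ is one-dimensional, not zero (and $H^n_{dR}(\AA^n)=0$ is not the relevant statement for a right $\D$-module quotient). What actually kills these classes is that the propagator product is invariant under simultaneous translation of any connected component $C$ not containing $\bp$, while the center-of-mass coordinates of $C$ remain free: explicitly, for $c_0\in C$ one has $\bigl((z^1_{c_0}-z^1_\bp)\gamma\bigr)\cdot\sum_{v\in C}\del_{z^1_v}=-\gamma$. Equivalently, the nonzero $\mathbf{E}_\bp$-weight does the job. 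So your conclusion is correct and the gap is repairable in one line, but as written the key vanishing claim rests on an incorrect statement; I would recommend adopting the uniform Euler-field argument, which also spares you the component-by-component bookkeeping when several prefactors and several components interact.
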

\begin{proof}
Consider the following Euler vector field, which performs rescaling relative to our chosen base point:
\[
\mathbf{E}_\bp:=\sum_{v\in I\setminus\{\bp\}}\sum^n_{s=1}z^s_{v\bp}\del_{z^s_v}.
\]
Clearly $\TS_{\AA^n,\basepoint}^{k,\bullet-(n-1)}((\oms)^{\boxtimes k})$ is the direct sum of its eigenspaces for the action of $\mathbf E_\bp$. Observe that if $\alpha \in \TS_{\AA^n,\basepoint}^{k,\bullet-(n-1)}((\oms)^{\boxtimes k})$ has nonzero eigenvalue for the action (from the right, since this is a right $\D$-module) of $\mathbf E_{\bp}$ then it is a total $z$ derivative:
\be\left( \alpha \cdot \mathbf{E}_\bp = \lambda \alpha ,\, \lambda\neq 0\right) \implies \alpha = \frac 1 \lambda \sum_{v\in I\setminus\{\bp\}}\sum^n_{s=1} \left(\alpha z^s_{v\bp}\right)\cdot \del_{z^s_v}.\nn\ee
Therefore we may confine our attention to the kernel of $\mathbf E_\bp$.

Now, from the basis in \cref{cor:TransInvBasis} we get a basis of $H_{\ddts}\left(\TS_{\AA^n,\basepoint}^{k,\bullet-(n-1)}((\oms)^{\boxtimes k})\right)$. And by the preceding corollary, it is enough to consider those basis vectors represented by elements of the form
\begin{align} \label{b2}
       \left(\prod_{i\in  I_{J,(\ell_j)}}\prod_{r=1}^n(z_{i}^r-z_{k}^r)^{n_{r,i}}\right)
        \left(\prod_{j\in J}\Prop_{\ell_jj}\right)(\dd\zz\shifted)^I.
\end{align}
(i.e. with no derivatives acting on the propagators $\Prop_{ij}$).
Since propagators $\Prop_{ij}$ have eigenvalue $-n$ for $\mathbf E_\bp$ and factors $(z^r_i-z^r_j)$ have eigenvalue $+1$, we see that the vectors of the form \cref{b2} which lie in the kernel of $\mathbf E_{\bp}$ are precisely those corresponding to maximal trees (i.e. with $k-1$ propagators  $\Prop_{ij}$) and with no nontrivial polynomial prefactor.

Therefore, every cohomology class in $ H_{\ddts}\left(\TS_{\AA^n,\basepoint}^{k,\bullet-(n-1)}((\oms)^{\boxtimes k})\right)$ is represented, modulo some total $z$ derivative, by a $\kk$-linear combination of the cohomology classes represented by the vectors
\begin{align} \label{b3}
        \left(\prod_{j=2}^k\Prop_{\ell_j,j}\right)(\dd\zz\shifted)^I, \qquad 1\leq \ell_j<j.
\end{align}
We want to show that this representative is unique. In other words, we want to show that the subspace, of the span of these vectors, consisting of total $z$ derivatives is trivial. Consider a general linear combination
\be \alpha = \sum_{(\ell_j)} c_{(\ell_j)} \left(\prod_{j=2}^k\Prop_{\ell_j,j}\right)(\dd\zz\shifted)^I , \qquad c_{(\ell_j)} \in \kk.\nn\ee
For each tuple $(\ell_j)$ in the sum, there is a residue operation (here, cf. \cref{sec: pairing of residues and propagators})
\be R_{(\ell_j)} = \mu_2^{2\to 1}\circ\dots\mu_2^{j\to l_j}\circ\dots\circ\mu_2^{k\to \ell_k} \label{residue operator}\ee
which picks out precisely that term
\be R_{(\ell_j)}(\alpha) = c_{(\ell_j)} \dd\zz_1  \in \dd\zz_1 \ox \kk[\lambda^s_v]^{1\leq s\leq n}_{v\geq 2}.\label{pickout}\ee
(Here we pick, for the first time, $1=\bp$ as the label of our base point.)

The operation $R_{(\ell_j)}$ is a map of $\D$-modules.
Therefore if $\alpha$ is a total $z$ derivative then so is $R_{(\ell_j)}(\alpha)$. But to be a total   $z$ derivative in $\dd\zz_1 \ox \kk[\lambda^s_v]^{1\leq s\leq n}_{v\geq 2}$ is to belong to the subspace $\bigoplus\limits_{r=1}^n \bigoplus\limits_{i\geq 2} \dd\zz_1 \ox \lambda^r_i \kk[\lambda^s_v]^{1\leq s\leq n}_{v\geq 2}$, and the only way $R_{(\ell_j)}(\alpha) = c_{(\ell_j)} \dd\zz_1 $ belongs to that subspace is if $c_{(\ell_j)}=0$. This establishes that the only linear combination of the vectors in \cref{b3} which is a total $z$ derivative is the zero vector.

We conclude that the de Rham cohomology
\[ h_{dR}\left( H_{\ddts}\left(\TS_{\AA^n,\basepoint}^{k,\bullet-(n-1)}((\oms)^{\boxtimes k})\right)\right)\]
has a $\kk$-basis given by the representatives of the vectors
\be \left(\prod_{j=2}^k \Prop_{\ell_jj} \right) (\dd\zz\shifted)^{\boxtimes k}\label{b4}\ee
labelled by tuples $(\ell_2,\dots,\ell_k)$ such that $\ell_j<j$ for each $j$.
In particular, it is concentrated in the cohomological degree
\be (k-1)(n-1) - k(n-1) = -(n-1) \Rightarrow \bullet=0.\nn\ee
(Recall that $(\dd\zz\shifted)^I$ is in cohomological degree $-k(n-1)$, and the propagators $\Prop_{ij}$ are in cohomological degree $n-1$.)

To complete the proof, let us recall an explicit description of $\LieOperad(k)^{\vee}$, the dual of the $\kk$-vector space of $k$-ary operations of the Lie operad. The Orlik-Solomon algebra $OS(k) = \bigoplus_{p\geq 0} OS(k)^p$ is the quotient of the free graded-commutative unital $\kk$-algebra in generators $(A_{ij}\equiv A_{ji})_{1\leq i<j\leq k}$ in degree one, by the ideal generated by the Arnold relations $A_{ij}A_{j\ell} + A_{j\ell}A_{\ell i} + A_{\ell i} A_{ij}=0$. (The relations $A_{ij}A_{ij}=0$ are automatic since $A_{ij}$ is in odd degree.)
As a $\kk$-vector space, $\LieOperad(k)^\vee \cong OS(k)^{k-1}$. It has a $\kk$-basis
\be \prod_{j=2}^{k} A_{\ell_jj} \nn\ee
labelled by tuples $(\ell_2,\dots,\ell_k)$ such that $\ell_j<j$ for each $j$. On comparing this with \cref{b4}, we have the required isomorphism of vector spaces.
The compatibility with $S_k$-module structure follows from the Arnold relations, \cref{sec: arnold relations}, \cref{thm: arnold}.

(See, for example, \cite{totaro1996configuration}, and  \cite[13.2.4]{LodayVallette} for a discussion of the basis of $\mathrm{LieOperad}(n)$.)
\end{proof}
We get a map
\begin{align}
&\Hom_{\kk[z^s_{v\bp},\del_{z^s_v}]^{1\leq s\leq n}_{v\in I \setminus\{\bp\}}}\left(H_{\ddts}\left(\TS_{\AA^n,\basepoint}^{k,\bullet-(n-1)}((\oms)^{\boxtimes k})\right),\dd\zz\ox\kk[\lambda^r_v]^{1\leq r\leq n}_{v\in I\setminus\{\bp\}}\right)\nn\\
&\qquad  \xrightarrow{[h_{dR}]} \Hom_{\kk}\left(h_{dR}\left( H_{\ddts}\left(\TS_{\AA^n,\basepoint}^{k,\bullet-(n-1)}((\oms)^{\boxtimes k})\right)\right),\kk\cdot \dd\zz \right).
\nn\end{align}
induced by taking de Rham cohomologies.
\begin{prop}\label{prop: dR injective}
This map is $[h_{dR}]$ injective.
\end{prop}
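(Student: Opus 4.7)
The plan is to use the Euler operator $\mathbf{E}_{\bp} := \sum_{v\in I\setminus\{\bp\},\,1\leq s\leq n} z^s_{v\bp}\del_{z^s_v}$, which lies in the algebra acting on both source and target of $\mu$, to put a compatible grading on each side and then argue by induction on this grading.

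The first step is to identify the right action of $\mathbf{E}_{\bp}$ on the target $\dd\zz\otimes\kk[\lambda^s_v]$. From \cref{prop: translation invariant Pch} one has $\dd\zz\cdot z^s_{v\bp}=0$ and $\dd\zz\cdot\del_{z^s_v}=\dd\zz\otimes\lambda^s_v$, and using the Weyl-algebra relation $[\del_{z^s_v},z^s_{v\bp}]=1$ a short calculation shows that on the whole target module $z^s_{v\bp}$ acts as $\del_{\lambda^s_v}$. Consequently $\mathbf{E}_{\bp}$ acts on the target as $\sum_{v,s}\lambda^s_v\del_{\lambda^s_v}$, that is, as multiplication by the total $\lambda$-degree, so that the target decomposes into eigenspaces $\bigoplus_{d\geq 0}\dd\zz\otimes\kk[\lambda]_{d}$ with only non-negative eigenvalues appearing. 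Because $\mu$ is equivariant for the whole algebra of $z$'s and $\del$'s it commutes with $\mathbf{E}_{\bp}$, and therefore preserves eigenspaces: writing $\alpha=\sum_d\alpha_d$ for the $\mathbf{E}_{\bp}$-eigenvalue decomposition of $\alpha\in H_{\ddts}$, we have $\mu(\alpha_d)\in\dd\zz\otimes\kk[\lambda]_d$.

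The hypothesis $[h_{dR}]\mu=0$ then translates directly to the vanishing of $\mu$ on the $0$-eigenspace of $\mathbf{E}_{\bp}$: by construction $[h_{dR}]\mu$ sends a class to $\mu(\alpha)|_{\lambda=0}$, and the eigenvalue decomposition identifies this quantity with $\mu(\alpha_0)$. With this set-up I would then prove $\mu=0$ by induction on the eigenvalue $d\geq 0$. The base case $d=0$ is the reformulated hypothesis. For the inductive step, fix any $\alpha$ in the $d$-eigenspace with $d>0$; since $[\mathbf{E}_{\bp},z^s_{v\bp}]=z^s_{v\bp}$, the element $\alpha\cdot z^s_{v\bp}$ lies in the $(d-1)$-eigenspace, and the inductive hypothesis gives
\[
\del_{\lambda^s_v}\mu(\alpha)\;=\;\mu(\alpha)\cdot z^s_{v\bp}\;=\;\mu(\alpha\cdot z^s_{v\bp})\;=\;0
\]
for every pair $(v,s)$. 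But $\mu(\alpha)$ is a homogeneous polynomial in the variables $\lambda^s_v$ of strictly positive total degree $d$, and such a polynomial annihilated by every partial derivative $\del_{\lambda^s_v}$ must vanish. Hence $\mu(\alpha)=0$. Components $\alpha_d$ with $d<0$ are automatically killed by $\mu$ because the target has no eigenspaces in negative degree, so summing over $d$ gives $\mu=0$ as required.

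The one delicate point on which the whole argument pivots is the Weyl-algebra computation showing that $z^s_{v\bp}$ acts on the whole target as the partial derivative $\del_{\lambda^s_v}$ rather than as zero; reading the relation $\dd\zz\cdot z^s_{v\bp}=0$ of \cref{prop: translation invariant Pch} as applying to the entire module would collapse the eigenvalue bookkeeping and invalidate the induction.
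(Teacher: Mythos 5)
Your proof is correct and rests on exactly the same mechanism as the paper's: the identification of the right action of $z^s_{v\bp}$ on $\dd\zz\ox\kk[\lambda^s_v]$ with $\del_{\lambda^s_v}$, so that multiplying a source class by products of $z^s_{v\bp}$ lets you recover every $\lambda$-coefficient of $\mu(\alpha)$ from the constant-term (de Rham) data. The paper does this in one shot — writing $\mu(\alpha)$ as a sum of monomials and applying $\mu(\alpha\cdot z^{s_1}_{v_1\bp}\cdots z^{s_\ell}_{v_\ell\bp})$ to kill each coefficient — whereas you repackage the same computation as an induction over the $\mathbf E_\bp$-eigenvalue grading; the extra scaffolding is harmless but not needed.
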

\begin{proof}
  This is proved in \cite[\S2.2.7]{BDChiralAlgebras}. We present an elementary proof here. Suppose that
\[\mu\in   \Hom_{\kk[z^s_{v\bp},\del_{z^s_v}]^{1\leq s\leq n}_{v\in I \setminus\{\bp\}}}\left(H^p_{\ddts}\left(\TS_{\AA^n,\basepoint}^{k,\bullet-(n-1)}((\oms)^{\boxtimes k})\right),\dd\zz\ox\kk[\lambda^r_v]^{1\leq r\leq n}_{v\in I\setminus\{\bp\}}\right)
\]
is in the kernel of this map, $[h_{dR}](\mu)=0$. That means, suppose that
\[
 \mu(\beta)\equiv 0 \mod \bigoplus_{r=1}^n \bigoplus_{i\in I\setminus\{\bp\}} \dd\zz \ox \lambda^r_i \kk[\lambda^s_v]^{1\leq s\leq n}_{v\in I\setminus\{\bp\}} \]
for all $\beta\in H^p_{\ddts}\left(\TS_{\AA^n,\basepoint}^{k,\bullet-(n-1)}((\oms)^{\boxtimes k})\right)$.
We want to argue that in that case $\mu(\alpha) = 0$ holds on the nose for all $\alpha\in H^p_{\ddts}\left(\TS_{\AA^n,\basepoint}^{k,\bullet-(n-1)}((\oms)^{\boxtimes k})\right)$. We may write
\be \mu(\alpha) = \dd\zz\ox\sum_{\mathbf v;\mathbf s} c_{\mathbf v;\mathbf s}(\alpha) \lambda^{s_1}_{v_1} \dots \lambda^{s_\ell}_{v_{\ell}} , \qquad c_{\mathbf v;\mathbf s}(\alpha) \in \kk.\nn\ee
for some sum over distinct monomials.
But then for each such monomial we compute
\[ \mu\left(\alpha \cdot z_{v_1\bp}^{s_1}\dots z_{v_\ell\bp}^{s_\ell} \right)
 \equiv c_{\mathbf v;\mathbf s}  \mod \bigoplus_{r=1}^n \bigoplus_{i\in I\setminus\{\bp\}} \dd\zz \ox \lambda^r_i \kk[\lambda^s_v]^{1\leq s\leq n}_{v\in I\setminus\{\bp\}}\]
and we conclude (setting $\beta = \alpha \cdot z_{v_1\bp}^{s_1}\dots z_{v_\ell\bp}^{s_\ell}$) that $c_{\mathbf v;\mathbf s}=0$ on the nose. So $\mu$ must indeed be the zero map, and thus $[h_{dR}]$ is indeed injective.
\end{proof}

Now we are ready to complete the proof of \cref{thm: ChiralIsoLie}, which is the main theorem of the paper.
\begin{proof}[Proof of \cref{thm: ChiralIsoLie}]
  We already proved that the composition of maps
\begin{align}
&\bigoplus_{p\in \ZZ}H^p_{\dd_\P}\left(P^{ch}_{\AA^n}(k)\right)\nn\\
&\underset{\text{\cref{prop: translation invariant Pch}}} \cong \bigoplus_{p\in \ZZ} H^p_{\dd_\P}\left(  \Hom_{\kk[z^s_{v\bp},\del_{z^s_v}]^{1\leq s\leq n}_{v\in I \setminus\{\bp\}}}\left(\TS_{\AA^n,\basepoint}^{k,\bullet-(n-1)}((\oms)^{\boxtimes k}),\dd\zz\ox\kk[\lambda^r_v]^{1\leq r\leq n}_{v\in I\setminus\{\bp\}}\right)\right)\nn\\
&\underset{\text{\cref{prop: translation invariant injectivity}}} \hookrightarrow\bigoplus_{p\in\ZZ}\Hom_{\kk[z^s_{v\bp},\del_{z^s_v}]^{1\leq s\leq n}_{v\in I \setminus\{\bp\}}}\left(H^p_{\ddts}\left(\TS_{\AA^n,\basepoint}^{k,\bullet-(n-1)}((\oms)^{\boxtimes k})\right),\dd\zz\ox\kk[\lambda^r_v]^{1\leq r\leq n}_{v\in I\setminus\{\bp\}}\right)\nn\\
&  \underset{\text{\cref{prop: dR injective}}} \hookrightarrow \bigoplus_{p\in\ZZ}\Hom_{\kk}\left(h_{dR}\left( H^p_{\ddts}\left(\TS_{\AA^n,\basepoint}^{k,\bullet-(n-1)}((\oms)^{\boxtimes k})\right)\right),\kk\cdot \dd\zz \right)\underset{\text{\cref{lem: de rham}}}\cong \LieOperad(k).\nn
\end{align}
is injective. We only need to observe that it is surjective, and this follows from the existence of the residue map $\mu_2\in P^{ch}_{\AA^n,0}(2)$.

So we have found an isomorphism of differential graded vector spaces for each $k$ which is in fact an isomorphism of $S_k$ modules.
By inspection, one sees that the operadic compositions agree, c.f. the composition of residue operators \cref{residue operator} and their action on the basis elements \cref{b3}.
\end{proof}

\section{The Chevalley-Cousin complex}\label{sec: Chevalley-Cousin complex}
From \cite{BDChiralAlgebras}, a chiral algebra on a smooth curve can be thought of as a Lie algebra object in a certain tensor category. The Chevalley-Cousin complex of this chiral algebra is defined to be the reduced Chevalley complex of the corresponding Lie algebra object. It is an important ingredient when defining the chiral homology \cite[Chapter 4]{BDChiralAlgebras}. The derived section of the Chevalley-Cousin complex for the unit chiral algebra can be thought of as singular differential forms  (associated to Feynman diagrams) that appear in 2d Lagrangian chiral QFT. The theory of chiral homology can be used to define such a priori divergent Feynman integrals, see \cite{gui2022elliptictracemapchiral,gui2023tracemapchiralweyl}. In this section, we give an analogous construction of the Chevalley-Cousin complex for the homotopy polysimplicial chiral algebra $\oms.$

Following \cite[\S1.3.1]{BDChiralAlgebras} we let
\be Q(I):= \{ I \xonto \pi T \}\nn\ee
denote the lattice of surjections out of a finite set $I$, and abuse notation by sometimes writing $T\in Q(I)$ for the surjection $\pi:I\onto T$.

We define the graded vector space
\be \ChirCompTot^\bul_{(\AA^n)^I} := \bigoplus_{T \in Q(I)} \Delta_*^{(I/T)} \TS^{T,\bul}((\oms)^{\boxtimes T}[1]). \nn\ee
It comes equipped with a differential $D$ which we are about to describe. The resulting chain complex $(\ChirCompTot^\bul_{(\AA^n)^I},D)$ is the \dfn{Chevalley-Cousin complex} in our setting. Compare \cite[\S3.4.11]{BDChiralAlgebras}.

First observe that as a graded vector space $\ChirCompTot^\bul_{(\AA^n)^I}$ is equivalently  the totalization
$\ChirCompTot_{(\AA^n)^I}^\bul = \Tot(\ChirComp_{(\AA^n)^I}^{\bul,\bul})$
of the bigraded vector space
\be 
\ChirComp_{(\AA^n)^I}^{q,p} :=
\bigoplus_{\substack{T\in Q(I) : \\ |T|=-q}} \Delta_*^{(I/T)}\TS^{T,p}((\oms)^{\boxtimes T}).\nn\ee

Recall that for us the canonical sheaf $\omega_{\AA^n}$ sits in cohomological degree zero, and $\TS^{k,\bul}((\omega_{\AA^n})^{\boxtimes k})$ is concentrated in cohomological degrees $\{0,1\dots,\binom k 2 (n-1)\}$. Thus our model $\TS^{k,\bul}((\oms)^{\boxtimes k})$ of the derived global sections of the \emph{shifted} canonical sheaf $(\oms)^{\boxtimes k}:= (\omega_{\AA^n}[n-1])^{\boxtimes k}$ is concentrated in cohomological degrees
$\{ -k(n-1), -k(n-1) + 1, \dots, \left(\binom k 2 - k\right)(n-1)\}$.
The chiral operations $\mu_\ell$ from \cref{sec: higher operations} define maps of bidegree $(-1+\ell,2-\ell)$
\be \ddmu \ell : \ChirComp_{(\AA^n)^k}^{q,p} \to \ChirComp_{(\AA^n)^k}^{q+\ell-1,p+2-\ell} \nn\ee
for every $\ell\geq 2$, as follows. We first define $\ddmu \ell$ on $\TS^T((\oms)^{\boxtimes T})$ to be the sum
\begin{align} \ddmu\ell|_{\TS^T((\oms)^{\boxtimes T})} &\quad: \TS^T((\oms)^{\boxtimes T}) \to \bigoplus_{ \substack{L\subset T \\: |L| = \ell}} \Delta_*^{T/(\{\star\}\sqcup T\setminus L)} \TS^{\{\star\}\sqcup T\setminus L}((\oms)^{\boxtimes (\{\star\}\sqcup T\setminus L)})\nn
\end{align}
over all ways of partially applying the operation $\mathbf{I}^{[L,T\setminus L]}_{[\{\star\},T\setminus L]}\circ\mu_\ell$, where the map $\mathbf{I}^{[L,T\setminus L]}_{[\{\star\},T\setminus L]}$ is defined in \cref{def: Imap} (none, if $|T|<\ell$, of course, so it is the zero map in such cases). See \cref{fig:mudifferential}.
\begin{figure}[htp]
    \centering

\tikzset{every picture/.style={line width=0.75pt}} 

\begin{tikzpicture}[x=0.75pt,y=0.75pt,yscale=-1,xscale=1]

\draw    (100,112) ;
\draw [shift={(100,112)}, rotate = 0] [color={rgb, 255:red, 0; green, 0; blue, 0 }  ][fill={rgb, 255:red, 0; green, 0; blue, 0 }  ][line width=0.75]      (0, 0) circle [x radius= 3.35, y radius= 3.35]   ;
\draw    (120,132) ;
\draw [shift={(120,132)}, rotate = 0] [color={rgb, 255:red, 0; green, 0; blue, 0 }  ][fill={rgb, 255:red, 0; green, 0; blue, 0 }  ][line width=0.75]      (0, 0) circle [x radius= 3.35, y radius= 3.35]   ;
\draw    (120,132) ;
\draw [shift={(120,132)}, rotate = 0] [color={rgb, 255:red, 0; green, 0; blue, 0 }  ][fill={rgb, 255:red, 0; green, 0; blue, 0 }  ][line width=0.75]      (0, 0) circle [x radius= 3.35, y radius= 3.35]   ;
\draw    (120,132) ;
\draw [shift={(120,132)}, rotate = 0] [color={rgb, 255:red, 0; green, 0; blue, 0 }  ][fill={rgb, 255:red, 0; green, 0; blue, 0 }  ][line width=0.75]      (0, 0) circle [x radius= 3.35, y radius= 3.35]   ;
\draw    (127,97) ;
\draw [shift={(127,97)}, rotate = 0] [color={rgb, 255:red, 0; green, 0; blue, 0 }  ][fill={rgb, 255:red, 0; green, 0; blue, 0 }  ][line width=0.75]      (0, 0) circle [x radius= 3.35, y radius= 3.35]   ;
\draw    (188,117) -- (261.08,116.88) ;
\draw [shift={(263.08,116.88)}, rotate = 179.91] [color={rgb, 255:red, 0; green, 0; blue, 0 }  ][line width=0.75]    (10.93,-3.29) .. controls (6.95,-1.4) and (3.31,-0.3) .. (0,0) .. controls (3.31,0.3) and (6.95,1.4) .. (10.93,3.29)   ;
\draw    (306,109) ;
\draw [shift={(306,109)}, rotate = 0] [color={rgb, 255:red, 0; green, 0; blue, 0 }  ][fill={rgb, 255:red, 0; green, 0; blue, 0 }  ][line width=0.75]      (0, 0) circle [x radius= 3.35, y radius= 3.35]   ;
\draw    (315,138) ;
\draw [shift={(315,138)}, rotate = 0] [color={rgb, 255:red, 0; green, 0; blue, 0 }  ][fill={rgb, 255:red, 0; green, 0; blue, 0 }  ][line width=0.75]      (0, 0) circle [x radius= 3.35, y radius= 3.35]   ;
\draw    (315,138) ;
\draw [shift={(315,138)}, rotate = 0] [color={rgb, 255:red, 0; green, 0; blue, 0 }  ][fill={rgb, 255:red, 0; green, 0; blue, 0 }  ][line width=0.75]      (0, 0) circle [x radius= 3.35, y radius= 3.35]   ;
\draw    (315,138) ;
\draw [shift={(315,138)}, rotate = 0] [color={rgb, 255:red, 0; green, 0; blue, 0 }  ][fill={rgb, 255:red, 0; green, 0; blue, 0 }  ][line width=0.75]      (0, 0) circle [x radius= 3.35, y radius= 3.35]   ;
\draw    (322,103) ;
\draw [shift={(322,103)}, rotate = 0] [color={rgb, 255:red, 0; green, 0; blue, 0 }  ][fill={rgb, 255:red, 0; green, 0; blue, 0 }  ][line width=0.75]      (0, 0) circle [x radius= 3.35, y radius= 3.35]   ;
\draw  [dash pattern={on 0.84pt off 2.51pt}] (293.89,106.49) .. controls (298.76,96.27) and (311,87.98) .. (321.22,87.98) .. controls (331.44,87.98) and (335.78,96.27) .. (330.91,106.49) .. controls (326.03,116.71) and (313.8,125) .. (303.57,125) .. controls (293.35,125) and (289.02,116.71) .. (293.89,106.49) -- cycle ;
\draw    (422.17,130.91) ;
\draw [shift={(422.17,130.91)}, rotate = 0] [color={rgb, 255:red, 0; green, 0; blue, 0 }  ][fill={rgb, 255:red, 0; green, 0; blue, 0 }  ][line width=0.75]      (0, 0) circle [x radius= 3.35, y radius= 3.35]   ;
\draw    (428.98,87.4) ;
\draw [shift={(428.98,87.4)}, rotate = 0] [color={rgb, 255:red, 0; green, 0; blue, 0 }  ][fill={rgb, 255:red, 0; green, 0; blue, 0 }  ][line width=0.75]      (0, 0) circle [x radius= 3.35, y radius= 3.35]   ;
\draw    (406.8,123.45) ;
\draw [shift={(406.8,123.45)}, rotate = 0] [color={rgb, 255:red, 0; green, 0; blue, 0 }  ][fill={rgb, 255:red, 0; green, 0; blue, 0 }  ][line width=0.75]      (0, 0) circle [x radius= 3.35, y radius= 3.35]   ;
\draw  [dash pattern={on 0.84pt off 2.51pt}] (428.7,141.42) .. controls (417.94,144.93) and (403.5,141.78) .. (396.46,134.37) .. controls (389.41,126.97) and (392.42,118.11) .. (403.19,114.59) .. controls (413.95,111.08) and (428.39,114.23) .. (435.44,121.64) .. controls (442.48,129.04) and (439.47,137.9) .. (428.7,141.42) -- cycle ;
\draw    (537.89,103.17) ;
\draw [shift={(537.89,103.17)}, rotate = 0] [color={rgb, 255:red, 0; green, 0; blue, 0 }  ][fill={rgb, 255:red, 0; green, 0; blue, 0 }  ][line width=0.75]      (0, 0) circle [x radius= 3.35, y radius= 3.35]   ;
\draw    (502.35,117.78) ;
\draw [shift={(502.35,117.78)}, rotate = 0] [color={rgb, 255:red, 0; green, 0; blue, 0 }  ][fill={rgb, 255:red, 0; green, 0; blue, 0 }  ][line width=0.75]      (0, 0) circle [x radius= 3.35, y radius= 3.35]   ;
\draw    (538.61,120.24) ;
\draw [shift={(538.61,120.24)}, rotate = 0] [color={rgb, 255:red, 0; green, 0; blue, 0 }  ][fill={rgb, 255:red, 0; green, 0; blue, 0 }  ][line width=0.75]      (0, 0) circle [x radius= 3.35, y radius= 3.35]   ;
\draw  [dash pattern={on 0.84pt off 2.51pt}] (544.05,92.44) .. controls (552.25,100.25) and (556.31,114.46) .. (553.13,124.17) .. controls (549.95,133.89) and (540.72,135.43) .. (532.52,127.61) .. controls (524.33,119.8) and (520.26,105.59) .. (523.45,95.88) .. controls (526.63,86.16) and (535.86,84.62) .. (544.05,92.44) -- cycle ;

\draw (110,164.4) node [anchor=north west][inner sep=0.75pt]    {$\TS^T((\oms)^{\boxtimes T})$};
\draw (190,84.4) node [anchor=north west][inner sep=0.75pt]    {$\ddmu2|_{\TS^T((\oms)^{\boxtimes T})}$};
\draw (363,106.4) node [anchor=north west][inner sep=0.75pt]    {$\oplus $};
\draw (468,106.4) node [anchor=north west][inner sep=0.75pt]    {$\oplus $};

\end{tikzpicture}
    \caption{$\ddmu2|_{\TS^T((\oms)^{\boxtimes T})}$}
    \label{fig:mudifferential}
\end{figure}

Then we extend $\ddmu\ell$ uniquely as a map of $\D$-modules to all the summands appearing in the definition of $\ChirComp_{(\AA^n)^I}$, i.e.
\(\ddmu \ell := \sum\limits_{\substack{T\in Q(I)}} \Delta_*^{I/T} \left(\ddmu\ell|_{\TS^T((\oms)^{\boxtimes T})}\right). \nn
\)
The sum
\begin{align}    D=\ddts+\sum_{\ell=2}^k\ddmu{\ell}:\ChirCompTot_{(\AA^n)^I}^p\to\ChirCompTot_{(\AA^n)^I}^{p+1}
\end{align}
is then a map of cohomological degree $+1$ on the total complex, and the $L_\8$-algebra coherence relations are equivalent to the statement that $D^2=0$, i.e. that
\begin{align}
    \label{eq:differentials}
\ddts\circ\ddmu{\ell}+\ddmu{\ell}\circ\ddts+\sum_{m=2}^{\ell-1}\ddmu{m}\circ \ddmu{\ell-m+1}=0
\end{align}
for all $\ell\geq 2$, in addition to $\ddts^2=0$.

\begin{exmp}\label{exmp: cousin}
In the case  $I=\{1,2,3\}$ of three marked points we have the following. (In our notation here, we are using the fact that surjections $(\pi:I \twoheadrightarrow T)\in Q(I)$ out of $I$ can be seen as equivalence relations on $I$.)
\begin{align} \ChirComp_{(\AA^n)^3}^{-3,\bul} &= \TS^{3,\bul}(\oms)\nn\\
\ChirComp_{(\AA^n)^3}^{-2,\bul} &= \Delta_*^{1\sim 2} \TS^{2,\bul}(\oms) \oplus
\Delta_*^{1\sim 3} \TS^{2,\bul}(\oms) \oplus
\Delta_*^{2\sim 3} \TS^{2,\bul}(\oms)\nn\\
\ChirComp_{(\AA^n)^3}^{-1,\bul} &=  \Delta_*^{1\sim 2 \sim 3} (\oms)^{\bul}
\end{align}
If we further specialize to  $n=1$ dimensions, we recover  the usual Chevalley-Cousin complex for 3 marked points (in this case, $\ddts=0$ and $d_{\mu^l}=0$ for all $l\geq 3$),
\be
\begin{tikzcd}
\ChirComp_{(\AA^1)^3}^{-3,0} \rar{\ddmu 2}& \ChirComp_{(\AA^1)^3}^{-2,0} \rar{\ddmu 2}& \ChirComp_{(\AA^1)^3}^{-1,0}  \\
\end{tikzcd}
\nn\ee
while in $n=2$ and $n=3$ dimensions respectively, $\ChirComp$ is concentrated in the following bidegrees (we have non-trivial $\ddts$ and $d_{\mu^l},l\geq 3$)
\be
\begin{tikzcd}
\ChirComp_{(\AA^2)^3}^{-3,-3} \dar{\ddts}& & \\
\ChirComp_{(\AA^2)^3}^{-3,-2} \dar{\ddts}\rar{\ddmu 2}& \ChirComp_{(\AA^2)^3}^{-2,-2} \dar{\ddts}& \\
\ChirComp_{(\AA^2)^3}^{-3,-1} \dar{\ddts}\rar{\ddmu 2}& \ChirComp_{(\AA^2)^3}^{-2,-1} \rar{\ddmu 2}& \ChirComp_{(\AA^2)^3}^{-1,-1}  \\
\ChirComp_{(\AA^2)^3}^{-3,0} \arrow[urr,bend right=0,"\ddmu 3"']& &
\end{tikzcd}
\qquad
\begin{tikzcd}
\ChirComp_{(\AA^3)^3}^{-3,-6} \dar{\ddts}& & \\
\ChirComp_{(\AA^3)^3}^{-3,-5} \dar{\ddts}& & \\
\ChirComp_{(\AA^3)^3}^{-3,-4} \dar{\ddts}\rar{\ddmu 2}& \ChirComp_{(\AA^3)^3}^{-2,-4} \dar{\ddts}& \\
\ChirComp_{(\AA^3)^3}^{-3,-3} \dar{\ddts}\rar{\ddmu 2}& \ChirComp_{(\AA^3)^3}^{-2,-3} \dar{\ddts}& \\
\ChirComp_{(\AA^3)^3}^{-3,-2} \dar{\ddts}\rar{\ddmu 2}& \ChirComp_{(\AA^3)^3}^{-2,-2} \rar{\ddmu 2}& \ChirComp_{(\AA^3)^3}^{-1,-2}  \\
\ChirComp_{(\AA^3)^3}^{-3,-1} \dar{\ddts}\arrow[urr,bend right=0,"\ddmu 3"']& &\\
\ChirComp_{(\AA^3)^3}^{-3,0} & &
\end{tikzcd}
\nn\ee
\end{exmp}
Beilinson and Drinfeld prove the following theorem.

\begin{thm}[\cite{BDChiralAlgebras}] Let $k\geq 2$.
The Chevalley-Cousin complex $\left(\ChirCompTot_{(\AA^1)^k}^\bul,D=d_{\mu^2}\right)$ is a resolution of the
global sections of the shifted canonical sheaf. Namely, we have the following complex of $\D$-modules
\be 0 \to\Gamma\left((\AA^{1})^k,(\omega_{(\AA^1)})^{\boxtimes k}\right) \to \ChirCompTot_{(\AA^1)^k}^{-k} \xrightarrow{d_{\mu^2}} \ChirCompTot_{(\AA^1)^k}^{-k+1} \xrightarrow{d_{\mu^2}} \cdots  \xrightarrow{d_{\mu^2}} \ChirCompTot_{(\AA^1)^k}^{-1} \rightarrow 0\nn\ee
which is exact.
\end{thm}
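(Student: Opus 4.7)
My approach is to identify the Chevalley-Cousin complex in dimension one with the classical Cousin resolution of $\omega_{\AA^1}^{\boxtimes k}$ on $(\AA^1)^k$ associated to the stratification by diagonals, and then invoke the standard exactness of such Cousin resolutions.

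First, I would exploit the drastic simplifications that occur when $n=1$. By \cref{thm: F model}, $\TS^T_{\AA^1}(\omega_{\AA^1}^{\boxtimes T})$ represents $j_{T*}j_T^* \omega^{\boxtimes T}$, where $j_T: \Conf_T(\AA^1) \hookrightarrow (\AA^1)^T$ is the open embedding; since $\Conf_T(\AA^1)$ is affine, this direct image is concentrated in cohomological degree zero. Consequently the bicomplex $\ChirComp^{\bul,\bul}$ has only one nonzero row, and
\[ \ChirCompTot^{-|T|}_{(\AA^1)^k} = \bigoplus_{\pi: I \twoheadrightarrow T} \Delta^{I/T}_* j_{T*} j_T^* (\omega^{\boxtimes T}), \]
with the sum over surjections whose codomain has $|T|$ elements. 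All higher chiral operations $d_{\mu^\ell}$, $\ell \geq 3$, vanish for degree reasons, so the only surviving differential is $D = d_{\mu^2}$, which lowers $|T|$ by one.

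Next, I would identify this with the Cousin complex of $\omega^{\boxtimes I}$ on $(\AA^1)^I$ for the diagonal stratification: for each $\pi: I \twoheadrightarrow T$, the locally closed stratum $S_\pi \subset (\AA^1)^I$ consisting of configurations with $z_i = z_j$ iff $\pi(i) = \pi(j)$ is isomorphic to $\Conf_T(\AA^1)$ via $\Delta^{I/T}\circ j_T$ and has codimension $|I| - |T|$. By purity (i.e.\ Grothendieck duality along the smooth closed embedding $\Delta^{I/T}$), together with a standard devissage restricting from $\bar S_\pi$ to its open part $S_\pi$, one computes
\[ \mathcal{H}^{|I|-|T|}_{S_\pi}(\omega^{\boxtimes I}) \cong \Delta^{I/T}_* j_{T*} j_T^*(\omega^{\boxtimes T}). \]
Summing over strata of fixed codimension recovers exactly the summands of $\ChirCompTot^{\bul}$. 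The Cousin differential, built from successive local cohomology connecting homomorphisms, should be matched with $d_{\mu^2}$ constructed from $\mu_2 = \res_{z_2 \to z_1} e^{\lambda_2 (z_2 - z_1)}$ and the operadic gluing maps $\mathbf{I}^{[K_j]}_{[\{j\}]}$ of \cref{def: Imap}. Once this identification is in place, exactness of the augmented complex $0 \to \omega^{\boxtimes k} \to \ChirCompTot^{-k}_{(\AA^1)^k} \to \cdots \to \ChirCompTot^{-1}_{(\AA^1)^k} \to 0$ follows from the classical theorem that the Cousin complex of a sheaf which is Cohen-Macaulay with respect to a filtration by closed subsets is an exact resolution: here $\omega^{\boxtimes k}$ is the canonical sheaf on the smooth affine $(\AA^1)^k$, and the diagonal arrangement is locally isomorphic to a Boolean arrangement of linear subspaces, which provides the needed Cohen-Macaulay property.

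The hard part is the identification of $d_{\mu^2}$ with the Cousin connecting map. The map $d_{\mu^2}$ is given by an alternating sum over pairs $(i,j)$ of indices of residue-type operations, whereas the Cousin boundary is defined abstractly via successive long exact sequences for local cohomology with closed support. Matching the two termwise, with correct signs and consistent orderings of colliding labels, is the delicate combinatorial step; this is essentially the content of \cite[\S3.4]{BDChiralAlgebras}, where the identification is established via the equivalence between chiral algebras and (Koszul dual) factorization algebras, for which the Cousin description arises naturally from the Čech complex of the complement of the diagonals.
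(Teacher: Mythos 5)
Your proposal is correct in outline but follows a genuinely different route from the one the paper takes. You identify the $n=1$ Chevalley--Cousin complex with the classical Cousin complex of $\omega^{\boxtimes k}$ for the diagonal stratification of $(\AA^1)^k$ --- computing each term as a local cohomology sheaf $\mathcal H^{|I|-|T|}_{S_\pi}$ via Kashiwara/purity together with affineness of $\Conf_T(\AA^1)$ --- and then appeal to exactness of Cousin complexes; your preliminary reductions ($\ddts=0$, vanishing of the higher $\mu_\ell$ for degree reasons, a single nonzero row in the bicomplex) are all correct. This is essentially Beilinson--Drinfeld's own viewpoint, which the paper alludes to only as the ``dual algebro-geometric interpretation.'' The paper itself does not reprove the $n=1$ statement through the Cousin formalism: it cites \cite{BDChiralAlgebras} and instead proves the generalization, \cref{lem:cohom} and \cref{thm: Chevalley-Cousin}, by a direct algebraic induction --- fix a vertex $1\in I$, use the explicit propagator basis of \cref{cor: basis} to show that any $[\ddmu 2]$-closed class is cohomologous, via the explicit primitive $\beta=\alpha\cdot[P_{\star t}]$, to one in which no propagator touches vertex $1$, and iterate over vertices. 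What each approach buys: yours is more conceptual and ties the statement to duality theory, but the key step --- matching $\ddmu 2$, built from $\mu_2=\res_{z_2\to z_1}e^{\lambda_2(z_2-z_1)}$ and the gluing maps of \cref{def: Imap}, with the abstract Cousin connecting homomorphism, signs included --- is precisely the part you defer back to \cite{BDChiralAlgebras}, so as written your argument is a reduction to the source rather than a self-contained proof. The paper's hands-on argument is longer but elementary and, crucially, survives the passage to $n\geq 2$, where the strata complements are no longer affine, $Rj_*$ acquires higher direct images, and the naive Cousin picture breaks down --- which is the reason the authors develop the polysimplicial model and the propagator calculus in the first place. One small inaccuracy: the diagonal (braid) arrangement is not locally a Boolean arrangement; the Cohen--Macaulay property you need follows instead from purity along the smooth closed diagonals combined with affineness of each open stratum $\Conf_T(\AA^1)$, which are exactly the two facts you already invoke earlier in the argument.
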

Our goal in this section is to generalize this theorem to arbitrary dimensions $n\geq 1$. We need some preparations.  From \cref{eq:differentials}, we have
\begin{align*}
\ddts\circ\ddmu{3}+\ddmu{3}\circ\ddts+\ddmu{2}\circ \ddmu{2}=0.
\end{align*}
Thus $\ddmu{2}$ induces a differential on $\ddts$-cohomology. We denote the induced differential by $[\ddmu 2]$. The proof of Beilinson and Drinfeld can be generalized to obtain the following lemma and we get their original theorem if we take $n=1$.
\begin{lem}\label{lem:cohom}
Let $k\geq 2$. The chain complex
\be
  \begin{tikzcd}
 0  \rar&   \Coh_{\ddts}^p(\ChirComp_{(\AA^n)^k}^{-k,\bul}) \rar{[\ddmu 2]} & \Coh_{\ddts}^p(\ChirComp_{(\AA^n)^k}^{-k+1,\bul})\rar{[\ddmu 2]} &\cdots\\
  \end{tikzcd}
  \nn\ee
is exact for any $p>-k(n-1)$.
\qed\end{lem}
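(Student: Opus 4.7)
The plan is to adapt Beilinson and Drinfeld's argument in \cite[\S3.4]{BDChiralAlgebras} to arbitrary dimension~$n$ by exploiting the explicit description of the $\ddts$-cohomology obtained in Section~\ref{sec: cohomology}. The first step is to use \cref{thm: cohFin} (together with its translation-invariant form \cref{thm: cohFinTransInv}) to describe each summand $H^p_{\ddts}\bigl(\TS^{T,\bul}_{\AA^n}((\oms)^{\boxtimes T})\bigr)$ of $\Coh^p_{\ddts}(\ChirComp^{-|T|,\bul}_{(\AA^n)^k})$ as a $\D_{(\AA^n)^T}$-module. Since every class $[\Prop_{ij}]$ satisfies $(z_i^r-z_j^r)[\Prop_{ij}]=0$ for every $r$, the submodule generated by a single propagator is a pushforward under the full diagonal $\Delta^{\{i,j\}/\{*\}}_*$, and a product of propagators supported on a forest on the vertex set $T$ is a pushforward from the partial diagonal determined by that forest. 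Thus $H^p_{\ddts}\bigl(\TS^{T,\bul}((\oms)^{\boxtimes T})\bigr)$ decomposes, as a $\D$-module, into a direct sum indexed by the admissible graphs of \cref{cor: basis} of pushforwards from deeper diagonals.

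Second, I would identify the induced differential $[\ddmu 2]$ on the $\ddts$-cohomology with a Cousin-type differential. The residue property $\mu_2^{j\to i}\bigl(\Prop_{ij}\,\dd\zz_i\shifted\boxtimes\dd\zz_j\shifted\bigr)=\pm\dd\zz_i\shifted$ together with the pairing pattern of \cref{sec: pairing of residues and propagators} implies that $[\ddmu 2]$ acts by ``contracting'' one propagator edge $(i,j)$, which exactly matches the covering map in the surjection lattice $Q(I)$ that merges $i$ with $j$. In other words, on $\ddts$-cohomology the edges labelled by propagators and the equivalence relations recorded by $T\in Q(I)$ combine into a single simplicial object whose boundary is $[\ddmu 2]$.

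Third, I would fix the total degree $p>-k(n-1)$ and organize the resulting complex by the propagator multigraph carried by each class, writing $\Coh^p_{\ddts}(\ChirComp^{-|T|,\bul})$ as a direct sum over graphs~$\Gamma$ on vertex set $T$ with the correct propagator count $m=\tfrac{p+|T|(n-1)}{n-1}$. The constraint $p>-k(n-1)$ guarantees $m\ge 1$ in the leftmost term $|T|=k$, so we never meet the pure-volume summand $(\oms)^{\boxtimes k}$ that BD append as the $-1$-term of the resolution. On each fixed graph type, the differential $[\ddmu 2]$ becomes isomorphic, after the diagonal identifications of Step~1, to a direct sum of shifted copies of the classical Chevalley-Cousin complex for $\omega_{\AA^1}$ with flat coefficients accounting for the extra $n-1$ transverse directions and for the dual of the propagator space, which is exact by the $n=1$ theorem of \cite{BDChiralAlgebras}.

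The main obstacle will be the bookkeeping in Step~2--3: one must verify that the propagator-content filtration is preserved by $[\ddmu 2]$ and that, after passing to the associated graded, the differential is precisely the BD Cousin differential tensored with an identity on the transverse factors. Concretely, this requires checking that the residue map $\mu_2$ applied to a general propagator monomial produces exactly one non-zero term (the one where the collapsed edge carries a bare propagator, with the remaining edges behaving as flat spectators), together with the Arnold relations of \cref{thm: arnold} to handle the case where the collapsed edge appears in a triangle. Once these compatibilities are in place, the exactness reduces term-by-term to the one-dimensional Chevalley-Cousin exactness, completing the proof.
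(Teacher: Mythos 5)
Your overall strategy --- transporting the exactness to Beilinson--Drinfeld's one-dimensional theorem via the explicit description of the $\ddts$-cohomology from \cref{thm: cohFin} --- is genuinely different from the paper's argument, and Steps 1 and 2 are sound: the summands of $\Coh^p_{\ddts}(\ChirComp^{q,\bul}_{(\AA^n)^k})$ do decompose as direct sums of diagonal pushforwards indexed by the admissible graphs of \cref{cor: basis}, and $[\ddmu 2]$ does act by contracting propagator edges, by the pairing of \cref{sec: pairing of residues and propagators}. The gap is in Step 3. A ``fixed graph type'' does not give a subcomplex: $[\ddmu 2]$ lowers $|T|$ and the propagator count by one simultaneously, so it moves between graph types and between partitions $T\in Q(I)$, and the ``propagator-content filtration'' you invoke is, as stated, not preserved. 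What you actually need is an isomorphism of the entire $E_1$-complex with a flat base change of the $n=1$ Chevalley--Cousin complex (or a filtration whose associated graded is such), compatible with the $\D$-module structure, with the decorations $\Prop^{m_1,\dots,m_n}_{ij}$ in all $n$ directions, and with the $\lambda$-variables of the codimension-$n$ diagonal pushforwards. That identification is the entire content of the lemma, and the ``compatibilities'' you defer --- including the claim that $\mu_2$ applied to a decorated propagator monomial produces exactly one term --- are precisely where the work lies; the Arnold relations alone do not handle the case where, after contracting $(i,j)$, two propagators would end at the merged vertex, which requires the vanishing $[\Prop_{i\star}][\Prop_{j\star}]=0$ of \cref{lem: isom} to land back in the chosen basis.

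For contrast, the paper never reduces to $n=1$: it fixes a base vertex $1\in I$, and for a class $\alpha$ supported on a partition with $|I_{\pi(1)}|\geq 2$ it sets $\beta=\alpha\cdot[\Prop_{\star t}]$ on the refined partition $T\sqcup\{\star\}$ and checks that $[\ddmu 2](\beta)-\alpha$ is supported on partitions in which $1$ is alone. Iterating over vertices reduces any $[\ddmu 2]$-closed class either to an exact one or to a class with no propagators at all, which forces $p=-k(n-1)$ and is excluded by hypothesis. If you want to pursue your route, the base-change isomorphism is plausible (residues factor over the $n$ coordinate directions), but it must be constructed and verified explicitly; as written, the reduction is asserted rather than proved.
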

\begin{proof} We have $\ChirComp_{(\AA^n)^k}^{-k,\bul} = \TS^{k,\bul}((\oms)^{\boxtimes k})$, by definition.
In \cref{sec: cohomology}  we gave an explicit basis of the cohomology $\Coh_{\ddts}^p(\TS^{k,\bul}((\oms)^{\boxtimes k}))$ as a $\kk$-vector space.
The basis vectors are of the form
\be P_{f,\Gamma} = f(z,\del) \left(\prod_{e\in \Gamma} P_{e}\right) (\dd \mathbf z[n-1])^{\ox k} \nn\ee
labelled by certain tree graphs $\Gamma$ on the vertex set $I=\{1,\dots,k\}$, and certain elements $f(z,\del) \in \kk[z^r_i,\del_r^i]^{1\leq r\leq n}_{1\leq i\leq k} = \D((\AA^n)^k)$.

(Such a product involving $|\Gamma|$ many propagators sits in cohomological degree $|\Gamma|(n-1) - k(n-1)$. Whenever $p$ is not of that form for some tree graph $\Gamma$ then the cohomology is trivial,
 $\Coh_{\ddts}^p(\TS^{k,\bul}((\oms))^{\boxtimes k})=0$, and there is nothing more to check.)

Since the push forward functors $\Delta_*^{(I/T)}, T\in Q(I)$, are exact, we have
\[
\Coh_{\ddts}^p(\ChirComp_{(\AA^n)^k}^{q,\bul})=\bigoplus_{\substack{T\in Q(I) : \\ |T|=-q}} \Delta_*^{(I/T)}\left(\Coh_{\ddts}^p(\TS^{T,\bul}((\oms)^{\boxtimes T}))\right).
\]
For $T=(T,\pi:I\twoheadrightarrow T)\in Q(I)$, and $t\in T$, let us write
\be I_t: =\pi^{-1}(t). \nn\ee
If we pick a vertex $1 \in I$, then the above direct sum can be written as
\begin{align}
\Coh_{\ddts}^p(\ChirComp_{(\AA^n)^k}^{q,\bul})&=\bigoplus_{\substack{T\in Q(I) : \\ |T|=-q,\\ I_{\pi(1)}=\{1\}}} \Delta_*^{(I/T)}\left(\Coh_{\ddts}^p(\TS^{T,\bul}((\oms)^{\boxtimes T}))\right)\nn\\&\qquad\oplus \bigoplus_{\substack{T\in Q(I) : \\ |T|=-q,\\ |I_{\pi(1)}|\geq 2}} \Delta_*^{(I/T)}\left(\Coh_{\ddts}^p(\TS^{T,\bul}((\oms)^{\boxtimes T}))\right).
\nn\end{align}
Suppose that we have an element
\begin{align}
\alpha \in \Delta_*^{(I/T)}\left(\Coh_{\ddts}^p(\TS^{T,\bul}((\oms)^{\boxtimes T}))\right)&=\Coh_{\ddts}^p(\TS^{T,\bul}((\oms)^{\boxtimes T}))\ox_{\kk[\lambda_T]}\kk[\lambda_I],\nn\\&\qquad\qquad\quad T\in Q(I) : |T|=-q, |I_{\pi(1)}|\geq 2.\nn
\end{align}
We claim that we can find
\[
\beta \in \Delta_*^{(I/T')}\left(\Coh_{\ddts}^p(\TS^{T',\bul}(\oms))\right),\quad |T'|=-q+1,
\]
such that
\[
[\ddmu 2](\beta)-\alpha\in \bigoplus_{\substack{T\in Q(I) : \\ |T|=-q,\\ I_{\pi(1)}=\{1\}}} \Delta_*^{(I/T)}\left(\Coh_{\ddts}^p(\TS^{T,\bul}((\oms)^{\boxtimes T}))\right).
\]
Since $\Coh_{\ddts}^p(\TS^{T,\bul}((\oms)^{\boxtimes T}))$ generates $\Delta_*^{(I/T)}\left(\Coh_{\ddts}^p(\TS^{T,\bul}((\oms)^{\boxtimes T}))\right)$ as a $\D$-module, we can assume that $\alpha\in \Coh_{\ddts}^p(\TS^{T,\bul}((\oms)^{\boxtimes T}))$. Let $(T',\pi')=(T\sqcup\{\star\},\pi')\in Q(I)$, and we have sequence of surjective maps
\[
\pi=\pi^{(T'/T)}\circ\pi':I\twoheadrightarrow T'\twoheadrightarrow T,\quad \pi^{(T'/T)}:T'\twoheadrightarrow T,
\]
such that $\pi'^{-1}(\star)=1,\pi^{(T'/T)}(\star)=\pi(1)=t, \pi^{(T'/T)}|_{T}=\mathrm{id}_T$. We view $\alpha$ as an element in $\Coh_{\ddts}^p(\TS^{T',\bul}((\oms)^{\boxtimes T'}))$ and define
\[
\beta=\alpha\cdot [P_{\star t}]\in \Coh_{\ddts}^p(\TS^{T,\bul}((\oms)^{\boxtimes T})).
\]
Then it is clear that (see Fig. \ref{TwoSummand})
\[
[\ddmu 2](\beta)-\alpha\in \bigoplus_{\substack{T\in Q(I) : \\ |T|=-q,\\ I_{\pi(1)}=\{1\}}} \left(\Coh_{\ddts}^p(\TS^{T,\bul}((\oms)^{\boxtimes T}))\right).
\]
\begin{figure}[htp]

\tikzset{every picture/.style={line width=0.75pt}} 

\begin{tikzpicture}[x=0.75pt,y=0.75pt,yscale=-1,xscale=1]

\draw    (122.66,67.19) ;
\draw [shift={(122.66,67.19)}, rotate = 0] [color={rgb, 255:red, 0; green, 0; blue, 0 }  ][line width=0.75]      (0, 0) circle [x radius= 3.35, y radius= 3.35]   ;
\draw    (140.79,84.38) ;
\draw [shift={(140.79,84.38)}, rotate = 0] [color={rgb, 255:red, 0; green, 0; blue, 0 }  ][fill={rgb, 255:red, 0; green, 0; blue, 0 }  ][line width=0.75]      (0, 0) circle [x radius= 3.35, y radius= 3.35]   ;
\draw    (120.85,85.24) ;
\draw [shift={(120.85,85.24)}, rotate = 0] [color={rgb, 255:red, 0; green, 0; blue, 0 }  ][fill={rgb, 255:red, 0; green, 0; blue, 0 }  ][line width=0.75]      (0, 0) circle [x radius= 3.35, y radius= 3.35]   ;
\draw  [dash pattern={on 0.84pt off 2.51pt}] (100,77.52) .. controls (100,62.34) and (113,50.02) .. (129.04,50.02) .. controls (145.08,50.02) and (158.08,62.34) .. (158.08,77.52) .. controls (158.08,92.71) and (145.08,105.02) .. (129.04,105.02) .. controls (113,105.02) and (100,92.71) .. (100,77.52) -- cycle ;
\draw    (231.08,63.02) ;
\draw [shift={(231.08,63.02)}, rotate = 0] [color={rgb, 255:red, 0; green, 0; blue, 0 }  ][fill={rgb, 255:red, 0; green, 0; blue, 0 }  ][line width=0.75]      (0, 0) circle [x radius= 3.35, y radius= 3.35]   ;
\draw    (218.1,66.07) ;
\draw [shift={(218.1,66.07)}, rotate = 0] [color={rgb, 255:red, 0; green, 0; blue, 0 }  ][fill={rgb, 255:red, 0; green, 0; blue, 0 }  ][line width=0.75]      (0, 0) circle [x radius= 3.35, y radius= 3.35]   ;
\draw  [dash pattern={on 0.84pt off 2.51pt}] (205,63.02) .. controls (205,54.19) and (213.08,47.02) .. (223.04,47.02) .. controls (233.01,47.02) and (241.08,54.19) .. (241.08,63.02) .. controls (241.08,71.86) and (233.01,79.02) .. (223.04,79.02) .. controls (213.08,79.02) and (205,71.86) .. (205,63.02) -- cycle ;
\draw    (115.08,147.02) ;
\draw [shift={(115.08,147.02)}, rotate = 0] [color={rgb, 255:red, 0; green, 0; blue, 0 }  ][fill={rgb, 255:red, 0; green, 0; blue, 0 }  ][line width=0.75]      (0, 0) circle [x radius= 3.35, y radius= 3.35]   ;
\draw    (102.1,150.07) ;
\draw [shift={(102.1,150.07)}, rotate = 0] [color={rgb, 255:red, 0; green, 0; blue, 0 }  ][fill={rgb, 255:red, 0; green, 0; blue, 0 }  ][line width=0.75]      (0, 0) circle [x radius= 3.35, y radius= 3.35]   ;
\draw  [dash pattern={on 0.84pt off 2.51pt}] (89,147.02) .. controls (89,138.19) and (97.08,131.02) .. (107.04,131.02) .. controls (117.01,131.02) and (125.08,138.19) .. (125.08,147.02) .. controls (125.08,155.86) and (117.01,163.02) .. (107.04,163.02) .. controls (97.08,163.02) and (89,155.86) .. (89,147.02) -- cycle ;
\draw    (56.08,66.02) ;
\draw [shift={(56.08,66.02)}, rotate = 0] [color={rgb, 255:red, 0; green, 0; blue, 0 }  ][fill={rgb, 255:red, 0; green, 0; blue, 0 }  ][line width=0.75]      (0, 0) circle [x radius= 3.35, y radius= 3.35]   ;
\draw    (43.1,69.07) ;
\draw [shift={(43.1,69.07)}, rotate = 0] [color={rgb, 255:red, 0; green, 0; blue, 0 }  ][fill={rgb, 255:red, 0; green, 0; blue, 0 }  ][line width=0.75]      (0, 0) circle [x radius= 3.35, y radius= 3.35]   ;
\draw  [dash pattern={on 0.84pt off 2.51pt}] (30,66.02) .. controls (30,57.19) and (38.08,50.02) .. (48.04,50.02) .. controls (58.01,50.02) and (66.08,57.19) .. (66.08,66.02) .. controls (66.08,74.86) and (58.01,82.02) .. (48.04,82.02) .. controls (38.08,82.02) and (30,74.86) .. (30,66.02) -- cycle ;
\draw    (103.08,142.02) ;
\draw [shift={(103.08,142.02)}, rotate = 0] [color={rgb, 255:red, 0; green, 0; blue, 0 }  ][fill={rgb, 255:red, 0; green, 0; blue, 0 }  ][line width=0.75]      (0, 0) circle [x radius= 3.35, y radius= 3.35]   ;
\draw    (203.08,141.02) ;
\draw [shift={(203.08,141.02)}, rotate = 0] [color={rgb, 255:red, 0; green, 0; blue, 0 }  ][fill={rgb, 255:red, 0; green, 0; blue, 0 }  ][line width=0.75]      (0, 0) circle [x radius= 3.35, y radius= 3.35]   ;
\draw    (190.1,144.07) ;
\draw [shift={(190.1,144.07)}, rotate = 0] [color={rgb, 255:red, 0; green, 0; blue, 0 }  ][fill={rgb, 255:red, 0; green, 0; blue, 0 }  ][line width=0.75]      (0, 0) circle [x radius= 3.35, y radius= 3.35]   ;
\draw  [dash pattern={on 0.84pt off 2.51pt}] (177,141.02) .. controls (177,132.19) and (185.08,125.02) .. (195.04,125.02) .. controls (205.01,125.02) and (213.08,132.19) .. (213.08,141.02) .. controls (213.08,149.86) and (205.01,157.02) .. (195.04,157.02) .. controls (185.08,157.02) and (177,149.86) .. (177,141.02) -- cycle ;
\draw    (191.08,136.02) ;
\draw [shift={(191.08,136.02)}, rotate = 0] [color={rgb, 255:red, 0; green, 0; blue, 0 }  ][fill={rgb, 255:red, 0; green, 0; blue, 0 }  ][line width=0.75]      (0, 0) circle [x radius= 3.35, y radius= 3.35]   ;
\draw    (65.08,71.02) -- (100,77.52) ;
\draw    (107.04,131.02) -- (117.08,103.02) ;
\draw    (125.08,147.02) -- (177,141.02) ;
\draw    (158.08,77.52) -- (205,63.02) ;
\draw    (434.66,30.19) ;
\draw [shift={(434.66,30.19)}, rotate = 0] [color={rgb, 255:red, 0; green, 0; blue, 0 }  ][line width=0.75]      (0, 0) circle [x radius= 3.35, y radius= 3.35]   ;
\draw    (450.79,97.38) ;
\draw [shift={(450.79,97.38)}, rotate = 0] [color={rgb, 255:red, 0; green, 0; blue, 0 }  ][fill={rgb, 255:red, 0; green, 0; blue, 0 }  ][line width=0.75]      (0, 0) circle [x radius= 3.35, y radius= 3.35]   ;
\draw    (430.85,98.24) ;
\draw [shift={(430.85,98.24)}, rotate = 0] [color={rgb, 255:red, 0; green, 0; blue, 0 }  ][fill={rgb, 255:red, 0; green, 0; blue, 0 }  ][line width=0.75]      (0, 0) circle [x radius= 3.35, y radius= 3.35]   ;
\draw  [dash pattern={on 0.84pt off 2.51pt}] (410,90.52) .. controls (410,75.34) and (423,63.02) .. (439.04,63.02) .. controls (455.08,63.02) and (468.08,75.34) .. (468.08,90.52) .. controls (468.08,105.71) and (455.08,118.02) .. (439.04,118.02) .. controls (423,118.02) and (410,105.71) .. (410,90.52) -- cycle ;
\draw    (433.1,84.07) ;
\draw [shift={(433.1,84.07)}, rotate = 0] [color={rgb, 255:red, 0; green, 0; blue, 0 }  ][fill={rgb, 255:red, 0; green, 0; blue, 0 }  ][line width=0.75]      (0, 0) circle [x radius= 3.35, y radius= 3.35]   ;
\draw    (425.08,160.02) ;
\draw [shift={(425.08,160.02)}, rotate = 0] [color={rgb, 255:red, 0; green, 0; blue, 0 }  ][fill={rgb, 255:red, 0; green, 0; blue, 0 }  ][line width=0.75]      (0, 0) circle [x radius= 3.35, y radius= 3.35]   ;
\draw    (412.1,163.07) ;
\draw [shift={(412.1,163.07)}, rotate = 0] [color={rgb, 255:red, 0; green, 0; blue, 0 }  ][fill={rgb, 255:red, 0; green, 0; blue, 0 }  ][line width=0.75]      (0, 0) circle [x radius= 3.35, y radius= 3.35]   ;
\draw  [dash pattern={on 0.84pt off 2.51pt}] (399,160.02) .. controls (399,151.19) and (407.08,144.02) .. (417.04,144.02) .. controls (427.01,144.02) and (435.08,151.19) .. (435.08,160.02) .. controls (435.08,168.86) and (427.01,176.02) .. (417.04,176.02) .. controls (407.08,176.02) and (399,168.86) .. (399,160.02) -- cycle ;
\draw    (366.08,79.02) ;
\draw [shift={(366.08,79.02)}, rotate = 0] [color={rgb, 255:red, 0; green, 0; blue, 0 }  ][fill={rgb, 255:red, 0; green, 0; blue, 0 }  ][line width=0.75]      (0, 0) circle [x radius= 3.35, y radius= 3.35]   ;
\draw    (353.1,82.07) ;
\draw [shift={(353.1,82.07)}, rotate = 0] [color={rgb, 255:red, 0; green, 0; blue, 0 }  ][fill={rgb, 255:red, 0; green, 0; blue, 0 }  ][line width=0.75]      (0, 0) circle [x radius= 3.35, y radius= 3.35]   ;
\draw  [dash pattern={on 0.84pt off 2.51pt}] (340,79.02) .. controls (340,70.19) and (348.08,63.02) .. (358.04,63.02) .. controls (368.01,63.02) and (376.08,70.19) .. (376.08,79.02) .. controls (376.08,87.86) and (368.01,95.02) .. (358.04,95.02) .. controls (348.08,95.02) and (340,87.86) .. (340,79.02) -- cycle ;
\draw    (413.08,155.02) ;
\draw [shift={(413.08,155.02)}, rotate = 0] [color={rgb, 255:red, 0; green, 0; blue, 0 }  ][fill={rgb, 255:red, 0; green, 0; blue, 0 }  ][line width=0.75]      (0, 0) circle [x radius= 3.35, y radius= 3.35]   ;
\draw    (513.08,154.02) ;
\draw [shift={(513.08,154.02)}, rotate = 0] [color={rgb, 255:red, 0; green, 0; blue, 0 }  ][fill={rgb, 255:red, 0; green, 0; blue, 0 }  ][line width=0.75]      (0, 0) circle [x radius= 3.35, y radius= 3.35]   ;
\draw    (500.1,157.07) ;
\draw [shift={(500.1,157.07)}, rotate = 0] [color={rgb, 255:red, 0; green, 0; blue, 0 }  ][fill={rgb, 255:red, 0; green, 0; blue, 0 }  ][line width=0.75]      (0, 0) circle [x radius= 3.35, y radius= 3.35]   ;
\draw  [dash pattern={on 0.84pt off 2.51pt}] (487,154.02) .. controls (487,145.19) and (495.08,138.02) .. (505.04,138.02) .. controls (515.01,138.02) and (523.08,145.19) .. (523.08,154.02) .. controls (523.08,162.86) and (515.01,170.02) .. (505.04,170.02) .. controls (495.08,170.02) and (487,162.86) .. (487,154.02) -- cycle ;
\draw    (501.08,149.02) ;
\draw [shift={(501.08,149.02)}, rotate = 0] [color={rgb, 255:red, 0; green, 0; blue, 0 }  ][fill={rgb, 255:red, 0; green, 0; blue, 0 }  ][line width=0.75]      (0, 0) circle [x radius= 3.35, y radius= 3.35]   ;
\draw    (375.08,84.02) -- (410,90.52) ;
\draw    (417.04,144.02) -- (427.08,116.02) ;
\draw    (435.08,160.02) -- (487,154.02) ;
\draw    (448.1,80.07) ;
\draw [shift={(448.1,80.07)}, rotate = 0] [color={rgb, 255:red, 0; green, 0; blue, 0 }  ][fill={rgb, 255:red, 0; green, 0; blue, 0 }  ][line width=0.75]      (0, 0) circle [x radius= 3.35, y radius= 3.35]   ;
\draw    (434.66,33.19) -- (434.08,63.02) ;
\draw    (256.79,237.38) ;
\draw [shift={(256.79,237.38)}, rotate = 0] [color={rgb, 255:red, 0; green, 0; blue, 0 }  ][fill={rgb, 255:red, 0; green, 0; blue, 0 }  ][line width=0.75]      (0, 0) circle [x radius= 3.35, y radius= 3.35]   ;
\draw    (237.85,231.24) ;
\draw [shift={(237.85,231.24)}, rotate = 0] [color={rgb, 255:red, 0; green, 0; blue, 0 }  ][fill={rgb, 255:red, 0; green, 0; blue, 0 }  ][line width=0.75]      (0, 0) circle [x radius= 3.35, y radius= 3.35]   ;
\draw  [dash pattern={on 0.84pt off 2.51pt}] (216,230.52) .. controls (216,215.34) and (229,203.02) .. (245.04,203.02) .. controls (261.08,203.02) and (274.08,215.34) .. (274.08,230.52) .. controls (274.08,245.71) and (261.08,258.02) .. (245.04,258.02) .. controls (229,258.02) and (216,245.71) .. (216,230.52) -- cycle ;
\draw    (347.08,216.02) ;
\draw [shift={(347.08,216.02)}, rotate = 0] [color={rgb, 255:red, 0; green, 0; blue, 0 }  ][fill={rgb, 255:red, 0; green, 0; blue, 0 }  ][line width=0.75]      (0, 0) circle [x radius= 3.35, y radius= 3.35]   ;
\draw    (334.1,219.07) ;
\draw [shift={(334.1,219.07)}, rotate = 0] [color={rgb, 255:red, 0; green, 0; blue, 0 }  ][fill={rgb, 255:red, 0; green, 0; blue, 0 }  ][line width=0.75]      (0, 0) circle [x radius= 3.35, y radius= 3.35]   ;
\draw  [dash pattern={on 0.84pt off 2.51pt}] (321,216.02) .. controls (321,207.19) and (329.08,200.02) .. (339.04,200.02) .. controls (349.01,200.02) and (357.08,207.19) .. (357.08,216.02) .. controls (357.08,224.86) and (349.01,232.02) .. (339.04,232.02) .. controls (329.08,232.02) and (321,224.86) .. (321,216.02) -- cycle ;
\draw    (231.08,300.02) ;
\draw [shift={(231.08,300.02)}, rotate = 0] [color={rgb, 255:red, 0; green, 0; blue, 0 }  ][fill={rgb, 255:red, 0; green, 0; blue, 0 }  ][line width=0.75]      (0, 0) circle [x radius= 3.35, y radius= 3.35]   ;
\draw    (218.1,303.07) ;
\draw [shift={(218.1,303.07)}, rotate = 0] [color={rgb, 255:red, 0; green, 0; blue, 0 }  ][fill={rgb, 255:red, 0; green, 0; blue, 0 }  ][line width=0.75]      (0, 0) circle [x radius= 3.35, y radius= 3.35]   ;
\draw  [dash pattern={on 0.84pt off 2.51pt}] (205,300.02) .. controls (205,291.19) and (213.08,284.02) .. (223.04,284.02) .. controls (233.01,284.02) and (241.08,291.19) .. (241.08,300.02) .. controls (241.08,308.86) and (233.01,316.02) .. (223.04,316.02) .. controls (213.08,316.02) and (205,308.86) .. (205,300.02) -- cycle ;
\draw    (172.08,219.02) ;
\draw [shift={(172.08,219.02)}, rotate = 0] [color={rgb, 255:red, 0; green, 0; blue, 0 }  ][fill={rgb, 255:red, 0; green, 0; blue, 0 }  ][line width=0.75]      (0, 0) circle [x radius= 3.35, y radius= 3.35]   ;
\draw    (159.1,222.07) ;
\draw [shift={(159.1,222.07)}, rotate = 0] [color={rgb, 255:red, 0; green, 0; blue, 0 }  ][fill={rgb, 255:red, 0; green, 0; blue, 0 }  ][line width=0.75]      (0, 0) circle [x radius= 3.35, y radius= 3.35]   ;
\draw  [dash pattern={on 0.84pt off 2.51pt}] (146,219.02) .. controls (146,210.19) and (154.08,203.02) .. (164.04,203.02) .. controls (174.01,203.02) and (182.08,210.19) .. (182.08,219.02) .. controls (182.08,227.86) and (174.01,235.02) .. (164.04,235.02) .. controls (154.08,235.02) and (146,227.86) .. (146,219.02) -- cycle ;
\draw    (219.08,295.02) ;
\draw [shift={(219.08,295.02)}, rotate = 0] [color={rgb, 255:red, 0; green, 0; blue, 0 }  ][fill={rgb, 255:red, 0; green, 0; blue, 0 }  ][line width=0.75]      (0, 0) circle [x radius= 3.35, y radius= 3.35]   ;
\draw    (319.08,294.02) ;
\draw [shift={(319.08,294.02)}, rotate = 0] [color={rgb, 255:red, 0; green, 0; blue, 0 }  ][fill={rgb, 255:red, 0; green, 0; blue, 0 }  ][line width=0.75]      (0, 0) circle [x radius= 3.35, y radius= 3.35]   ;
\draw    (306.1,297.07) ;
\draw [shift={(306.1,297.07)}, rotate = 0] [color={rgb, 255:red, 0; green, 0; blue, 0 }  ][fill={rgb, 255:red, 0; green, 0; blue, 0 }  ][line width=0.75]      (0, 0) circle [x radius= 3.35, y radius= 3.35]   ;
\draw  [dash pattern={on 0.84pt off 2.51pt}] (293,294.02) .. controls (293,285.19) and (301.08,278.02) .. (311.04,278.02) .. controls (321.01,278.02) and (329.08,285.19) .. (329.08,294.02) .. controls (329.08,302.86) and (321.01,310.02) .. (311.04,310.02) .. controls (301.08,310.02) and (293,302.86) .. (293,294.02) -- cycle ;
\draw    (307.08,289.02) ;
\draw [shift={(307.08,289.02)}, rotate = 0] [color={rgb, 255:red, 0; green, 0; blue, 0 }  ][fill={rgb, 255:red, 0; green, 0; blue, 0 }  ][line width=0.75]      (0, 0) circle [x radius= 3.35, y radius= 3.35]   ;
\draw    (181.08,224.02) -- (216,230.52) ;
\draw    (223.04,284.02) -- (233.08,256.02) ;
\draw    (241.08,300.02) -- (293,294.02) ;
\draw    (274.08,230.52) -- (321,216.02) ;
\draw    (290.66,171.19) ;
\draw [shift={(290.66,171.19)}, rotate = 0] [color={rgb, 255:red, 0; green, 0; blue, 0 }  ][line width=0.75]      (0, 0) circle [x radius= 3.35, y radius= 3.35]   ;
\draw    (288.66,174.19) -- (259.08,207.02) ;

\draw (131.04,53.42) node [anchor=north west][inner sep=0.75pt]    {$I_{t}$};
\draw (116,25.4) node [anchor=north west][inner sep=0.75pt]    {$t$};
\draw (258,89.4) node [anchor=north west][inner sep=0.75pt]    {$=$};
\draw (411,17.4) node [anchor=north west][inner sep=0.75pt]    {$\star $};
\draw (514,87.4) node [anchor=north west][inner sep=0.75pt]    {$+\cdots $};
\draw (81,248.4) node [anchor=north west][inner sep=0.75pt]    {$+d_{\mu ^{2}}$};
\draw (264,153.4) node [anchor=north west][inner sep=0.75pt]    {$\star $};
\draw (215,188.4) node [anchor=north west][inner sep=0.75pt]    {$t$};
\draw (244,174.4) node [anchor=north west][inner sep=0.75pt]    {$P_{\star t}$};
\draw (51,95.4) node [anchor=north west][inner sep=0.75pt]    {$\alpha $};

\end{tikzpicture}
  \centering
  \caption{$\alpha$ is cohomologous to an element in the first summand.}\label{TwoSummand}
\end{figure}
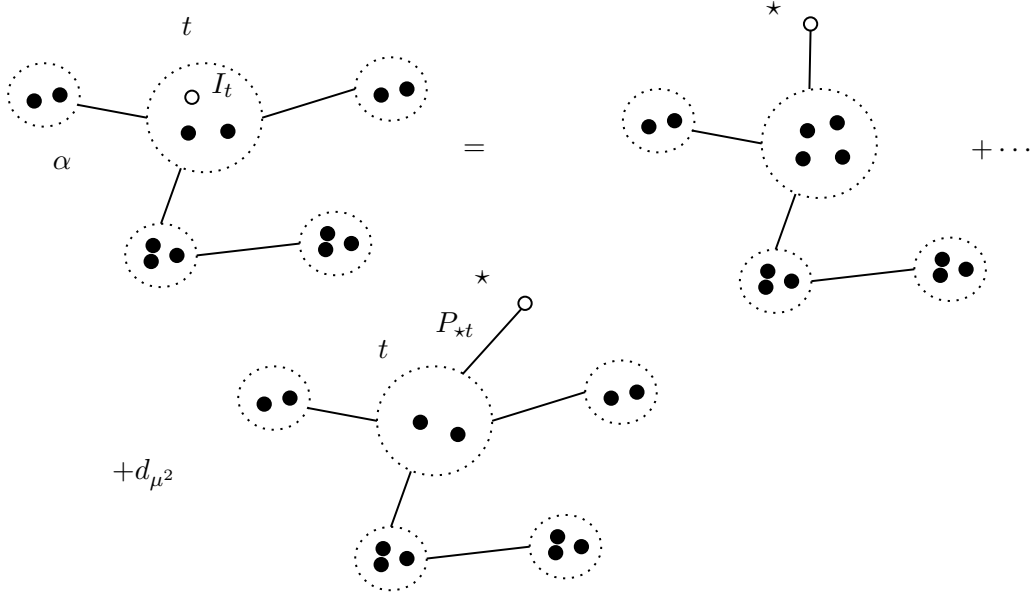

Now suppose that we have $\alpha\in\Coh_{\ddts}^p(\ChirComp_{(\AA^n)^k}^{q,\bul})$ such that $[\ddmu 2](\alpha)=0$. Modifying $\alpha$ by a $[\ddmu 2]$-exact term, we can assume that
\[
\alpha\in \bigoplus_{\substack{T\in Q(I) : \\ |T|=-q,\\ I_{\pi(1)}=\{1\}}} \Delta_*^{(I/T)}\left(\Coh_{\ddts}^p(\TS^{T,\bul}((\oms)^{\boxtimes T}))\right).
\]
The condition $[\ddmu 2](\alpha)=0$ implies that $\alpha$ is regular between the vertex $1$ and other vertices (that is, no propagator connects 1 and other $v\in I$). We can repeat this procedure and get either $\alpha=[\ddmu 2](\beta)$ or
\[
\alpha\in \bigoplus_{\substack{T\in Q(I) : \\ |T|=-q,\\ I_{\pi(1)}=\{1\},\dots,I_{\pi(q-1)}=\{q-1\}}} \Delta_*^{(I/T)}\left(\Coh_{\ddts}^p(\TS^{T,\bul}((\oms)^{\boxtimes T}))\right)
\]
which is regular between every vertex in $\{1,\dots,q-1\}$ and all other vertices (that is, no propagator emanates from any of these vertices). If $|\pi^{-1}(\pi(q))|=1$, then $-q=k,p=-k(n-1)$ which is a contradiction since we assume that $p>-k(n-1)$. Thus $|\pi^{-1}(\pi(q))|>1$, and by a similar construction of $\beta$ above we can see that this element $\alpha$ is again $[\ddmu 2]$-exact(see Fig.\ref{MorePointsExact}).
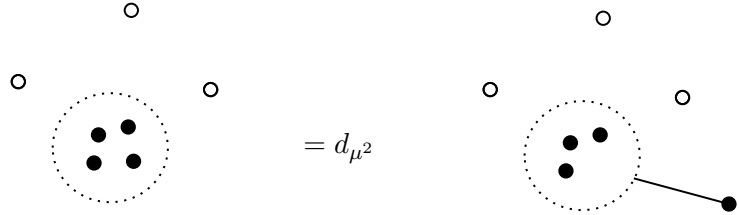
\begin{figure}[htp]
  \centering

\tikzset{every picture/.style={line width=0.75pt}} 

\begin{tikzpicture}[x=0.75pt,y=0.75pt,yscale=-1,xscale=1]

\draw    (209.79,118.38) ;
\draw [shift={(209.79,118.38)}, rotate = 0] [color={rgb, 255:red, 0; green, 0; blue, 0 }  ][fill={rgb, 255:red, 0; green, 0; blue, 0 }  ][line width=0.75]      (0, 0) circle [x radius= 3.35, y radius= 3.35]   ;
\draw    (189.85,119.24) ;
\draw [shift={(189.85,119.24)}, rotate = 0] [color={rgb, 255:red, 0; green, 0; blue, 0 }  ][fill={rgb, 255:red, 0; green, 0; blue, 0 }  ][line width=0.75]      (0, 0) circle [x radius= 3.35, y radius= 3.35]   ;
\draw  [dash pattern={on 0.84pt off 2.51pt}] (169,111.52) .. controls (169,96.34) and (182,84.02) .. (198.04,84.02) .. controls (214.08,84.02) and (227.08,96.34) .. (227.08,111.52) .. controls (227.08,126.71) and (214.08,139.02) .. (198.04,139.02) .. controls (182,139.02) and (169,126.71) .. (169,111.52) -- cycle ;
\draw    (192.1,105.07) ;
\draw [shift={(192.1,105.07)}, rotate = 0] [color={rgb, 255:red, 0; green, 0; blue, 0 }  ][fill={rgb, 255:red, 0; green, 0; blue, 0 }  ][line width=0.75]      (0, 0) circle [x radius= 3.35, y radius= 3.35]   ;
\draw    (207.1,101.07) ;
\draw [shift={(207.1,101.07)}, rotate = 0] [color={rgb, 255:red, 0; green, 0; blue, 0 }  ][fill={rgb, 255:red, 0; green, 0; blue, 0 }  ][line width=0.75]      (0, 0) circle [x radius= 3.35, y radius= 3.35]   ;
\draw    (208.66,42.19) ;
\draw [shift={(208.66,42.19)}, rotate = 0] [color={rgb, 255:red, 0; green, 0; blue, 0 }  ][line width=0.75]      (0, 0) circle [x radius= 3.35, y radius= 3.35]   ;
\draw    (151.66,78.19) ;
\draw [shift={(151.66,78.19)}, rotate = 0] [color={rgb, 255:red, 0; green, 0; blue, 0 }  ][line width=0.75]      (0, 0) circle [x radius= 3.35, y radius= 3.35]   ;
\draw    (151.66,78.19) ;
\draw [shift={(151.66,78.19)}, rotate = 0] [color={rgb, 255:red, 0; green, 0; blue, 0 }  ][line width=0.75]      (0, 0) circle [x radius= 3.35, y radius= 3.35]   ;
\draw    (248.66,82.19) ;
\draw [shift={(248.66,82.19)}, rotate = 0] [color={rgb, 255:red, 0; green, 0; blue, 0 }  ][line width=0.75]      (0, 0) circle [x radius= 3.35, y radius= 3.35]   ;
\draw    (248.66,82.19) ;
\draw [shift={(248.66,82.19)}, rotate = 0] [color={rgb, 255:red, 0; green, 0; blue, 0 }  ][line width=0.75]      (0, 0) circle [x radius= 3.35, y radius= 3.35]   ;
\draw    (427.85,123.24) ;
\draw [shift={(427.85,123.24)}, rotate = 0] [color={rgb, 255:red, 0; green, 0; blue, 0 }  ][fill={rgb, 255:red, 0; green, 0; blue, 0 }  ][line width=0.75]      (0, 0) circle [x radius= 3.35, y radius= 3.35]   ;
\draw  [dash pattern={on 0.84pt off 2.51pt}] (407,115.52) .. controls (407,100.34) and (420,88.02) .. (436.04,88.02) .. controls (452.08,88.02) and (465.08,100.34) .. (465.08,115.52) .. controls (465.08,130.71) and (452.08,143.02) .. (436.04,143.02) .. controls (420,143.02) and (407,130.71) .. (407,115.52) -- cycle ;
\draw    (430.1,109.07) ;
\draw [shift={(430.1,109.07)}, rotate = 0] [color={rgb, 255:red, 0; green, 0; blue, 0 }  ][fill={rgb, 255:red, 0; green, 0; blue, 0 }  ][line width=0.75]      (0, 0) circle [x radius= 3.35, y radius= 3.35]   ;
\draw    (445.1,105.07) ;
\draw [shift={(445.1,105.07)}, rotate = 0] [color={rgb, 255:red, 0; green, 0; blue, 0 }  ][fill={rgb, 255:red, 0; green, 0; blue, 0 }  ][line width=0.75]      (0, 0) circle [x radius= 3.35, y radius= 3.35]   ;
\draw    (446.66,46.19) ;
\draw [shift={(446.66,46.19)}, rotate = 0] [color={rgb, 255:red, 0; green, 0; blue, 0 }  ][line width=0.75]      (0, 0) circle [x radius= 3.35, y radius= 3.35]   ;
\draw    (389.66,82.19) ;
\draw [shift={(389.66,82.19)}, rotate = 0] [color={rgb, 255:red, 0; green, 0; blue, 0 }  ][line width=0.75]      (0, 0) circle [x radius= 3.35, y radius= 3.35]   ;
\draw    (389.66,82.19) ;
\draw [shift={(389.66,82.19)}, rotate = 0] [color={rgb, 255:red, 0; green, 0; blue, 0 }  ][line width=0.75]      (0, 0) circle [x radius= 3.35, y radius= 3.35]   ;
\draw    (486.66,86.19) ;
\draw [shift={(486.66,86.19)}, rotate = 0] [color={rgb, 255:red, 0; green, 0; blue, 0 }  ][line width=0.75]      (0, 0) circle [x radius= 3.35, y radius= 3.35]   ;
\draw    (486.66,86.19) ;
\draw [shift={(486.66,86.19)}, rotate = 0] [color={rgb, 255:red, 0; green, 0; blue, 0 }  ][line width=0.75]      (0, 0) circle [x radius= 3.35, y radius= 3.35]   ;
\draw    (462.08,127) -- (510.08,139.95) ;
\draw [shift={(510.08,139.95)}, rotate = 15.1] [color={rgb, 255:red, 0; green, 0; blue, 0 }  ][fill={rgb, 255:red, 0; green, 0; blue, 0 }  ][line width=0.75]      (0, 0) circle [x radius= 3.35, y radius= 3.35]   ;

\draw (294,102.4) node [anchor=north west][inner sep=0.75pt]    {$=d_{\mu ^{2}}$};

\end{tikzpicture}
  \caption{$|\pi^{-1}(\pi(q))|>1$ implies that $\alpha$ is exact.}\label{MorePointsExact}
\end{figure}
\end{proof}

\begin{thm}\label{thm: Chevalley-Cousin} Let $n\geq 2$ and $k\geq 2$.
The Chevalley-Cousin complex $\left(\ChirCompTot_{(\AA^n)^k}^\bul,D\right)$ is a resolution of the
of global sections of the shifted canonical sheaf. Namely, we have the following complex of $\D$-modules
\be 0 \to\Gamma\left((\AA^{n})^k,(\omega\shifted_{(\AA^n)})^{\boxtimes k}\right) \to \ChirCompTot_{(\AA^n)^k}^{-kn} \xrightarrow{D} \ChirCompTot_{(\AA^n)^k}^{-kn+1} \xrightarrow{D} \cdots  \xrightarrow{D} \ChirCompTot_{(\AA^n)^k}^{-n} \xrightarrow{}0\nn\ee
which is exact.
\end{thm}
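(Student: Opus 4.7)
The plan is to deduce the theorem from \cref{lem:cohom} via a spectral sequence argument for the total complex $\ChirCompTot^\bullet_{(\AA^n)^k}=\mathrm{Tot}(\ChirComp^{\bullet,\bullet}_{(\AA^n)^k})$. Since every $\ddmu{\ell}$ with $\ell\geq 2$ strictly raises the $q$-index, the total differential $D=\ddts+\sum_{\ell\geq 2}\ddmu{\ell}$ respects the decreasing filtration $F^s\ChirCompTot^N:=\bigoplus_{q\geq s,\;q+p=N}\ChirComp^{q,p}$, which is bounded on each total degree $N$ (only finitely many $(q,p)$ with $q\in\{-k,\dots,-1\}$ and $p\geq -|q|(n-1)$ satisfy $q+p=N$). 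On the associated graded only $\ddts$ survives, so the convergent spectral sequence has
\[
E_1^{q,p}=\Coh^p_{\ddts}(\ChirComp^{q,\bullet}_{(\AA^n)^k}),\qquad d_1=[\ddmu{2}],
\]
since $\ddmu{2}$ is the unique component of $D$ raising $q$ by exactly one while preserving $p$.

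The next step is to show that the $E_2$ page has a unique non-zero entry. By \cref{lem:cohom}, $E_2^{q,p}=0$ for every $p>-k(n-1)$. On the other hand $\TS^{T,p}((\oms)^{\boxtimes T})$ is concentrated in degrees $p\geq -|T|(n-1)$, so $\ChirComp^{q,p}_{(\AA^n)^k}=0$ for $p<-|q|(n-1)$; in particular $E_1^{q,-k(n-1)}=0$ for $q>-k$ and $E_1^{q,p}=0$ whenever $p<-k(n-1)$. Thus the only potentially surviving entry on $E_1$ at $p=-k(n-1)$ is at $q=-k$, and both incoming and outgoing $d_1$'s there land in zero groups, giving
\[
E_2^{-k,-k(n-1)}=E_1^{-k,-k(n-1)}=\Coh^{-k(n-1)}_{\ddts}\!\Bigl(\TS^{k,\bullet}\bigl((\oms)^{\boxtimes k}\bigr)\Bigr).
\]
All higher differentials $d_r$ ($r\geq 2$) out of this entry target $(q,p)=(-k+r,-k(n-1)-r+1)$, which vanish for the same degree reason, while incoming $d_r$'s originate at $q=-k-r<-k$ and vanish as well. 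Hence the spectral sequence degenerates at $E_2$, and $H^N(\ChirCompTot^\bullet_{(\AA^n)^k})=0$ for $N\neq -kn$.

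It remains to identify the surviving entry with the claimed inclusion. By \cref{thm: cohFin} and the explicit basis of \cref{cor:TransInvBasis}, for $n\geq 2$ the bottom cohomology $\Coh^0_{\ddts}(\TS^k_{\AA^n})$ is generated only by the class of $\mathbf 1$, and so equals $\O_{(\AA^n)^k}$; shifting by $(\oms)^{\boxtimes k}=(\omega_{\AA^n})^{\boxtimes k}[k(n-1)]$ yields $\Coh^{-k(n-1)}_{\ddts}(\TS^{k,\bullet}((\oms)^{\boxtimes k}))=\omega_{(\AA^n)^k}[k(n-1)]=\Gamma((\AA^n)^k,(\oms)^{\boxtimes k})$. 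This isomorphism is realized at the chain level by the natural inclusion of regular top forms into $\TS^{k,-k(n-1)}((\oms)^{\boxtimes k})=\ChirCompTot^{-kn}$. A direct degree check shows that $D|_{\ChirCompTot^{-kn}}=\ddts$: the $\ddmu{2}$-component targets $\ChirComp^{-k+1,-k(n-1)}=0$ (no surjection of size $k-1$ supports low-enough $p$ when $n\geq 2$), and every $\ddmu{\ell}$ with $\ell\geq 3$ lands in $\ChirComp^{-k+\ell-1,-k(n-1)-\ell+2}$, which lies strictly below the minimum $p$-degree and is therefore zero for $n\geq 2$. Combined with the vanishing of $H^N$ for $N>-kn$, this identifies $\ker\bigl(D:\ChirCompTot^{-kn}\to\ChirCompTot^{-kn+1}\bigr)$ with $\Gamma((\AA^n)^k,(\oms)^{\boxtimes k})$ and yields the asserted exact resolution. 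The only non-routine input beyond spectral-sequence bookkeeping is \cref{lem:cohom} itself, already established; the residual subtlety---the vanishing of the $\ddmu{\ell}$ components at the bottom row---is a purely dimensional obstruction and does not present a genuine obstacle.
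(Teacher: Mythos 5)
Your proof is correct and rests on exactly the same key input as the paper's, namely \cref{lem:cohom} together with the degree bounds $\ChirComp^{q,p}_{(\AA^n)^k}=0$ for $p<-|q|(n-1)$ and the identification of $\Coh^{-k(n-1)}_{\ddts}(\TS^{k,\bul}((\oms)^{\boxtimes k}))$ with $\Gamma((\AA^n)^k,(\oms)^{\boxtimes k})$ from \cref{thm: cohFin}. The paper performs the identical computation by an explicit staircase induction on the components $\alpha^{q,p}$ of a $D$-closed element instead of invoking the spectral sequence of the filtration by $|T|$, so the two arguments coincide in substance, with your packaging being the standard formalization of the paper's diagram chase.
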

\begin{proof}
We need to prove that the complex  $(\ChirCompTot_{(\AA^n)^k}^\bul,D)$ has vanishing cohomology in all degrees except $-kn$. Note that
\[
\ChirCompTot_{(\AA^n)^k}^{-kn}=\ChirComp_{(\AA^n)^I}^{-k,-k(n-1)} :=
\TS^{I,-k(n-1)}(\oms).
\]
Furthermore $\mathrm{Ker}(D)=\mathrm{Ker}(\ddts)=\Gamma\left((\AA^{n})^k,(\omega\shifted_{(\AA^n)})^{\boxtimes k}\right)$ from the computation of the cohomology in the previous section.

Now consider an element $\alpha \in \ChirCompTot_{(\AA^n)^k}^{p},p>-kn$. Such an element is, more explictly, a sum
\be \alpha = \alpha^{-k,p+k} + \alpha^{-k+1,p+k-1} + \dots + \alpha^{-1,p+1} \nn\ee
with each $\alpha^{-q,p+q} \in \ChirComp_{(\AA^n)^k}^{-q,p+q}$. Let us demand that $\alpha$ be $D$-closed,
\be (\ddts + \ddmu 2 + \dots + \ddmu k)\alpha = 0, \nn\ee
so that it defines a cohomology class $[\alpha]_D \in \Coh_D^p(\ChirCompTot_{(\AA^n)^k}^{\bul})$.
That demand entails in particular that
\be \ddts \alpha^{-k,p+k} = 0 \quad\text{and}\quad \ddmu 2 \alpha^{-k,p+k} = -\ddts \alpha^{-k+1,p+k-1} .\nn\ee
The first of these implies that $\alpha^{-k,p+k}$ represents class $[\alpha^{-k,p+k}]_{\ddts} \in \Coh_{\ddts}^{p+k}(\ChirComp_{(\AA^n)^k}^{-k,\bul})$ for the cohomology of $\ddts$. Now we shall use \cref{lem:cohom} to argue that this class is the zero class. Indeed, \cref{lem:cohom} implies that the class $[\alpha^{-k,p+k}]_{\ddts}$ is zero whenever its image $[\ddmu2 \alpha^{-k,p+k}]_{\ddts} \in \Coh_{\ddts}^{p+k}(\ChirComp_{(\AA^n)^k}^{-k+1,\bul})$ is zero. By the second equation above this image is $[-\ddts \alpha^{-k+1,p+k-1}]_{\ddts}$ which is indeed the zero class. We conclude that
\be \alpha^{-k,p+k} = \ddts \beta^{-k,p+k-1}\nn\ee
for some $\beta^{-k,p+k-1} \in \ChirComp_{(\AA^n)^k}^{-k,p+k-1}$.
Therefore we have that
\begin{align} [\alpha]_D &= [\alpha^{-k+1,p+k-1} + \dots + \alpha^{-1,p+1} + \ddts \beta^{-k,p+k-1}]_D \nn\\
&= [\alpha^{-k+1,p+k-1} + \dots + \alpha^{-1,p+1} + \ddts \beta^{-k,p+k-1} - D\beta^{-k,p+k-1}]_D \nn\\
&= [\alpha^{-k+1,p+k-1} + \dots + \alpha^{-1,p+1} - (\ddmu 2+\dots+\ddmu k)\beta^{-k,p+k-1} ]_D\nn\\
&= [\tilde\alpha]_D
\end{align}
where
\be \tilde\alpha = \tilde\alpha^{-k+1,p+k-1} + \dots + \tilde\alpha^{-1,p+1} \nn\ee
with $\tilde\alpha^{-k+q,p+k-q} := \alpha^{-k+q,p+k+q} - \ddmu {q+1} \beta^{-k,p+k-1}$.
So we have found a new representative, $\tilde\alpha$, of our original cohomology class $[\alpha]_D$. The condition $D\tilde{\alpha}$ now implies that
\be \ddts \tilde\alpha^{-k+1,p+k-1} = 0 \quad\text{and}\quad \ddmu 2 \tilde\alpha^{-k+1,p+k-1} = -\ddts \tilde{\alpha}^{-k+2,p+k-2} .\nn\ee
We conclude that $[\alpha^{-k+1,p+k-1}]\in \Coh_{\ddts}^{p+k-1}(\ChirComp_{(\AA^n)^k}^{-k,\bul})$ and $[\ddmu 2][\alpha^{-k+1,p+k-1}]=0$. Again by Lemma \ref{lem:cohom}, $[\alpha^{-k+1,p+k-1}]$ is $[\ddmu 2]$-exact which means that there is $\gamma^{-k,p+k-1},\delta^{-k+1,p+k-2}\in \ChirComp_{(\AA^n)^k}^{-k,p+k-1}$ such that
\be \ddts\gamma^{-k,p+k-1} = 0 \quad\text{and}\quad \alpha^{-k+1,p+k-1}-\ddmu 2\gamma^{-k,p+k-1} = \ddts \delta^{-k+1,p+k-2} .\nn\ee
Define
\[
\tilde{\tilde{\alpha}}=\tilde{\alpha}-D(\gamma^{-k,p+k-1}+\delta^{-k+1,p+k-2}).
\]
Then
\be \tilde{\tilde{\alpha}} =\tilde{ \tilde{\alpha}}^{-k+2,p+k-2} + \dots + \tilde{\tilde{\alpha}}^{-1,p+1} .\nn\ee
Repeating the above procedure, we conclude that $\alpha$ is in fact $D$-exact.

\end{proof}

\section{Examples of higher chiral operations}\label{sec: examples}
Our main result, in the form \cref{thm: quasi-isomorphism from LieInfinity}, asserts the existence of chiral operations $\mu_k\in \P^{ch}_{\AA^n,k-2}[k]$, for $k=2,3,4,\dots$ obeying the coherence relations governing \Linfinity-algebras as in \cref{mucoherence}.
It is instructive to construct the first higher operation, $\mu_3$, explicitly in low dimensions.

\subsection{The Jacobiator and $\mu_3$ in $n=2$ dimensions}
In $n=2$ dimensions the \polysimplicial model $\TS_{\AA^2}^k$ of $R\Gamma(\Conf_k(\AA^2),\O)$ reduces to a model of polynomial differential forms on hypercubes by identifying $u^1_e=u_e$ and $u^2_e=1-u_e$, cf. \cref{rem: cubes}, obeying certain boundary conditions.
Explicitly, we have
\begin{align}
\TS_{\AA^2}^k = &\bigl\{ \tau \in
 \kk[z_i^r,(z_j^r-z_l^r)^{-1})]^{1\leq r\leq 2}_{1\leq i\leq k; 1\leq j<\ell<k} \ox \kk[u_{j\ell},\dd u_{j\ell}]_{1\leq j<\ell<k}\nn\\
& \quad : \text{$\omega|_{u_{ij}=0}$ is regular in $(z^1_i-z^1_j)$ }\nn\\ &\quad \qquad\text{and $\omega|_{u_{ij}=1}$ is regular in $(z^2_i-z^2_j)$, for $1\leq i<j\leq k$} \bigr\}. \nn
\end{align}
In particular, $\TS_{\AA^2}^3$ consists of polynomial differential forms on the $\binom 3 2 =3$-cube, cf. \cref{fig}.

The first coherence relation is
\be \mu_3 \circ \ddts = \mu_2 \circ_1 \mu_2 - (\mu_2 \circ_1 \mu_2)^{[2 1 3]} + (\mu_2 \circ_1 \mu_2)^{[2 3 1]}=:\Jac_3 \nn\ee
where the three $(1,2)$-unshuffles are $[1 2 3]$, $[2 1 3]$ and $[2 3 1]$.

If we identify $\omega_{\AA^2} \ox_{\kk[\lambda^r]^{1\leq r\leq 2}} \kk[\lambda^r_i]^{1\leq r\leq 2}_{1\leq i\leq k}\cong \omega_{\AA^2} \ox_{\kk} \kk[\lambda^r_i]^{1\leq r\leq 2}_{2\leq i\leq k}$,
by carefully working out the signs and using the explicit description of $\mu_2$ from \cref{sec: higher operations}, the Jacobiator in the cube model is given by
\begin{align} \Jac_3(\tau)
&= \Biggl(+\int_{u_{13}=0}^1 \int_{u_{12}=0}^1 \res_{z^1_3 \to z^1_1} \res_{z^1_2\to z^1_1} \res_{z^2_3 \to z^2_1} \res_{z^2_2\to z^2_1}(-)|_{u_{23} = u_{13}}
\nn\\&\qquad
+ \int_{u_{12}=0}^1 \int_{u_{13}=0}^1 \res_{z^1_2 \to z^1_1} \res_{z^1_3\to z^1_1} \res_{z^2_2 \to z^2_1} \res_{z^2_3\to z^2_1}(-)|_{u_{23} = u_{12}} \nn\\
&\qquad + \int_{u_{12}=0}^1 \int_{u_{23}=0}^1 \res_{z^1_2 \to z^1_1} \res_{z^1_3\to z^1_2} \res_{z^2_2\to z^2_1} \res_{z^2_3\to z^2_2}(-)|_{u_{13} = u_{12}}\Biggr)\nn\\
&\qquad\qquad\qquad \circ e^{\lambda^1_3(z^1_3-z^1_1) + \lambda^1_2(z^1_2-z^1_1) + \lambda^2_3(z^2_3-z^2_1) + \lambda^2_2(z^2_2-z^2_1)} \tau. \nn
\label{jacobiator}
\end{align}
The integrals in these three terms are over, respectively, the yellow, purple and green rectangles in \cref{fig}.

By a direct calculation (using Stokes theorem and the Jacobi relation satisfied by the residue maps) one finds that the solution to the first coherence relation $\mu_3\circ \ddts = \Jac_3$ is in fact unique and is given by
\begin{align*}
    \mu_3(\tau) &= \Biggl(+\int_{u_{23}=0}^1\int_{u_{12}=0}^{u_{23}}\int_{u_{13}=0}^{u_{12}} \res_{z^1_2\to z^1_1}\res_{z^1_3\to z^1_1}\res_{z^2_2\to z^2_1}\res_{z^2_3\to z^2_2}(-)\\
    &\qquad+\int_{u_{13}=0}^1\int_{u_{12}=0}^{u_{13}}\int_{u_{23}=0}^{u_{12}} \res_{z^1_2\to z^1_1}\res_{z^1_3\to z^1_2}\res_{z^2_2\to z^2_1}\res_{z^2_3\to z^2_1}(-)\\
    &\qquad-\int_{u_{12}=0}^1\int_{u_{13}=0}^{u_{12}}\int_{u_{23}=0}^{u_{13}} \res_{z^1_2\to z^1_1}\res_{z^1_3\to z^1_2}\res_{z^2_3\to z^2_1}\res_{z^2_2\to z^2_1}(-)\\
    &\qquad-\int_{u_{23}=0}^1\int_{u_{13}=0}^{u_{23}}\int_{u_{12}=0}^{u_{13}} \res_{z^1_3\to z^1_1}\res_{z^1_2\to z^1_1}\res_{z^2_2\to z^2_1}\res_{z^2_3\to z^2_2}(-)\\
    &\qquad-\int_{u_{13}=0}^1\int_{u_{23}=0}^{u_{13}}\int_{u_{12}=0}^{u_{23}} \res_{z^1_3\to z^1_1}\res_{z^1_2\to z^1_1}\res_{z^2_2\to z^2_1}\res_{z^2_3\to z^2_1}(-)\\
    &\qquad-\int_{u_{12}=0}^1\int_{u_{23}=0}^{u_{12}}\int_{u_{13}=0}^{u_{23}} \res_{z^1_2\to z^1_1}\res_{z^1_3\to z^1_1}\res_{z^2_3\to z^2_1}\res_{z^2_2\to z^2_1}(-)\Biggl)\\
    &\qquad\qquad\qquad \circ e^{\lambda^1_3(z^1_3-z^1_1) + \lambda^1_2(z^1_2-z^1_1) + \lambda^2_3(z^2_3-z^2_1) + \lambda^2_2(z^2_2-z^2_1)} \tau. \nn
\end{align*}
The integrals in these six terms are over the tetrahedra dissecting the cube which are obtained by cutting the cube along the yellow, purple and green rectangles in \cref{fig}.

\subsection{Application to products of propagators}\label{sec: triangle diagram}
With these explicit descriptions one can also easily verify that $\Jac_3\neq0\neq\mu_3$. In $n=2$ dimensions the propagator from \cref{sec: arnold relations} is $\Prop_{ij}=\frac{\dd u_{ij}}{(z^1_i-z^1_j)(z^2_i-z^2_j)}$, and  $\dd\mathbf{z}=\dd z^1\dd z^2$.
Now, one may verify that
\begin{align*}
    &P_{12}P_{13}P_{23}=\dd_\TS\left(\frac{u_{12}}{z^1_1-z^1_2}P_{13}\partial_{z^2_2}P_{23}+P_{12}\frac{u_{13}}{z^1_1-z^1_3}\partial_{z^2_2}P_{23}\right)=:\dd_\TS(V)
\end{align*}
or equivalently
\begin{equation}
    \begin{tikzpicture}[scale=0.7,every node/.style={scale=0.7},x=1cm,y=1cm,baseline={0.7cm-0.5*height("$=$")}]
    \coordinate (1) at (0,0);
    \coordinate (2) at (2,0);
    \coordinate (3) at (1,1.732);

    \coordinate (12) at (1,0);
    \coordinate (13) at (0.5,0.867);
    \coordinate (23) at (1.5,0.867);

    \draw[thick] (1) -- (2);
    \draw[thick] (1) -- (3);
    \draw[thick] (2) -- (3);

    \node[draw, circle, fill=black, inner sep=2pt, label=below:{$1$}] at (1) {};
    \node[draw, circle, fill=black, inner sep=2pt, label=below:{$2$}] at (2) {};
    \node[draw, circle, fill=black, inner sep=2pt, label=above:{$3$}] at (3) {};

    \node[draw, circle, fill=white, inner sep=2pt, label=below:{$P_{12}$}] at (12) {};
    \node[draw, circle, fill=white, inner sep=2pt, label=left:{$P_{13}$}] at (13) {};
    \node[draw, circle, fill=white, inner sep=2pt, label=right:{$P_{23}$}] at (23) {};
\end{tikzpicture}
=\dd_\TS\left(
    \begin{tikzpicture}[scale=0.7,every node/.style={scale=0.7},x=1cm,y=1cm,baseline={0.7cm-0.5*height("$=$")}]
    \coordinate (1) at (0,0);
    \coordinate (2) at (2,0);
    \coordinate (3) at (1,1.732);

    \coordinate (12) at (1,0);
    \coordinate (13) at (0.5,0.867);
    \coordinate (23) at (1.5,0.867);

    \draw[thick] (1) -- (2);
    \draw[thick] (1) -- (3);
    \draw[thick] (2) -- (3);

    \node[draw, circle, fill=black, inner sep=2pt, label=below:{$1$}] at (1) {};
    \node[draw, circle, fill=black, inner sep=2pt, label=below:{$2$}] at (2) {};
    \node[draw, circle, fill=black, inner sep=2pt, label=above:{$3$}] at (3) {};

    \node[draw, circle, fill=white, inner sep=2pt, label=below:{$\frac{u_{12}}{z^1_1-z^1_2}$}] at (12) {};
    \node[draw, circle, fill=white, inner sep=2pt, label=left:{$P_{13}$}] at (13) {};
    \node[draw, circle, fill=white, inner sep=2pt, label=right:{$\partial_{z^2_2}P_{23}$}] at (23) {};
    \end{tikzpicture}
+
    \begin{tikzpicture}[scale=0.7,every node/.style={scale=0.7},x=1cm,y=1cm,baseline={0.7cm-0.5*height("$=$")}]
    \coordinate (1) at (0,0);
    \coordinate (2) at (2,0);
    \coordinate (3) at (1,1.732);

    \coordinate (12) at (1,0);
    \coordinate (13) at (0.5,0.867);
    \coordinate (23) at (1.5,0.867);

    \draw[thick] (1) -- (2);
    \draw[thick] (1) -- (3);
    \draw[thick] (2) -- (3);

    \node[draw, circle, fill=black, inner sep=2pt, label=below:{$1$}] at (1) {};
    \node[draw, circle, fill=black, inner sep=2pt, label=below:{$2$}] at (2) {};
    \node[draw, circle, fill=black, inner sep=2pt, label=above:{$3$}] at (3) {};

    \node[draw, circle, fill=white, inner sep=2pt, label=below:{$P_{12}$}] at (12) {};
    \node[draw, circle, fill=white, inner sep=2pt, label=left:{$\frac{u_{13}}{z^1_1-z^1_3}$}] at (13) {};
    \node[draw, circle, fill=white, inner sep=2pt, label=right:{$\partial_{z^2_2}P_{23}$}] at (23) {};
\end{tikzpicture}
\right).\nn
\end{equation}
Observe that, crucially, $V$ fulfils the boundary conditions of the model (in contrast to for example $\frac{u_{12}}{(z^1_1-z^1_2)(z^2_1-z^2_2)}$).

Direct calculation then shows
\begin{align*}
    &\Jac_3(V\dd\zz_1\shifted\dd\zz_2\shifted\dd\zz_3\shifted)=\mu_3(P_{12}P_{13}P_{23}\dd\zz_1\shifted\dd\zz_2\shifted\dd\zz_3\shifted)\\
    &\qquad\qquad=\frac{1}{2}(\lambda^1_2\lambda^2_3-\lambda^2_2\lambda^1_3)\dd\zz_1\shifted =: \frac{1}{2} \ul\lambda_2\wedge\ul\lambda_3\dd\zz_1\shifted.
\end{align*}
This agrees with a calculation
by Budzik, Gaiotto, Kulp, Wu and Yu; see \cite[equation (4.20)]{BGKWY2023}.

\appendix
\addtocontents{toc}{\protect\setcounter{tocdepth}{1}}
\section{Conventions for $\D$-modules}\label{sec: D module background}
In this section we set the notations related to $\D$-modules in this paper. For basic backgrounds on $\D$-modules, we refer to \cite{HTT_Dmodules2008}.

Let $X$ be a smooth algebraic variety over $\kk$. We denoted by $\omega_X$ the canonical sheaf of $X$. The tangent sheaf $\Theta_X$ acts on $\omega_X$ by minus the Lie derivative, and this extends to a right action of the sheaf of differential operators $\D_X$. More explicitly, for local sections $\tau\in \omega_X, \theta,\theta_1,\dots,\theta_n\in \Theta_X$ ($n=\dim X$)
\[
\left(\mathcal{L}_\theta\tau\right)(\theta_1,\dots,\theta_n):=\theta\left(\tau(\theta_1,\dots,\theta_n)\right)-\sum^n_{i=1}\tau(\theta_1,\dots,[\theta,\theta_i],\dots,\theta_n).
\]
The right $\D_X$-action is then given by $\tau\theta:=-\mathcal{L}_{\theta}\tau.$
Given a left $\D_X$-module $M$, one can form a right $\D_X$-module $M\ox_{\O_X}\omega_X$ via the Leibniz rule
\[
(m\otimes \tau)\theta := m\otimes \tau\theta-\theta m\otimes \tau.
\]
Conversely, for a right $\D_X$-module $M^r$ we can equip $M^r\ox_{\O_X}\omega_X^{-1}$ with a left $\D_X$-module structure using the canonical isomorphism $\D_X^{op}\cong \omega_X\ox_{\O_X}\D_X\ox_{\O_X}\omega_X^{-1}$.

Suppose that we have a morphism $f:X\rightarrow Y$. We denote by $f^{\centerdot}$ and $f_{\centerdot}$ the sheaf-theoretic pullback and pushforward (in \cite{HTT_Dmodules2008}, they use $f^{-1}.f_*$). Define the transfer bimodule $\D_{f}$ by
\[\D_{f} :=\O_X\ox_{f^{\centerdot}\O_Y}f^{\centerdot}\D_Y.\]
It is naturally a right $f^{\centerdot}\D_Y$-module. The left $\D_X$-module structure is given in terms of local coordinates $\{y_i,\del_i\}_{i=1,\dots,\dim Y}$ on $Y$ by
\[
\theta(s\otimes P)=\theta(s)\otimes P+ \sum_{i=1}^{\dim Y}\left(s\theta(y_i\circ f)\right)\otimes\del_iP.
\]

The pullback of a left $\D_Y$-module $M$ is
\[ f^*(M) := \D_{f}\ox_{f^{\centerdot}\D_Y}f^{\centerdot}(M). \]
For a right $\D_Y$-module $M^r$, the pullback is defined to be
\[
f^*(M^r) := \omega_X\ox_{\O_X}\left(\D_{f}\ox_{f^{\centerdot}\D_Y}f^{\centerdot}(\omega^{-1}_Y\ox_{\O_Y}M^r)\right).
\]
To define the pushforward of $\D$-modules, we need to use derived functors. We are mainly interested in the pushforwards of right $\D$-modules. Denote by $D^b(\D^{op}_X)$ the bounded derived category of right $\D_X$-modules. Define a functor $\int_f:D^b(\D^{op}_X)\rightarrow D^b(\D^{op}_Y)$
\[
\int_f N^{\bul}:=Rf_{\centerdot}\left(N^{\bul}\otimes^{L}_{\D_X}\D_{f}\right),\quad N^{\bul}\in D^b(\D^{op}_X),
\]
where we use a flat resolution of $N^{\bul}$ to build $N^{\bul}\otimes^{L}_{\D_X}\D_{f}$ and an injective resolution of $N^{\bul}\otimes^{L}_{\D_X}\D_{f}$ to define $Rf_{\centerdot}$.

If $j:U\hookrightarrow Y$ is an open embedding, then we have $\D_j=j^{\centerdot}\D_Y=\D_U$ and $j^*$ is exact
\[
j^*M=j^{\centerdot}M=M|_{U},
\]
and for the pushforward we have
\[
\smallint_jN^{\bul}=Rj_{\centerdot}\left(N^{\bul}\otimes^{L}_{\D_U}\D_{j}\right)=Rj_{\centerdot}N^{\bul},\quad N^{\bul}\in D^b(\D^{op}_U).
\]
We may have non-trivial higher cohomology groups.
(This is the case for example when $j:\AA^{2}-\{0\}\hookrightarrow\AA^2$.)

If $i: X\hookrightarrow Y$ is a closed embedding of smooth algebraic varieties, then $\D_i$ is a locally free $\D_X$-module and
\[
i_*(N):=H^0\left(\int_iN\right)=i_{\centerdot}\left(N\ox_{\D_X}\D_{f}\right).
\] is exact.

In this paper, we mainly focus on the open inclusion
\[
j^{(k)}:\Conf_k(\AA^n)\hookrightarrow (\AA^n)^k
\]
and the diagonal embedding
\[
\Delta^{(k)}:\AA^n\hookrightarrow (\AA^n)^k.
\]
One of our main results is a construction of a complex of $\DAAnk$-modules $\TS^{k,\bul}_{\AA^n}(M)$ which computes $\Gamma\left((\AA^{n})^k,\smallint_j(j^*M)\right)$, where $M$ is a right $\D$-module.

For the diagonal embedding $\Delta^{(k)}:\AA^n\hookrightarrow (\AA^n)^k$, we can write everything explicitly as in \cref{sec: shifted canonical bundle}.

Since we mainly consider $\D$-modules on $(\AA^{n})^k$ which is $\D$-affine (more precisely, is $\D^{op}$-affine since we are considering right $\D$-modules), we will not distinguish the module and its global section.

\section{Construction of the polysimplicial model}
In this section we prove \cref{thm: polysimplex model NN}. We first recall the definition of the Thom-Sullivan functor $\Th$.
\subsection{Semisimplicial objects}\label{sec: simplicial sets}
Let $\triangle$ denote the category whose objects are the finite totally-ordered sets
\([n] := \{0<1<\dots<n\}, \, n\in \ZZ_{\geq 0}, \nn\)
and whose morphisms are the strictly 
order-preserving maps $\theta: [n] \to [N]$.
A \dfn{semisimplicial object} $Z$ in a category $\C$ is a functor $Z: \triangle^\op\to \C$.  A \dfn{semicosimplicial object} $A$ is a functor $A: \triangle \to \C$.

(\dfn{(Co)simplicial objects} are obtained by taking altering ``strictly order-preserving'' to ``weakly order-preserving'' in the definition above.
)

\subsection{Polynomial differential forms on the algebro-geometric simplex}\label{sec: omega}
There is a semisimplicial dg commutative algebra
$\Omega:\triangle^\op \to \dgCAlg \nn$
defined as follows.
For each $n\geq 0$, $\Omega([n])$ is the dg commutative algebra
\begin{equation} \Omega([n]) :=  \kk[t_0,\dots,t_n; \dd t_0, \dots \dd t_n]\big/\langle \sum_{i=0}^n t_i -1, \sum_{i=0}^n \dd t_i \rangle \nn\end{equation}
with $t_i$ in degree $0$ and $\dd t_i$ in degree $1$, for each $i$, and equipped with the usual de Rham differential. One should think of $\Omega([n])$ as the complex of polynomial differential forms on the algebro-geometric $n$-simplex.
For any map $\phi: [n] \to [N]$ of $\triangle$, the map
\(\Omega(\phi) : \Omega([N]) \to \Omega([n]) \nn\)
is the map of dg commutative algebras defined by $t_i \mapsto \sum_{j\in \phi^{-1}(i)} t_j$. (The set $\phi^{-1}(i)$ here is either empty or has one element, since for us $\phi$ is \emph{strictly} order-preserving.)

\subsection{The functor $\Th$}\label{sec: Thom-Sullivan functor}
Suppose we are given a functor $A : \triangle \to \CAlg(\dgVect_\kk)$; that is, suppose we are given a semicosimplicial object in dg commutative algebras. One can construct a dg commutative algebra, given by the graded vector space
\begin{align} \Th^\bul(A)  &:= \Tot\biggl\{ \mathbf a = (a_X)_{X\in \Ob(\triangle)} \in \prod_{X \in \Ob(\triangle) } A^\bul(X) \ox \Omega^\bul(X) :\nn\\&\qquad\qquad\qquad
\left(A(\phi) \ox \id \right) a_X
= \left(\id\ox\Omega(\phi)\right) a_Y \quad \text{in}\quad A(Y) \ox \Omega(X)   \nn\\&\qquad\qquad\qquad
\qquad\text{for all maps $\phi:X \to Y$ of $\triangle$} \biggr\}\label{def: Thom-Sullivan functor}
\end{align}
equipped with the differential
\begin{equation} \dd = \dd_A\ox \id + \id \ox \dd_{\text{de Rham}}\nn\end{equation}
and the graded commutative product given by $(a\ox \omega)(b\ox \tau) := (-1)^{\gr \omega \gr b} ab \ox \omega \wedge \tau$.
This defines the action on objects of a functor, the Thom-Sullivan or Thom-Whitney functor, from semicosimplicial dg commutative algebras to dg commutative algebras,
\begin{equation} \Th:[\triangle, \CAlg(\dgVect_\kk)] \to \dgCAlg. \nn\end{equation}
This functor computes (a model of) the homotopy limit of the functor $A$,
\begin{equation} \Th A \simeq \holim A .\nn\end{equation}
See \cite{HS} and e.g. \cite{Manetti}, \cite[Appendix A]{Kapranov}, \cite[Appendix A]{FHK}.

\subsection{Cech cohomology}\label{sec: cech cohomology}
Let $\I := \Hom_\Set\left(\{ (i,j)\}\ijk , \{1,\dots,n\}\right)$ denote the set of all ordered partitions of the set $\{(i,j)\}\ijk$ into $n$ disjoint subsets: if $X\in \I$ then
\begin{equation} \{(i,j)\}\ijk = X^1\sqcup \dots \sqcup X^n, \quad\text{where}\quad X^r := X^{-1}(r)\quad\text{for each}\quad r.\nn\end{equation}
For each $X\in \I$, let us set
\begin{equation} U_X := (\AA^{n})^k\setminus \bigcup_{(i,j) \in X_1} \{z^1_i=z^1_j\} \setminus \dots \setminus \bigcup_{(i,j)\in  X_n} \{z^n_i=z^n_j\}. \nn\end{equation}
Then $\mc U := (U_X)_{X\in \I}$ is an open cover of
\begin{equation} \Conf_k(\AA^n)  := (\AA^{n})^k \setminus \bigcup\ijk \{z^1_i=z^1_j,\dots,z^n_i=z^n_j\}\nn. \end{equation}
The sets $U_X$ of this cover and all their intersections are affine schemes: for every tuple $(X_1,\dots,X_K) \in \I^K$ we have
$U_{X_1} \cap \dots \cap U_{X_K} =  \Spec \Gamma(U_{X_1} \cap \dots \cap U_{X_K},\O)$
where
\begin{align} &\Gamma(U_{X_1} \cap \dots \cap U_{X_K},\O) \label{OUXXX} 
= \bigotimes_{r=1}^n \kk[z^r_1,\dots,z^r_k]\left[ \prod_{(i,j) \in \bigcup_{p=1}^K X^r_p} (z^r_i-z^r_j)^{-1}\right]  .
\end{align}
So $\mc U$ is a Leray cover: it is sufficiently refined that the Cech complex of $\O$ with respect to $\mc U$ computes the sheaf cohomology of the structure sheaf $\O$.
Recall that the usual Cech complex is the cochain complex which computes the homotopy limit in dg vector spaces of the diagram
\begin{equation} \Gamma(\Cech(\mc U),\O) = \left(
\begin{tikzcd}
\dots \quad 
\rar[<-,shift left=4pt]\rar[<-,shift left=0pt]\rar[<-,shift right = 4pt]&
\prod_{\substack{(X,Y)\in \I^2\\X<Y}} \Gamma(U_X \cap U_Y,\O)
\rar[<-,shift left=2pt]\rar[<-,shift right=2pt]&
\prod_{X\in \I} \Gamma(U_X,\O)
\end{tikzcd}\right).
\nn
\end{equation}
Here we pick and fix any total ordering $<$ on the index set $\I$. But this is also a diagram in dg commutative algebras and we want to compute its homotopy limit there. The Thom-Sullivan functor allows us to do this, because this diagram is given by a semicosimplicial object, i.e. the indexing category is $\triangle$ and $\Gamma(\Cech(\mc U),\O)$ the functor
\begin{equation} \triangle \xrightarrow{\Gamma(\Cech(\mc U),\O)}  \CAlg(\dgVect_\kk) \label{gammacech}\end{equation}
whose action on objects is
\begin{equation} \Gamma(\Cech(\mc U),\O)([p]) := \prod_{\substack{(X_0,\dots,X_p)\in \I^{p+1}\\ X_0<\dots<X_p}} \Gamma\left(\bigcap_{i=0}^p U_{X_i},\O\right). \nn\end{equation}
Thus, we have that
\begin{equation} R\Gamma(\Conf_k(\AA^n),\O)\simeq \holim \Gamma(\Cech(\mc U),\O)  \simeq \Th\left( \Gamma(\Cech(\mc U),\O) \right).\label{Th computes RGamma}\end{equation}
The dg commutative algebra $\Th\left( \Gamma(\Cech(\mc U),\O)\right)$ has an intuitively natural geometric interpretation as a dg algebra of forms on the $N$-simplex, where
\[N:= |\I|-1,\]
as we shall now describe.

First, observe that the functor $\Gamma(\Cech(\mc U),\O)$ in \cref{gammacech} sends $[p] \mapsto \mathbf 0$ for all $p > N$ and it sends $[N] \mapsto \Gamma\left(\bigcap_{X\in \I} U_X,\O\right)= \kk[z^r_i,(z^r_j-z^r_\ell)^{-1}]^{1\leq r\leq n}_{1\leq i\leq k; 1\leq j<\ell<k}$. Indeed, we picked a totally ordered cover $\mc U$ consisting of $|\I|=N+1<\8$ many open sets. So there \emph{are} no strictly ordered tuples of $p>|\I|$ of opens from our cover.
(Here it matters that we are working with a totally ordered cover and therefore with \emph{semi}(co)simplicial objects.)

Now, we may regard the power set $2^\I$ as a poset and hence as a category. Our choice of total order $<$ on $\I$ defines an identification of $2^\I$ with the overcategory $\triangle/\topsimplex$.\footnote{Recall that the overcategory $\triangle/\topsimplex$ is the category in which an object is an object $[n]$ of $\triangle$ together with a morphism $[n]\to \topsimplex$ of $\triangle$, and a morphism from $[n] \to \topsimplex$ to $[m] \to \topsimplex$ is a morphism $[n]\xrightarrow\phi [m]$ of $\triangle$ such that the diagram
\[\begin{tikzcd}[ampersand replacement = \&]\left[n\right] \arrow{rr}{\phi}\drar \& \& \left[m\right] \dlar \\ \& \topsimplex \&  \end{tikzcd}\]
commutes. The identification with the poset $2^\I$ is by picking a bijection $[N] \isom \I$, which amounts to picking a total order on $\I$.} The latter is, intuitively speaking, the category of facets of the $N$-simplex.
We have the functor
\begin{equation} 2^\I \cong \triangle/\topsimplex \xrightarrow{F} \CAlg(\dgVect_\kk)  \nn\end{equation}
whose action on objects is
\begin{equation} F(\mc J) := \Gamma\left(\bigcap_{X \in \mc J} U_{X},\O\right),\qquad \mc J \subset \mc I.\nn\end{equation}
The forgetful functor $2^\I \to \triangle$ sends $\mc J\mapsto [|\mc J|-1]$ for each $\mc J\subset \I$.
(The functor $\Gamma(\Cech(\mc U),\O)$ is actually the left Kan extension of $F$ along this forgetful functor.)

On inspection of the definitions \cref{def: Thom-Sullivan functor} and \cref{gammacech}, one sees that $\Th^\bul(\Gamma(\Cech(\mc U), \O))$ is equivalently given by
\begin{align} &\Th^\bul(\Gamma(\Cech(\mc U), \O))\label{ontheNsimplex} \\  &\cong \Tot\biggl\{ \mathbf a = (a_\mc J)_{\mc J\in \Ob(\triangle/[N])} \in \prod_{\mc J \in \Ob(\triangle/[N]) } F^\bul(\mc J) \ox \Omega^\bul([|\mc J|-1]) :\nn\\&\qquad\qquad\qquad
\left(F(\phi) \ox \id \right) a_\mc J
= \left(\id\ox\Omega(\phi)\right) a_\mc K \quad \text{in}\quad F(\mc K) \ox \Omega([|\mc J|-1])   \nn\\&
\qquad\qquad\qquad\qquad\qquad\qquad\text{whenever there is a map $\phi:\mc J \to \mc K$ of $\triangle/[N]$}\biggr\}.
\nn\end{align}
In words, this just says that an element of $\Th^\bul(\Gamma(\Cech(\mc U), \O))$ is equivalent to the data of a $F(\mc J)$-valued polynomial differential form on the facet labelled by $\mc J$ of the $N$-simplex, for each such facet, subject to the compatibility conditions that whenever $\mc J\subset \mc K$ the pullback of the form on the facet labelled by $\mc K$ to the facet labelled by $\mc J$ must agree with the image, under the application of the sheaf restriction map $\Gamma(\bigcap_{X\in \mc J} U_{X},\O)  \to \Gamma(\bigcap_{X\in \mc K} U_{X},\O)$, of the form on the facet labelled by $\mc J$.

Moreover, the algebra maps \(\Gamma(\bigcap_{X\in \mc J} U_X,\O) \into \Gamma\left(\bigcap_{X\in \I} U_X,\O\right)\) are injective, for all $\mc J\subset \mc I$. That means once the form on the top facet, labelled $\I$, is fixed, then all the forms on lower dimensional facets are fixed too.
Thus $\Th^\bul\left( \Gamma(\Cech(\mc U),\O)\right)$ is equivalently the dg commutative algebra of $\Gamma\left(\bigcap_{X\in \I} U_X,\O\right)$-valued polynomial differential forms on the algebro-geometric $N:=(|\I|-1)$-simplex, obeying certain boundary conditions:
\begin{align} \Th^\bul\left( \Gamma(\Cech(\mc U),\O) \right) &\cong \biggl\{ \tau \in \Gamma\left(\bigcap_{X\in \I} U_X,\O\right) \ox \kk[t_X; \dd t_X]_{X\in \I}\big/\langle \sum_{X\in \I} t_X -1, \sum_{X\in \I} \dd t_X \rangle  \label{boundaryconditions}\\
&\quad :\text{ $\tau|_{\bigcap_{X\in \mc I \setminus \mc J}(t_{X} = 0)}$ has coefficients in $\Gamma\left(\bigcap_{X\in \mc J} U_{X},\O\right)$ for all $\mc J \subset \mc I$} \biggr\}.\nn
\end{align}
(Cf. \cref{ontheNsimplex}, where the tuple $\mathbf a$ consists of such a form $\tau$ together with all its pullbacks to facets of the $N$-simplex.)

This much is true for any Leray cover $\mc U = \{U_X\}_{X\in \I}$ of finite cardinality $N+1=|\I|$. But in our case one can do rather better because $\Gamma(U_{X_1} \cap \dots \cap U_{X_K},\O)$ depends on the tuple of set partitions $X_1,\dots,X_K$ only through
\begin{equation} \bigcup_{p=1}^K X^1_p, \quad\bigcup_{p=1}^K X^2_p,\quad \dots, \quad\bigcup_{p=1}^K X^n_p, \nn\end{equation} as one sees in \cref{OUXXX}. It follows that the boundary conditions on $\Th\left( \Gamma(\Cech(\mc U),\O) \right)$ are nonempty only for facets of the $(|\I|-1)$-simplex on which at least one of these unions fails to be the full set $\{(i,j)\}\ijk$. The facets of highest dimension on which this happens are those for which, for precisely one direction $r$, $1\leq r\leq n$, and for precisely one pair of the marked points, with indices say $(i,j)$, $1\leq i<j\leq k$, the union $\bigcup_{p=1}^K X^r_p$ fails to contain $(i,j)$. (Such a facet corresponds to an intersection of opens in which it may happen that $z^r_i=z^r_j$, for this specific pair of points $(i,j)$ and this specific direction $r$ only.) Moreover, one sees that all boundary conditions on facets of lower dimensions follow from taking intersections of these. Reasoning in this way, one checks that
\begin{subequations}\label{ThSimplex}
\begin{equation} \Th\left( \Gamma(\Cech(\mc U),\O) \right)\cong \Th^k_{\AA^n} \end{equation}
where
\begin{align} \Th^k_{\AA^n}&:= \biggl\{ \tau \in \kk[z^r_i,(z^r_j-z^r_\ell)^{-1}]^{1\leq r\leq n}_{1\leq i\leq k; 1\leq j<\ell<k}  \nn\\&\qquad\qquad\ox \kk[t_X; \dd t_X]_{X\in \I}\big/\langle \sum_{X\in \I} t_X -1, \sum_{X\in \I} \dd t_X \rangle  \\
&\qquad\quad : \text{$\tau|_{\bigcap_{X\in \I: X^r \ni (i,j)}(t_{X} = 0)}$ is regular in $(z^r_i-z^r_j)$}\nn\\ & \quad\qquad\qquad\text{for all $1\leq i<j\leq k$ and all $1\leq r\leq n$}  \biggr\}. \nn
\end{align}
\end{subequations}
\begin{rem}
    The construction $\Th\left( \Gamma(\Cech(\mc U),\mathcal{E}) \right)$ works for any quasi-coherent sheaf $\mathcal{E}$ on $\Conf_k(\AA^n)$. However, we will not have the above simplification if $\mathcal{E}$ is not locally free.
\end{rem}
\subsection{Proof of \cref{thm: polysimplex model NN}}\label{sec: proof of polysimplex model NN}
In view of \cref{ThSimplex}, \cref{thm: polysimplex model NN} now follows from \cref{Th computes RGamma} together with the following lemma.

\begin{prop}\label{prop: retract of simplex to polysimplex}
There is a deformation retract of dg vector spaces, in which both $\id\ox\pi^*$ and $\id\ox\iota^*$ are maps of dg commutative algebras,
\begin{equation}\begin{tikzcd}
\TS_{\AA^n}^k  \rar[shift left]{\id\ox\pi^*}  & \lar[shift left]{\id\ox\iota^*}
\ar[loop right,""] \Th_{\AA^n}^k \end{tikzcd}.\nn\end{equation}
Recall the definition of $\TS_{\AA^n}^k$ from \cref{sec: higher configuration space and movable punctures in two complex dimensions},
\begin{align} \TS_{\AA^n}^{k,\bul}  := &\bigl\{ \tau \in
 \kk[z^r_i,(z^r_j-z^r_\ell)^{-1}]^{1\leq r\leq n}_{1\leq i\leq k; 1\leq j<\ell<k} \ox \Omega(\triangle_{n-1}^{\times \binom k 2}) \nn\\
& \quad :          \text{$\tau|_{u^r_{ij}=0}$ is regular in $(z^r_i-z^r_j)$,  for $1\leq r\leq n$ and $1\leq i<j\leq k$} \bigr\}. \nn
\end{align}

\end{prop}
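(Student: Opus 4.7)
The approach is to exhibit explicit maps of polyhedra and then construct an explicit chain homotopy by fiber integration over a linear interpolation, mirroring a standard Dupont-style retraction but respecting the extra sheaf-theoretic boundary conditions built into both complexes.

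\textbf{Step 1: The polyhedral maps $\iota$ and $\pi$.} Note that the vertices of the polysimplex $\triangle_{n-1}^{\times\binom k 2}$ are in natural bijection with the index set $\mc I = \Hom_{\Set}(\{(i,j)\}_{i<j},\{1,\dots,n\})$, which also indexes the vertices of the $N$-simplex with $N=|\mc I|-1$. The plan is to define the embedding $\iota:\triangle_{n-1}^{\times\binom k 2}\to\triangle_N$ of polyhedra by the pullback formula
\[\iota^* t_X \;:=\; \prod_{1\leq i<j\leq k} u^{X(i,j)}_{ij}, \qquad X\in \mc I,\]
and the retraction $\pi:\triangle_N\to\triangle_{n-1}^{\times\binom k 2}$ by
\[\pi^* u^r_{ij} \;:=\; \sum_{X\in\mc I:\, X(i,j)=r} t_X.\]
A routine check shows these send simplex coordinates to simplex coordinates (the normalisations $\sum_{r} u^r_{ij}=1$ and $\sum_X t_X=1$ are preserved), and factoring the sum over $X$ with $X(i,j)=r$ gives $\pi\circ\iota=\id_{\triangle_{n-1}^{\times\binom k 2}}$.

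\textbf{Step 2: Boundary conditions.} The decisive geometric fact is that the maps $\iota$ and $\pi$ send face strata to face strata in the right way. Explicitly, on the polysimplex face $\{u^r_{ij}=0\}$, every $X$ with $X(i,j)=r$ has $\iota^* t_X=0$, so $\iota$ carries this face into the $\Th$-face $\bigcap_{X:\,X(i,j)=r}\{t_X=0\}$; conversely, on that $\Th$-face, $\pi^* u^r_{ij}=\sum_{X:\,X(i,j)=r} t_X = 0$. Because the coefficient algebras are identical on both sides, this immediately implies that $\pi^*$ and $\iota^*$ preserve the regularity boundary conditions defining $\TS_{\AA^n}^k$ and $\Th_{\AA^n}^k$, and hence that $\id\ox\pi^*$ and $\id\ox\iota^*$ are well-defined maps of dg commutative algebras. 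The identity $\iota^*\pi^*=(\pi\iota)^*=\id$ on $\TS_{\AA^n}^k$ is immediate from Step~1.

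\textbf{Step 3: The chain homotopy.} Define the linear interpolation
\[\Phi:\triangle_N\times[0,1]\to \triangle_N,\qquad \Phi(t,s):=(1-s)\,t+s\,(\iota\circ\pi)(t),\]
which is a polynomial homotopy from $\id$ to $\iota\circ\pi$. Pulling back a form $\omega\in\Th_{\AA^n}^k$ by $\Phi$ and writing the result as $\alpha(s)+\dd s\wedge\beta(s)$ with $\alpha(s),\beta(s)$ polynomial in $s$ (and with coefficients in our fixed ring), set
\[h(\omega)\;:=\;\int_0^1\beta(s)\,\dd s.\]
This is a well-defined polynomial form on $\triangle_N$, and the usual Cartan magic formula computation gives
\[\dd h + h\dd \;=\; (\iota\circ\pi)^*-\id\;=\;\pi^*\iota^*-\id\]
on $\Th_{\AA^n}^k$, producing the required deformation retract.

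\textbf{Step 4: $h$ preserves boundary conditions.} This is the main thing to check, and it hinges on the face-preserving property of $\Phi$. On the $\Th$-face $F_{r,ij}:=\bigcap_{X:\,X(i,j)=r}\{t_X=0\}$, for any $t\in F_{r,ij}$ we have $(\iota\circ\pi)(t)_X=\pi(t)^r_{ij}\cdot(\text{rest})=0$ for every $X$ with $X(i,j)=r$, by the computation in Step~2. Hence $\Phi(F_{r,ij}\times[0,1])\subset F_{r,ij}$ for all $s$, so the pullback $\Phi^*\omega$ restricted to $F_{r,ij}\times[0,1]$ has coefficients regular in $z^r_i-z^r_j$ whenever $\omega$ does. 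Fiber-integration in $s$ then preserves this regularity, giving $h(\omega)\in\Th_{\AA^n}^k$.

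\textbf{Expected main obstacle.} The polyhedral and algebraic content of Steps~1--3 is essentially standard once the combinatorial formulas for $\iota,\pi$ are written down. The one subtle point, and the source of most of the technical bookkeeping, is Step~4: one must ensure that the linear homotopy respects \emph{simultaneously} the many strata $F_{r,ij}$ (and their intersections, which encode the collision combinatorics) and that fiber integration carries forms regular along $z^r_i-z^r_j$ to forms still regular along $z^r_i-z^r_j$. The face-preserving computation above is exactly what makes this work, and propagates without essential change to intersections of faces by multiplicativity of the formula $\iota^*t_X=\prod_{(i,j)} u^{X(i,j)}_{ij}$.
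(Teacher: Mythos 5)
Your proposal is correct and follows essentially the same route as the paper: the maps $\pi^*(u^r_{ij})=\sum_{X:X(i,j)=r}t_X$ and $\iota^*(t_X)=\prod_{\alpha}u^{X(\alpha)}_{\alpha}$ are exactly those in the paper's \cref{lem: retract}, and your homotopy $h$ obtained by fiber integration along the linear interpolation between $\id$ and $\iota\circ\pi$ is precisely the paper's operator $h$ built from $t_X^\lambda=\lambda t_X+(1-\lambda)t_X^*$. Your Step 4, that the interpolation preserves each face stratum $\bigcap_{X:X(i,j)=r}\{t_X=0\}$ and hence the regularity conditions, is the same key observation as parts (v)--(vi) of the paper's lemma.
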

\begin{proof} Recall that $\I = \{1,\dots,n\}^S$ is the set of ordered partitions into $n$ parts of a set $S$. Let $m=|S|$, so $|\I|= n^m$.

(In our case $S$ is actually the set $\{(i,j): 1\leq i<j\leq k\}$ and $m= \binom k 2$.)

Let $\pi^*$ and $\iota^*$ be the maps of dg commutative algebras
\begin{equation}\begin{tikzcd}
\kk[u^r_\alpha, \dd u^r_\alpha]^{1\leq r\leq n}_{\alpha \in S} \rar[shift left]{\pi^*}  &\lar[shift left]{\iota^*} \kk[t_X, \dd t_X]_{X\in \I}
\end{tikzcd}\nn
\end{equation}
defined by setting
\begin{equation} \pi^*(u^r_\alpha) := \sum_{\substack{X \in \I  \\ X^r \ni \alpha }} t_X\qquad\text{and}\qquad \iota^*(t_X) := \prod_{r=1}^n \prod_{\alpha \in X^r} u^r_\alpha .\nn\end{equation}
Let us also write \[ t_X^*:=\pi^*(\iota^*(t_X)) \quad\text{and}\quad t_X^\lambda:=\lambda t_X + (1-\lambda) t_X^*,\]
and define a $\kk$-linear map $h:\kk[t_X, \dd t_X]_{X\in \I} \to \kk[t_X, \dd t_X]_{X\in \I}$ by setting
\begin{align}
    &h(t_{X_1} \dots t_{X_p} \dd t_{X'_{1}} \dots \dd t_{X'_{q}})\nn\\ &:= \int_{\lambda=0}^1 \tl_{X_1} \dots \tl_{X_p}\sum_{s=1}^q(-1)^{s-1}(t_{X'_{s}}-t_{X'_{s}}^*)\dd\tl_{X'_{1}} \dots \widehat{\dd\tl_{X'_{s}}} \dots \dd\tl_{X'_{q}} \dd\lambda,\nn
\end{align}
for any collection of $p\geq 0$ elements $X_1,\dots,X_p \in \mc I$ and any collection of $q\geq 0$ pairwise distinct elements $X'_1,\dots,X'_q \in \mc I$.

Let $\mathscr I$ and $\mathscr K$ denote  the dg ideals which define $\Omega(\triangle_{n^m-1})$ and  $\Omega(\triangle_{n-1}^{\times m})$ respectively:
\begin{align} \mathscr I& :=\left< \sum_{X\in \I} t_X-1 ,\sum_{X\in\I} \dd t_X\right>\subset \kk[t_X,\dd t_X]_{X\in \I}\nn\\
\mathscr K&:=\left< \sum_{r=1}^n u^r_\alpha-1,\sum_{r=1}^n \dd u^r_\alpha\right>_{\alpha\in S}\subset \kk[u^r_\alpha,\dd u^r_\alpha]^{1\leq r\leq n}_{\alpha \in S}\nn
\end{align}
Let us introduce also the dg ideals, for each $1\leq r\leq n$ and $\alpha \in S$,
\begin{align}\mathscr I^r_\alpha &:= \bigl< t_X, \dd t_X \bigr>_{X\in \I: X^r\ni \alpha} \subset \kk[t_X,\dd t_X]_{X\in \I}\nn\\
\mathscr K^r_\alpha &:= \left< u^r_\alpha, \dd u^r_\alpha\right> \subset \kk[u^s_\beta,\dd u^s_\beta]^{1\leq s\leq n}_{\beta\in S}.\nn
\end{align}
To prove \cref{prop: retract of simplex to polysimplex} it is enough to establish the claims of \cref{lem: retract} below.
Indeed, parts (i) and (iii) imply that the maps $\pi^*$, $\iota^*$ and $h$ induce well-defined maps
\begin{equation}\begin{tikzcd} \Omega^\bul(\triangle_{n-1}^{\times m})
\rar[shift left]{\pi^*}  & \lar[shift left]{\iota^*}
 \ar[loop right,"h"] \Omega^\bul(\triangle_{n^m-1})   \end{tikzcd}\label{hrt}\end{equation}
(whose names we keep the same, by a slight abuse).
Parts (ii) and (iv) ensure that this diagram is a deformation retract.

It remains to show that the maps respect the defining boundary conditions of $\TS_{\AA^n}^k$ and  $\Th_{\AA^n}^k$. This is the content of parts (v) and (vi). Indeed, the boundary conditions of $\Th_{\AA^n}^k$ can be expressed as the statement that $\tau \in \kk[z^r_i,(z^r_j-z^r_\ell)^{-1}]^{1\leq r\leq n}_{1\leq i\leq k; 1\leq j<\ell<k} \ox \kk[t_X; \dd t_X]_{X\in \I}\big/\mathscr I$ belongs to $\Th_{\AA^n}^k$ if and only if, for each $1\leq r\leq n$ and each $(i,j) \in S$, the $\D$-module residue $\res_{z^r_{i}-z^r_{j}} \tau$ has coefficients in (the image in $\Omega(\Delta_{n^m-1})$ of) the ideal $\mathscr I^r_{(ij)}$; and similarly for $\TS_{\AA^n}^k$.
\end{proof}
\begin{lem}\label{lem: retract} The maps $\pi^*$, $\iota^*$ and $h$ are such that:
\begin{enumerate}[(i)]
\item $\pi^*(\mathscr K) \subset \mathscr I$ and $\iota^*(\mathscr I) \subset \mathscr K$
\item $\iota^*(\pi^*(u^r_\alpha)) \equiv u^r_\alpha \mod \mathscr K$ for each $1\leq r\leq n$ and each $\alpha\in S$
\item $h(\mathscr I) \subset \mathscr I$
\item $\pi^*\circ\iota^* - \id = [h,\dd]$ as an equality of $\kk$-linear maps $\kk[t_X,\dd t_X]_{X\in \I}\to\kk[t_X,\dd t_X]_{X\in \I}$
\item $\iota^*(\mathscr I^r_\alpha) \subset \mathscr K^r_\alpha$ and $
\pi^*(\mathscr K^r_\alpha) \subset \mathscr I^r_\alpha$ for each $1\leq r\leq n$ and $\alpha\in S$
\item  $h(\mathscr I^r_\alpha) \subset \mathscr I^r_\alpha$ for each $1\leq r\leq n$ and $\alpha\in S$.
\end{enumerate}
\end{lem}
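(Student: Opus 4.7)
The plan is to dispatch parts (i), (ii), (v) by direct computation on the generators exploiting the precise combinatorics of the set $\I = \{1,\dots,n\}^S$, then treat (iii), (iv), (vi) uniformly using a straight-line chain homotopy.

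For (i) I would compute $\pi^*\bigl(\sum_r u^r_\alpha\bigr) = \sum_r \sum_{X : X^r \ni \alpha} t_X = \sum_{X \in \I} t_X$ (since each partition $X$ places $\alpha$ into the unique part indexed by $X(\alpha)$) and $\iota^*\bigl(\sum_X t_X\bigr) = \sum_X \prod_\beta u^{X(\beta)}_\beta = \prod_{\alpha \in S} \sum_r u^r_\alpha$ by distributivity, after which both claims follow by passing to quotients. For (v), $\pi^*(u^r_\alpha) = \sum_{X : X^r \ni \alpha} t_X$ is manifestly in $\mathscr I^r_\alpha$, while $\iota^*(t_X) = \prod_s \prod_{\beta \in X^s} u^s_\beta$ contains $u^r_\alpha$ as a factor whenever $X^r \ni \alpha$. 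For (ii) I would identify the partitions $X$ with $X^r \ni \alpha$ with arbitrary assignments $S \setminus \{\alpha\} \to \{1,\dots,n\}$ to obtain
\[
\iota^*\pi^*(u^r_\alpha) \;=\; u^r_\alpha \prod_{\beta \in S \setminus \{\alpha\}} \sum_{s=1}^n u^s_\beta \;\equiv\; u^r_\alpha \pmod{\mathscr K}.
\]

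For (iii), (iv), (vi), I would introduce the straight-line homotopy as a map of dg commutative algebras
\[
H \colon \kk[t_X, \dd t_X]_{X \in \I} \longrightarrow \kk[t_X, \dd t_X]_{X \in \I} \otimes \Omega^\bul([0,1])
\]
defined on generators by $H(t_X) = t_X^\lambda$, so that $H|_{\lambda = 1} = \id$ and $H|_{\lambda = 0} = \pi^*\iota^*$, and $h(\omega)$ is the integral over $\lambda \in [0,1]$ of the coefficient of $\dd\lambda$ in $H(\omega)$. Part (iv) is then the standard Stokes/Poincaré argument: decomposing $H(\omega)$ into its $\dd\lambda$-free and $\dd\lambda$-containing parts and using that $H$ is a chain map, integration in $\lambda$ gives $\pi^*\iota^* - \id = [h,\dd]$, with the signs fixed by the usual Koszul rules.

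For (iii) and (vi) it suffices to show that $H$ preserves the relevant ideal, after which integration of the $\dd\lambda$-component clearly does so as well. For (iii), the two generators of $\mathscr I$ map to
\[
H\!\left(\textstyle\sum_X t_X - 1\right) = \lambda\!\left(\textstyle\sum_X t_X - 1\right) + (1-\lambda)\!\left(\textstyle\sum_X t_X^* - 1\right), \qquad H\!\left(\textstyle\sum_X \dd t_X\right) = \dd_{\mathrm{tot}} H\!\left(\textstyle\sum_X t_X - 1\right),
\]
so one only needs $\sum_X t_X^* - 1 \in \mathscr I$; but $\sum_X t_X^* - 1 = \pi^*\bigl(\prod_\alpha \sum_r u^r_\alpha - 1\bigr) \in \pi^*(\mathscr K) \subset \mathscr I$ by part (i). For (vi), when $X^r \ni \alpha$ both $t_X$ and $t_X^*$ lie in $\mathscr I^r_\alpha$: for $t_X^* = \pi^*(\iota^*(t_X))$ this follows since $\iota^*(t_X)$ contains $u^r_\alpha$ as a factor (as in (v)), so $t_X^*$ contains $\pi^*(u^r_\alpha) \in \mathscr I^r_\alpha$ as a factor. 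The only point requiring real care will be the bookkeeping of Koszul signs in the derivation of (iv); everything else is a direct verification on generators.
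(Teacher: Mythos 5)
Your proposal is correct and follows essentially the same route as the paper: parts (i), (ii), (v) by the same generator computations (including the key identity $\iota^*\bigl(\sum_{X\in\{1,\dots,n\}^{S'}}t_X\bigr)=\prod_{\alpha\in S'}\sum_r u^r_\alpha$ applied to $S'=S\setminus\{\alpha\}$), and parts (iii), (iv), (vi) via the same straight-line homotopy $t_X^\lambda$, which the paper writes out by hand on monomials while you package it as integration of the $\dd\lambda$-component of a dg-algebra map $H$ into $\kk[t_X,\dd t_X]\otimes\Omega^\bul([0,1])$ — a purely presentational difference, since the paper's explicit $h$ is exactly the $\dd\lambda$-coefficient of your $H$. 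The one item you defer, the Koszul-sign bookkeeping in (iv), is precisely what the paper's direct computation of $h\circ\dd+\dd\circ h$ on monomials supplies.
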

\begin{proof}
We observe that
\(\iota^*(\sum\limits_{X \in \I } t_X) = \sum\limits_{X \in \I} \prod\limits_{r=1}^n \prod\limits_{\alpha \in X^r}u^r_\alpha = \prod\limits_{\alpha \in S} \sum\limits_{r=1}^n u^r_\alpha
    \label{IdSimplex}
\)
and, for each $\alpha \in S$,
\(\pi^*(\sum\limits_{r=1}^nu^r_\alpha) = \sum\limits_{r=1}^n \sum\limits_{\substack{X \in \I  \\ X^r \ni \alpha }} t_X = \sum\limits_{X \in \I} t_X.\nn\) This establishes part (i).

Now, in fact
\(\iota^*(\sum\limits_{X \in \{1,\dots,n\}^{S'} } t_X) = \sum\limits_{X \in \{1,\dots,n\}^{S'}} \prod\limits_{r=1}^n \prod\limits_{\alpha \in X^r}u^r_\alpha = \prod\limits_{\alpha \in S'} \sum\limits_{r=1}^n u^r_\alpha
\) holds
for any subset $S'\subset S$ of the index set $S$. In particular this holds for the subset $S\setminus \{\alpha\}$ for any $\alpha \in S$. It follows that for every $\alpha \in S$, we have
$$
\iota^*(\pi^*(u^r_\alpha)) =\sum_{\substack{X \in \I  \\ X^r \ni \alpha }}\prod_{p=1}^n \prod_{\beta \in X^p}u^p_\beta
=u^r_\alpha \sum_{\substack{X \in \{1,\dots,n\}^{S\setminus\{\alpha\}} }}\prod_{p=1}^n \prod_{\beta \in X^p}u^p_\beta
=u^r_\alpha \prod_{\beta \in S\setminus\{\alpha\}} \sum_{r=1}^n u_\beta^r
.
$$ This establishes part (ii).

We turn to the statements (iii) and (iv) about the homotopy, whose definition is similar in spirit to homotopies used in \cite{AlfonsiYoung} (morally the definition is ``$h= \int_{\iota(\pi(*))}^*$'').

Consider part (iii). We must show that $h(\mathscr I) \subset \mathscr I$. As we have already checked,   $\sum\limits_{X \in \I} t^*_X-1$ and $\sum\limits_{X \in \I} \dd t^*_X$ belong to $\mathscr I$. Therefore $\sum\limits_{X \in \I} t^\lambda_X-1$ and $\sum\limits_{X \in \I} \dd t^\lambda_X$
are polynomials in $\lambda$ and $\dd\lambda$ whose coefficients lie in $\mathscr I$. In this way, we observe that in the expressions
\begin{align}
    &h\biggl(\bigl(\sum_{X\in \I} t_X-1\bigr)t_{X_1} \dots t_{X_p} \dd t_{X'_{1}} \dots \dd t_{X'_{q}}\biggr)\nn\\ &= \int_{\lambda=0}^1 \bigl(\sum_{X\in \I} t^\lambda_X-1\bigr)\tl_{X_1} \dots \tl_{X_p}\sum_{s=1}^q(-1)^{s-1}(t_{X'_{s}}-t_{X'_{s}}^*)\dd\tl_{X'_{1}} \dots \widehat{\dd\tl_{X'_{s}}} \dots \dd\tl_{X'_{q}} \dd\lambda\nn,
\end{align}
and
\begin{align}
    &h\biggl(t_{X_1} \dots t_{X_p} \dd \bigl(\sum_{X\in \I} t_X\bigr) \dd t_{X'_{1}} \dots \dd t_{X'_{q}}\biggr)\nn\\
&=  \int_{\lambda=0}^1 \tl_{X_1} \dots \tl_{X_p}
\bigl(\sum_{X\in \I} (t_{X}-t_{X}^*)\bigr) \dd\tl_{X'_{1}} \dots \widehat{\dd\tl_{X'_{s}}} \dots \dd\tl_{X'_{q}} \dd\lambda\nn\\&
\quad
+\int_{\lambda=0}^1 \tl_{X_1} \dots\tl_{X_p}\bigl(\sum_{X\in \I} \dd t_X^\lambda \bigr)\sum_{s=1}^q(-1)^{s}(t_{X'_{s}}-t_{X'_{s}}^*)\dd\tl_{X'_{1}} \dots \widehat{\dd\tl_{X'_{s}}} \dots \dd\tl_{X'_{q}} \dd\lambda\nn
\end{align}
the integrands are polynomials in $\lambda$ with coefficients in $\mathscr I$.
The integral $\int_{\lambda=0}^1P(\lambda) \dd \lambda$ of any such polynomial $P(\lambda) \in \mathscr I[\lambda]$ belongs to $\mathscr I$. This completes the proof of part (iii).

We turn to part (iv). 
We have \[\dd(t_{X_1} \dots t_{X_p} \dd t_{X'_{1}} \dots \dd t_{X'_{q}}) = \sum_{r=1}^p  t_{X_1} \dots \widehat{t_{X_r}} \dots t_{X_s} \dd t_{X_r} \dd t_{X'_{1}} \dots \dd t_{X'_{q}}\]
and hence
\begin{align}
    &h(\dd(t_{X_1} \dots t_{X_p} \dd t_{X'_{1}} \dots \dd t_{X'_{q}}))\nn\\
& = \int_0^1 \sum_{r=1}^p \tl_{X_1} \dots \wh{\tl_{X_r}} \dots \tl_{X_p} \nn \\
   &\quad\left((t_{X_r}-t_{X_r}^*) \dd\tl_{X'_{1}} \dots  \dd\tl_{X'_{q}} - \sum_{s=1}^q (-1)^{s-1}(t_{X'_{s}}-t_{X'_{s}}^*) \dd\tl_{X_r} \dd\tl_{X'_{1}} \dots \widehat{\dd\tl_{X'_{s}}} \dots \dd\tl_{X'_{q}} \right)\dd\lambda \nn
\end{align}
and similarly
\begin{align}
    &\dd (h(t_{X_1} \dots t_{X_p} \dd t_{X'_{1}} \dots \dd t_{X'_{q}}))\nn\\
&= \dd \int_{\lambda=0}^1 \tl_{X_1} \dots \tl_{X_p}\sum_{s=1}^q(-1)^{s-1}(t_{X'_{s}}-t_{X'_{s}}^*)\dd\tl_{X'_{1}} \dots \widehat{\dd\tl_{X'_{s}}} \dots \dd\tl_{X'_{q}} \dd\lambda,\nn\\
&= \int_0^1 \sum_{r=1}^p  \tl_{X_1} \dots \wh{\tl_{X_r}} \dots \tl_{X_p} \sum_{s=1}^q(-1)^{s-1} (t_{X'_{s}}-t_{X'_{s}}^*) \dd \tl_{X_r} \dd \tl_{X'_{1}} \dots \widehat{\dd \tl_{X'_{s}}} \dots \dd \tl_{X'_{q}} \dd \lambda \nn \\
    &\qquad +\int_0^1 \tl_{X_1} \dots \tl_{X_p} \sum_{s=1}^q(-1)^{s-1} (\dd t_{X'_{s}}-\dd t_{X'_{s}}^*) \dd \tl_{X'_{1}} \dots \widehat{\dd \tl_{X'_{s}}} \dots \dd \tl_{X'_{q}} \dd \lambda. \nn
\end{align}
In this way, one finds that
\begin{align}
    &h(\dd (t_{X_1} \dots t_{X_p} \dd t_{X'_{1}} \dots \dd t_{X'_{q}})) + \dd h(t_{X_1} \dots t_{X_p} \dd t_{X'_{1}} \dots \dd t_{X'_{q}}) \nn \\
    &= \int_0^1 \sum_{r=1}^p  \tl_{X_1} \dots \wh{\tl_{X_r}} \dots \tl_{X_p}(t_{X_r}-t_{X_r}^*) \dd \tl_{X'_{1}} \dots  \dd \tl_{X'_{q}} \dd \lambda \nn \\
    &\qquad +\int_0^1 \tl_{X_1} \dots \tl_{X_p} \sum_{s=1}^q \dd \tl_{X'_{1}} \dots (\dd t_{X'_{s}}-\dd t_{X'_{s}}^*) \dots \dd \tl_{X'_{q}} \dd \lambda. \nn \\
    &= \int_0^1 \frac{d}{d\lambda}\left( \tl_{X_1} \dots \tl_{X_p}
   \dd \tl_{X'_{1}} \dots  \dd \tl_{X'_{q}}  \right)\dd \lambda \nn \\
   &\qquad = t_{X_1} \dots t_{X_p} \dd t_{X'_{1}} \dots \dd t_{X'_{q}} - t^*_{X_1} \dots t^*_{X_p} \dd t^*_{X'_{1}} \dots \dd t^*_{X'_{q}} \nn.
\end{align}
This establishes that
\begin{equation} h\circ\dd + \dd\circ h = \id  - \pi^*\circ \iota^* \nn\end{equation}
as an equality of $\kk$-linear endomorphisms of $\kk[t_X,\dd t_X]_{X\in \I}$, which is part (iv).

Part (v) is clear on inspection.

The argument for part (vi) is parallel to that for part (iii), as follows. Consider any generator $t_Y$ of $\mathscr I^r_\alpha$, i.e. pick any $Y\in \I$ be such that $Y^r\ni \alpha$. It follows from part (v) that $t_Y^* \in \mathscr I^r_\alpha$ also. Therefore $t_Y-t_Y^*\in \mathscr I^r_\alpha$ and hence $t^\lambda_Y \in \mathscr I^r_\alpha[\lambda,\dd \lambda]$. Then one argues as for part (iii) that, for any $p\in \kk[t_X,\dd t_X]_{X\in \I}$, $h(t_Y p)$ and $h(\dd t_Y p)$ are both integrals of polynomials in $\lambda$ with coefficients in $\mathscr I^r_\alpha$ and therefore belong to $\mathscr I^r_\alpha$. Thus $h(\mathscr I^r_\alpha)\subset \mathscr I^r_\alpha$, which is part (vi).

The proof of \cref{lem: retract} is complete.
\end{proof}

This completes the proof of \cref{prop: retract of simplex to polysimplex} and hence of \cref{thm: polysimplex model NN}.

\subsection{Proof of \cref{thm: F model}}\label{sec: proof of F model}
For any complex $\G$ of quasi-coherent sheaves on $(\AA^{n})^k$, one defines the ordered Cech sheaf (complex) $\mathfrak{C}_{ord}^{p}(\mathcal{U},j^*\G)$ on $\Conf_k(\AA^n)$ as follows
\[
\mathfrak{C}_{ord}^{\bullet}(\mathcal{U},j^*\G):=\prod_{\substack{(X_0,\dots,X_p)\in \I^{p+1}\\ X_0<\dots<X_p}} (j_{X_0\cdots X_p})_*\left(j^*\G\right)|_{\bigcap_{i=0}^p U_{X_i}}=\prod_{\substack{(X_0,\dots,X_p)\in \I^{p+1}\\ X_0<\dots<X_p}} (j_{X_0\cdots X_p})_*\G|_{\bigcap_{i=0}^p U_{X_i}},
\]
here $\quad j_{X_0\cdots X_p}:\bigcap_{i=0}^p U_{X_i}\hookrightarrow \Conf_k(\AA^n).
$ Taking global sections, we get the usual ordered Cech complex
\[
\Gamma\left((\AA^{n})^k,j_*\mathfrak{C}_{ord}^{\bullet}(\mathcal{U},j^*\G)\right)=\Gamma\left(\Conf_k(\AA^n) ,\mathfrak{C}_{ord}^{\bullet}(\mathcal{U},j^*\G)\right)=\Cech_{ord}^{\bullet}(\mc U,j^*\G),
\]
\[
\Cech_{ord}^{p}(\mc U,j^*\G)=\prod_{\substack{(X_0,\dots,X_p)\in \I^{p+1}\\ X_0<\dots<X_p}}\G({\bigcap_{i=0}^p U_{X_i}}).
\]
Since $(\AA^{n})^k$ is affine, we have $j_*\mathfrak{C}_{ord}(\mathcal{U},j^*\G)=\underline{\Cech_{ord}^{\bullet}(\mc U,j^*\G)}$. The integration along simplices gives rise to a quasi-isomorphism (see \cite[Theorem 7.4.5]{Manetti} or \cite[(4.4.1)]{hinich2006homotopy})
\[
\int_{\triangle}:\Th\left( \Gamma(\Cech(\mc U),j^*\G) \right)\xrightarrow{\sim}\Cech_{ord}^{\bullet}(\mc U,j^*\G) .
\]
Since the cover $\mathcal{U}$ is finite, the double complex $\Cech_{ord}^{\bullet}(\mc U,j^*\G)$ (thus $\Th\left( \Gamma(\Cech(\mc U),j^*\G) \right)$) computes $R\Gamma\left(\Conf_k(\AA^n),\G\right)$ without any boundedness condition on $\G$ (see  \cite[\href{https://stacks.math.columbia.edu/tag/08BZ}{Tag 08BZ}]{stacks-project}). And if $\G\xrightarrow{\sim}\M$ is a quasi-isomorphism of complexes of quasi-coherent sheaves, we have the induced quasi-isomorphism (see \cite[Lemma 7.4.3]{Manetti})
\[
\Th\left( \Gamma(\Cech(\mc U),j^*\G)\right)\xrightarrow{\sim}\Th\left( \Gamma(\Cech(\mc U),j^*\M)\right).
\]
Now suppose that $\F$ is a bounded complex of $\D_{(\AA^{n})^k}$-modules with quasi-coherent cohomology. By a theorem by Bernstein \cite[1.5.7]{HTT_Dmodules2008} we can find a bounded complex $\G$ of quasi-coherent $\D_{(\AA^{n})^k}$-modules quasi-isomorphic to $\F$. Then we can find a bounded above complex of locally free $\D_{(\AA^{n})^k}$-modules $\M$ which resolves $\G$, cf. \cite[1.4.20]{HTT_Dmodules2008}. The claim that $\Tot\left(\underline{\TS_{\AA^n}^{k}} \ox_{\O_{(\AA^{n})^k}} \F\right)\sim \int_jj^*\F$ follows from the following diagram.
\[
\begin{tikzcd}
\Tot\left(\underline{\TS_{\AA^n}^{k}} \ox_{\O_{(\AA^{n})^k}} \F\right) \rar["\sim"] & \Tot\left(\underline{\TS_{\AA^n}^{k}} \ox_{\O_{(\AA^{n})^k}} \G\right)\dar["\sim"]  & \lar["\sim"] \Tot(\underline{\TS_{\AA^n}^k} \ox_{\O_{(\AA^{n})^k}} \M) \dar["\sim"] \\
& \Tot(\underline{\Th_{\AA^n}^k} \ox_{\O_{(\AA^{n})^k}} \G) \dar["i"] & \lar["\sim"]\Tot(\underline{\Th_{\AA^n}^k} \ox_{\O_{(\AA^{n})^k}} \M)\dar["\cong"] \\
\int_jj^{*}\F\sim \int_jj^{*}\G \rar["\sim"]& \underline{\Th\left( \Gamma(\Cech(\mc U),j^*\G) \right) } & \lar["\sim"] \underline{\Th\left( \Gamma(\Cech(\mc U),j^*\M) \right) }
\end{tikzcd}
\]
In this diagram,
\begin{enumerate}[-]
 \item Every arrow respects $\D_{(\AA^{n})^k}$-module structures.
 \item The arrows in the first row are quasi-isomorphisms. This is because $\TS_{\AA^n}^k$ is a bounded complex of flat $\O_{(\AA^{n})^k}$-modules. (We prove flatness in \cref{lem: FlatE}, cf. \cref{thm: TSIE gens and rels}). Therefore it is K-flat. (See \cite[\href{https://stacks.math.columbia.edu/tag/06Y7}{\S06Y7}]{stacks-project}.)
 \item The first two vertical arrows are quasi-isomorphisms since $\underline{\TS_{\AA^n}^{k}} $ and $\Th_{\AA^n}^{k}$ are chain homotopy equivalent by \cref{prop: retract of simplex to polysimplex}. This implies that the horizontal arrow in the second row is a quasi-isomorphism.
 \item Since $\M$ is locally free, we have the identification $\underline{\Th\left( \Gamma(\Cech(\mc U),j^*\M) \right) }\cong \Tot(\underline{\Th_{\AA^n}^k} \ox_{\O_{(\AA^{n})^k}} \M) $ by the same argument as before.
 \item The map $i$ comes from the map $\Th\left( \Gamma(\Cech(\mc U),\O) \right)\otimes_{\Gamma((\AA^n)^k,O_{(\AA^n)^k})}\Gamma((\AA^n)^k,\G)\rightarrow \Th\left( \Gamma(\Cech(\mc U),j^*\G) \right)$ and we use the fact that $\G$ is quasi-coherent.
    \item The first two columns are bounded complexes of $\D_{(\AA^{n})^k}$-modules.
    \item $\int_jj^{*}\G$ is represented by $j_*\mathfrak{C}_{ord}(\mathcal{U},j^*\G)=\underline{\Cech_{ord}^{\bullet}(\mc U,j^*\G)}$. This can be seen using injective resolutions and the fact that $\mathcal{U}$ is affine and finite.
\end{enumerate}

\begin{rem}
One could also use the projection formula \cite[Corollary 1.7.5]{HTT_Dmodules2008} directly by the flatness of $\TS_{\AA^n}^k$.
\end{rem}
\begin{rem}
The statement seems to be true for unbounded complexes. However, we did not find proof of the unbounded Bernstein theorem in the literature (it is mentioned in \cite[pp.207]{neeman1996grothendieck}). For bounded below complexes of quasi-coherent $D_{(\AA^{n})^k}$-modules, the statement is true by using the truncation of the complexes.
\end{rem}
\subsection{Proof of \cref{thm: TSIE gens and rels}}{\label{sec: proof of thm TSIE gens and rels}}
In this subsection we prove \cref{thm: TSIE gens and rels}, i.e. that $\TS_{\AA^n}^{I,E} \cong \mathbf B_{\AA^n}^{I,E}$. We first recall the notations:
\begin{enumerate}[-]
\item $\Omega_{n}^{I,E} := \kk[u^r_e; \dd u^r_e]^{1\leq r\leq n}_{e \in E} \big/\langle \sum\limits_{r=1}^{n} u^r_e -1, \sum\limits_{r=1}^{n} \dd u^r_e \rangle_{e\in E}$
\item $\mathbf \Omega_{\AA^n}^{I,E} := \kk[z^r_i,\frac 1{z^r_e}]^{1\leq r\leq n}_{i\in I; e \in E} \ox_\kk \Omega_{n}^{I,E}$
\end{enumerate}
Given also an undirected edge $e\in E$ and a direction $s\in \{1,\dots,n\}$, let
\begin{enumerate}[-]
\item $\Omega^{I,E}_{n,(e,\del_s)} := \Omega_{n}^{I,E}\big/ \langle u_e^s = 0, \dd u_e^s=0\rangle$
\item $\mathbf \Omega^{I,E}_{\AA^n,(e,\del_s)} 
:= \mathbf \Omega^{I,E}_{n}\big/  \langle u_e^s = 0, \dd u_e^s=0\rangle$
\item $\mathbf \Omega^{I,E}_{\AA^n,(e,\del_s)^{\mathbf{reg}}} := \kk[z^r_i,\frac 1{z^t_{f}}]^{(r,i)\in \{1,\dots,n\}\times I}_{(t,f)\in (\{1,\dots,n\}\times E) \setminus \{(s,e)\}} \ox \Omega^{I,E}_{n,(e,\del_s)}$
\item $\mathbf \Upsilon^{I,E}_{\AA^n,u^s_e=0} := \mathbf \Omega^{I,E}_{\AA^n,(e,\del_s)}\big/  \mathbf \Omega^{I,E}_{\AA^n,(e,\del_s)^{\mathbf{reg}}}$
\end{enumerate}
We have the canonical quotient maps
$\Res_e^{I,E}|_{u^s_e=0} : \mathbf \Omega^{I,E}_{\AA^n} \to \mathbf\Upsilon^{I,E}_{\AA^n,u^s_e=0}.$

The general strategy is to start with simple cases and add vertices and edges.
We begin with the case of two vertices and one edge.
\begin{prop} Let $e = (v',v'')$.
$\TS_{\AA^n}^{\{v',v''\},\{e\}}$ is isomorphic to
$
\mathbf{B}^{\{v',v''\},\{e\}}_{\AA^n}
.
$
\end{prop}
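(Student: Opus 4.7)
The plan is to construct an isomorphism $\phi: \mathbf{B}^{\{v', v''\}, \{e\}}_{\AA^n} \to \TS^{\{v', v''\}, \{e\}}_{\AA^n}$ of dg commutative algebras, induced by the identity map on generators inside the common ambient algebra $\mathbf{\Omega}^{\{v',v''\}, \{e\}}_{\AA^n}$. Well-definedness amounts to checking that each $\mathbf B$-generator $\frac{u_e^r}{(z_e^r)^m}$ and $\frac{\dd u_e^r}{(z_e^r)^m}$ satisfies the defining boundary conditions of $\TS$: restriction to $u_e^r = 0, \dd u_e^r = 0$ gives $0$, while restriction to $u_e^s = 0, \dd u_e^s = 0$ for $s \ne r$ gives an expression not involving $z_e^s$. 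The simplex relations $\sum_r u_e^r = 1$ and $\sum_r \dd u_e^r = 0$ hold on both sides, so $\phi$ descends to a well-defined map, and injectivity is immediate since both algebras embed in $\mathbf\Omega$.

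The substantive part is surjectivity, which I would prove by an iterative ``peeling'' procedure, one pole direction at a time. Fix $s \in \{1, \dots, n\}$ and, given $\tau \in \TS$, expand $\tau$ in monomials in $u_e^s$ and $\dd u_e^s$, then extract the principal part of each coefficient as a Laurent series in $z_e^s$. The $\TS$-boundary condition at the face $u_e^s = 0, \dd u_e^s = 0$ forces the $(u_e^s)^0(\dd u_e^s)^0$-part of $\tau$ to be regular in $z_e^s$, so no ``bare'' pole can occur; every $z_e^s$-singular term carries either $u_e^s$ or $\dd u_e^s$ as a factor. Collecting terms, one arrives at the decomposition
\[
\tau = \tau_0 + \sum_{m \ge 1} \frac{u_e^s}{(z_e^s)^m}\, \alpha_m + \sum_{m \ge 1} \frac{\dd u_e^s}{(z_e^s)^m}\, \beta_m,
\]
where $\frac{u_e^s}{(z_e^s)^m}$ and $\frac{\dd u_e^s}{(z_e^s)^m}$ are $\mathbf B$-generators, and $\tau_0, \alpha_m, \beta_m$ have no pole in $z_e^s$. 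Iterating the procedure on each of $\tau_0, \alpha_m, \beta_m$, cycling through $s = 1, 2, \dots, n$, eventually produces a pole-free expression that is manifestly a polynomial in the $\mathbf B$-generators, so the required expression of $\tau$ as an element of $\mathbf B$ follows by multiplying back the extracted factors.

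The main obstacle is showing, at each stage, that the coefficients $\tau_0, \alpha_m, \beta_m$ produced by peeling direction $s$ inherit $\TS$-style regularity conditions at every remaining face $u_e^{s'} = 0$ for $s' \ne s$, so that the induction can proceed. This decoupling follows from the fact that the factors $\frac{u_e^s}{(z_e^s)^m}, \frac{\dd u_e^s}{(z_e^s)^m}$ do not involve $z_e^{s'}$: equating coefficients of distinct $(z_e^s)^{-m}$-orders in the Laurent expansion of $\tau|_{u_e^{s'} = 0, \dd u_e^{s'} = 0}$ in $z_e^s$ decomposes the $z_e^{s'}$-regularity requirement into independent statements about $\tau_0, \alpha_m, \beta_m$. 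Some additional bookkeeping is needed to handle the direction $s = n$ on equal footing with $s < n$ in the presence of the simplex relations $\sum_r u_e^r = 1$, $\sum_r \dd u_e^r = 0$; this can be accomplished by exploiting the manifest symmetry between all $n$ directions.
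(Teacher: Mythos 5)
Your setup (the map on generators, injectivity, and the single-direction extraction) is fine, but the decoupling claim on which the whole iteration rests is false, and it already fails on the propagator. Take $n=2$ and $\tau = \Prop_{v'v''} = \dd u^1_e/(z^1_ez^2_e)$, which lies in $\TS_{\AA^2}^{\{v',v''\},\{e\}}$: its pullback to $u^1_e=0$ is $0$, and its pullback to $u^2_e=0$ (i.e.\ $u^1_e=1,\ \dd u^1_e=0$) is also $0$. Peeling the direction $s=1$ as you describe gives $\tau = \frac{\dd u^1_e}{z^1_e}\cdot\beta_1$ with $\beta_1 = 1/z^2_e$. This $\beta_1$ does \emph{not} satisfy the boundary condition at the face $u^2_e=0$ (its restriction there is still $1/z^2_e$), it is not an element of $\mathbf B$, and the iteration cannot continue. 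The reason your coefficient-comparison argument breaks is that restricting to the face $u^{s'}_e=0$ imposes the relation $\sum_{r\neq s'}\dd u^r_e=0$ among the surviving variables, so multiplication by $\dd u^s_e$ (or $u^s_e$) is not injective on that face; hence regularity of $(u^s_e\alpha_m+\dd u^s_e\beta_m)|_{u^{s'}_e=0}$ does not imply regularity of $\alpha_m|_{u^{s'}_e=0}$ and $\beta_m|_{u^{s'}_e=0}$ separately. For $n=2$ the term $\dd u^1_e\,\beta_m$ is annihilated outright by restriction to $u^2_e=0$, so $\beta_m$ is completely unconstrained there; for $n\ge 3$ the same failure occurs because $\dd u^1_e\cdots\dd u^{n-1}_e$ already vanishes on the face $u^n_e=0$.

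What you dismiss as bookkeeping with the simplex relations is in fact the crux: to land in $\mathbf B$ one must use $\sum_r u^r_e=1$ and $\sum_r\dd u^r_e=0$ to redistribute the form so that \emph{each} pole direction acquires its own factor of $u^s_e$ or $\dd u^s_e$ \emph{simultaneously} --- e.g.\ $\dd u^1_e = u^2_e\,\dd u^1_e - u^1_e\,\dd u^2_e$, whence $\Prop_{v'v''} = \frac{\dd u^1_e}{z^1_e}\cdot\frac{u^2_e}{z^2_e}-\frac{u^1_e}{z^1_e}\cdot\frac{\dd u^2_e}{z^2_e}$. A one-direction-at-a-time peel cannot discover this, because the choice of factor made for the first direction destroys the information needed for the later ones. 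The paper instead separates all of the $z$-dependence at once (expanding in the monomial basis $\prod_s(z^s_{v''})^{l_s}(z^s_e)^{-r_s}$) and proves the simultaneous statement: a form $\alpha\in\Omega^\bul(\triangle_{n-1})$ whose pullback to $u^s_e=0$ vanishes for every $s$ in a subset $S$ is a $\kk$-combination of monomials containing a positive power of $u^s_e$ or $\dd u^s_e$ for \emph{every} $s\in S$. The only delicate case is $S=\{1,\dots,n\}$, handled by writing $\alpha=\sum_{s}u^s_e\alpha_s$, where $\alpha_s$ is the presentation of $\alpha$ eliminating the $s$-th variable. You would need to replace your inductive decoupling step with an argument of this simultaneous kind.
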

\begin{proof}
It is clear that
\[ \mathbf{B}^{\{v',v''\},\{e\}}_{\AA^n}\hookrightarrow \TS_{\AA^n}^{\{v',v''\},\{e\}} \]
 is a subalgebra. It remains to establish the containment in the other direction.
As a $\kk$-vector space, $\TS_{\AA^n}^{\{v',v''\},\{e\}}$ is spanned by elements of the form
\[
\sum_{l_s \in \ZZ_{\geq 0}} \sum_{r_s\in \ZZ} \left(\prod^n_{s=1}(z^s_{v''})^{l_s} (z^s_{e})^{-r_s}\right) \alpha_{(l_1,\dots,l_n),(r_1,\dots,r_n)} \in\TS_{\AA^n}^{\{v',v''\},\{e\}}
\]
with each $\alpha_{(l_1,\dots,l_n),(r_1,\dots,r_n)} \in \Omega_{\AA^n}^{\{v',v''\},\{e\}}$.
This element must belong to $\mathbf{\Omega}^{\{v',v''\},\{e\}}_{\AA^n,(e,\del_s)^{\mathbf{reg}}}$ for each $s$, i.e. it must obey the defining boundary conditions, namely that
\[\alpha_{(l_1,\dots,l_n),(r_1,\dots,r_n)}|_{u^s_e=0}=0 \quad \text{if}\ r_s>0.\]
Then the proposition follows from the following claim: For any subset $S\subset \{1,\dots,n\}$, if a polynomial differential form $\alpha\in \Omega^{\bul}(\triangle_{n-1})$ on the $(n-1)$-simplex satisfies the boundary condition
\[\alpha|_{u^{s}_e=0} \quad\text{for all } s\in S,\]
then it can be written as $\kk$-linear combination of monomials of the form
\begin{equation} \prod_{s\in S} (\varepsilon u_e^{s})^{p_s}\quad\text{with}\quad\varepsilon u^{s}_e \in \{u^{s}_e, \dd u^{s}_e\} \quad\text{and}\quad p_s>0\quad\text{for each $s\in S$}.\label{form}\end{equation}
To see this, note that for any $t\in \{1,\dots,n\}$, we may use the relations
$\sum\limits_{s=1}^{n}u^{s}_e=1, \sum\limits_{s=1}^{n}\dd u^{s}_e=0$
to write $\alpha$ uniquely as a polynomial $\alpha_t$ in the variables $\{u^s_e,\dd u^s_e\}_{s\neq t}$. If $S\neq \{1,\dots,n\}$ we may pick any $t\notin S$, write $\alpha = \alpha_t$, and the claim follows immediately from the boundary condition.
When $S=\{1,\dots,n\}$, then we argue in two steps. First, for every $t\in S$, $\alpha$ satisfies in particular the boundary conditions above for $S\setminus\{t\}\subset S$. Therefore, as we just saw, $\alpha_t$ is of the form \cref{form} for $S\setminus\{t\}$. Then we need only note that
\[
  \alpha =\sum_{s=1}^{n}u^s_e\alpha=\sum_{s=1}^{n}u^s_e\alpha_s
\]
is of the required form. This establishes the claim and hence the proposition.
\end{proof}

Now we consider the case of many vertices but still only one edge.

\begin{cor}\label{cor: oneedge} Let $e\in I\two$.
 $ \TS_{\AA^n}^{I,\{e\}}$ is isomorphic to
  \[
\mathbf{B}^{I,\{e\}}_{\AA^n}=\O_{(\AA^n)^I}\ox_{\O_{(\AA^n)^{\{v',v''\}}}}\mathbf{B}^{\{v',v''\},\{e\}}_{\AA^n}
.
\]
\end{cor}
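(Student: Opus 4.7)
The plan is to reduce the statement directly to the preceding proposition by observing that the extra vertices $i \in I \setminus \{v', v''\}$ contribute only polynomially (no poles and no boundary conditions), so everything factors as a tensor product. Concretely, first I would rewrite the ambient algebra as
\[ \mathbf\Omega^{I,\{e\}}_{\AA^n} \;=\; \kk[z^r_i]^{1\leq r\leq n}_{i\in I\setminus\{v',v''\}} \;\ox_{\kk}\; \mathbf\Omega^{\{v',v''\},\{e\}}_{\AA^n}, \]
using that the variables $z^r_i$ with $i \notin \{v',v''\}$ never appear in denominators of $\mathbf\Omega^{I,\{e\}}_{\AA^n}$ and are not among the $u$-variables indexed by the single edge $e$.

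Next I would check that this tensor-factorization is compatible with the residue/quotient data defining $\TS$. The $\O$-submodule $\mathbf\Omega^{I,\{e\}}_{\AA^n,(e,\del_s)^{\mathbf{reg}}}$ factors in the same way, so the canonical quotient $\Res_e^{I,\{e\}}|_{u^s_e = 0}$ is simply the tensor product of $\id$ on $\kk[z^r_i]^{1\leq r\leq n}_{i\in I\setminus\{v',v''\}}$ with $\Res_e^{\{v',v''\},\{e\}}|_{u^s_e = 0}$. Because $\kk[z^r_i]_{i\in I\setminus\{v',v''\}}$ is free (hence flat) over $\kk$, taking the kernel commutes with this tensor factor, and intersecting over $s \in \{1,\dots,n\}$ gives
\[ \TS^{I,\{e\}}_{\AA^n} \;\cong\; \kk[z^r_i]^{1\leq r\leq n}_{i\in I\setminus\{v',v''\}} \;\ox_{\kk}\; \TS^{\{v',v''\},\{e\}}_{\AA^n}. \]
Plugging in the previous proposition, $\TS^{\{v',v''\},\{e\}}_{\AA^n} \cong \mathbf B^{\{v',v''\},\{e\}}_{\AA^n}$, and then recognizing the outer tensor factor as the base change $\O_{(\AA^n)^I}\ox_{\O_{(\AA^n)^{\{v',v''\}}}} (-)$, yields the stated isomorphism with $\mathbf B^{I,\{e\}}_{\AA^n}$.

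There is no real obstacle here; the only thing to be careful about is the book-keeping that shows the residue/quotient maps really do factor through the $\{v',v''\}$-part, and that the resulting identification of $\kk$-tensor product with $\O$-tensor product matches the definition of $\mathbf B^{I,\{e\}}_{\AA^n}$ given in the corollary. Both are formal once the tensor decomposition of $\mathbf\Omega^{I,\{e\}}_{\AA^n}$ is set up, so the proof will be short.
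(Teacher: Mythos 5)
Your proposal is correct and is essentially the paper's own argument: the paper tensors the short exact sequence $0\to \mathbf B^{\{v',v''\},\{e\}}_{\AA^n}\to \mathbf\Omega^{\{v',v''\},\{e\}}_{\AA^n}\to \bigoplus_s \mathbf\Upsilon^{\{v',v''\},\{e\}}_{\AA^n,u^s_e=0}\to 0$ with the flat extension $\O_{(\AA^n)^{\{v',v''\}}}\to\O_{(\AA^n)^I}$, which is exactly your observation that the extra vertices contribute a free polynomial tensor factor commuting with the kernels of the residue maps. The two formulations coincide, so nothing further is needed.
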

\begin{proof}
  Since $\O_{(\AA^n)^{I}}=\kk[z^s_i]^{1\leq s\leq n}_{i\in I}$ is flat over $\O_{(\AA^n)^{\{v',v''\}}}=\kk[z^s_{v'},z^s_{v''}]^{1\leq s\leq n}$, we can tensor it with the exact sequence
  \[
  0\rightarrow \mathbf{B}^{\{v',v''\},\{e\}}_{\AA^n} \rightarrow  \mathbf{\Omega}_{\AA^n}^{\{v',v''\},\{e\}}\xrightarrow{\oplus^n_{s=1}\Res^{\{v,v'\},\{e\}}_e|_{u^s_e=0}} \mathbf{\Upsilon}^{\{v',v''\},\{e\}}_{\AA^n,u^1_e=0}  \oplus\cdots \oplus \mathbf{\Upsilon}^{\{v',v''\},\{e\}}_{\AA^n,u^n_e=0} \rightarrow 0
  \]
to get
\[
  0\rightarrow \mathbf{B}^{I,\{e\}}_{\AA^n} \rightarrow  \mathbf{\Omega}^{I,\{e\}}_{\AA^n}\xrightarrow{\oplus^n_{s=1}\Res^{I,\{e\}}_e|_{u^s_e=0}} \mathbf{\Upsilon}^{I,\{e\}}_{\AA^n,u^1_e=0}  \oplus\cdots \oplus \mathbf{\Upsilon}^{I,\{e\}}_{\AA^n,u^n_e=0} \rightarrow 0
  \]
\end{proof}

We also need the following simple lemma.

\begin{lem}\label{RegularBoth} Let $e,e'\in E\subset I\two$ and $s,s'\in \{1,\dots,n\}$ such that $(e,s) \neq (e',s')$.
We have
\begin{align} &\kk[z^r_i]^{1\leq r\leq n}_{i\in I}\left[  \frac 1{z^r_f}\right]_{\substack{(r,f) \in (\{1\dots,n\}\times E)\setminus\{(s,e)\}}} \cap
\kk[z^r_i]^{1\leq r\leq n}_{i\in I}\left[  \frac 1{z^r_f}\right]_{\substack{(r,f) \in (\{1\dots,n\}\times E)\setminus\{(s',e')\}}}\nn\\
& \qquad\qquad=
\kk[z^r_i]^{1\leq r\leq n}_{i\in I}\left[  \frac 1{z^r_f}\right]_{\substack{(r,f) \in (\{1\dots,n\}\times E)\setminus\{(s,e),(s',e')\} }},
\nn\end{align}
where the intersection is taken in $\kk[z^r_i]^{1\leq r\leq n}_{i\in I}\left[  \frac 1{z^r_f}\right]_{\substack{(r,f) \in (\{1\dots,n\}\times E)}} $.
\end{lem}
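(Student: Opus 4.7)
The plan is to reduce the lemma to a standard fact about unique factorization domains and their localizations. The ambient polynomial ring $R := \kk[z^r_i]^{1\leq r\leq n}_{i\in I}$ is a UFD, and I would first observe that the linear polynomials $z^r_f = z^r_a - z^r_b$ (for $f = \{a,b\} \in E$, $1 \leq r \leq n$) form a family of pairwise non-associate irreducibles in $R$. Indeed, $z^r_f$ and $z^{r'}_{f'}$ involve disjoint sets of variables when $r \neq r'$, while for fixed $r$, distinct edges $f \neq f'$ in $I\two/S_2$ yield linearly independent linear forms in $\{z^r_i\}_{i\in I}$, hence non-associate irreducibles.

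The inclusion $\supseteq$ of the claimed equality is clear by inspection, so I would concentrate on the reverse inclusion. Working inside the total localization $R' := R\bigl[(z^r_f)^{-1}\bigr]_{(r,f) \in \{1,\dots,n\}\times E}$ -- itself a UFD, being a localization of one -- every nonzero element $\alpha \in R'$ admits a unique (up to units in $\kk^\times$) expression
\[
\alpha \;=\; \frac{P}{\prod_{(r,f)}(z^r_f)^{m_{r,f}}},
\]
with $P \in R$, exponents $m_{r,f} \geq 0$, and $P$ coprime in $R$ to each $z^r_f$ for which $m_{r,f} > 0$. By the non-associacy observation above, membership of $\alpha$ in the first localization on the LHS is equivalent to $m_{s,e} = 0$, and similarly membership in the second localization forces $m_{s',e'} = 0$. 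Since $(s,e) \neq (s',e')$ by hypothesis, both vanishings can be imposed at once, and this is exactly the condition for $\alpha$ to lie in the RHS.

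I do not anticipate any real obstacle. The argument is elementary commutative algebra once the irreducibility and pairwise non-associacy of the family $\{z^r_f\}_{(r,f)}$ in the UFD $R$ is established, and the only step that deserves explicit verification is the case $r = r'$ of that non-associacy claim, which one can phrase either through linear independence of linear forms in $R$ or, equivalently, by comparing the hyperplanes cut out by $z^r_f$ and $z^r_{f'}$ in $\AA^{|I|}$.
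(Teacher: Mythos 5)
Your proof is correct. It rests on the same underlying fact as the paper's argument --- that the linear forms $z^r_f$ are pairwise non-associate irreducibles, hence pairwise coprime, in the polynomial ring $R=\kk[z^r_i]^{1\leq r\leq n}_{i\in I}$ --- but it is organized differently. The paper first clears all denominators other than $z^s_e$ and $z^{s'}_{e'}$ by multiplying by a large common factor $G(z)$, thereby reducing the lemma to the two-prime identity $R[1/z^s_e]\cap R[1/z^{s'}_{e'}]=R$, which it then invokes without further justification. You instead work directly in the full localization and use the canonical (reduced) form of an element of a localized UFD, reading off membership in each of the two sub-localizations as the vanishing of the exponent $m_{s,e}$ (respectively $m_{s',e'}$) in that canonical form. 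Your route has the merit of making explicit the one point the paper leaves implicit, namely the pairwise non-associacy of the family $\{z^r_f\}$ (disjoint variables for $r\neq r'$, linearly independent linear forms for $r=r'$ and distinct edges), and of avoiding the auxiliary choice of $N\gg 0$; the paper's reduction buys a slightly shorter write-up at the cost of leaving the coprimality input unstated. Either argument is complete once that input is recorded.
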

\begin{proof}
We write an element in the intersection as
\[
F(z)=\sum_{l'}\frac{F'_{l'}(z)}{(z^{s'}_{e'})^{l'}}=\sum_{l}\frac{F_l(z)}{(z^s_e)^l},
\]
where $F'_{l'}(z)$ and $F_l(z)$  are regular in both $z^s_e$ and $z^{s'}_{e'}$. We can find $N\gg0$ and set $G(z)=\left(\prod\limits_{(\tilde{e},\tilde{s})\neq (e,s),(e',s')}z^{\tilde{s}}_{\tilde{e}}\right)^N$ such that $G(z)F_l(z),G(z)F'_{l'}(z)\in \kk[z^r_i]^{1\leq r\leq n}_{i\in I}$. Then
\[
G(z)F(z)\in\kk[z_i^r,{\frac{1}{z^s_{e}}}]^{1\leq r\leq n}_{i\in I}\cap  \kk[z_i^r,{\frac{1}{z^{s'}_{e'}}}]^{1\leq r\leq n}_{i\in I}=\kk[z^r_i]^{1\leq r\leq n}_{i\in I}.
\]
We conclude that
\[
F(z)=\frac{G(z)F(z)}{G(z)}\in \kk[z^r_i]^{1\leq r\leq n}_{i\in I}\left[  \frac 1{z^r_f}\right]_{\substack{(r,f) \in (\{1\dots,n\}\times E)\setminus\{(s,e),(s',e')\} }}
\]
as required.
\end{proof}

Now we can complete the proof of \cref{thm: TSIE gens and rels}.
We prove the statement inductively by adding edges, with \cref{cor: oneedge} as the base case.

So suppose, inductively, that we already have $\TS^{I,E}_{\AA^n}\cong\mathbf{B}_{\AA^n}^{I,E}$.
It is manifest from the definition that
$\mathbf{B}^{I,E\sqcup \{e\}}_{\AA^n} \cong \mathbf{B}^{I,E}_{\AA^n} \ox_{\O_{(\AA^n)^I}} \mathbf{B}^{I,\{e\}}_{\AA^n}$. Therefore, by our inductive assumption and \cref{cor: oneedge} we have \[\mathbf{B}^{I,E\sqcup \{e\}}_{\AA^n} \cong \TS^{I,E}_{\AA^n} \ox_{\O_{(\AA^n)^I}} \TS^{I,\{e\}}_{\AA^n}.\]
Thus, our goal is to show that $\TS^{I,E\sqcup \{e\}}_{\AA^n}$ is isomorphic to $\TS_{\AA^n}^{I,E}\ox_{\O_{(\AA^n)^I}}\TS_{\AA^n}^{I,\{e\}}$. From \cref{cor: oneedge} we have the short exact sequence
  \[
  0\rightarrow \mathbf{B}^{I,\{e\}}_{\AA^n} \rightarrow  \mathbf{\Omega}_{\AA^n}^{I,\{e\}}\xrightarrow{\mathop{\oplus}\limits^n_{s=1}\Res^{I,\{e\}}_e|_{u^s_e=0}} \mathbf{\Upsilon}^{I,\{e\}}_{\AA^n,u^1_e=0}  \oplus\cdots \oplus \mathbf{\Upsilon}^{I,\{e\}}_{\AA^n,u^n_e=0} \rightarrow 0.
  \]
The rings $\TS_{\AA^n}^{I,E}=\mathbf{B}_{\AA^n}^{I,E}, \mathbf{\Omega}_{\AA^n}^{I,\{e\}},\mathbf{\Omega}_{\AA^n}^{I,E}$ and $\TS_{\AA^n}^{I,\{e\}}$ are flat over $\O_{(\AA^n)^I}=\kk[z^s_i]^{1\leq s\leq n}_{i\in I}$. (The fact that $\mathbf{B}_{\AA^n}^{I,E}$ is flat over $\O_{(\AA^n)^I}$ is proved in \cref{lem: FlatE} below.) Therefore we have the commutative diagram
  \[
\begin{tikzcd}
            & 0                                                                              & 0                                                               &                                                                                               &   \\
            & {\bigoplus\limits_{e'\in E}^{1\leq s\leq n}\mathbf{\Upsilon}^{I,E}_{u^s_{e'}=0}\otimes\TS^{I,\{e\}}} \arrow[r] \arrow[u] & {\bigoplus\limits_{e'\in E}^{1\leq s\leq n}\mathbf{\Upsilon}^{I,E}_{u^s_{e'}=0}\otimes\mathbf{\Omega}^{I,\{e\}}} \arrow[u] &                                                                                               &   \\
0 \arrow[r] & \mathbf{\Omega}^{I,E}\otimes\TS^{I,\{e\}} \arrow[r] \arrow[u]                      & \mathbf{\Omega}^{I,E}\otimes\mathbf{\Omega}^{I,\{e\}} \arrow[u,"{\mathop{\oplus}^{1\leq s\leq n}\limits_{e'\in E}\Res^{I,E}_{e'}|_{u^s_{e'}=0}}"] \arrow[r,"
\small
{}^{{}^{{}^{\mathop{\oplus}^n\limits_{s=1}\Res^{I,\{e\}}_e|_{u^s_e=0}}}}
"]              & \mathbf{\Omega}^{I,E}\otimes\bigoplus^n\limits_{s=1}\mathbf{\Upsilon}^{I,\{e\}}_{u^s_{e}=0} \arrow[r]                             & 0 \\
0 \arrow[r] & \TS^{I,E}\otimes\TS^{I,\{e\}} \arrow[r] \arrow[u]                 & \TS^{I,E}\otimes\mathbf{\Omega}^{I,\{e\}} \arrow[u] \arrow[r]       & \TS^{I,E}\otimes\bigoplus^n\limits_{s=1}\mathbf{\Upsilon}^{I,\{e\}}_{u^s_{e}=0} \arrow[r] \arrow[u, "\iota"] & 0 \\
            & 0 \arrow[u]                                                                    & 0 \arrow[u]                                                     & 0 \arrow[u]                                                                                   &
\end{tikzcd}
  \]
  Here all tensor products $\otimes$ are over $\O_{(\AA^n)^I}=\kk[z^s_i]^{1\leq s\leq n}_{i\in I}$ and we have omitted the subscript ${\AA^n}$.

  From the diagram we know that the map \[\TS_{\AA^n}^{I,E}\ox_{{\O_{(\AA^n)^I}}} \TS^{I,\{e\}}_{\AA^n}\rightarrow \mathbf{\Omega}^{I,E}_{\AA^n}\ox_{{\O_{(\AA^n)^I}}}\mathbf{\Omega}^{I,\{e\}}_{\AA^n}=\mathbf{\Omega}_{\AA^n}^{I,E\sqcup \{e\}}\]
  is injective and its image is contained in $\TS_{\AA^n}^{I,E\sqcup \{e\}}\subset   \mathbf{\Omega}_{\AA^n}^{I,E\sqcup \{e\}}$. We claim that this map is also onto $\TS_{\AA^n}^{I,E\sqcup \{e\}}$: that is, $
  \TS^{I,E\sqcup \{e\}}_{\AA^n}=\TS^{I,E}_{\AA^n}\ox_{{\O_{(\AA^n)^I}}} \TS_{\AA^n}^{I,\{e\}}.$

  We need to show that if $\alpha\otimes \beta\in \mathbf{\Omega}_{\AA^n}^{I,E}\ox_{{\O_{(\AA^n)^I}}}\mathbf{\Omega}_{\AA^n}^{I,\{e\}}$ such that
  \[
  \Res^{I,E}_{e'}|_{u^s_{e'}=0}(\alpha)\otimes \beta=\alpha\otimes \Res^{I,\{e\}}_e|_{u^s_e=0}(\beta)=0,\quad e'\in E,1\leq s\leq n,
  \]
  then $\alpha\otimes\beta\in \TS_{\AA^n}^{I,E}\ox_{{\O_{(\AA^n)^I}}} \TS_{\AA^n}^{I,\{e\}}$. We first conclude that $\alpha\otimes\beta\in \TS^{I,E}_{\AA^n}\ox_{{\O_{(\AA^n)^I}}}\mathbf{\Omega}^{I,\{e\}}_{\AA^n}$.
Then it is enough to show that
\[
\iota:\TS_{\AA^n}^{I,E}\ox_{{\O_{(\AA^n)^I}}}\bigoplus^n\limits_{s=1}\mathbf{\Upsilon}^{I,\{e\}}_{\AA^n,u^s_{e}=0}  \rightarrow \mathbf{\Omega}^{I,E}_{\AA^n}\ox_{{\O_{(\AA^n)^I}}}\bigoplus^n\limits_{s=1}\mathbf{\Upsilon}^{I,\{e\}}_{\AA^n,u^s_{e}=0}
\]
is injective. Suppose that $\eta\otimes \sum\limits_{s=1}^n \gamma_s\in\TS^{I,E}_{\AA^n}\ox_{\O_{(\AA^n)^I}}\bigoplus\limits_{s=1}^n\mathbf{\Omega}^{I,\{e\}}_{{\AA^n},(e,\del_s)}$ is such that
\[
\iota\left([\eta\otimes \sum_{s=1}^n \gamma_s]\right)=0,\quad [\eta\otimes \sum_{s=1}^n \gamma_s]\in \TS^{I,E}_{\AA^n}\ox_{{\O_{(\AA^n)^I}}}\bigoplus^n\limits_{s=1}\mathbf{\Upsilon}^{I,\{e\}}_{\AA^n,u^s_{e}=0}.
\]
Given this, we can reexpress
\[
\eta\otimes \sum_{s=1}^n \gamma_s=\eta'\otimes \sum_{s=1}^n \gamma'_s\in \mathbf{\Omega}^{I,E}_{\AA^n}\ox_{{\O_{(\AA^n)^I}}}\bigoplus^n\limits_{s=1}\mathbf{\Omega}^{I,\{e\}}_{\AA^n,(e,\del_s)^\mathbf{reg}}.
\]
Then $\eta|_{u^{s'}_{e'}=0}\otimes \gamma_s=\eta'|_{u^{s'}_{e'}=0}\otimes  \gamma'_s$ is in the intersection
\[
 \left(\mathbf{\Omega}^{I,E}_{\AA^n,(e',\del_{s'})^\mathbf{reg}}\ox_{\O_{(\AA^n)^I}}\mathbf{\Omega}^{I,\{e\}}_{\AA^n,(e,\del_s)}\right)\cap \left(\mathbf{\Omega}^{I,E}_{\AA^n,(e',\del_{s'})}\ox_{\O_{(\AA^n)^I}}\mathbf{\Omega}^{I,\{e\}}_{\AA^n,(e,\del_s)^\mathbf{reg}}\right).
\]
Notice that we have
\[
\mathbf{\Omega}^{I,E}_{\AA^n,(e',\del_{s'})}\ox_{\O_{(\AA^n)^I}}\mathbf{\Omega}^{I,\{e\}}_{\AA^n,(e,\del_{s})}={\Omega}^{I,E}_{n,(e',\del_{s'})}\ox_{\kk}{\Omega}^{I,\{e\}}_{n,(e,\del_s)}\ox_{\kk}\kk[z^r_i,\frac 1{z^t_{f}}]^{(r,i)\in \{1,\dots,n\}\times I}_{(t,f)\in (\{1,\dots,n\}\times E\sqcup\{e\})}.
\]
From Lemma \ref{RegularBoth}, we get
\[
\eta|_{u^{s'}_{e'}=0}\otimes\gamma_s=\eta'|_{u^{s'}_{e'}=0}\otimes  \gamma'_s\in \left(\mathbf{\Omega}^{I,E}_{\AA^n,(e',\del_{s'})^\mathbf{reg}}\ox_{\O_{(\AA^n)^I}}\mathbf{\Omega}^{I,\{e\}}_{\AA^n,(e,\del_s)^\mathbf{reg}}\right).
\]
From the short exact sequence
\[
\TS^{I,E}_{\AA^n}\ox_{\O_{(\AA^n)^I}}\mathbf{\Omega}^{I,\{e\}}_{\AA^n,(e,\del_s)^\mathbf{reg}} \rightarrow\mathbf{\Omega}_{\AA^n}^{I,E}\ox_{\O_{(\AA^n)^I}}\mathbf{\Omega}^{I,\{e\}}_{\AA^n,(e,\del_s)^\mathbf{reg}}\xrightarrow{\mathop{\oplus}^{1\leq s\leq n}\limits_{e'\in E}\Res^{I,E}_{e'}|_{u^s_{e'}=0}} \bigoplus\limits_{e'\in E}^{1\leq s\leq n}\mathbf{\Upsilon}^{I,E}_{\AA^n,u^s_{e'}=0}\ox_{\O_{(\AA^n)^I}}\mathbf{\Omega}^{I,\{e\}}_{\AA^n,(e,\del_s)^\mathbf{reg}},
\]
we get
\[
\eta\otimes \sum_{s=1}^n \gamma_s=\eta'\otimes \sum_{s=1}^n \gamma'_s\in  \TS^{I,E}_{\AA^n}\ox_{\O_{(\AA^n)^I}}\bigoplus\limits_{s=1}^n\mathbf{\Omega}^{I,\{e\}}_{\AA^n,(e,\del_s)^\mathbf{reg}} .
\]
Thus
\[
[\eta\otimes \sum_{s=1}^n \gamma_s]=0\in \TS^{I,E}_{\AA^n}\ox_{{\O_{(\AA^n)^I}}}\bigoplus^n\limits_{s=1}\mathbf{\Upsilon}^{I,\{e\}}_{\AA^n,u^s_{e}=0}.
\]
Thus the map $\iota$ is injective, as required. This completes the inductive step, and therefore the proof of \cref{thm: TSIE gens and rels}.

\subsection{A lemma on flatness}
In the proof above, and also in \cref{sec: cohomology}, we required the following result.
\begin{lem}\label{lem: FlatE}
 The ring $\mathbf{B}^{I,E}_{\AA^n}$ is flat over $\O_{(\AA^n)^I}.$
\end{lem}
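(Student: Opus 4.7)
I would prove the lemma in three steps, reducing from general $(I,E)$ to a single edge on two vertices, and then to a translation-invariant core where an explicit direct-sum decomposition makes flatness manifest.

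\textbf{Step 1 (reduction to a single edge).} By inspection of the defining direct limit, one has an isomorphism of $\O_{(\AA^n)^I}$-algebras
$\mathbf{B}^{I,E\sqcup\{e\}}_{\AA^n} \cong \mathbf{B}^{I,E}_{\AA^n} \otimes_{\O_{(\AA^n)^I}} \mathbf{B}^{I,\{e\}}_{\AA^n}$,
because the generators and relations in the two edge-sets do not interact. Since tensor products of flat modules over a commutative ring are flat, an induction on $|E|$ reduces us to proving that $\mathbf{B}^{I,\{e\}}_{\AA^n}$ is flat over $\O_{(\AA^n)^I}$ for a single edge $e=(v',v'')$.

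\textbf{Step 2 (reduction to two vertices, then to the translation-invariant core).} From the defining formula, only the coordinates at $v'$ and $v''$ enter nontrivially (through $z^r_e = z^r_{v'}-z^r_{v''}$), so $\mathbf{B}^{I,\{e\}}_{\AA^n} \cong \mathbf{B}^{\{v',v''\},\{e\}}_{\AA^n}\otimes_{\O_{(\AA^n)^{\{v',v''\}}}}\O_{(\AA^n)^I}$, and $\O_{(\AA^n)^I}$ is free over $\O_{(\AA^n)^{\{v',v''\}}}$; so it suffices to show that $\mathbf{B}^{\{v',v''\},\{e\}}_{\AA^n}$ is flat over $\O_{(\AA^n)^{\{v',v''\}}}$. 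The linear change of variables $z^r_e := z^r_{v'}-z^r_{v''}$, $z^r_+ := z^r_{v'}+z^r_{v''}$ identifies $\O_{(\AA^n)^{\{v',v''\}}}$ with $R[z^r_+]$, where $R := \kk[z^r_e]^{1\leq r\leq n}$, and identifies $\mathbf{B}^{\{v',v''\},\{e\}}_{\AA^n} \cong \mathbf{B}^{(e)}_{\AA^n}\otimes_\kk \kk[z^r_+]$ with $\mathbf{B}^{(e)}_{\AA^n}$ the translation-invariant subalgebra (obtained by dropping $z^r_+$ from the defining formula). By standard base-change, it is enough to show that $\mathbf{B}^{(e)}_{\AA^n}$ is flat over $R$.

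\textbf{Step 3 (barycentric decomposition).} I would exhibit an explicit decomposition of $\mathbf{B}^{(e)}_{\AA^n}$ as an $R$-module into a direct sum of pieces indexed by nonempty subsets $F\subset\{1,\dots,n\}$, each of which is a localization of $R$ tensored with a $\kk$-free module, and hence flat over $R$. The key observation is that a polynomial in $u^1_e,\dots,u^n_e$ (modulo $\sum_r u^r_e=1$) admits a unique ``barycentric'' decomposition as a sum of terms $\prod_{r\notin F} u^r_e \cdot \gamma_F$, where $\gamma_F$ is a polynomial in the $u^s_e$ for $s\in F$ and $\dd u^s_e$ for $s\in F\setminus\{s_0\}$ (for some choice of base direction $s_0\in F$); setting $u^s_e=0$ for $s\notin F$ annihilates such a term, while setting $u^s_e=0$ for $s\in F$ further decomposes $\gamma_F$ barycentrically on the face $F$. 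Tracking the boundary conditions through this decomposition shows that denominators in $z^r_e$ for the $F$-piece can appear only for $r\in F$, so the $F$-piece is a free $\kk[u^r_e]_{r\in F}\otimes \kk[\dd u^r_e]_{r\in F\setminus\{s_0\}}$-module over $R[1/z^r_e : r\in F]$. Each summand is then flat over $R$ as a $\kk$-free extension of a localization of $R$, and the direct sum is flat.

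In the case $n=2$, writing $u=u^1_e$ and $\dd u=\dd u^1_e=-\dd u^2_e$, the decomposition reads
\[
\mathbf{B}^{(e)}_{\AA^2} = (1-u)\,R[1/z^2_e] \oplus u\,R[1/z^1_e] \oplus u(1-u)\,\kk[u]\otimes_\kk R[1/z^1_e,1/z^2_e]
\]
for the degree-zero part, and analogously (with an overall factor of $\dd u$) for the degree-one part; each summand is a localization of $R$ (possibly tensored with $\kk[u]$), so flatness is manifest. The main obstacle in writing out the general case is purely combinatorial: the barycentric decomposition must be specified unambiguously (e.g.\ by choosing a total order on $\{1,\dots,n\}$ and expanding greedily from the largest index), and the boundary-condition bookkeeping must be carried out carefully to verify that the decomposition indeed restricts to $\mathbf{B}^{(e)}_{\AA^n}\subset \mathbf{\Omega}^{(e)}_{\AA^n}$ piece-by-piece. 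None of this is conceptually subtle; it is a direct multi-variable generalization of the $n=2$ computation above.
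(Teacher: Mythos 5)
Your Step 1 coincides with the paper's first reduction ($\mathbf{B}^{I,E}_{\AA^n}\cong\bigotimes_{e\in E}\mathbf{B}^{I,\{e\}}_{\AA^n}$ over $\O_{(\AA^n)^I}$), and your Step 2 reductions to two vertices and to the translation-invariant core are correct and harmless. From there your route genuinely diverges from the paper's: for the single-edge case the paper proves flatness of the degree-zero part by applying miracle flatness to the smooth hypersurfaces $\{\sum_s (z^s_e)^m y^s=1\}$ and passing to the direct limit, and then handles the positive-degree part by verifying the equational criterion for flatness of $\mathbf{B}^{I,\{e\},1}_{\AA^n}$ over $\mathbf{B}^{I,\{e\},0}_{\AA^n}$ (taking exterior powers afterwards); you instead propose an explicit direct-sum decomposition of the whole algebra into localizations of $R=\kk[z^r_e]$ tensored with $\kk$-free modules. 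Your route, if completed, is more explicit and would in fact reprove the vector-space decomposition $\bigoplus_f f\cdot\Omega^\bul_{S_f}(\triangle_{n-1})$ that the paper uses separately in its cohomology computation; the paper's route avoids all combinatorics at the cost of invoking two standard flatness criteria.

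The gap is in Step 3, which is where the actual content lies for $n>2$. What your argument needs is a decomposition $\Omega^\bul(\triangle_{n-1})=\bigoplus_{F\subseteq\{1,\dots,n\}}V_F$ of $\kk$-vector spaces such that $\Omega^\bul_S(\triangle_{n-1})=\bigoplus_{F\supseteq S}V_F$ for \emph{every} $S$ simultaneously (then $\mathbf{B}^{(e)}_{\AA^n}\cong\bigoplus_F R[1/z^r_e:r\in F]\otimes_\kk V_F$ as an $R$-module and flatness follows). Such a decomposition does exist, but you have neither constructed it nor verified directness and exhaustiveness; because of the relation $\sum_r u^r_e=1$ the complements $V_F$ must be chosen with care (e.g.\ for $n=3$, degree $0$, one is forced into something like $V_{\{s\}}=\kk u^s_e$, $V_{\{s,t\}}=u^s_eu^t_e\,\kk[u^{\min(s,t)}_e]$, $V_{\{1,2,3\}}=u^1_eu^2_eu^3_e\,\Omega^0(\triangle_2)$, with $V_\emptyset=0$, and one must check these really sum directly to $\kk[u^1_e,u^2_e]$), and the verification in positive form degrees, where the boundary conditions degenerate on low-dimensional faces, is additional work. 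Moreover your general description is internally inconsistent with your own $n=2$ formula: you write the prefactor as $\prod_{r\notin F}u^r_e$ while requiring poles only in $z^r_e$ for $r\in F$, whereas the displayed formula (correctly) pairs the prefactor $\prod_{r\in F}u^r_e$ with poles in exactly the directions $r\in F$ — that pairing is what makes the boundary conditions hold. So the strategy is viable, but as written the proof of the lemma for general $n$ is asserted rather than given.
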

\begin{proof}
It is manifest from the definition, \cref{sec: gens and rels for TS}, of $\mathbf{B}^{I,E}_{\AA^n}$ that  \begin{equation} \mathbf{B}_{\AA^n}^{I,E}=\bigotimes_{e\in E}\mathbf{B}_{\AA^n}^{I,\{e\}},\nn\end{equation}
the tensor product being taken in $\O_{(\AA^n)^I}$-modules.
Therefore, it is enough to show that $\mathbf{B}_{\AA^n}^{I,\{e\}}$ is flat over $\O_{(\AA^n)^I}$, for if so then the tensor product above is a tensor product of flat $\O^I_{\AA^n}$-modules and thus a flat $\O_{(\AA^n)^I}$-module.

Our goal is thus to prove the following:
The ring $\mathbf{B}^{I,\{e\}}_{\AA^n}$ is flat over $\O_{(\AA^n)^I}$.

We have the sequence of inclusions
\[
\O_{(\AA^n)^I}\into  \mathbf{B}^{I,\{e\},0}_{\AA^n}=
\directlim_{m\geq0}\kk\left[ z^r_i, \frac{u^r_{e}}{(z^r_{e})^m}\right]^{1\leq r\leq n}_{i\in I}\big/\langle \sum_{r=1}^{n} u^r_{e} -1\rangle
\into \mathbf{B}^{I,\{e\}}_{\AA^n}.\]
Given any fixed positive integer $m$, consider
\[ Y_m := \Spec\left( \kk\left[ z^r_i, \frac{u^r_{e}}{(z^r_{e})^m}\right]^{1\leq r\leq n}_{i\in I}\big/\langle \sum_{r=1}^{n} u^r_{e} -1\rangle \right).\]
We have 
\[ Y_m =\{(z^r_i,y^r)^{1\leq r\leq n}_{i\in I} : \sum^n_{s=1}(z^s_e)^{m}y^s=1\}\subset(\AA^{n})^k\times \AA^{n}.\]
Notice that the affine variety $Y_{m}$ is non-singular and every fiber of the morphism $Y_{m}\rightarrow (\AA^{n})^k$ is of the same dimension $n-1$. Hence $Y_{m}$ is flat over $(\AA^{n})^k$ by the miracle flatness (\cite[\href{https://stacks.math.columbia.edu/tag/00R4}{Tag 00R4}]{stacks-project}).

Thus the ring $\mathbf{B}^{I,\{e\},0}_{\AA^n}$ is the direct limit of flat rings, and is therefore itself flat.

Now we prove the second inclusion, $\mathbf{B}^{I,\{e\},0}_{\AA^n} \into \mathbf{B}^{I,\{e\}}_{\AA^n}$, is also flat.
The degree 1 component $\mathbf{B}^{I,\{e\},1}_{\AA^n}$ is a $\mathbf{B}^{I,\{e\},0}_{\AA^n}$-module and
\[ \mathbf{B}^{I,\{e\}}_{\AA^n}=\bigwedge{}_{\mathbf{B}^{I,\{e\},0}_{\AA^n}}\mathbf{B}^{I,\{e\},1}_{\AA^n}.\]
We only need to prove that $\mathbf{B}^{I,\{e\},1}_{\AA^n}$ is flat over $\mathbf{B}^{I,\{e\},0}_{\AA^n}$. We shall use the equational criterion for flatness. Namely, suppose that we have a relation in $\mathbf{B}^{I,\{e\},1}_{\AA^n}$,
\[\sum_{s=1}^n F_s\cdot \frac{\dd u^s_e}{(z^s_e)^{\ell_s}}=0,\quad F_s\in\mathbf{B}^{I,\{e\},0}_{\AA^n}.\]
We must show that it is possible to rewrite $\frac{\dd u^s_e}{(z^s_e)^{\ell_s}} = \sum\limits_{j} a_{sj} b_j$, for some elements $b_j \in \mathbf{B}^{I,\{e\},1}_{\AA^n}$ and coefficients $a_{ij}\in \mathbf{B}^{I,\{e\},0}_{\AA^n}$, in such a way that for each $j$ we have the relations $\sum\limits_{s=1}^n F_s a_{sj} = 0$ in $\mathbf{B}^{I,\{e\},0}_{\AA^n}$.

To that end, first observe that we have the relations
\begin{equation} F_s \cdot (z^r_e)^{\ell_r} - F_r \cdot (z^s_e)^{\ell_s} =0 \quad\text{in}\quad \mathbf{B}^{I,\{e\},0}_{\AA^n}. \label{star}
\end{equation}
To see this, we note that $\mathbf{B}^{I,\{e\},1}_{\AA^n}$ embeds into $\kk[z_i^s, \frac 1 {z^s_e},u^s_e]^{1\leq s\leq n}_{i\in I}\{\dd u^s_e\}^{1\leq s\leq n}\big/\langle \sum_{r=1}^{n} \dd u^r_{e}\rangle$. There, we may eliminate $\dd u_e^n$ and then compare coefficients of the remaining $\dd u^r_e$ to find that $\frac{F_r}{(z^r_e)^{\ell_r}}=\frac{F_n}{(z^n_e)^{\ell_n}}$ for each $r$, and hence $\frac{F_s}{(z^s_e)^{\ell_s}}=\frac{F_t}{(z^t_e)^{\ell_t}}$ for all $1\leq s<t\leq n$. The relations \cref{star} in $\mathbf{B}^{I,\{e\},0}_{\AA^n}$ follow.

Now we rewrite
\begin{gather}
\frac{\dd u^s_e}{(z^s_e)^{\ell_s}}=\frac{\dd u^s_e}{(z^s_e)^{\ell_s}} \sum_{t=1}^n u^t_e=\sum_{\substack{t=1\\t\neq s}}^n(z^t_e)^{\ell_t}\cdot\left(\frac{u^t_e \dd u^s_e}{(z^s_e)^{\ell_s} (z^t_e)^{\ell_t}}\right)+\frac{u^s_e \dd u^s_e}{(z^s_e)^{\ell_s}}\nn\\
=\sum_{\substack{t=1\\t\neq s}}^n(z^t_e)^{\ell_t} \cdot \left(\frac{u^t_e \dd u^s_e -u^s_e du^t_e}{(z^s_e)^{\ell_s} (z^t_e)^{\ell_t}}\right),\quad \text{using}\ \sum^n_{t=1}du^t_e=0,\nn
\end{gather}
and hence find that the relation in  $\mathbf{B}^{I,\{e\},1}_{\AA^n}$  comes from the relations \cref{star} in $\mathbf{B}^{I,\{e\},0}_{\AA^n}$:
\begin{gather}
\sum_{s=1}^n F_s\cdot\frac{\dd u^s_e}{(z^s_e)^{\ell_s}}=\sum_{\substack{s,t=1\\s\neq t}}^n
F_s \cdot (z^t_e)^{\ell_t} \cdot\left(\frac{u^t_e \dd u^s_e -u^s_e du^t_e}{(z^s_e)^{\ell_s} (z^t_e)^{\ell_t}}\right)\nn\\
=\sum_{\substack{s,t=1\\s<t}}^n (F_s\cdot (z^t_e)^{\ell_t}-F_t\cdot (z^s_e)^{\ell_s})\cdot \left(\frac{u^t_e \dd u^s_e -u^s_e du^t_e}{(z^s_e)^{\ell_s} (z^t_e)^{\ell_t}}\right).\nn
\end{gather}
This establishes that   $\mathbf{B}^{I,\{e\},1}_{\AA^n}$ is flat over   $\mathbf{B}^{I,\{e\},0}_{\AA^n}$, completing the proof.
\end{proof}

\printbibliography

\end{document}

